\theoremstyle{plain}
\newtheorem{thm}{Theorem}
\newtheorem{corol}[thm]{Corollary}
\newtheorem{lemma}{Lemma}[section]
\newtheorem{prop}[lemma]{Proposition}
\theoremstyle{definition}
\newtheorem*{gapthm}{Gap Labeling Theorem}
\newtheorem*{claim}{Claim}
\theoremstyle{remark}
\newtheorem{rem}[lemma]{Remark}
\DeclareMathOperator{\tr}{tr}
\def\R{\mathbb{R}}
\def\C{\mathbb{C}}
\def\Z{\mathbb{Z}}
\def\N{\mathbb{N}}
\def\Q{\mathbb{Q}}
\def\P{\mathbb{P}}
\def\D{\mathbb{D}}
\def\H{\mathbb{H}}
\def\B{\mathbb{B}}
\def\eps{\varepsilon}
\newcommand{\cB}{\mathcal{B}}
\newcommand{\cF}{\mathcal{F}}
\newcommand{\calL}{\mathcal{L}}
\newcommand{\cO}{\mathcal{O}}\newcommand{\cP}{\mathcal{P}}
\newcommand{\cQ}{\mathcal{Q}}
\newcommand{\cS}{\mathcal{S}}\newcommand{\cT}{\mathcal{T}}
\newcommand{\cU}{\mathcal{U}}\newcommand{\cV}{\mathcal{V}}\newcommand{\cW}{\mathcal{W}}
\def\SL{\mathrm{SL}}
\def\SO{{\mathrm{SO}}}
\def\PSL{{\mathrm{PSL}}}
\def\supp{\operatorname{supp}}
\def\diam{\operatorname{diam}}
\def\id{{\operatorname{id}}}
\def\Id{{\operatorname{Id}}}  
\DeclareMathOperator{\interior}{int}
\renewcommand{\epsilon}{\varepsilon}
\renewcommand{\setminus}{\smallsetminus}
\renewcommand{\emptyset}{\varnothing}
\def\Leb{\mathrm{Leb}}
\def\Ruth{\mathit{Ruth}} 
\def\cUH{\mathcal{UH}}
\newcommand{\triplebar}[1]{|{\kern-.1 em}|{\kern-.1 em}|{#1}|{\kern-.1 em}|{\kern-.1 em}|}
\newcommand{\comm}[1]{}
\begin{document}

\title[Opening Gaps in the Spectrum of Schr\"odinger Operators]{Opening Gaps in the Spectrum of Strictly Ergodic Schr\"odinger Operators}

\begin{abstract}
We consider Schr\"odinger operators with dynamically defined
potentials arising from continuous sampling along orbits of
strictly ergodic transformations. The Gap Labeling Theorem states
that the possible gaps in the spectrum can be canonically labelled
by an at most countable set defined purely in terms of the
dynamics.  Which labels actually appear depends on the choice of
the sampling function; the missing labels are said to correspond
to collapsed gaps.  Here we show that for any collapsed gap, the
sampling function may be continuously deformed so that the gap
immediately opens.  As a corollary, we conclude that for generic
sampling functions, all gaps are open.  The proof is based on the
analysis of continuous $\SL(2,\R)$ cocycles, for which we obtain
dynamical results of independent interest.
\end{abstract}

\author[Avila]{Artur Avila$^{\flat,\sharp}$}
\urladdr{www.impa.br/$\sim$avila}
\email{artur@math.sunysb.edu}

\author[Bochi]{Jairo Bochi$^\ast$}
\urladdr{www.mat.puc-rio.br/$\sim$jairo}
\email{jairo@mat.puc-rio.br}

\author[Damanik]{David Damanik$^\ddagger$}
\urladdr{www.ruf.rice.edu/$\sim$dtd3} \email{damanik@rice.edu}

\thanks{
$^\flat$CNRS UMR 7599,
Laboratoire de Probabilit\'es et Mod\`eles al\'eatoires.
Universit\'e Pierre et Marie Curie--Bo\^\i te courrier 188.
75252--Paris Cedex 05, France. \\ 
\indent $^\sharp$IMPA, Estrada Dona Castorina 110, Jardim Bot\^anico, 22460-320 Rio de
Janeiro, Brazil.
\\
\indent $^\ast$Departamento de Matem\'atica, PUC-Rio,
Rua Marqu\^es de S\~ao Vicente, 225, 22453-900 Rio de Janeiro, RJ, Brazil.
\\
\indent $^\ddagger$Department of Mathematics, Rice University, Houston, TX 77005, USA
\\
\indent We are indebted to Jean Bellissard, Russell Johnson, and Barry Simon for useful comments and hints to the literature.
J.~B.\ would like to thank Rice University for its hospitality.
D.~D.\ would like to express his
gratitude to IMPA and {PUC--Rio} for
their hospitality and financial support through PRONEX--Dynamical Systems and
Faperj.
This research was partially conducted during the period A.~A.\ served as a Clay
Research Fellow.  J.~B.\ was partially supported by a grant from CNPq--Brazil.
D.~D.\ was supported in part by NSF~grants
DMS--0653720 and DMS--0800100.
We benefited from CNPq and Procad/CAPES support for traveling.
}

\date{March 12, 2009.}

\maketitle

\tableofcontents

\section{Introduction}

\subsection{Gap Opening}

Let $X$ be a compact metric space and let $f:X \to X$ be a strictly ergodic
homeomorphism (hence all orbits are dense and equidistributed with respect
to an invariant probability measure $\mu$).  We will also assume
that $f$ has a non-periodic finite-dimensional factor.\footnote{That is, there is
a homeomorphism $g:Y\to Y$, where $Y$ is a infinite compact subset of some Euclidean space $\R^d$,
and there is a onto continuous map $h:X \to Y$ such that $h \circ f = g \circ h$.}

Here we are interested in the one-dimensional discrete Schr\"odinger operator
$H=H_{f,v,x}$ whose potential is obtained by continuous sampling along an orbit of $f$.
Thus $H$ is a bounded self-adjoint operator on $\ell^2(\Z)$ given by
\begin{equation}\label{f.oper}
[H\psi](n) = \psi(n+1) + \psi(n-1) + V(n) \psi(n),
\end{equation}
where the potential $V$ is given by
$V(n) = v(f^n(x))$ for some $x \in X$ and some continuous {\it sampling
function} $v : X \to \R$.

Since $f$ is minimal, the spectrum of $H$ turns out to depend only
on $f$ and $v$, but not on $x$, and hence may be denoted by $\Sigma_{f,v}$; it is a perfect compact subset of $\R$.
Strict ergodicity implies that the
{\it integrated density of states} is well defined and only depends on $f$
and $v$: it is a
continuous non-decreasing surjective function $N_{f,v}:\R \to [0,1]$.
(See \S\ref{ss.review} for details.)
The basic relation between $\Sigma_{f,v}$ and $N_{f,v}$ is that $E \notin \Sigma_{f,v}$ if
and only if $N_{f,v}$ is constant in a neighborhood of $E$.  Hence, in each bounded
connected component of $\R \setminus \Sigma_{f,v}$ (called a {\it gap} of
$\Sigma_{f,v}$), $N_{f,v}$ assumes a constant value $\ell \in (0,1)$, called the {\it
label} of the gap.  A basic question is thus: which values of $\ell$ appear as
labels of gaps?

There is a well known general partial answer to this problem known
as the {\it Gap Labeling Theorem}.
Let $G \subset \R$ be the subgroup of all
$t$ such that there exist continuous maps $\phi:X \to \R$ and $\psi:X
\to \R/\Z$ with $t=\int \phi \, d\mu$ and
$\psi(f(x))-\psi(x)=\phi(x) \bmod{1}$.  Thus $G$ only depends on $f$
and it is called the {\it range of the Schwartzman asymptotic
cycle}\footnote{See \cite{S}, \cite[\S 5.2]{Parry}, \cite{A}; Theorem 4.2 from \cite{A} justifies this terminology.}.
Often $G$ is a non-discrete group: this happens for
instance
whenever $X$ has infinitely many connected components.

\begin{gapthm}

If $\ell$ is the label of some gap, then $\ell \in \mathcal{L}=G \cap
(0,1)$.

\end{gapthm}

Of course, the Gap Labeling Theorem does not guarantee that any
particular $\ell \in \mathcal{L}$ actually is  the label of some gap: it
is quite easy to see that the specific choice of the sampling
function is also important.
A missing label, that is, some $\ell \in \mathcal{L}$ which is
not the label of an actual gap, is usually said to correspond to
the {\it collapsed gap} $N_{f,v}^{-1}(\ell)$ (actual gaps are
sometimes called {\it open gaps} for this reason).

Describing which gaps are open in specific situations (and in
particular, whether the converse of the Gap Labeling Theorem
holds) is usually a very difficult problem.  The well known ``Dry
Ten Martini Problem'' (cf.~\cite{Si2}) is an instance of this: it asks one to show that
all gaps are open for the Almost Mathieu operator ($X=\R/\Z$,
$f(x)=x+\alpha$ for some $\alpha \in \R \setminus \Q$ and $v(x)=2
\lambda \cos 2 \pi x$ for some $\lambda>0$).  This problem is
still open, despite considerable attention (for partial progress,
see \cite {CEY}, \cite {P}, \cite {AJ}, and \cite {AJ2}).

Here we are concerned with the more general problem of whether
the dynamics, by itself, could {\it force} the closing of (many) gaps.
Questions in this direction arose early in the development of the theory: in
a 1982 paper, Belissard asks whether the mixing property would prevent the
set of labels of open gaps to be dense in $(0,1)$
(or equivalently, whether ``mixing avoids Cantor spectrum'',
\cite {Be0}, Open Problem~5).

We will show that the dynamics in fact does not obstruct the opening of
gaps, and indeed the converse of the Gap Labeling Theorem holds for
generic sampling functions.
This is obtained as a consequence of the
following result, which shows that any collapsed gap can be opened
by deformation:

\begin{thm}[Gap opening]\label{t.mainthm1}
For every $v \in C(X,\R)$ and any $\ell \in \mathcal{L}$,
there exists a continuous path $v_t \in C(X,\R)$,
$0 \leq t \leq 1$ with
$v_0 =v$ such that for every $t > 0$, the spectrum $\Sigma_{f,v_t}$
has an open gap with label $\ell$.
\end{thm}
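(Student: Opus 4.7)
The plan is to reformulate the problem in the language of $\SL(2,\R)$-cocycles. For each energy $E$, introduce the Schr\"odinger transfer matrix cocycle $A_{E,v}: X \to \SL(2,\R)$ whose top row is $(E - v(x), -1)$ and whose bottom row is $(1,0)$. By Johnson's theorem, $E \notin \Sigma_{f,v}$ if and only if $(f, A_{E,v})$ is uniformly hyperbolic, and the fibered rotation number $\rho(f, A_{E,v})$ is related to $N_{f,v}(E)$ by an explicit affine formula modulo $\Z$. Fix $E_0$ with $N_{f,v}(E_0) = \ell$; since the gap is collapsed, $(f, A_{E_0,v})$ fails to be uniformly hyperbolic. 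The task then reduces to producing a continuous path $v_t$ such that, for each $t > 0$, there exists $E(t)$ near $E_0$ for which $(f, A_{E(t), v_t})$ is uniformly hyperbolic and has the fibered rotation number assigned to $\ell$.

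Because uniform hyperbolicity is open and the rotation number is locally constant on the uniformly hyperbolic set, it suffices to perturb the cocycle into the uniformly hyperbolic locus; the label will match by continuity, \emph{provided} the approach to hyperbolicity happens through cocycles of the correct rotation class. This is exactly where $\ell \in G$ enters: the hypothesis furnishes continuous maps $\phi : X \to \R$ and $\psi: X \to \R/\Z$ with $\int \phi \, d\mu$ equal to the required rotation number and $\psi \circ f - \psi = \phi \bmod 1$. This pair provides a continuous twist by the rotation cocycle $x \mapsto R_{\psi(f(x)) - \psi(x)}$ under which the rotation number of $A_{E_0,v}$ can be normalized, and which moreover exhibits the target rotation class as a coboundary. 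Without this cohomological realization, there would be no continuous path of cocycles with the correct rotation number connecting the initial data to the uniformly hyperbolic regime.

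The main technical step, and the principal obstacle, is therefore a general perturbation result for continuous $\SL(2,\R)$-cocycles over a strictly ergodic base: a cocycle that is not uniformly hyperbolic but whose rotation number lies in the Schwartzman group admits arbitrarily small continuous perturbations into the uniformly hyperbolic locus, with the rotation number preserved. My strategy for this auxiliary result would be, after applying the $\psi$-twist to normalize the rotation number, to locate a long orbit segment along which the candidate stable and unstable directions come close to coincidence and there perform a direct Lyapunov-exponent-raising perturbation that separates them; continuity and strict ergodicity then propagate the resulting hyperbolicity to the whole orbit. The subtler final step is to realize such an abstract cocycle perturbation through a change in $v$ alone, since varying the potential only moves $A_{E,v}$ in the one-dimensional Lie-algebra direction given by the elementary matrix altering the $(1,1)$-entry. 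The key here is that, thanks to strict ergodicity, iterates of this single direction under the cocycle densely populate $\SL(2,\R)$; a controllability/averaging argument along a finite piece of orbit should then let one realize any infinitesimal change of the global cocycle compatible with the rotation number constraint, completing the deformation.
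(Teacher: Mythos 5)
Your reduction is the same one the paper uses: fix $E_0$ with $N_{f,v}(E_0)=\ell$, pass to the Schr\"odinger cocycle $A_{E_0,v}$, invoke Johnson's characterization \eqref{jlresult} and the relation $N=1-2\rho$, push the cocycle into $\cUH$ while keeping the rotation number at the value corresponding to $\ell$, and then return to potentials. The trouble is that the two steps you describe as ``technical'' are exactly the content of the paper, and your sketches for them have genuine gaps. The perturbation step is Theorem~\ref{t.mainthm2}, whose proof occupies Sections~\ref{s.almost inv}--\ref{s.reduce} (almost invariant sections over towers with controlled return times, the cohomological equation, extension of families of cocycles over cubes via non-coboundary sets and a parametrized Lusin argument, the locked/semi-locked/unlocked trichotomy, and a topological argument following a level set of the rotation number). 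Your proposed shortcut --- raise the Lyapunov exponent along an orbit segment where candidate stable and unstable directions nearly coincide, then let strict ergodicity ``propagate'' hyperbolicity --- does not work: a positive Lyapunov exponent is far from uniform hyperbolicity for continuous cocycles (energies in the spectrum can carry positive exponent), such a perturbation gives no control on the fibered rotation number, and the label of the resulting gap must be \emph{exactly} $\ell$. At a collapsed gap the cocycle is typically unlocked, so arbitrarily nearby uniformly hyperbolic cocycles have rotation numbers different from $(1-\ell)/2$ (many values of the dense group $G$ are available), and ``matching by continuity'' fails. Moreover Theorem~\ref{t.mainthm1} requires a continuous path lying in the open-gap locus for all $t>0$, not a sequence of approximants; producing such a path with prescribed rotation number is precisely where the hypothesis $\ell\in G$ is consumed in the paper (through the cohomological equation, Lemma~\ref{l.line adjust ext}, and the topological Lemma~\ref{l.topology}), not through a single coboundary twist performed at the outset.

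The final step also conceals a real result. Varying $v$ moves $A_{E,v}$ only inside the set $\cS$ of Schr\"odinger matrices, and one cannot literally realize a general nearby $\SL(2,\R)$-valued cocycle by a potential change (a fiberwise dimension count already forbids it); the correct statement is the Projection Lemma (Lemma~\ref{l.projection}): every $\SL(2,\R)$-perturbation of a Schr\"odinger cocycle is \emph{conjugate} to an $\cS$-valued perturbation, continuously in the perturbation. Conjugacy is the mechanism that makes the return to potentials legitimate, because it preserves uniform hyperbolicity and the rotation number modulo $G'$, which is what guarantees that the gap opened at $E_0$ still carries the label $\ell$. Your ``controllability/averaging along a finite orbit piece'' heuristic, resting on the claim that iterates of the single Lie-algebra direction densely populate $\SL(2,\R)$, is unsubstantiated and is not how this can be carried out; without the Projection Lemma or a genuine substitute, the passage from a deformation of the cocycle back to a deformation of $v$ remains a gap.
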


Since the sampling functions for which the gap with label $\ell$
is open forms an open subset of $C(X,\R)$, we have the following:

\begin{corol}[Generic converse to the Gap Labeling Theorem]\label{c.gencgl}
For a generic $v \in C(X,\R)$, all gaps are open, that is, for every gap
label $\ell \in \mathcal{L}$,  the spectrum  $\Sigma_{f,v}$ has an open gap with label $\ell$.
\end{corol}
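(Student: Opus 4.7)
The plan is to deduce the corollary from Theorem~\ref{t.mainthm1} by a standard Baire category argument; Theorem~\ref{t.mainthm1} already supplies the substantive dynamical input, and the corollary is essentially a formal consequence. For each label $\ell \in \mathcal{L}$ I would introduce the set
\[
U_\ell = \{v \in C(X,\R) : \Sigma_{f,v} \text{ has an open gap with label } \ell\},
\]
and show that $\bigcap_{\ell \in \mathcal{L}} U_\ell$ is a dense $G_\delta$ in $C(X,\R)$ equipped with the uniform topology.

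The first step is the openness of each $U_\ell$, which is precisely the statement made in the sentence preceding the corollary. I would justify it as follows: the integrated density of states $N_{f,v}$ depends continuously on $v$ in the $C^0$-sense, and a gap with label $\ell$ corresponds to a nondegenerate plateau of $N_{f,v}$ at height $\ell$; a sufficiently small uniform perturbation of $v$ moves $N_{f,v}$ by an arbitrarily small amount in sup-norm, so such a plateau persists and keeps its height. The second step, density of each $U_\ell$ in $C(X,\R)$, is immediate from Theorem~\ref{t.mainthm1}: given any $v \in C(X,\R)$, the theorem produces a continuous path $v_t$ with $v_0 = v$ such that $v_t \in U_\ell$ for every $t > 0$; letting $t \to 0^+$ exhibits $v$ as a uniform limit of elements of $U_\ell$.

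The final step is to combine these observations. The set $\mathcal{L} \subset G \cap (0,1)$ is at most countable (this is part of the content of the Gap Labeling Theorem, as emphasized in the abstract), and $C(X,\R)$ with the uniform metric is a complete metric space. Hence by the Baire category theorem the intersection $\bigcap_{\ell \in \mathcal{L}} U_\ell$ is a dense $G_\delta$ subset of $C(X,\R)$, and every $v$ in this intersection has the property that every label in $\mathcal{L}$ is realized as the label of an open gap of $\Sigma_{f,v}$. There is no real obstacle at this stage: the only conceivable difficulty was the density of each $U_\ell$, which has been converted into Theorem~\ref{t.mainthm1}, where the actual work takes place.
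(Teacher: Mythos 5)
Your argument is essentially the paper's: the authors deduce the corollary from Theorem~\ref{t.mainthm1} by exactly this observation, remarking that for each $\ell$ the set of $v$ for which the gap with label $\ell$ is open is an open subset of $C(X,\R)$, with the countable-intersection/Baire step left implicit. The one point worth tightening is your justification of openness: since $\mathcal{L}$ may be dense in $(0,1)$, sup-norm continuity of $N_{f,v}$ in $v$ alone does not force the persisting plateau to keep height exactly $\ell$; instead note that for $v'$ near $v$ the cocycle $(f,A_{E_0,v'})$ remains uniformly hyperbolic, so the fibered rotation number, and hence $N_{f,v'}(E_0)=1-2\rho_{f,v'}(E_0)$, is locally constant in $v'$ (Proposition~\ref{p.lockprop}), which pins the label at $\ell$.
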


In particular, this gives, in our setting, an affirmative answer to another
question of Bellissard, who asked in \cite[Question~4]{Be1} whether Cantor
spectrum is generic whenever it is not precluded by the Gap
Labeling Theorem. (A specific mixing example falling in this category
is discussed below, answering his earlier question as well.)
On the other hand, as we will show in Appendix~\ref{a.appb}, the answer to Bellissard's
question becomes negative if one considers slightly more general base dynamics $f$.

\begin{rem}

We don't know whether for $v \in C(X,\R)$ given, it is possible to
find a continuous path $v_t \in C(X,\R)$, $0 \leq t \leq 1$ with
$v_0 =v$  and such that for every $t > 0$ all gaps of
$\Sigma_{f,v_t}$ are open.\footnote{The difficulty is that we
deduce Theorem~\ref{t.mainthm1} from Theorem~\ref{t.mainthm2}, and
that result does not hold with an extra parameter -- see
Remark~\ref{r.a4}.}

\end{rem}

\subsection{Examples}

One important class of examples (which includes the almost Mathieu operator)
is given by almost periodic Schr\"o\-dinger operators,
for which $X$ is an infinite compact Abelian group and
$f$ is a minimal translation: in this case $\mathcal{L}$ is a dense subset
of $(0,1)$. Two important subclasses are given by quasi-periodic
Schr\"odinger operators, where $X = \R^d / \Z^d$ for some $d \ge
1$, and by limit-periodic Schr\"odinger operators, where $X$ is a
product of finite cyclic groups.

If $\mathcal{L}$ is not dense in $(0,1)$, it is impossible for
$\Sigma_{f,v}$ to be nowhere dense. On the other hand, when $\mathcal{L}$
is dense in $(0,1)$, particularly in the almost periodic case,
much effort has been dedicated to establishing Cantor spectrum
(which means that a dense set of gaps are open).  For
limit-periodic Schr\"odinger operators, this issue was studied by
various authors in the 1980's (e.g., \cite{M}, \cite{AS} ,
\cite{C} , \cite{PT}) and, in fact, Corollary~\ref{c.gencgl} is
known in this particular case (see \cite{Si1} for an earlier
account in the periodic case). However, already for quasi-periodic
Schr\"odinger operators, the problem is much harder.

For the almost Mathieu operator, proving Cantor spectrum
corresponds to the Ten Martini Problem, which is now completely solved \cite
{AJ}, after several partial results (\cite {BS}, \cite {HS}, \cite {CEY},
\cite {L}, \cite {P}, \cite {AK}).
More generally, the case of non-constant
analytic sampling functions along translations of tori has been considered,
first by KAM-like methods and more recently by non-perturbative approaches.
For Diophantine translations on tori, in any dimension,
Eliasson \cite {E} has shown that for sufficiently small generic analytic
potentials, all gaps are open.
For general
non-constant analytic sampling functions $v$, Goldstein and Schlag \cite
{GS} showed that a dense set of gaps can be opened ``almost surely''
by changing $\alpha$, in the case
where the Lyapunov exponent is positive (which includes the case of
large $v$).

Previous to this work, we had shown in \cite {ABD}\footnote{Results related to \cite{ABD} can be found in
the papers \cite{CF}, \cite{FJ}, \cite{FJP}.} that generically,
a dense set of gaps are open, but only for a more restricted set of
dynamics (``generalized skew-shifts''): those are the dynamics which fiber
over an almost periodic dynamical system.  They also have dense $\mathcal{L}$.
An example in this class of particular interest is the skew-shift $(x,y) \mapsto
(x+\alpha,y+x)$.  However, generalized skew-shifts still form
a somewhat restricted class; for instance,
they are never {\it weakly mixing}.

In this respect, we would like to mention Fayad's analytic map $f$
on $X=\R^3/\Z^3$ from \cite{F} which is strictly ergodic, mixing
(hence not a generalized skew-shift), and has dense $\mathcal{L}$.\footnote{Details
are provided in Appendix~\ref{a.fayad}.}
Applying our results to this example yields, to the best of our
knowledge, the first example of Cantor spectrum for Schr\"odinger
operators arising from mixing dynamics.
Of course, an interesting example of a strictly ergodic
dynamical system is given by the time one map of the horocycle
flow for a compact hyperbolic surface. This is again mixing, but
in this case $\mathcal{L}$ is empty!

Other interesting examples of ``strictly ergodic dynamics'' which are
weakly mixing are given by typical interval exchange transformations \cite
{AF}.  They are not actually strictly ergodic
according to the usual definition since they are not homeomorphisms, but
this can be bypassed by going to a symbolic setting (blowing up procedure;
see~\cite {MMY}).

Of course, in the symbolic context, and more generally, among Cantor set
maps, there is a wealth of examples, and all have dense $\mathcal{L}$.  See also
\cite{BCL} for related examples of homeomorphisms of
manifolds.

\begin{rem}

In the analytic setting, it is far from clear whether one should
expect that the generic converse of the Gap Labeling Theorem, or
even generic Cantor spectrum, remains true for translations on
tori in higher (at least $2$) dimensions; compare \cite {CS}.
We do expect it to be true
in one dimension, which is supported by recent advances such as
\cite {AJ2} and \cite {GS}, but despite this progress, it is not yet
known whether the ``analytically generic'' spectrum is a Cantor
set (even restricting to Diophantine translations).

\end{rem}

\subsection{A Brief History of Gap Labeling}
Gap labeling theorems were first established in the early 1980's.
In 1982, Johnson and Moser \cite{JM} considered continuum
Schr\"odinger operators, $H = -\frac{d^2}{dx^2} + V$, with almost
periodic potential $V$. For $E \in \R$, if $u = u(x,E)$ is a
non-trivial real solution of the associated differential equation
$-u'' + V u = E u$, then $u'(x,E) + i u(x,E)$ never vanishes; this
allowed them to define the rotation number by
$$
\rho(E) = \lim_{x \to \infty} \frac{1}{x} \arg (u'(x,E) + i
u(x,E)).
$$
They showed that the limit exists, is independent of the solution
chosen, and defines a continuous non-decreasing function of $E$.
They also pointed out that due to Sturm oscillation theory, there
is a simple relation between the rotation number and the
integrated density of states. Since $V$ is almost periodic,
$M_V (\lambda) = \lim_{x \to \infty} \frac{1}{x} \int_0^x V(y)
e^{-i\lambda y} \, dy$ exists for every $\lambda \in \R$ and there
are at most countably many $\lambda$'s for which
$M_V (\lambda) \not = 0$. The set $\mathcal{M}(V)$ of finite integer
linear combinations of these $\lambda$'s is called the
\emph{frequency module} of $V$. Johnson and Moser then showed that
the function $\rho$ is constant precisely on the complement of the
spectrum $H$ and, moreover, if $E \in \R \setminus \sigma(H)$,
then $2 \rho(E) \in \mathcal{M}(V)$. Thus, one can label the gaps
of the spectrum by elements of the frequency module of $V$.

The analogue of the Johnson--Moser results for ergodic discrete
Schr\"odinger operators (and Jacobi matrices) can be found in the
1983 paper \cite{DS} by Delyon and Souillard. In essence, they
used Sturm oscillation theory and the easy existence proof for the
integrated density of states to define the rotation number in
their setting. The relation between the rotation number and the
integrated density of states is therefore built into this approach
and continues to hold trivially. Moreover, in the case of almost
periodic potential, the gaps in the spectrum may again be labeled
by elements of the frequency module of the potential.

Around the same time, Bellissard, Lima, and Testard began to
develop an approach to gap labeling based on $K$-theory of
$C^*$-algebras; see \cite{Be0} for the earliest account and
\cite{Be1,Be2,Be3,BLT} for extensive discussions. A major
advantage of this approach is that it is not restricted to one
space dimension. Let us discuss it in the one-dimensional discrete
setting relevant to our present study, but emphasize that the
results hold in greater generality. Given a compact space $X$ and
a homeomorphism $f$ of $X$, one can define an associated
$C^*$-algebra $C^*(X,f)$. An $f$-ergodic probability measure $\mu$
induces a trace $\tau$ on $C^*(X,f)$. Consider the $K_0$ group
associated with $C^*(X,f)$ and the induced map $\tau_* :
K_0(C^*(X,f)) \to \R$. The abstract gap labeling theorem now reads
as follows: Suppose $H$ is a self-adjoint operator affiliated to
$C^*(X,f)$ such that its integrated density of states exists with
respect to the $f$-ergodic measure $\mu$. Then the value of the
integrated density of states in a gap of the spectrum belongs to
$\tau_*(K_0(C^*(X,f))) \cap (0,1)$. This set of labels is not quite
easy to compute, in general. For the case of an irrational rotation of the circle,
this was accomplished using the Pimsner--Voiculescu cyclic six-term
exact sequence \cite{PV}. This and other computational aspects of
the theory are discussed in the references given above. In this context, we would
like to also mention the paper \cite{X} by Xia, which shows that the integers in the
quasi-periodic gap-labeling can be expressed as Chern characters.

The direct relation between taking the image under the trace
homomorphism of the $K_0$ group of $C^*(X,f)$ and taking the image
under Schwartzman's asymptotic cycle of the
first \v{C}ech cohomology group $\check H^1(Y_f,\Z)$,
where $Y_f$ is the suspension space of $(X,f)$, was
pointed out by Johnson \cite{J} in 1986; compare also Connes
\cite{Connes} and the discussions by Bellissard \cite{Be1} and
Kellendonk--Zois \cite{KZ}. On the other hand, as pointed out by
Bellissard and Scoppola in 1982 \cite{BSc}, a naive extension of
gap labeling by means of the frequency module to discontinuous
sampling function may not work. Indeed, they exhibited an example
where the potential arises by coding the rotation of the circle by
$\alpha$ irrational with respect to two intervals whose lengths
are irrational with respect to $\alpha$ and showed that the
integrated density of states takes values not belonging to $\Z +
\alpha \Z$.

In recent years, most of the literature on gap labeling has
focused on aperiodic tilings and Delone sets in higher dimensions.
While this is not immediately related to the subject of this
paper, we do want to point out that a recent highlight was
accomplished in three simultaneous and independent works by
Bellissard--Benedetti--Gambaudo \cite{BBG}, Benameur--Oyono--Oyono
\cite{BO}, and Kaminker--Putnam \cite{KP}. They proved a version of
the gap labeling theorem for aperiodic, repetitive tilings or
Delone sets of finite local complexity.

\subsection{Cocycle Dynamics} \label{ss.cocycle dynamics}

It is well established that spectral properties of one-dimensional Schr\"odinger operators
are intimately connected to a study of the solutions to the time-independent Schr\"odinger equation.
This equation in turn can be reformulated in terms of transfer matrices.
Since the potential is generated by sampling along the orbits of $f$ and
the dependence of the transfer matrices on the potential is local,
it turns out that these matrices have a so-called cocycle structure.
In particular, the dynamics of these $\SL(2,\R)$-valued cocycles describe the behavior of solutions
and hence are closely related to spectral information.
We will make this connection more explicit in the next subsection.
Here, we discuss basic notions related to $\SL(2,\R)$-valued cocycles
and present a relative of Theorem~\ref{t.mainthm1} in this context.

\medskip

Given a continuous map $A:X \to \SL(2,\R)$, we consider the
skew-product $X \times \SL(2,\R) \to X \times \SL(2,\R)$ given by
$(x,g) \mapsto (f(x), A(x) \cdot g)$. This map is called the
\emph{cocycle} $(f,A)$.\footnote{Since $f$ is fixed, we sometimes call $A$ a cocycle.}
For $n\in \Z$, $A^n$ is defined by
$(f,A)^n = (f^n, A^n)$.
The \emph{Lyapunov exponent} of the cocycle is the non-negative number:
$$
L(A) = \lim_{n \to \infty} \frac{1}{n}\log \|A^n(x)\| \, .
$$
(The limit exists for $\mu$-almost every $x$.)

We say that a cocycle $(f,A)$ is \emph{uniformly hyperbolic} if there
exist constants $c>0$, $\lambda>1$ such that $\|A^n(x)\| > c
\lambda^n$ for every $x\in X$ and $n>0$.\footnote{If $A$ is a real
$2 \times 2$ matrix then $\|A\| = \sup_{\|v\|\neq 0}
\|A(v)\|/\|v\|$, where $\|v\|$ is the Euclidean norm of $v \in
\R^2$.} This is equivalent to the usual hyperbolic splitting
condition: see \cite{Yoccoz}. Thus uniform hyperbolicity is
an open condition in $C(X, \SL(2,\R))$. Denote
$$
\cUH = \{ A \in C(X,\SL(2,\R) ; \; (f,A) \text{ is uniformly hyperbolic} \}.
$$

We say that two cocycles $(f,A)$ and $(f,\tilde A)$ are \emph{conjugate} if there exists
a \emph{conjugacy} $B \in C(X,\SL(2,\R))$ such that $\tilde{A}(x) = B(f(x))A(x)B(x)^{-1}$.
Uniform hyperbolicity is invariant under conjugation.

\medskip

Next suppose $A \in C(X,\SL(2,\R))$ is homotopic to a constant. Then
one can define a homeomorphism $F : X \times \R \to X \times \R$
so that $F(x,t) = (f(x), F_2(x,t))$ and $e^{2 \pi i F_2(x,t)}$ is
a positive real multiple of $A(x) \cdot e^{2 \pi i t}$, with the
usual identification of $\C$ with $\R^2$. We call $F_2(x,t)$ a
lift of $A$.  The lift is not uniquely defined, since for any
continuous function $\phi:X \to \Z$, $(x,t) \mapsto
F_2(x,t)+\phi(x)$ is also a lift, but this is the only possible
ambiguity.

Write $F^n_2(x,t)$ for the second component of $F^n(x,t)$.
Then the limit
$$
\rho_{f,A} = \lim_{n \to \infty} \frac{F^n_2(x,t) - t}{n}
$$
exists uniformly and is independent of $(x,t)$, it is called the
\emph{fibered rotation number} of the cocycle $(f,A)$; compare
\cite{H}. Due to the ambiguity in the choice of $F_2$, $\rho$ is
only defined up to addition of an element of the group
$G'=\{\int_X \phi \, d\mu: \phi \in C(X,\Z)\}$.  Clearly, $\Z
\subset G' \subset G$. Moreover, if $X$ is connected, then
$G'=\Z$, while if it is totally disconnected, then $G'=G$.

We will be interested in the variation of the fibered rotation number.
For any map $A_0:X \to \SL(2,\R)$ that is homotopic to a constant,
there is a neighborhood $\cV$ of it in $C(X,\SL(2,\R))$
such that for each $A \in \cV$, we can continuously select a lift $F_A : X \times \R \to X \times \R$.
Using those lifts, we define a map $A \in \cV \mapsto \rho_{f,A} \in \R$,
which is in fact continuous (see \cite{H}, \S5.7),
and is called a \emph{continuous determination of the fibered rotation number}.

\medskip

It turns out that (see~\cite {J})
\begin{equation}\label{f.uhrot}
A \in \cUH \; \Rightarrow \; 2\rho_{f,A} \in G.
\end{equation}
We will establish the following variant of Theorem~\ref{t.mainthm1} for $\SL(2,\R)$-cocycles:

\begin{thm}[Accessibility by uniformly hyperbolic cocycles]\label{t.mainthm2}
Suppose $A \in C(X,\SL(2,\R))$ is homotopic to a constant and
obeys $2\rho_{f,A} \in G$. Then there exists a continuous path
$A_t \in C(X,\SL(2,\R))$, $0 \leq t \leq 1$ with $A_0 = A$
and such that $A_t \in \cUH$ for every $t>0$.
{\rm (}In particular, $\rho_{f,A_t}$ is independent of $t$.{\rm )}
\end{thm}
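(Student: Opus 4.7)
The overall plan is to reduce the statement to a \emph{local} accessibility result: I want to produce a continuous family $\{A^{\varepsilon}\}_{\varepsilon \ge 0}$ in $C(X,\SL(2,\R))$ with $A^0 = A$ and $A^\varepsilon \in \cUH$ for all $\varepsilon > 0$. The path $A_t := A^{t}$ then proves the theorem. The rotation-preservation clause is automatic: the fibered rotation number is continuous everywhere and locally constant on $\cUH$ (being determined by the topological degree of the invariant line bundles), so continuity at $t=0$ forces $\rho_{f,A_t}=\rho_{f,A}$ for all $t \in [0,1]$.

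The construction of the family $A^\varepsilon$ splits into two stages. For Stage~1 I would first build an explicit target cocycle $B \in \cUH$ with $\rho_{f,B} = \rho_{f,A}$. Pick continuous $\phi : X\to\R$ and $\psi : X\to\R/\Z$ witnessing $2\rho_{f,A}\in G$, i.e., $\psi\circ f - \psi \equiv \phi \pmod 1$ and $\int \phi\,d\mu$ represents $2\rho_{f,A}$ modulo $G'$. Using this data, I would form a rotation-valued cocycle whose fibered rotation number matches $2\rho_{f,A}$, multiply by a hyperbolic diagonal factor to push it into $\cUH$, and conjugate by an appropriate lift built from $\psi$. That $A$ is homotopic to a constant is used here to resolve the $\pmod 1$ ambiguity coherently in the $\SL(2,\R)$ double cover.

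Stage~2 is to connect $A$ to $B$ by a path that immediately enters $\cUH$. Since $A$ and $B$ are both homotopic to a constant they lie in the same component of $C(X,\SL(2,\R))$; take any continuous $\gamma:[0,1]\to C(X,\SL(2,\R))$ from $A$ to $B$. A generic $\gamma$ can meander outside $\cUH$, so I would surgically modify it near $t=0$ via an explicit ``opening'' deformation of $A$ of the form $A_t(x) = C_t(f(x))^{-1} D_t(x) C_t(x)\, A(x)$, where $D_t(x)$ is diagonal and hyperbolic for $t>0$ (reducing to $\Id$ at $t=0$) and $C_t(x)$ is a conjugating family built from $\phi,\psi$ ensuring that the rotation number stays equal to $\rho_{f,A}$. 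Once the family enters $\cUH$, concatenate with any convenient path inside $\cUH$ terminating at $B$, which is possible since $\cUH$ is open and we are free to move inside the component through $B$.

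The main obstacle is precisely the opening step: producing an explicit, controlled, rotation-preserving deformation of $A$ that enters $\cUH$ immediately. This is the converse of the implication \eqref{f.uhrot} at the path-accessibility level, and it is where the hypothesis $2\rho_{f,A}\in G$ must be used in a crucial, non-obstructive way. I expect the construction to read $\phi,\psi$ geometrically as producing an \emph{almost} invariant section of the $\R\P^1$-bundle for $A$: a small hyperbolic twist in the frame defined by that section should promote it to a pair of transverse invariant sections, which is precisely uniform hyperbolicity. The strict ergodicity of $f$ together with the non-periodic finite-dimensional factor hypothesis should guarantee enough continuity and flexibility for the twist to be arranged globally and to avoid cancellations that would trap $A_t$ on $\partial \cUH$.
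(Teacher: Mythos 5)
Your plan does not close the problem: the ``opening step'' that you defer is not a technical detail but is precisely the content of the theorem, and it is what the entire body of the paper is built to supply. Moreover, the heuristic you offer for it is mistaken. The witnesses $\phi,\psi$ of $2\rho_{f,A}\in G$ live entirely on the base dynamics (they certify a value of the Schwartzman cycle); they carry no information about the fibered behavior of $A$ and do not produce any almost invariant section of the projective bundle of $A$. For a typical non-uniformly hyperbolic $A$ (say with positive Lyapunov exponent), no continuous section is approximately invariant, and no ``small hyperbolic twist in a frame built from $\phi,\psi$'' creates the required invariant splitting; the condition $2\rho_{f,A}\in G$ only removes the topological obstruction \eqref{f.uhrot}, it does not by itself furnish the geometry. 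In the paper this step is Theorem~\ref{t.reduce psl} applied with $\alpha=0$: one must first perturb $A$ to cocycles conjugate to rotations, which already requires making the Lyapunov exponent uniformly small (the genericity results of \cite{AB}, \cite{Bochi}), the almost-invariant-section and return-time machinery (Lemma~\ref{l.almost invariant}, Proposition~\ref{p.setskn}, Proposition~\ref{p.disk adjust ext}), the measurable/ergodic extension argument of Lemma~\ref{l.reduction} and Propositions~\ref{p.extension1}--\ref{p.extension2}, and then one must pin the rotation number to the prescribed value while accessing $A$, which needs the locked/semi-locked/unlocked analysis (Proposition~\ref{p.lockprop}), the cohomological equation (Lemma~\ref{l.line adjust ext}), and the two-parameter family plus the topological Lemma~\ref{l.topology} of Section~\ref{s.reduce}. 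The only ingredient of the actual proof that your sketch does contain is the final, easy move: inserting a diagonal factor $\mathrm{diag}(\tau(t),1/\tau(t))$ with $\tau(t)\to 1$ fast enough into a projective conjugacy to the identity.

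Two further points. First, Theorem~\ref{t.reduce psl} requires $A$ to be unlocked; when $A$ is semi-locked (i.e.\ on the boundary of $\cUH$ at the level of the rotation number) a separate argument is needed, namely $A_t=R_{\eps t}A$ via Proposition~\ref{p.lockprop}, and your scheme does not address this case. Second, your Stage~2 claim that once the path enters $\cUH$ you may ``concatenate with any convenient path inside $\cUH$ terminating at $B$'' is false in general: Proposition~\ref{p.disconnect} exhibits uniformly hyperbolic cocycles, homotopic to constants and with equal rotation numbers, lying in distinct path components of $\cUH$. This flaw happens to be harmless only because the target $B$ of your Stage~1 is not needed for the statement at all; what is needed, and what is missing, is the rotation-number-preserving access of $A$ itself by $\cUH$, i.e.\ the main machinery of the paper.
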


In fact, as we will see in the next subsection,
Theorem~\ref{t.mainthm1} may be derived from Theorem~\ref{t.mainthm2}.

\subsection{Schr\"odinger Operators and Relation to Dynamics}\label{ss.review}

Let us now review in more detail the basic objects introduced in
the beginning and their relation to the dynamics of cocycles.

Consider the operator $H = H_{f,v,x}$ given by
\eqref{f.oper}. Since $f$ is minimal and $v$ is continuous, the
$x$-independence of the spectrum $\Sigma_{f,v}$ of $H$ can be
obtained from strong operator convergence modulo conjugation with
shifts. This common spectrum has a convenient description in terms
of the integrated density of states, which is defined as follows.
There is a probability measure $dN_{f,v}$ that obeys
$$
\int \langle \delta_0 , g(H) \delta_0 \rangle \, d\mu(x) = \int g(E) \, dN_{f,v}(E)
$$
for every bounded, measurable function $g$. Indeed, by the
spectral theorem, $dN_{f,v}$ is given by the $\mu$-average of the
$x$-dependent spectral measures associated with $H$ and
$\delta_0$.  The distribution function of $dN_{f,v}$ is called the
integrated density of states and denoted by $N_{f,v}$. By
construction, it is a non-decreasing function from $\R$ onto
$[0,1]$, which is constant in every connected component of $\R
\setminus \Sigma_{f,v}$. Moreover, it is not hard to show that it
is continuous and strictly increasing on $\Sigma_{f,v}$. That is,
the measure $dN_{f,v}$ is non-atomic and its topological support
is equal to $\Sigma_{f,v}$. There is an alternate way to define
$N_{f,v}(E)$ by restricting $H$ to finite intervals, counting the
eigenvalues below $E$, and taking a thermodynamic limit; compare
\cite{AS2}.
A third way is explained below.

\medskip

Spectral properties of general Schr\"odinger operators of the form
\eqref{f.oper} are most conveniently studied in terms of the
solutions to the one-parameter family of difference equations
\begin{equation}\label{eve}
u(n+1) + u(n-1) + V(n) u(n) = E u(n),
\end{equation}
where the energy $E$ belongs to $\R$.  Equivalently,
$$
\begin{pmatrix} u(n+1) \\ u(n) \end{pmatrix} = \begin{pmatrix} E - V(n) & - 1 \\ 1 & 0
\end{pmatrix} \begin{pmatrix} u(n) \\ u(n-1) \end{pmatrix}.
$$
Since in our situation, we have $V(n) = v(f^n x)$, we may consider the cocycle $(f,A_{E,v})$, where
$$
A_{E,v}(x) = \begin{pmatrix} E - v(x) & - 1 \\ 1 & 0
\end{pmatrix},
$$
and observe that $u$ solves \eqref{eve}
if and only if
$$
\begin{pmatrix} u(n) \\ u(n-1) \end{pmatrix} = A^n_{E,v} \begin{pmatrix} u(0) \\ u(-1).
\end{pmatrix}.
$$

Consequently, $(f,A)$ is called a \emph{Schr\"odinger cocycle} if $A$ takes its values in the set
\begin{equation}\label{e.def S}
\cS = \left\{ \begin{pmatrix} t & -1 \\ 1 & 0 \end{pmatrix} ; \; t \in \R \right\} \subset \SL(2,\R) \, .
\end{equation}

One of the fundamental results linking the spectral theory of $H$
and the dynamics of the family of Schr\"odinger cocycles
$(f,A_{E,v})$ is that $\Sigma_{f,v}$ consists of those energies~$E$
for which $(f,A_{E,v})$ is not uniformly hyperbolic:
\begin{equation}\label{jlresult}
\R \setminus \Sigma_{f,v} = \{ E \in \R : (f,A_{E,v}) \text{ is
uniformly hyperbolic} \};
\end{equation}
see Johnson \cite{J}.

The maps $A_{E,v}$ are homotopic to a constant and hence the
cocycle $(f,A_{E,v})$ has an associated rotation number which we
denote by $\rho_{f,v}(E)$.  For Schr\"odinger cocycles, there is a
canonical way to remove the ambiguity in the definition of
$\rho$,\footnote{Namely, one can uniquely choose the lift $F_2$ so
that $F_2(x,1/4)=1/2$ for every $x \in X$.} so that it can be
interpreted as an element of $[0,1/2]$ (and not of $\R/G'$ as in
the general case). There is a simple relation between $N_{f,v}$
and $\rho_{f,v}$ which follows from Sturm oscillation theory:
\begin{equation}\label{f.nandrho}
N_{f,v}(E) = 1 - 2\rho_{f,v}(E).
\end{equation}
It follows from \eqref{f.uhrot}, \eqref{jlresult}, and
\eqref{f.nandrho} that the gaps of $\Sigma_{f,v}$ (i.e., the
bounded connected components of $\R \setminus \Sigma_{f,v}$) may
be labeled by elements of $\mathcal{L} = G \cap (0,1)$ so that the label
of a gap corresponds to the constant value $N_{f,v}$ takes on it.

\medskip

To deduce Theorem~\ref{t.mainthm1} from Theorem~\ref{t.mainthm2}, we use the following result,
which is an improved version of \cite[Lemma~9]{ABD} and is proved in Appendix~\ref{a.2}.
(Recall that $\cS \subset \SL(2,\R)$ indicates the set of Schr\"odinger matrices,
as in \eqref{e.def S}.)

\begin{lemma}[Projection Lemma] \label{l.projection}
Let $f:X \to X$ be a minimal homeomorphism of a compact metric space with at least $4$ points, and let $A \in C^0(X,\cS)$.
Then there exist a neighborhood $\cW \subset C^0(X,\SL(2,\R))$ of $A$
and continuous maps
$$
\Phi=\Phi_A:\cW \to C^0(X,\cS) \quad \text{and} \quad
\Psi=\Psi_A:\cW \to C^0(X,\SL(2,\R))
$$
satisfying:
\begin{gather*}
\Psi(B)(f(x)) \cdot B(x) \cdot \left[\Psi(B)(x)\right]^{-1} = \Phi(B)(x), \\
\Phi(A)=A \text { and } \Psi(A) = \id.
\end{gather*}
\end{lemma}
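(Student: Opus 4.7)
The plan is to reduce the gauge-equivalence condition to a single scalar equation for a continuous vector field on $X$, and solve it by the implicit function theorem, with a nonlinear correction when the linearization degenerates. A matrix $M\in\SL(2,\R)$ lies in $\cS$ precisely when $Me_2=-e_1$. Writing the columns of $\Psi(B)^{-1}(x)$ as $(u_B(x),w_B(x))$ and using $\det(u_B,w_B)\equiv 1$, the requirement $\Psi(B)^fB\Psi(B)^{-1}\in\cS$ forces $u_B(y)=-B(f^{-1}(y))\,w_B(f^{-1}(y))$, and everything reduces to the single scalar condition
$$(*)\qquad \det\bigl(w_B(f(y)),\,B(y)\,w_B(y)\bigr)=1 \qquad (y\in X),$$
with $w_A\equiv e_2$ as the obvious solution for $B=A$ (since $A(y)e_2=-e_1$). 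If one solves $(*)$ for $w_B$ continuously in $B$ near $A$, then $\Psi(B)^{-1}=[u_B\mid w_B]$ and $\Phi(B)=\Psi(B)^fB\Psi(B)^{-1}$ produce the required maps, with $\Phi(A)=A$ and $\Psi(A)=\id$.

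To solve $(*)$, I would write $w=e_2+s\,e_1+t\,e_2$ with $s,t\in C^0(X,\R)$ and compute the linearization of $(w,B)\mapsto\det(w\circ f,\,Bw)$ at $(e_2,A)$:
$$L(s,t)(y) \;=\; t(y)+t(f(y))-v(y)\,s(y),$$
where $v$ denotes the trace of $A$ (its Schr\"odinger parameter). When $v$ is bounded away from zero, $L$ admits the explicit bounded right inverse $g\mapsto(-g/v,\,0)$, so the standard Banach-space inverse function theorem yields a neighborhood $\cW$ and a continuous map $B\mapsto w_B$ solving $(*)$ near $w_A=e_2$, and hence $\Phi$ and $\Psi$ by the reduction above.

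The main obstacle is the general case, where $v$ may vanish (even $v\equiv 0$, e.g.\ for the constant cocycle $A\equiv A_0=\bigl(\begin{smallmatrix}0&-1\\1&0\end{smallmatrix}\bigr)$). On $\{v=0\}$ the linearization collapses to the anti-cohomological equation $t+t\circ f=\delta_{12}$, whose cokernel in $C^0(X,\R)$ is generated by $(-1)$-eigenfunctions of the Koopman operator $U_f$ and is genuinely nontrivial (already for $X$ a $2k$-cycle or for suitably chosen minimal $f$). To circumvent this, I would perform a Lyapunov--Schmidt-style reduction: split $(*)$ into its image-of-$L$ and cokernel-of-$L$ components, solve the former by IFT to obtain $(s,t)$ as a function of a small obstruction parameter, and substitute back to obtain a reduced nonlinear equation on the obstruction parameter alone. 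This reduced equation is driven by the \emph{quadratic} terms in the expansion of $\det(w\circ f,\,Bw)$ around $(e_2,A)$ (terms like $s(y)\,s(f(y))$ and $t(y)\,t(f(y))$), and the hypothesis that $X$ has at least $4$ points is what ensures enough room for these quadratic terms to cover the obstruction nondegenerately. Solving the reduced equation yields $w_B$ only H\"older-continuously in $B-A$ at degenerate $A$, but this is compatible with the merely continuous regularity demanded by the statement. Setting up this reduction and controlling it uniformly in $B$ over a common neighborhood $\cW$ of $A$ is the delicate, nonstandard step and the principal technical obstacle.
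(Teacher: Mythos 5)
Your reduction of the problem to the scalar equation $(*)$ for $w_B$ is correct, and your implicit-function-theorem argument does settle the special case where the Schr\"odinger parameter $v=\tr A$ is bounded away from zero. But the case that is the actual content of this lemma (beyond Lemma~9 of \cite{ABD}, which already covers $\tr A\not\equiv 0$) is the degenerate one, and there your proposal has a genuine gap. The Lyapunov--Schmidt reduction you describe cannot be set up in $C^0(X,\R)$: when $v\equiv 0$ the linearization is $t\mapsto t+t\circ f$, and for an infinite minimal $f$ (e.g.\ an irrational rotation) this operator has \emph{dense but non-closed} range, by the usual small-divisor phenomenon; there are no continuous $(-1)$-eigenfunctions of $U_f$ in that example, so your description of the cokernel is incorrect, and more importantly there is no closed, complemented (let alone finite-dimensional) obstruction space onto which to project. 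Consequently the ``reduced nonlinear equation on the obstruction parameter'' does not exist as stated, and the claim that the hypothesis of at least $4$ points makes the quadratic terms ``cover the obstruction nondegenerately'' is not substantiated --- indeed your argument never uses minimality at all, whereas minimality is essential.

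The paper proceeds quite differently and entirely avoids functional analysis on $C^0(X,\R)$. First, by Lemma~10 of \cite{ABD} (this is where minimality enters), any perturbation $\tilde A$ of the constant cocycle $A$ with $\tr A\equiv 0$ is conjugated, continuously in $\tilde A$, to a perturbation $\hat A$ that equals $A$ outside a prescribed compact set $K$. Since $X$ has at least $4$ points, one can take a compact neighborhood $K'\supset K$ with $f^i(K')\cap K'=\emptyset$ for $1\le i\le 3$; since $A^4=\id$, the matrix $\hat A^4(x)$ is close to $\id$ for $x\in K'$, and one solves the finite-dimensional problem of writing $\hat A^4(x)$ as a product of four Schr\"odinger matrices $S_{E_4(x)}\cdots S_{E_1(x)}$ with the third parameter prescribed as $E_3(x)=E\phi(x)$, where $E=\sup_x\|\hat A^4(x)-\id\|^{1/2}$. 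The explicit formulas (e.g.\ $E_2=-d/(E\phi)$ with $d=O(E^2)$) show the parameters are continuous, vanish on $\partial K'$, and depend continuously on $\hat A$; the conjugacy $\Psi$ is then written down directly on the tower $K',f(K'),f^2(K'),f^3(K')$. The square-root scale of $E$ is precisely what resolves the degeneracy you identified, and it is also why the dependence on $B-A$ is only H\"older at such $A$, consistent with your observation; but this is achieved by an explicit four-step factorization on a Rokhlin-type tower, not by an infinite-dimensional bifurcation argument. To repair your approach you would essentially have to localize the problem in the same way, at which point you would be redoing the paper's construction.
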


In particular, an $\SL(2,\R)$-valued perturbation of a Schr\"odinger cocycle is
conjugate to an $\cS$-valued perturbation.

\begin{proof}[Proof of Theorem~\ref{t.mainthm1}]
Take a sampling function $v \in C(X,\R)$ and a label $\ell \in \mathcal{L} = G \cap (0,1)$.
Assume the gap of $\Sigma_{f,v}$ of label $\ell$ is collapsed,
that is, there is a unique $E_0 \in \R$ such that $N_{f,v}(E_0) = \ell$.
Let
$$
A(x) 
=  \begin{pmatrix} E_0 - v(x) & -1 \\ 1 & 0 \end{pmatrix} \, .
$$
By~\eqref{f.nandrho}, $\rho(A) = (1-\ell)/2$.
So we can apply Theorem~\ref{t.mainthm2} and find a continuous family of cocycles $A_t$, $t\in [0,1]$
with $A_0 = A$ and $A_t \in \cUH$ for $t>0$.

Apply Lemma~\ref{l.projection} and define $\tilde A_t = \Phi_A (A_t)$ for small $t$.
Since $\tilde A_t \in C(X, \cS)$, there is a continuous path $v_t \in C(X,\R)$
so that
$$
\tilde A_t (x) = \begin{pmatrix} E_0 - v_t(x) & -1 \\ 1 & 0 \end{pmatrix} \, .
$$
For $t>0$, we have $\tilde A_t \in \cUH$ and so, by \eqref{jlresult}, $E_0 \not \in \Sigma_{f, v_t}$.
Thus $N_{f,v_t}(E_0)$, which equals $1-2\rho(\tilde A_t) =\ell$, is a label of an open gap.
\end{proof}

\section{Statement of Further Results for $\SL(2,\R)$-Cocycles}\label{s.2}

This paper actually contains more refined results on cocycles,
and Theorem~\ref{t.mainthm2} is obtained as a corollary of one of them,
as we now explain.

\medskip

A cocycle $(f,A)$ is called \emph{bounded} if there exists $C>1$
such that $\|A^n(x)\| \le C$ for every $x\in X$ and $n\in \Z$. It
is known that a cocycle is bounded if and only if it is conjugate
to a cocycle of rotations, that is, to $(f,\tilde A)$ with $\tilde
A : X \to \SO(2, \R)$; see \cite{Cameron}, \cite{EJ}, \cite{Yoccoz}.

Our next main result is:

\begin{thm}[Accessibility by bounded cocycles]\label{t.bounded}
If $A \in C(X,\SL(2,\R)) \setminus \cUH$, then there
exists a continuous path $A_t \in C(X,\SL(2,\R))$, $t \in [0,1]$, such
that $A_0=A$ and $(f,A_t)$ is
conjugate to an $\SO(2,\R)$-valued cocycle for every $t \in (0,1]$.
Moreover, the conjugacy can be chosen to depend continuously on $t \in
(0,1]$.
\end{thm}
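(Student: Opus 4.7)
My plan is threefold: (i) reformulate ``conjugate to an $\SO(2,\R)$-valued cocycle'' as the existence of an invariant continuous conformal structure, (ii) approximate $A$ by such bounded cocycles using $A \notin \cUH$, and (iii) upgrade the approximations into a continuous path. For (i), a cocycle $(f,B)$ is conjugate via $C \in C(X,\SL(2,\R))$ to an $\SO(2,\R)$-valued cocycle if and only if the field $Q(x) := C(x)^T C(x)$ of positive-definite unimodular symmetric matrices satisfies
\[
B(x)^T Q(f(x)) B(x) = Q(x), \quad x \in X,
\]
and then $C$ can be recovered as the continuous symmetric square root of $Q$. So the task reduces to constructing continuous paths $t \mapsto A_t$ on $[0,1]$ and $t \mapsto Q_t$ on $(0,1]$ solving this relation, with $A_0 = A$.

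For (ii), the key input I would isolate as a lemma is that the set of cocycles conjugate to rotations is $C^0$-dense in $C(X,\SL(2,\R)) \setminus \cUH$. In the Ma\~n\'e/Bochi spirit, absence of uniform hyperbolicity forbids any continuous invariant hyperbolic splitting, and this should allow one to build an arbitrarily $C^0$-small perturbation $A'$ of $A$ admitting a continuous invariant line field $\ell : X \to \P^1(\R)$. Minimality of $f$ and the assumption that $f$ has a non-periodic finite-dimensional factor enter here to permit a continuous selection. Symmetrizing an induced norm on $\ell$ against its pushforwards by $A'$ then yields an $A'$-invariant continuous conformal structure $Q'$.

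For (iii), given a sequence of bounded approximations $A^{(n)} \to A$ with invariant metrics $Q^{(n)}$, I would partition $(0,1]$ into intervals $[t_{n+1},t_n]$ with $t_n \searrow 0$, set $A_{t_n} := A^{(n)}$, and connect consecutive $A^{(n)}, A^{(n+1)}$ by a path lying entirely inside the set of bounded cocycles. The space of pairs $(B,Q)$ satisfying $B^T Q(f\cdot) B = Q$ projects onto the space of admissible $Q$'s as a continuous fiber bundle whose fibers are the $Q$-rotation cocycles, and each such fiber is path-connected; combined with a continuous path of $Q$'s from $Q^{(n)}$ to $Q^{(n+1)}$, this delivers the path and the continuous conjugacies $B_t$ on $(0,1]$. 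The main obstacle is step (ii): producing the invariant line field through an explicit small perturbation of a general non-UH cocycle. My approach would be to find a continuous unit vector field $v : X \to S^1$ whose projectivized iterates $[A^n(x) v(x)]$ are recurrent (guaranteed by non-UH together with minimality), then twist $A$ by a small continuous field of rotations designed to make the line $\R v(x)$ exactly invariant; controlling the $C^0$-size of this twist uniformly in $x$, while simultaneously ensuring boundedness rather than merely the existence of an invariant line, is the technical heart of the argument.
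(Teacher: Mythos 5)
Your step (ii) contains a genuine gap, both in mechanism and in depth. An invariant continuous line field does not produce an invariant conformal structure by ``symmetrizing'': a parabolic cocycle such as the constant $\begin{pmatrix} 1 & 1 \\ 0 & 1 \end{pmatrix}$ preserves a line field but is unbounded and not conjugate to rotations, and any averaging of pushforward norms diverges unless you already control the growth of $\|A'^n\|$. The actual content of the density statement (Theorem~\ref{t.addendum} in the paper) is not a soft ``Ma\~n\'e/Bochi spirit'' remark: it combines the theorem of \cite{AB} and \cite{Bochi} that for generic $A \notin \cUH$ the \emph{uniform} Lyapunov exponent vanishes, with a separate quantitative construction (Proposition~\ref{p.disk adjust ext}) that converts uniform subexponential growth into an exactly invariant section of the disk bundle after a $C^0$-small perturbation; that construction needs the tower/almost-invariant-section machinery of Sections~\ref{s.almost inv}--\ref{s.controlled} ($n$-good, $N$-spanning, $d$-mild sets). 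Your sketch of twisting along a recurrent orbit of directions produces at best an invariant line, which, as above, is not enough.

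Step (iii) has a second, independent gap: continuity at $t=0$. You set $A_{t_n}=A^{(n)}$ and connect consecutive approximants through bounded cocycles, but the concatenated path converges to $A$ only if the \emph{diameters} of the connecting paths tend to $0$, and your fiber-bundle argument gives no such control. Since $A$ itself is in general unbounded, the invariant structures $Q^{(n)}$ of the approximants necessarily degenerate (escape to the boundary of the disk) as $A^{(n)} \to A$; interpolating between $Q^{(n)}$ and $Q^{(n+1)}$ and transporting the cocycle along the resulting bundle path can move you far from $A$, so the limit at $t=0$ is lost. (Also, the fibers are homeomorphic to $C(X,\SO(2,\R))$, whose path components are indexed by $[X,S^1]$, so ``each fiber is path-connected'' is false in general.) This smallness-with-prescribed-boundary-data problem is exactly what the paper's proof solves: Theorem~\ref{t.bounded} is Proposition~\ref{p.extension2} with $p=1$, whose proof interpolates the \emph{cocycles} (not the conjugacies) across dyadic cells of a shell, using Theorem~\ref{t.addendum} at the vertices and the relative extension results --- Lemma~\ref{l.reduction} (a parametrized construction keeping Lyapunov exponents small without touching the boundary values, via non-coboundaries, induced cocycles and a parametrized Lusin argument) followed by the relative form of Proposition~\ref{p.disk adjust ext} with $\Lambda$ equal to the boundary fibers, packaged as Proposition~\ref{p.extension1} --- precisely to obtain sections in the interior extending the given boundary sections while staying in a prescribed small ball. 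Without an ingredient of this type your concatenation scheme does not close.
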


In particular, bounded cocycles are dense in the complement of $\cUH$.
In fact, we are able to prove this fact directly and
with weaker hypotheses on the dynamics:

\begin{thm}[Denseness of bounded cocycles]\label{t.addendum}
Assume that $f$ is a minimal homeomorphism of a compact metric space $X$
such that at least one of the following holds:
\begin{enumerate}
\item $X$ is finite-dimensional;
\item $f$ has at most countably many distinct ergodic invariant measures,
and $f$ has a non-periodic finite-dimensional factor.
\end{enumerate}
Then, the bounded cocycles form a dense subset of $C(X,\SL(2,\R)) \setminus \cUH$.
\end{thm}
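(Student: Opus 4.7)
The plan is to reduce the problem to the following: for any $A \in C(X,\SL(2,\R)) \setminus \cUH$ and any $\epsilon > 0$, construct a perturbation $A'$ with $\|A' - A\|_{C^0} < \epsilon$ admitting a continuous invariant line field $\xi : X \to \R\P^1$. Once such $\xi$ is produced, conjugating by any $B \in C(X,\SL(2,\R))$ that sends $\xi(x)$ to a fixed vector puts $A'$ into continuous upper-triangular form, and a further small perturbation of the diagonal entries brings it into $\SO(2,\R)$; this is exactly the definition of boundedness via the Cameron--Johnson--Ellis--Yoccoz characterization mentioned in the text.

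The central local tool is the Bochi--Ma\~n\'e rotation trick adapted to $\SL(2,\R)$-cocycles over a minimal base. Since $A \notin \cUH$, there is no dominated splitting, and along any sufficiently long orbit segment $\{f^j(x)\}_{j=0}^{N-1}$ one can find an intermediate iterate at which the most expanding and most contracting directions of the finite product $A^N(x)$ are arbitrarily close in $\R\P^1$. A rotation by a small angle, inserted via a bump supported in a tiny neighborhood disjoint from the rest of the orbit segment, then collapses norm growth along that segment. This is the engine driving the whole argument, and I would quote or adapt it directly from Bochi's proof of the Ma\~n\'e--Bochi discontinuity theorem.

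Globalizing the local trick is where the two hypotheses diverge. Under hypothesis~(a), I would exploit finite topological dimension of $X$: by minimality, pick a finite cover of $X$ by arbitrarily small sets of multiplicity at most $\dim X + 1$ consisting of neighborhoods of pairwise disjoint orbit segments that nearly close up. Combining the local rotations via a partition of unity subordinate to this cover produces a single perturbation $A'$ under which every $x$ has an iterate with small norm product; the bounded multiplicity of the cover is what lets the sum of perturbations remain within the $\epsilon$-budget and lets the approximate invariant line fields on patches be glued into a genuine continuous $\xi$. Under hypothesis~(b), I would first treat Lyapunov exponents measure by measure: enumerate the countably many ergodic measures $\mu_1, \mu_2, \ldots$ and apply the Ma\~n\'e trick inductively on $\mu_k$-generic points to build a summable Cauchy sequence of perturbations driving each $L_{\mu_k}$ to zero, with $A'$ as the $C^0$-limit. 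The non-periodic finite-dimensional factor then supplies the rigidity needed to promote ``$L_{\mu}(A') = 0$ for every ergodic $\mu$'' to the existence of a continuous invariant section: a Schwartzman-cohomology argument on the factor, combined with a transversality/selection step that picks out a continuous representative among the measurable Oseledets directions, yields the desired $\xi$ after one final small perturbation.

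The hard part is exactly this last upgrade, from the measure-theoretic condition of vanishing Lyapunov exponent for every ergodic measure to the uniform topological condition of boundedness, which on its own does not follow (one can have subexponential yet unbounded growth). Finite-dimensionality in both hypotheses is precisely what allows one to glue almost-invariant measurable projective sections into a single continuous invariant section at the cost of a small additional perturbation. I expect this gluing step -- reconciling the measure-theoretic flexibility of Ma\~n\'e's trick with the topological inflexibility demanded by boundedness -- to be the main technical obstacle, and the reason the theorem is stated with exactly the two dimension-theoretic hypotheses listed.
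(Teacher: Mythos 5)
There is a genuine gap, and it starts with your very first reduction. A continuous invariant line field $\xi:X\to\R\P^1$ does \emph{not} imply boundedness: uniformly hyperbolic cocycles carry continuous invariant line fields, and so do parabolic-type cocycles with zero exponents that are triangularizable but unbounded. Putting $A'$ in continuous upper-triangular form and then ``perturbing the diagonal entries into $\SO(2,\R)$'' does not work --- a triangular cocycle $\begin{pmatrix} a(x) & b(x)\\ 0 & a(x)^{-1}\end{pmatrix}$ is conjugate to rotations only if one can solve a multiplicative cohomological equation for $a$ \emph{and} an additive one for $b$, and neither is achievable by a small perturbation of the diagonal alone. The correct target, and the one the paper uses, is a continuous invariant section for the action on the hyperbolic disk, $F^\D_{f,A}:X\times\D\to X\times\D$ (equivalently an invariant conformal structure); such a section immediately conjugates the cocycle into $\SO(2,\R)$ via Lemma~\ref{l.firstadjust}, whereas an $\R\P^1$-section carries strictly less information.

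The second gap is the one you yourself flag but do not fill: passing from ``$L_\mu(A')=0$ for every ergodic $\mu$'' (or even from uniform subexponential growth) to an actual small perturbation conjugate to rotations. A ``Schwartzman-cohomology argument plus a selection of continuous representatives of Oseledets directions'' is not an argument --- when all exponents vanish there are in general no Oseledets directions to select, and measurable invariant projective data cannot be upgraded to a continuous disk section without a quantitative mechanism. The paper's route is different and much shorter at the front end: in case (a) it simply quotes the main theorem of \cite{AB}, and in case (b) it intersects the countably many generic sets from \cite{Bochi} and invokes Proposition~1 of \cite{AB} to convert ``zero exponent for every ergodic measure'' into the uniform statement $\lim_n \sup_x \frac1n\log\|A^n(x)\|=0$; no Ma\~n\'e-type rotation construction or partition-of-unity gluing is redone. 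The real work is then the uniform-to-bounded upgrade, carried out by Proposition~\ref{p.disk adjust ext}: one builds an \emph{almost} invariant disk section supported in a $d$-mild, $n$-good, $N$-spanning set (Proposition~\ref{p.setskn} and Lemma~\ref{l.almost invariant}, with the hyperbolic distance to $0$ as the size function), and then uses the Disk Adjustment Lemma~\ref{l.adjust} to correct the cocycle by an error of size roughly $e^{C\gamma}-1$ along the tower, producing a genuinely invariant continuous section. Your proposal contains no substitute for this tower/almost-invariant-section machinery, which is exactly the step you identify as the main obstacle.
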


\medskip

We now focus on the case of cocycles that are homotopic to a constant,
which form a subset of $C(X,\SL(2,\R))$ that we indicate by $C_0(X,\SL(2,\R))$.

Given $A \in C_0(X,\SL(2,\R))$, fix
a continuous determination $\rho$ of the fibered rotation number
on a neighborhood of it.
We say that the cocycle $A$ is \emph{locked} if
$\rho$ is constant on a neighborhood of $A$.
We say that $A$ is \emph{semi-locked} if it is not locked and
it is a point of local extremum for the function $\rho$.
Otherwise we say that $A$ is \emph{unlocked}.
This classification is obviously independent of the chosen determination of $\rho$.

In fact, a cocycle is locked if and only if it is uniformly
hyperbolic, and any semi-locked cocycle $A$ can be accessed by
uniformly hyperbolic cocycles. The proof of these statements and
some related facts may be found in Appendix~\ref{a.uhrn}. (Similar
results for ``non-linear cocycles'' over rotations can be found in
\cite{BJ}.)

For unlocked cocycles, we can strengthen the conclusion of
Theorem~\ref{t.bounded} considerably:

\begin{thm}[Accessibility by cocycles conjugate to a rotation]\label{t.reduce}
Let $A \in C_0(X,\SL(2,\R))$ be an unlocked cocycle. Let $\alpha
\in \R$ be such that $\alpha = \rho_{f,A} \mod G$. Then there
exist continuous paths $A_t \in C(X,\SL(2,\R))$, $t \in [0,1]$,
and $B_t \in C(X,\SL(2,\R))$, $t \in (0,1]$, such that $A_0=A$ and
$B_t(f(x))^{-1} A_t(x) B_t(x) = R_{2\pi\alpha}$ for $t \in (0,1]$.
\end{thm}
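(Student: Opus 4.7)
My plan is to prove Theorem~\ref{t.reduce} as a refinement of Theorem~\ref{t.bounded}: whereas Theorem~\ref{t.bounded} only accesses $A$ by cocycles conjugate to some $\SO(2,\R)$-valued cocycle, here we must access $A$ by cocycles conjugate to the specific constant rotation $R_{2\pi\alpha}$. The guiding idea is to combine the unlocked hypothesis on $A$ with the defining property of the Schwartzman group $G$ so that the rotation cocycles produced by the Theorem~\ref{t.bounded} construction can be arranged to be cohomologous to $R_{2\pi\alpha}$ via continuous $\SO(2,\R)$-valued conjugacies; these extra conjugacies can then be absorbed into the final $B_t$.

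First, I would apply Theorem~\ref{t.bounded} to $A$; this is legitimate because unlocked cocycles are not uniformly hyperbolic (by the characterization noted after the statement of Theorem~\ref{t.reduce}). This yields a continuous path $A_t \in C(X,\SL(2,\R))$ with $A_0 = A$ and continuous conjugacies $\hat B_t \in C(X,\SL(2,\R))$ on $t \in (0,1]$ with $\hat B_t(f(x))^{-1} A_t(x) \hat B_t(x) = R_{2\pi\theta_t(x)}$ for some continuous $\theta_t : X \to \R$. The fibered rotation number is then $\rho_{f,A_t} = \int \theta_t \, d\mu$ modulo $G'$, and $\rho_{f,A_0} \equiv \alpha \pmod{G}$.

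Second, using the assumption $\alpha \equiv \rho_{f,A} \pmod{G}$, the definition of the Schwartzman group $G$ provides, for every prescribed $\gamma \in G$, continuous $\phi : X \to \R$ and $\psi : X \to \R/\Z$ with $\int \phi\, d\mu = \gamma$ and $\psi(f(x)) - \psi(x) \equiv \phi(x) \pmod{1}$. I would refine the construction of the first step so that $\theta_t$ has the specific form $\theta_t(x) = \alpha + \psi_t(f(x)) - \psi_t(x) \pmod{1}$ for a continuous family $\psi_t : X \to \R/\Z$, $t \in (0,1]$. With this form one has $R_{2\pi\theta_t(x)} = R_{2\pi\psi_t(f(x))}\, R_{2\pi\alpha}\, R_{-2\pi\psi_t(x)}$, so setting $B_t(x) = \hat B_t(x) \cdot R_{2\pi\psi_t(x)}$ gives $B_t(f(x))^{-1} A_t(x) B_t(x) = R_{2\pi\alpha}$, which is the required conclusion.

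The main obstacle is effecting this refinement inside a single continuous path. Theorem~\ref{t.bounded}, as a black box, controls neither the rotation number nor the cohomological class of the rotation cocycle it produces; to arrange $\theta_t \equiv \alpha + \psi_t(f) - \psi_t \pmod{1}$ along the entire path, one presumably has to reenter the proof of Theorem~\ref{t.bounded} and build the conjugacy $\hat B_t$ simultaneously with the Schwartzman data $(\phi_t, \psi_t)$. The unlocked hypothesis is essential here, both because it rules out local extrema of $\rho$ (allowing free adjustment of the rotation number) and because it rules out local constancy (ensuring $A$ is genuinely accessible from bounded cocycles). I expect the principal technical difficulty to be a careful inductive construction coordinating the conjugacy with the cohomological data, so that the rotation cocycle obtained is of the desired cohomologous-to-$R_{2\pi\alpha}$ form at every stage of the approximation.
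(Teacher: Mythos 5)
Your overall direction --- access $A$ by cocycles conjugate to rotations, then use the Schwartzman data to replace the rotation-valued cocycle $R_{2\pi\theta_t(x)}$ by the constant $R_{2\pi\alpha}$ via $B_t(x)=\hat B_t(x)R_{2\pi\psi_t(x)}$ --- is the same general strategy as the paper's, but the step you defer (``refine the construction so that $\theta_t(x)=\alpha+\psi_t(f(x))-\psi_t(x) \bmod 1$'') is exactly the whole difficulty, and as stated it cannot be carried out along a single path by ``coordinating the conjugacy with the cohomological data inductively''. The machinery behind Theorem~\ref{t.bounded} (Proposition~\ref{p.disk adjust ext} and Propositions~\ref{p.extension1}--\ref{p.extension2}) produces an invariant section for the disk action, i.e.\ conjugacy to \emph{some} $\SO(2,\R)$-valued cocycle, with no control whatsoever on the cohomology class of the resulting angle function $\theta_t$; and having $\int\theta_t\,d\mu\equiv\alpha$ is far from sufficient, since the cohomological equation $\theta_t-\alpha=\psi_t\circ f-\psi_t \pmod 1$ is in general not continuously solvable. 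The paper's Lemma~\ref{l.line adjust ext} only solves it after an arbitrarily small perturbation of $\theta$, and only makes the perturbed angle cohomologous to its own mean $c(t)$ --- and that perturbation moves the rotation number, so one cannot simultaneously prescribe the exact value $c(t)=\alpha$ along a one-parameter path. This is precisely why the paper does not argue with one parameter: it uses the unlocked hypothesis to build (Lemmas~\ref{l.step1}--\ref{l.step3}) a \emph{two}-parameter family $A_t$, $t\in[0,1]^2$, conjugate to rotations off the corner and with $\rho(A_{(0,s)})<\rho(A)<\rho(A_{(s,0)})$, so that after the cohomological correction the level set $c^{-1}(\alpha)$ accumulates at $(0,0)$; a purely topological lemma (Lemma~\ref{l.topology}) then extracts a path $\gamma(s)\to(0,0)$ inside a neighborhood of that level set, along which one defines $A_s(x)=\hat B_{\gamma(s)}(f(x))R_{2\pi\alpha}\hat B_{\gamma(s)}(x)^{-1}$, exactly conjugate to $R_{2\pi\alpha}$ and still converging to $A$.

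Two further points. First, your gloss on the role of the unlocked hypothesis is slightly off: accessibility by bounded cocycles (Theorem~\ref{t.bounded}) needs only $A\notin\cUH$; what unlocked actually buys (via Proposition~\ref{p.lockprop}) is that the rotation number takes values strictly on \emph{both} sides of $\rho(A)$ arbitrarily close to $A$, which is what guarantees the level set $c^{-1}(\alpha)$ in the two-parameter family is nonempty near the corner --- without this the topological step collapses, consistent with Remark~\ref{r.unlocked} that the conclusion forces $A$ to be unlocked. Second, the delicacy of coordinating such constructions with parameters is real (cf.\ Remark~\ref{r.a4}): families with prescribed exact rotation number cannot in general be pushed into $\cUH$ or into conjugacy classes of constants parameter-by-parameter, so the missing ``inductive coordination'' is not a routine elaboration but the core of the theorem; as written, your proposal is a plan whose central step remains unproved.
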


The assumption that the cocycle be unlocked is necessary:
see Remark~\ref{r.unlocked}.

It is useful to consider also a weaker notion of conjugacy:
We say that two cocycles $(f,A)$ and $(f,\tilde A)$ are \emph{projectively conjugate}
if there exists a \emph{projective conjugacy} $B \in C(X,\PSL(2,\R))$
such that $\tilde{A}(x) = B(f(x))A(x)B(x)^{-1}$ modulo sign.
The corresponding version of Theorem~\ref{t.reduce} is:

\begin{thm}[Accessibility by cocycles projectively conjugate to a rotation]\label{t.reduce psl}
Let $A \in C_0(X,\SL(2,\R))$ be an unlocked cocycle. Let $\alpha
\in \R$ be such that $2\alpha = 2\rho_{f,A} \mod G$. Then there
exist continuous paths $A_t \in C(X,\SL(2,\R))$, $t \in [0,1]$,
and $B_t \in C(X,\PSL(2,\R))$, $t \in (0,1]$, such that $A_0=A$
and $B_t(f(x))^{-1} A_t(x) B_t(x) = R_{2\pi\alpha}$ {\rm (}modulo
sign{\rm )} for $t \in (0,1]$.
\end{thm}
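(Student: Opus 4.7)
The plan is to prove Theorem~\ref{t.reduce psl} as a $\PSL(2,\R)$-analog of Theorem~\ref{t.reduce}. The hypothesis $2\alpha\equiv 2\rho_{f,A}\pmod G$ is precisely the natural condition when one passes from $\SL(2,\R)$-conjugacy to $\PSL(2,\R)$-conjugacy, because the rotation subgroup $\PSO(2)\subset\PSL(2,\R)$ is parameterized by $\R/(\tfrac{1}{2}\Z)$ rather than $\R/\Z$, effectively yielding a halved Schwartzman-type group $\tfrac{1}{2}G$ for $\PSL$-cocycles: the condition $2\alpha-2\rho_{f,A}\in G$ is the same as $\alpha-\rho_{f,A}\in\tfrac{1}{2}G$.

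Concretely, I would exploit the hypothesis to choose continuous $\phi:X\to\R$ and $\psi:X\to\R/\Z$ satisfying $\int_X\phi\,d\mu=2(\alpha-\rho_{f,A})$ and $\psi\circ f-\psi\equiv\phi\pmod 1$, which is possible by the definition of $G$. The key observation is that the map $x\mapsto R_{\pi\psi(x)}$ descends to a well-defined continuous map $C:X\to\PSL(2,\R)$, even though it need not lift to a continuous $\SL(2,\R)$-valued map in general, because $R_{\pi(\psi(x)+1)}=-R_{\pi\psi(x)}$ becomes trivial modulo sign. I would then mirror the construction in the proof of Theorem~\ref{t.reduce}, systematically replacing each $\SL(2,\R)$-valued conjugacy built from a cochain $\sigma\in C(X,\R/\Z)$ via $\sigma\mapsto R_{2\pi\sigma}$ by its $\PSL(2,\R)$-analog $\sigma\mapsto R_{\pi\sigma}$; the factor of $\tfrac{1}{2}$ in the parameterization is exactly compensated by the factor of $2$ in the hypothesis, so the fibered-rotation-number bookkeeping matches up. The path $A_t$ would be constructed as in Theorem~\ref{t.reduce}'s proof (unchanged, since it is an $\SL(2,\R)$-valued path), and the conjugacies $B_t$ would be obtained as $\PSL$-lifts of the corresponding construction.

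The main obstacle I anticipate is carefully handling the $\Z/2$-valued sign obstruction that can arise when one wants to lift $\PSL(2,\R)$-valued objects to $\SL(2,\R)$ continuously, and in particular verifying that the final identity $B_t(f(x))^{-1}A_t(x)B_t(x)=R_{2\pi\alpha}$ holds modulo sign. In a direct ``twist/untwist'' reduction (set $\hat A(x)=A(x)R_{\pi\phi(x)}$, apply Theorem~\ref{t.reduce} to obtain $\hat A_t$ conjugate to $R_{2\pi\alpha}$ via $\hat B_t\in\SL$, and try $A_t(x)=\hat A_t(x)R_{-\pi\phi(x)}$ with $B_t=\hat B_tC$), the conjugation identity does not close up because $\hat B_t(x)$ does not commute with the twist $R_{\pm\pi\phi(x)}$; the obstruction $\hat B_t(x)^{-1}R_{-\pi\phi(x)}\hat B_t(x)=R_{\pi\phi(x)}$ modulo sign is false in general. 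Overcoming this requires either reworking the perturbative argument in Theorem~\ref{t.reduce}'s proof so that the projective half-rotation $C$ is woven in from the start, or working on the $\Z/2$-cover of $X$ defined by $\psi$ where $C$ lifts to $\SL(2,\R)$ and then descending. In either approach, the doubled cover introduces only a discrete twofold ambiguity which is ultimately absorbed by working modulo sign throughout; the rest is bookkeeping, not a genuinely new idea.
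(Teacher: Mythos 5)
Your proposal is correct and follows essentially the paper's own route: Theorems~\ref{t.reduce} and \ref{t.reduce psl} are proved there by a single argument whose only $\PSL(2,\R)$-specific step is exactly your half-rotation trick --- after the two-parameter family from Lemma~\ref{l.step3} is written as $\tilde B_t(f(x))\,R_{2\pi\theta(t,x)}\,\tilde B_t(x)^{-1}$, one writes $2\alpha-2\int\theta(0,0,x)\,d\mu(x)=\int\phi\,d\mu$ with $\phi=\psi\circ f-\psi \bmod 1$, multiplies $\tilde B_t$ by the $\PSL(2,\R)$-valued half-rotation built from $\psi$, and shifts $\theta$ by $\phi/2$, the factor $2$ in the hypothesis compensating the halved angle exactly as you describe. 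Your anticipated commutation obstruction does not arise because the correction is inserted at the stage where the conjugated middle factor is itself a rotation (your ``woven in'' alternative), and from there the cohomological-equation and topological steps are identical to the $\SL(2,\R)$ case.
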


This result easily yields Theorem~\ref {t.mainthm2}:

\begin{proof}[Proof of Theorem~\ref{t.mainthm2}]
Suppose $A \in C(X,\SL(2,\R))$ is homotopic to a constant with
$2\rho_{f,A} \in G$; we want to access $A$ by a path $A_t$ in $\cUH$.
As mentioned above, if $A$ is locked, then $A$ is in $\cUH$ and thus we can take a constant path.
If $A$ is semi-locked, then a path of the form $A_t = R_{\eps t} A$ (for some small $\eps \neq 0$)
works; see Proposition~\ref{p.lockprop}.
Thus assume that $A$ is unlocked.

Apply Theorem~\ref{t.reduce psl} with $\alpha = 0$ and
find continuous paths $A_t \in C(X,\SL(2,\R))$, $t \in [0,1]$, and
$B_t \in C(X,\PSL(2,\R))$, $t \in (0,1]$, such that $A_0=A$ and
$A_t(x) = B_t(f(x)) B_t(x)^{-1}$ (modulo sign) for $t \in (0,1]$.
Let $\tau:(0,1] \to (1,\infty)$ be a continuous function. Take
$\tilde A_t \in C(X,\SL(2,\R))$ such that
$$
\tilde A_t(x) = B_t(f(x)) \begin{pmatrix} \tau(t) & 0 \\ 0 & 1/\tau(t) \end{pmatrix} B_t(x)^{-1}
\quad \text{(mod sign).}
$$
Choosing a function $\tau(t)$ that goes sufficiently fast to $1$
as $t\to 0$, we have $\lim_{t \to 0} \tilde A_t(x) = A(x)$. This
path has the desired properties.
\end{proof}

As for uniformly hyperbolic cocycles, they cannot, in general,
be approximated by cocycles that are conjugate to a constant (see Remark~3 in \cite{ABD}).
But for projective conjugacy there is no obstruction; indeed we show:

\begin{thm}[Accessibility by cocycles projectively conjugate to a hyperbolic matrix]
\label{t.reduce hyp}
Let $A \in \cUH$, and let $D \in \SL(2,\R)$ be such that $L(A)=L(D)$.
Then there exist continuous paths
$A_t \in C(X,\SL(2,\R))$,
$t \in [0,1]$, and $B_t \in C(X,\PSL(2,\R))$, $t \in (0,1]$, such that
$A_0=A$ and $B_t(f(x)) A_t(x) B_t(x)^{-1}=D$ {\rm (}in $\PSL(2,\R)${\rm )}
for $t \in (0,1]$.
\end{thm}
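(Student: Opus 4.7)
The plan is to reduce the problem to a classical approximation statement: on a uniquely ergodic system, zero-mean continuous functions are uniformly approximable by coboundaries. Since $A \in \cUH$, one has $L := L(A) > 0$, so by hypothesis $D$ is hyperbolic. Conjugating $D$ by a constant $C \in \SL(2,\R)$ amounts to replacing $B_t$ by $C^{-1} B_t$, so there is no loss of generality in assuming that, in $\PSL(2,\R)$, $D = \mathrm{diag}(e^L, e^{-L})$.

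Next I would use the invariant hyperbolic splitting $\R^2 = E^u(x) \oplus E^s(x)$ associated with $A$ to build a continuous $\tilde B \in C(X,\PSL(2,\R))$ sending the coordinate axes onto the splitting. This yields
$$
\tilde B(f(x))^{-1} A(x) \tilde B(x) = \mathrm{diag}(\lambda(x), \lambda(x)^{-1}) \qquad \text{(mod sign)}
$$
for some $\lambda \in C(X,(0,\infty))$ with $\int \log \lambda \, d\mu = L$ (achieved by ordering $E^u$ before $E^s$, using a $\pi/2$-rotation if needed). Working in $\PSL$ rather than $\SL$ removes a possible orientation obstruction in selecting such a frame globally. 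Set $g := L - \log \lambda$; then $g \in C(X,\R)$ and $\int g\, d\mu = 0$.

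The main analytic ingredient comes next: because $f$ is uniquely ergodic, the image of the coboundary map $h \mapsto h \circ f - h$ is dense in $\{g \in C(X,\R) : \int g \, d\mu = 0\}$ in the sup norm. (By Hahn--Banach, the annihilator of the coboundaries is the space of $f$-invariant continuous linear functionals on $C(X,\R)$; unique ergodicity forces every $f$-invariant signed measure to be a scalar multiple of $\mu$, giving the sharp characterization.) I can thus produce $h_t \in C(X,\R)$ depending continuously on $t \in (0,1]$ with $\|h_t \circ f - h_t - g\|_\infty \to 0$ as $t \to 0^+$, by interpolating between a sequence of approximants of improving accuracy. Setting $B_t(x) := \mathrm{diag}(e^{h_t(x)}, e^{-h_t(x)}) \, \tilde B(x)$, a direct computation yields
$$
B_t(f(x))^{-1} D B_t(x) = \tilde B(f(x))^{-1} \mathrm{diag}\!\left(e^{L - (h_t(f(x)) - h_t(x))}, e^{(h_t(f(x)) - h_t(x)) - L}\right) \tilde B(x),
$$
whose diagonal exponent converges uniformly to $L - g(x) = \log \lambda(x)$; hence the right-hand side converges uniformly to $A(x)$ in $\PSL$. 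For $t$ small, the $\PSL$-valued map on the left-hand side has a unique $\SL(2,\R)$-lift close to $A(x)$; taking $A_t$ to be this lift (and extending it continuously in $t$ via the path-lifting property of the double cover $\SL(2,\R) \to \PSL(2,\R)$, starting from $A_0 = A$) produces the desired path.

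The step I expect to be most delicate is the coboundary approximation together with its continuous dependence on $t$: the abstract density statement does not furnish canonical approximants, so some care is needed to interpolate between the discrete sequence supplied by Hahn--Banach. A secondary technical point is the continuous selection of $\tilde B$ globally on $X$, which is clean in $\PSL$ but would fail in $\SL$ when the unstable line bundle is non-orientable.
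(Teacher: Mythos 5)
Your proof is correct, and its key analytic step is genuinely different from the paper's. Both arguments begin identically: using the hyperbolic splitting you conjugate $(f,A)$ by some $\tilde B\in C(X,\PSL(2,\R))$ (passing to $\PSL(2,\R)$ exactly to kill the orientability obstruction) to a diagonal cocycle $\mathrm{diag}(\lambda(x),\lambda(x)^{-1})$ with $\int\log\lambda\,d\mu=L(A)$, and reduce to $D=\mathrm{diag}(e^{L},e^{-L})$; this is the first paragraph of the paper's proof, which cites Lemma~4 of \cite{ABD}. The difference is how the cohomological equation is treated. The paper invokes Lemma~\ref{l.line adjust ext} (with $T=(0,1]$, $T^*=\emptyset$), which produces a function uniformly close to $\log\lambda$ and \emph{exactly} cohomologous to the constant; that lemma rests on the almost-invariant-section and tower machinery of Sections~\ref{s.almost inv}--\ref{s.controlled} and on the standing hypothesis of a non-periodic finite-dimensional factor. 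You instead use the soft Hahn--Banach fact that under unique ergodicity (an invariant signed measure has invariant Jordan parts, hence is a multiple of $\mu$) the continuous coboundaries are uniformly dense in the zero-mean functions, and then define $A_t$ as the $\SL(2,\R)$-lift of $B_t(f\cdot)^{-1}DB_t(\cdot)$, which converges to $A$ because $h_t\circ f-h_t\to g$ uniformly; the lift near $A$ and the exact $\PSL(2,\R)$-conjugacy to $D$ are handled as you say. This buys a more elementary argument under weaker hypotheses (unique ergodicity alone suffices for this theorem), at the price of not providing the exact, relative ($T^*\neq\emptyset$) and parametrized solvability that Lemma~\ref{l.line adjust ext} gives and which the paper needs again in Section~\ref{s.reduce}; routing Theorem~\ref{t.reduce hyp} through that lemma costs the authors nothing since it is needed anyway. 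Finally, the step you flag as delicate is actually immediate: coboundaries form a linear subspace, so linear interpolation $h_t=\theta h_{n+1}+(1-\theta)h_n$ between successive approximants gives $\|h_t\circ f-h_t-g\|_\infty\le\max(1/n,1/(n+1))$, hence joint continuity in $(t,x)$ and the required decay as $t\to 0$ come for free; one only needs to reparametrize in $t$ so that the lift to $\SL(2,\R)$ near $A$ exists for all $t\in(0,1]$.
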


\begin{rem}\label{r.ruth}
Cocycles that are projectively conjugate to a constant are called
{\it reducible}.  Theorems \ref {t.reduce psl} and \ref {t.reduce
hyp} (together with the fact that the set of semi-locked cocycles
has empty interior) imply that reducibility is dense in
$C_0(X,\SL(2,\R))$. As for uniform hyperbolicity, it is dense in
$C_0(X,\SL(2,\R))$ if and only if $G$ is a non-discrete subgroup
of $\R$, by Theorem \ref {t.mainthm2} (and the basic
fact~\eqref{f.uhrot}). This statement cannot be significantly
improved; compare Theorem~\ref{t.furstenberg} in Appendix~\ref{a.appb}.

More generally, as in \cite{ABD}, we can let $\Ruth$ be the set of
all cocycles that are reducible up to homotopy
(or, equivalently, are projectively  conjugate to a cocycle  $C_0(X,\SL(2,\R))$;
see Lemma~8 in \cite{ABD}).
Any uniformly hyperbolic
cocycle is reducible up to homotopy, and in \cite{ABD} it is shown that
reducible uniformly hyperbolic cocycles are dense in $\Ruth$ in the
particular case where $f$ is a generalized skew-product.
Our results imply that in our more general context, reducibility is still dense
in $\Ruth$, and uniformly hyperbolic cocycles are dense in $\Ruth$ if and only if
$G$ is non-discrete.
\end{rem}

\begin{rem} \label{r.homotopy groups}
We also notice the following consequence of our methods: any
continuous cocycle $A$ that is not uniformly hyperbolic admits arbitrarily
small neighborhoods in $C(X,\SL(2,\R)) \setminus \cUH$ that are path
connected (and even have trivial homotopy groups in all dimensions).
See the comment after Proposition~\ref{p.extension2}.
\end{rem}

\section{Outline of the Paper and Discussion of the Methods}

Most of the paper is dedicated to building up the tools that will
be involved in the proof of cousin Theorems \ref {t.reduce} and
\ref {t.reduce psl}, with the other results being obtained on the
side. These tools can be split into two classes of results which
produce families of cocycles with certain features. The input in
either case is always some initial family of cocycles, but while
the first kind of result is only concerned with outputting a
suitable {\it perturbation} of the family, the second is primarily
concerned with the {\it extension} of the parameter space.

The perturbation arguments are developed in Sections \ref {s.almost inv} to
\ref {s.cohom}: they
refine the ideas introduced in \cite {ABD} to deal with individual
cocycles over generalized skew-shifts, both to address a
more general class of dynamical systems and to allow us to work in the
context of parametrized families.  Since many problems about
cocycles can be rephrased in terms of the existence of
invariant sections for an associated skew-product (for instance, conjugacy
to rotations is equivalent to the existence of an invariant section for the
associated skew-product arising from the disk action of $\SL(2,\R)$),
we first (Section \ref {s.almost inv}) discuss in a
more abstract setting the problem of existence of
{\it almost invariant sections}, satisfying the invariance condition except in
some specific set.  The actual estimates for the section depend on the
recurrence properties of the set and the existence of sets with good recurrence
properties is the topic of Section \ref {s.controlled}.
Those results are used
in Section \ref {s.disk} to show that a family of $\SL(2,\R)$-cocycles with small
Lyapunov exponents can be perturbed to become continuously conjugate to a
family of cocycles of rotations (Proposition \ref {p.disk adjust ext});
hence Theorem~\ref{t.addendum} follows as a particular case.
A similar result (Lemma \ref {l.line adjust ext}), regarding the
cohomological equation (which plays a role when conjugating a cocycle of
rotations to a constant), is obtained in Section~\ref {s.cohom},
and permits us to establish Theorem~\ref{t.reduce hyp}.

The extension arguments are developed in Sections \ref {s.cube 1} and \ref
{s.cube 2}: the
questions considered are rather novel, and their proofs need the
introduction of several ideas.  In Section \ref {s.cube 1}
we show that a family
of cocycles with zero Lyapunov
exponent, defined over the boundary of a cube, can be extended to a family
of cocycles with small
Lyapunov exponents, defined over the whole cube, provided there is no
topological obstruction (Lemma \ref {l.reduction}).  The proof is technically quite
involved, jumping from the continuous to the measurable category and back, and is
mostly independent from the rest of the paper (which never departs from the
continuous category): it is also the only place where ergodic theory plays a
significant role.  In Section \ref {s.cube 2}, Lemma \ref {l.reduction}
is used to
bootstrap a considerably more refined result
(Proposition \ref {p.extension2}): under the
weaker condition that the boundary values are not uniformly hyperbolic,
one gets the stronger conclusion that the extended family can be
continuously conjugated to cocycles of rotations in the interior of
the cube.
In particular, we obtain Theorem~\ref{t.bounded}.

In Section \ref {s.reduce},
everything is put together in the proof of Theorems \ref {t.reduce} and
\ref {t.reduce psl}.
We first show (Lemma \ref {l.step3}) that given $A$ unlocked, it is possible to
construct a two-parameter family $A_{s,t}$, $(s,t) \in [0,1]^2$ with
$A_{0,0}=A$, such that $\rho(A_{0,s})<\rho(A)<\rho(A_{s,0})$, and which is
continuously conjugate to rotations (except possibly at $(0,0)$).  This
almost implies that $A$ is accessible from the interior of the square
$[0,1]^2$ through a path with constant rotation number.  The actual
accessibility of $A$ by cocycles conjugate to a fixed rotation
is obtained with a little wiggling involving in particular the solution of
the cohomological equation.

The following diagram gives the logical dependence between the sections:
$$
\xymatrix{
\ref{s.almost inv} \ar[r] \ar[rd] & \ref{s.disk}  \ar[r]\ar[rd] & \ref{s.cube 1}  \ar[d]  & \ref{s.reduce}\\
\ref{s.controlled} \ar[r] \ar[ru] & \ref{s.cohom} \ar[rru]      & \ref{s.cube 2} \ar[ru]  &  \\
}
$$

The paper also contains five appendices.
The first contains complementary results and examples concerning connectedness of
some sets of cocycles.
The second shows that complete gap labeling, and even just the Cantor structure of the spectrum, ceases to be generic when the assumption of unique ergodicity is dropped.
The third exposes some facts on locked / semi-locked / unlocked cocycles.
The fourth proves the projection lemma that was used to realize $\SL(2,\R)$ perturbations
by Schr\"odinger matrices.
Finally, in the last appendix we show how Fayad's results can be applied to obtain a mixing strictly ergodic example with a dense set of labels.

\section{Construction of Almost Invariant Sections} \label{s.almost inv}

{\it In this section and in the next section, $f:X \to X$ is an arbitrary
homeomorphism of a compact metric space.}  In fact, even if we are
ultimately interested in minimal dynamical systems, in order to
obtain ``parameterized'' results in the later sections, we will
apply the results obtained here to maps of the form $\hat
f(x,t)=(f(x),t)$ (the second coordinate representing the
parameter).

\medskip

Let $Y$ be a topological space and let
$F: X \times Y \to X \times Y$ be an invertible skew-product over $f$, that is,
a homeomorphism of the form $(x,y) \mapsto (f(x), F_x(y))$.  An {\it invariant
section for $F$} is a continuous map
$y:X \to Y$ such that $F_x(y(x))=y(f(x))$.
Throughout the paper,
a common theme will be the construction of {\it almost invariant sections}, which
satisfy $F_x(y(x))=y(f(x))$ in much of $X$.  We define the {\it support}
of an almost invariant section as the set of all $x$ such that
$F_x(y(x)) \neq y(f(x))$.
We will also be interested in estimating
``how large'' such almost invariant sections are.  In the abstract setting
considered here, this largeness will be measured by a continuous function
$M:Y \to [0, \infty)$.

We say that the pair $(Y, M)$ has the \emph{Tietze property} if
for any metric space $L$, every continuous function $y: K \to Y$
defined on a compact subset $K \subset L$ can be  extended to a continuous map $y: L \to Y$ such that
$$
\sup_{x \in L} M(y(x)) = \sup_{x \in K} M(y(x)) \, .
$$
The following are the examples of pairs $(Y,M)$ that we will use:
\begin{itemize}
    \item $Y=\R$ and $M(y) = |y|$;
    \item $Y$ is the unit disk $\D$ and $M(y) = \mathrm{d}(y,0)$, where $\mathrm{d}$ is the hyperbolic distance
    (see \S\ref{ss.disk adjustment}).
\end{itemize}
Using Tietze's Extension Theorem, we see that the Tietze property holds in these two examples.

We denote
$$
\triplebar{F}_M = \sup_{(x,y)\in X \times Y} \left| M(F_x(y)) - M(y) \right| \in [0,\infty].
$$
Notice that $\triplebar{F^{-1}}_M = \triplebar{F}_M$.

\medskip

To construct and estimate almost invariant sections whose support
is contained in the interior of
a compact set $K$, we will need only a few aspects of the recurrence
of the dynamics to $K$, which
we encode in the following notions.
We say that $K \subset X$ is \emph{$n$-good} if its first $n$
iterates are disjoint, that is, $f^i(K) \cap K=\emptyset$ for $1
\leq i \leq n-1$.
Let us say that $K \subset X$ is
\emph{$N$-spanning} if
the first $N$ iterates
cover $X$, that is, $\bigcup_{i=0}^{N-1} f^i(K)=X$.
We say that $K \subset X$ is \emph{$d$-mild} if
the orbit $\{f^n x : n \in \Z\}$ of each point in $X$ enters
$\partial K$ at most $d$ times.
The construction of sets with
appropriate recurrence properties, under suitable dynamical assumptions,
will be carried out in the next section.

The following abstract lemma
is the main result of this section.
(A simple particular case of it, where the dynamics fibers over an irrational rotation,
corresponds to Lemma~2 from \cite{ABD}.)

\begin{lemma}\label{l.almost invariant}
Let $f:X \to X$ be a homeomorphism of a compact metric space $X$,
and let $K \subset X$ be a $d$-mild, $n$-good, $N$-spanning set.
Let $\Lambda \subset X$ be a compact {\rm (}possibly empty{\rm )} $f$-invariant set.
Let $(Y,M)$ be a pair with Tietze property,
and let $F:X \times Y \to X \times Y$ be an invertible skew-product over $f$.
Let $y_0: X \to Y$ be an almost invariant section supported outside $\Lambda$.
Then there exists an almost invariant section $y:X \to Y$ supported in $\interior K$
with the following properties:
\begin{enumerate}
    \item $y$ equals $y_0$ on $\Lambda$;
    \item $\sup_{x \in X} M(y(x)) \le \sup_{x \in X} M(y_0(x))
+ d \max_{j \in [n, N]} \triplebar{F^j}_M$.
\end{enumerate}
\end{lemma}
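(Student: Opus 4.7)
The strategy is to realize $X$ as a finite tower over $K$ and build $y$ column by column. Combining the $n$-good and $N$-spanning hypotheses, the first-return time $\tau : K \to \{n, n+1, \ldots, N\}$ is well-defined and uniformly bounded, and every $x \in X$ admits a unique decomposition $x = f^r(z)$ with $z \in K$ and $0 \le r < \tau(z)$. Thus $X$ is partitioned into tower columns with heights in $[n, N]$. The $d$-mild hypothesis bounds the number of $\partial K$-visits per orbit and will supply the factor $d$ in the estimate.

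The construction reduces to producing a continuous base section $\varphi : K \to Y$ that (i) agrees with $y_0$ on $\Lambda \cap K$ and (ii) satisfies the first-return compatibility $\varphi(R_K(z)) = F^{\tau(z)}_z(\varphi(z))$ for every $z \in \partial K$, where $R_K$ is the first-return map. Given such a $\varphi$, I would define $y$ on the tower by the forward rule $y(f^r(z)) = F^r_z(\varphi(z))$ when $z \in \partial K \cup \Lambda$, and by the backward rule $y(f^r(z)) = F^{r-\tau(z)}_{R_K(z)}(\varphi(R_K(z)))$ for $1 \le r < \tau(z)$ when $z \in \interior K \setminus \Lambda$, with $y(z) = \varphi(z)$ at the base. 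The backward rule confines any possible defect to the base $z \in \interior K$---a defect at the top of a column is forbidden, since by $n$-good that top point lies outside $K$---so $\supp y \subset \interior K$. Compatibility (ii) matches the two rules on the columns over $\partial K$, giving continuity of $y$, and $y = y_0$ on $\Lambda$ follows from (i), the $f$-invariance of $\Lambda$, and the fact that $\supp y_0 \cap \Lambda = \emptyset$.

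The section $\varphi$ is built inductively along a filtration $K \cap \Lambda \subset K^{(0)} \subset \cdots \subset K^{(d)} = K$, where $K^{(j)}$ consists of points of $K$ whose backward $R_K$-orbit has visited $\partial K$ at most $j$ times; by the $d$-mild hypothesis this filtration exhausts $K$. Stage $0$ Tietze-extends $y_0|_{\Lambda \cap K}$ to $K^{(0)}$ without increasing $\sup M$. At each subsequent stage $j$, compatibility (ii) forces $\varphi$ on the $\partial K$-portion of $K^{(j)} \setminus K^{(j-1)}$ to be an $F^{\tau(\cdot)}$-pushforward (with $\tau \in [n,N]$) of values set at stage $j-1$, adding at most $\max_{k \in [n,N]} \triplebar{F^k}_M$ to $\sup M$; a further Tietze extension completes the stage without further growth. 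After $d$ iterations one obtains $\sup_K M(\varphi) \le \sup_X M(y_0) + d \max_{k \in [n,N]} \triplebar{F^k}_M$, and this bound transfers to $y$ on the whole tower by writing each tower value as a first-return pushforward of $\varphi$ through (ii). The hard part will be the bookkeeping needed to arrange the $K^{(j)}$ as closed subsets of $K$ on which successive Tietze extensions can be glued continuously---respecting both (i) and the chain of constraints imposed by (ii)---and to check that only first-return $F$-powers (all in the range $[n,N]$) enter the final $M$-estimate on $y$; this is where the three combinatorial conditions $n$-good, $N$-spanning, and $d$-mild interact most closely.
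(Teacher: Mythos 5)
Your overall architecture is close in spirit to the paper's proof: both arguments confine the defect to $\interior K$, both run an induction on the number of visits of an orbit to $\partial K$ (this is where $d$-mildness and the factor $d$ enter), both pay one power of $F$ per visit, and both use the Tietze property to extend freely at each stage; your reduction to a base section $\varphi$ on $K$ with first-return compatibility on $\partial K$ is a reasonable reformulation. The genuine gap is precisely the step you label as ``bookkeeping'': the filtration you propose cannot be made to consist of closed sets, and without that the Tietze/gluing steps collapse. The visit count you use is semicontinuous in the wrong direction for your induction: if $f^{-m}(z)\in\partial K$, then arbitrarily close to $z$ there are points $z'$ with $f^{-m}(z')\notin K$ (that is what being on $\partial K$ means), so $z'$ loses that visit; hence a limit of points with at most $j$ visits can have $j+1$ visits, and $K^{(j)}$ is in general not closed. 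Consequently ``Tietze-extend to $K^{(0)}$'' and the later extensions have no closed domain to work with, the forced part of stage $j$ (the image under the first-return map of a non-closed piece of $\partial K$, the return map itself being discontinuous exactly where returns land on $\partial K$) cannot be glued continuously to the freely extended part by the argument you outline, and the same phenomenon is what you would have to confront when verifying continuity of $y$ across columns whose return time jumps --- a verification you defer entirely to (ii) without treating iterated boundary hits.

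The paper's proof is organized in the opposite direction precisely to avoid this. It counts boundary hits inside the current excursion between visits to $\interior K$ (the sets $T_B(x)$); that count can only increase at a limit point, so the strata $X^i=\{N(x)\ge d-i\}$ \emph{are} closed. The construction starts on the most constrained stratum $X^0$, where $y:=y_0$, and at each stage pulls values backwards from $\interior K$ into the excursion (rather than pushing forward along returns), Tietze-extending only on pieces of $\interior K$; continuity at each stage is checked using that the closure of each newly added piece $Z^i_\ell$ meets only the already defined $X^{i-1}$. Repairing your scheme so that the inductive sets are closed essentially forces this reversal of the induction, i.e.\ it reproduces the paper's construction rather than being bookkeeping on top of yours. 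A secondary issue: your claim that the $M$-estimate transfers to the whole tower ``by writing each tower value as a first-return pushforward of $\varphi$'' is not correct as stated, since values at intermediate heights are images of base values under partial powers $F^{r}$ with $1\le r<n$ possible, not first-return powers; as written your construction yields an additional term of the form $\max_{1\le j\le N}\triplebar{F^j}_M$, which is not the inequality (b) you set out to prove.
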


\begin{proof}
We begin by using the set $K$ and the dynamics $f$ to decompose the space $X$.

For each $x \in X$, let
\begin{gather*}
\ell^+(x) = \min\{j \ge 0 : f^j(x) \in \interior K\}, \quad \text{and} \quad
\ell^-(x) = \min\{j >0 : f^{-j}(x) \in \interior K\}; \\
T(x) = \{j \in \Z : -\ell^-(x) < j < \ell^+(x) \}
\quad \text{and} \quad
T_B(x) = \{j \in T(x) :  f^j(x) \in \partial K\}.
\end{gather*}
Notice $T$ and $T_B$ are upper-semicontinuous.
Define
$N(x) = \# T_B(x)$ and $X^i = \{x \in X : N(x) \ge d-i\}$.
The sets $X^i$ are closed and
$$
X = X^d \supset X^{d-1} \supset \cdots \supset X^0 \supset X^{-1}
= \emptyset.
$$
Let also $Z^i = X^i \setminus X^{i-1} = \{x : N(x) = d-i\}$.

\begin{claim}
$\ell^+$ is locally constant on the set $Z^i$.
\end{claim}

\begin{proof}
We will show that $T_B$ and $T$ are locally constant on $Z^i$. Fix
$x\in Z^i$ and let $y \in Z^i$ be very close to $x$. We have
$T_B(y) \subset T_B(x)$. Since $y$ is also in $Z^i$, we must have
equality. Now, if $j \in T(x)$, then either $f^j(x) \in \partial
K$ or $f^j(x) \in X \setminus K$. In the former case, $f^j(y) \in
\partial K$ (because $T_B(y) = T_B(x)$), and in the latter,
$f^j(y)$ is also in $X\setminus K$ (because $y$ is close to $x$).
Since $f^{\ell^+(x)}(y)$, $f^{-\ell^-(x)}(y) \notin \interior K$,
we conclude that $T(y) =T(x)$.
\end{proof}

Let $Z^i_\ell = \{x \in Z^i : \ell^+(x) = \ell\}$; by the claim
this is a (relatively) open subset of $X^i$. Also notice
the following facts:
$$
\begin{gathered}
X^i = X^{i-1} \sqcup Z^i_0 \sqcup Z^i_1 \sqcup \cdots \sqcup Z^i_N \, , \qquad
\overline{Z^i_\ell} \setminus Z^i_\ell \subset X^{i-1}, \\
Z^i_0 = Z^i \cap \interior K, \qquad
f(Z^i_\ell) = Z^i_{\ell - 1} \text{ if $\ell>0$.}
\end{gathered}
$$

Now let $(Y,M)$, $F$, $\Lambda$, $y_0$ be as in the assumptions of the lemma.
We will describe a procedure to successively define $y$ on the set
$\Lambda \cup (X^0 \cap \interior K)$,
then on $\Lambda \cup X^0$,
then on $\Lambda \cup X^0 \cup (X^1 \cap \interior K)$,
then on $\Lambda \cup X^1$,
then on $\Lambda \cup X^1 \cup (X^2 \cap \interior K)$,
\dots,
then on $\Lambda \cup X^{d-1} \cup (X^d \cap \interior K)$,
and finally on $\Lambda \cup X^d = X$.

We begin by defining $y$ on $\Lambda \cup (X^0 \cap \interior K)$ as equal to $y_0$.

Let $i \in \{0,\ldots,d\}$.
Assume, by induction, that the map $y$ has already been continuously defined on the
set $\Lambda \cup X^{i-1} \cup (X^i \cap \interior K)$,
and has the two properties below:
\begin{gather}
F(x,y(x)) = (f(x), y(f(x))) \quad \text{for every $x$ in the domain and not in $\interior K$;}
\label{e.induction 1} \\
\sup_{x \in \Lambda \cup X^{i-1} \cup (X^i \cap \interior K)} M(y(x)) \le
\sup_{x \in X} M(y_0(x)) + iA, \label{e.induction 2}
\end{gather}
where $A = \max_{j \in [n, N]} \triplebar{F^j}_M$.

Now, for each $\ell =1, \ldots, N$, the map
$y$ is already defined on $f^\ell(Z^i_\ell) \subset X^i \cap \interior K$.
Naturally, we define $y$ on $Z^i_\ell$ by
$$
y(x) = \left(F^\ell_x\right)^{-1} \left( y(f^\ell(x)) \right).
$$
This defines $y$ on the set $\Lambda \cup X^i$
in such a way that \eqref{e.induction 1} holds,
and moreover
$$
\sup_{x \in \Lambda \cup X^i} M(y(x)) \le
\sup_{x \in \Lambda \cup X^{i-1} \cup (X^i \cap \interior K)} M(y(x))  + A \, .
$$

Let us check that the map $y$ so defined on $\Lambda \cup X^i$ is continuous.
Since $\Lambda$ and $X_i$ are closed sets and $y| \Lambda  = y_0|\Lambda$ is continuous,
we only need to check that $y|X^i$ is continuous.
Take a sequence $x_j$ in $X^i$ converging to some $x$ and let us show that $y(x_j) \to y(x)$.
If $x \in Z^i$, convergence follows from the claim above
and the continuity of $y|X^i \cap \interior K$.
Next assume $x \in X^{i-1}$.
Since $X^i = X^{i-1} \sqcup Z^i_0 \sqcup Z^i_1 \sqcup \cdots \sqcup Z^i_N$,
and $y|X^{i-1}$ is continuous, it suffices to consider the case where
the sequence $x_j$ is contained in some $Z^i_\ell$.
Then $f^\ell(x_j)$ and $f^\ell(x)$ all belong to $(X^i \cap \interior K) \cup X^{i-1}$,
where $y$ is continuous.
So, applying $F^{-\ell}$, we obtain $y(x) = \lim y(x_j)$, as desired.

At this point, $y$ is continuously defined on $\Lambda \cup X^i$.
If $i=d$, we are done.
Otherwise, we use the Tietze property to extend
continuously $y$ to $\Lambda \cup X^i \cup (X^{i+1} \cap \interior K)$
in a way such that
$$
\sup_{x \in \Lambda \cup X^i \cup (X^{i+1} \cap \interior K)} M(y(x))
=  \sup_{x \in \Lambda \cup X^i} M(y(x)) \, .
$$
Then the hypotheses \eqref{e.induction 1} and \eqref{e.induction
2} hold with $i+1$ in place of $i$; thus we can increment $i$ and
continue the procedure.
\end{proof}

\section{Construction of Sets With Controlled Return Times}  \label{s.controlled}

In the applications of Lemma~\ref{l.almost invariant},
it would be bad if $N$ was much bigger than $n$
(because then we wouldn't get a useful estimate for the $M$-size of the almost invariant section).
This issue is settled by means of the following result:

\begin{prop}\label{p.setskn}
Let $f:X \to X$ be a homeomorphism of a compact metric space
that admits a non-periodic minimal finite-dimensional factor.
Then there is $d \in \N$ such that for every $n \in \N$, there is a compact set
$K \subset X$ that is $n$-good, $(d+2)n-1$-spanning, and $d$-mild.
\end{prop}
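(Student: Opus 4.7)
My approach has two stages: reduce to the factor, then construct the set in the factor using finite-dimensionality.

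\emph{Reduction to the factor.} Let $h:X\to Y$ be the factor map to the non-periodic minimal finite-dimensional factor $(Y,g)$, so $Y$ is an infinite compact subset of some $\R^m$. I would construct a compact set $K_Y\subset Y$ that is $n$-good, $((d+2)n-1)$-spanning, and $d$-mild with respect to $g$, and then take $K=h^{-1}(K_Y)$. Since $h\circ f=g\circ h$, the identity $f^i(h^{-1}(K_Y))=h^{-1}(g^i(K_Y))$ gives $f^i(K)\cap K = h^{-1}(g^i(K_Y)\cap K_Y)$ (so $n$-goodness lifts) and $\bigcup_{i=0}^{N-1}f^i(K)=h^{-1}(\bigcup_{i=0}^{N-1}g^i(K_Y))=h^{-1}(Y)=X$ (so spanning lifts). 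Finally, $h^{-1}(\mathrm{int}\,K_Y)\subset \mathrm{int}\,K$ forces $\partial K \subset h^{-1}(\partial K_Y)$, so each $f$-orbit hits $\partial K$ no more often than its $h$-projection hits $\partial K_Y$; thus $d$-mildness of $K_Y$ gives $d$-mildness of $K$.

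\emph{Construction in the factor.} Pick $y_0\in Y$; it is non-periodic by hypothesis on $(Y,g)$. I would take $K_Y=\psi^{-1}([0,r])$ for a suitable continuous $\psi:Y\to[0,\infty)$ (a natural first try being $\psi(y)=\mathrm{dist}(y,y_0)$, possibly perturbed) and a radius $r>0$ to be chosen. For $n$-goodness, pick $r$ small enough that $\psi(g^i(y_0))>2r$ for $1\le i\le n-1$; this is possible by non-periodicity of $y_0$ and uniform continuity, and forces $g^i(K_Y)\cap K_Y=\emptyset$ for such $i$. For the spanning property with $N=(d+2)n-1$, minimality of $g$ guarantees that finitely many iterates of $K_Y$ cover $Y$, and a careful balancing of $r$ against $n$ should make the spanning time grow linearly in $n$ with slope controlled by the dimension of $Y$.

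\emph{Main obstacle: mildness.} The crux is to select $\psi$ and $r$ so that each $g$-orbit hits $\partial K_Y \subset \psi^{-1}(r)$ at most $d$ times, uniformly, with $d$ depending only on the topological dimension of $Y$ and not on $n$. I would reduce this to a multiplicity statement: find $(\psi,r)$ such that each point of $Y$ belongs to at most $d$ of the sets $g^i(\psi^{-1}(r))$. A multiplicity bound forces $d$-mildness, since an orbit meeting $\partial K_Y$ at $d+1$ distinct times would place its initial point in $d+1$ distinct iterates of $\psi^{-1}(r)$. To produce such $(\psi,r)$ I would vary $r$ in a one-parameter family and argue via a Fubini/Baire-type argument: for generic $\psi$ the level set $\psi^{-1}(r)$ is a codimension-one slice of $Y$, and in a finite-dimensional ambient space the iterates of such slices must have bounded multiplicity for a substantial set of radii $r$. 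Non-periodicity of every $g$-orbit is what keeps these iterates genuinely distinct, and minimality supplies the uniform recurrence that keeps $d$ independent of $n$. Arranging all this simultaneously -- producing one $(\psi,r)$ that works uniformly over all orbits and all scales $n$ with the same constant $d$ -- is the technical heart of the argument and is where I would expect to spend the most effort.
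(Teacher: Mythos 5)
Your reduction to the factor is fine and matches the paper's first step. But the construction inside the factor has a genuine gap at the spanning step: taking $K_Y$ to be a metric ball (a sublevel set $\psi^{-1}([0,r])$) with $r$ balanced against $n$ cannot work in general. Consider an irrational rotation of the circle whose angle $\alpha$ has unbounded partial quotients. If $q$ is a denominator with $\|q\alpha\|$ extremely small and $n$ is chosen with $q < n \ll \|q\alpha\|^{-1}$, then any interval whose first $n$ iterates are disjoint must have length less than $\min_{1\le j<n}\|j\alpha\| \le \|q\alpha\|$, and an interval of that length needs on the order of $\|q\alpha\|^{-1} \gg n$ iterates to cover the circle. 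So an $n$-good ball is in general very far from being $Cn$-spanning for any constant $C$: minimality does not give covering time linear in the disjointness time, and no choice of $r$ repairs this. The set one needs is not a ball but a set spread through the space (for rotations, a union of many intervals, cf.\ the continued-fraction construction mentioned right after Proposition~\ref{p.setskn}). The paper circumvents this with a different mechanism (Lemma~\ref{l.tower}): start from any small $n$-good set, which by minimality is $N$-spanning for some possibly enormous $N$, and inductively enlarge it, lowering $N$ by one at each step, by adjoining points $f^{in}(x)$ chosen inside the long orbit gaps; the $d$-mildness of a slightly enlarged neighborhood guarantees that among the $d+1$ candidate blocks $\{in,\dots,(i+1)n-1\}$ at least one avoids the set entirely (so $n$-goodness survives), and a Lebesgue-number argument keeps the enlarged set compact.

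Second, your mildness step is not merely ``the technical heart,'' it is absent, and the statement you need is stronger than what a finite transversality or genericity argument yields: you need each point of $Y$ to lie in at most $d$ of the sets $g^i(\partial K_Y)$ with $i$ ranging over \emph{all} of $\Z$ simultaneously, with $d$ depending only on the dimension of $Y$ and not on $n$ or on the set. This uniform-in-$\Z$ multiplicity bound is precisely the content of Lemmas 4 and 5 of \cite{AB}, which the paper cites (Lemma~\ref{l.mild}) rather than reproves. Note also that the paper decouples the three properties: mildness is obtained for an arbitrarily small compact neighborhood of \emph{any} prescribed compact set (Lemma~\ref{l.n,d}), so it can be imposed after goodness and spanning have been arranged, instead of asking a single level set $\psi^{-1}([0,r])$ to deliver all three properties at once.
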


We mention that in the case where $f$ fibers over an irrational
rotation, the proposition can be proven (with $d=1$) by a simple
explicit construction based on continued fractions -- see Figure~1
from \cite{ABD}.

\medskip

If $\tilde f:\tilde X \to \tilde X$ is a factor of $f:X \to X$
and $\tilde K \subset \tilde X$ is $d$-mild, $n$-good and $N$-spanning for
$\tilde f$, then $\pi^{-1}(\tilde K)$ is
$d$-mild, $n$-good and $N$-spanning for $\tilde f$, where
$\pi:X \to \tilde X$ is a continuous surjective map such that
$\pi \circ f=\tilde f \circ \pi$.
Thus it suffices to prove this statement for the minimal non-periodic
finite-dimensional factor of $f$.
For the remainder of
this section, it will therefore be assumed that $X$ is finite-dimensional and
infinite and that $f$ is minimal.

\begin{lemma}\label{l.mild}
There exists $d \in \N$ such that for every compact set $K \subset X$
and every neighborhood $U$ of $K$, there exists a compact
neighborhood $K' \subset U$ of $K$ that is $d$-mild.
\end{lemma}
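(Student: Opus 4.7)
The plan is to extract $d$ from the topological dimension $n=\dim X$ via a combination of dimension-theoretic boundary reduction and a general-position argument against the iterates of $f$.

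First, I would use classical finite-dimensional topology: for every compact $K\subset X$ and neighborhood $U\supset K$, one can find a compact neighborhood $K'\subset U$ of $K$ whose boundary satisfies $\dim\partial K'\leq n-1$. This holds because an $n$-dimensional space has a basis of open sets with boundaries of dimension at most $n-1$, and the countable sum theorem for dimension allows us to combine finitely many of them into the required sandwich $K\subset\interior K'\subset K'\subset U$.

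Second, I would bound the number of orbit hits on $\partial K'$ by a general-position argument. The orbit $\{f^k(x)\}_{k\in\Z}$ meets $\partial K'$ at time $k$ exactly when $x\in f^{-k}(\partial K')$, and each translate $f^{-k}(\partial K')$ is a compact set of dimension $\leq n-1$ (as $f$ is a homeomorphism). If the translates can be placed in general position, so that the intersection of any $j$ of them has dimension at most $n-j$, then any $n+1$ translates meet emptily, forcing every orbit to hit $\partial K'$ at most $n$ times and giving $d\leq n$. There are countably many iterates, so a Baire category argument ought to make all the transversality conditions hold simultaneously for ``almost every'' choice of $K'$.

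Since $X$ is only a compact metric space and not a manifold, realizing general position in a purely topological category is the main obstacle. I would attempt to fix this by a Menger--N\"obeling embedding $X\hookrightarrow\R^N$ with $N=2n+1$ and taking $K'$ as the intersection of $X$ with a small smooth domain $\Omega\subset\R^N$ containing $K$ and with closure disjoint from $X\setminus U$; smooth transversality of $\partial\Omega$ against countably many perturbations is standard in $\R^N$ by Baire/Sard-type arguments. The delicate point is that ambient transversality in $\R^N$ does not automatically yield the required dimensional bound on intersections inside the possibly singular set $X$, and some extra care is needed for this descent, likely combined with an inductive step that peels off one coordinate direction at a time. If this step succeeds, one obtains a $d$ depending only on $N$, hence only on $\dim X$, as required by the lemma.
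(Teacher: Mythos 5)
Your overall target is the right one (bound $d$ by $\dim X$ by forcing the boundary of $K'$ into ``general position'' with respect to its own $f$-iterates, so that any $n+1$ distinct translates $f^{-k}(\partial K')$ have empty intersection), but the proposal does not actually establish that step, and it is the whole content of the lemma. The first step (a compact neighborhood with $\dim \partial K' \le n-1$) is standard but by itself gives no control on orbit hits. The crucial difficulty, which you flag yourself, is that the general-position conditions are not conditions on a fixed countable family of targets: the unknown set $\partial K'$ appears in \emph{every} factor of each intersection $\partial K' \cap f^{-j_1}(\partial K') \cap \cdots \cap f^{-j_n}(\partial K')$, so one cannot ``place the translates'' independently -- choosing $\partial K'$ determines all of them at once. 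The proposed fix via a Menger--N\"obeling embedding $X \hookrightarrow \R^{2n+1}$ and Sard/Baire transversality of a smooth hypersurface $\partial\Omega$ does not repair this: $f$ is only a homeomorphism of the (typically nowhere smooth) set $X$, so the sets $f^{-k}(X \cap \partial\Omega)$ carry no smooth structure, they move when $\Omega$ moves, and ambient transversality in $\R^N$ gives no bound on the dimension of intersections taken inside $X$. In addition, aperiodicity must enter somewhere: at a periodic point the translates $f^{-k}(\partial K')$ cannot be separated at all (a fixed point lying in $\partial K'$ is hit at every time), and your sketch never uses the standing hypothesis (minimality of an infinite finite-dimensional factor) that rules this out.

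For comparison, the paper does not reprove this statement: its proof is a direct citation of Lemmas~4 and 5 of \cite{AB}, where the required construction is carried out by purely dimension-theoretic general-position arguments (in the topological category, using the aperiodicity of $f$), not by smooth transversality. So what your outline labels ``the delicate point'' and ``extra care'' is precisely the content of the cited lemmas; without supplying an argument of that type (e.g., a Baire-category/separator argument in the topological category handling the countably many self-referential conditions simultaneously), the proposal has a genuine gap at its central step.
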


\begin{proof}
This follows easily from Lemmas 4 and 5 in \cite{AB}.
\end{proof}

Let now $d$ given by Lemma \ref{l.mild} be fixed.

\begin{lemma} \label{l.n,d}
Let $K \subset X$ be an $n$-good, $N$-spanning compact set.
Then there exists a compact neighborhood $K'$ of $K$ that is $n$-good,
$N$-spanning and $d$-mild.
\end{lemma}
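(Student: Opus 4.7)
The plan is to combine two simple observations with the previous lemma. First, both the $n$-good property and the $N$-spanning property are, in the appropriate direction, stable under small thickenings of $K$: any superset of an $N$-spanning set is $N$-spanning, and because $f^i(K) \cap K = \emptyset$ for $1 \le i \le n-1$ involves finitely many pairs of disjoint compact sets, a sufficiently small compact neighborhood $K''$ of $K$ will still satisfy $f^i(K'') \cap K'' = \emptyset$ for the same range of $i$. Once we have such a $K''$, we simply apply Lemma~\ref{l.mild} to obtain a $d$-mild compact neighborhood $K'$ of $K$ that sits inside (the interior of) $K''$, and verify that $K'$ inherits the other two properties from $K$ and $K''$.

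Carrying this out: by compactness of $K$ and continuity of $f$, choose a compact neighborhood $K''$ of $K$ small enough that the first $n-1$ iterates of $K''$ remain pairwise disjoint from $K''$; such a neighborhood exists because, for each fixed $1 \le i \le n-1$, the disjoint compact sets $f^i(K)$ and $K$ have a positive distance, so slightly enlarging $K$ preserves disjointness after iterating. Then $K''$ is $n$-good, and trivially $N$-spanning since $K \subset K''$. Now apply Lemma~\ref{l.mild} to the compact set $K$ with the open neighborhood $U = \interior K''$: this produces a compact neighborhood $K' \subset \interior K''$ of $K$ that is $d$-mild. Finally, $K' \subset K''$ gives $f^i(K') \cap K' \subset f^i(K'') \cap K'' = \emptyset$ for $1 \le i \le n-1$, so $K'$ is $n$-good; and $K \subset K'$ gives $\bigcup_{i=0}^{N-1} f^i(K') \supset \bigcup_{i=0}^{N-1} f^i(K) = X$, so $K'$ is $N$-spanning. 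There is no real obstacle here; the only content is the standard openness argument for $n$-goodness, and all the hard work has already been absorbed into Lemma~\ref{l.mild}.
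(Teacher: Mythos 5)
Your proof is correct and follows the same route as the paper: the paper's proof simply notes that any compact neighborhood of $K$ sufficiently close to $K$ remains $n$-good and $N$-spanning, and then invokes Lemma~\ref{l.mild}; you have merely spelled out the standard positive-distance argument for the stability of $n$-goodness.
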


\begin{proof}
If $K'$ is a compact neighborhood of $K$ sufficiently close to
$K$, then it is still $n$-good and $N$-spanning. So the result follows from
Lemma~\ref{l.mild}.
\end{proof}

\begin{lemma}\label{l.tower}
For every $n \in \N$, there exists an $n$-good,
$(d+2) n-1$-spanning compact set $K \subset X$.
\end{lemma}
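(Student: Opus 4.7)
The plan is to imitate in the topological category the Kakutani--Rokhlin tower construction from ergodic theory. In the measurable setting this construction yields a set whose return times all lie in $[n, 2n-1]$, hence $n$-good and $(2n-1)$-spanning; the factor $d+2$ (instead of $2$) appearing in the statement reflects the additional cost of performing the construction with closed sets, which is paid in units of the mildness constant $d$ fixed after Lemma~\ref{l.mild}.

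First I would choose a ``base'' set. Since $X$ is infinite and $f$ is minimal, $f$ is non-periodic, and by compactness one can choose a small closed set $K_0$ with nonempty interior whose first $n$ iterates are pairwise disjoint, so $K_0$ is $n$-good. By minimality, $K_0$ is $N$-spanning for some $N$, and by Lemma~\ref{l.mild} I may arrange furthermore that $K_0$ is $d$-mild (any small enlargement preserves $n$-goodness and $N$-spanning). The first-return function $r: K_0 \to \{n, n+1, \ldots, N\}$ is then upper semi-continuous.

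Next I would cut the Rokhlin tower into slices of height $n$. The naive candidate
$$
K^{\sharp} = \{f^{jn}(x) : x \in K_0,\; 0 \le j \le \lfloor r(x)/n \rfloor - 1\}
$$
is formally $n$-good and has all return times bounded by $2n-1$, but the integer-valued function $\lfloor r/n \rfloor$ is in general discontinuous, so $K^{\sharp}$ need not be closed. The $d$-mildness of $K_0$ is what controls this failure: along any orbit, the orbit crosses $\partial K_0$ at most $d$ times, and correspondingly the slicing procedure introduces at most $d$ discontinuities per orbit. Accepting one extra gap of size at most $n$ per such discontinuity, I would modify $K^{\sharp}$ to a genuinely closed set $K$ whose return times are bounded by $(2n-1) + dn = (d+2)n - 1$, which is exactly the claimed spanning bound.

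I expect the main obstacle to be the last step: producing an actual closed $K$ realizing this bookkeeping. Selecting, for each $x \in K_0$, a number of slice points that depends on the integer $\lfloor r(x)/n \rfloor$ is inherently discontinuous, and the $d$-mildness estimate must be translated into a concrete topological construction. I anticipate this being done either by partitioning $K_0$ according to the level sets of $r$ modulo $n$ (closed by upper semi-continuity) and handling each level separately with controlled transitions at their common boundaries, or alternatively by an iterative enlargement procedure that repeatedly adds ``partial slices'' at heights that are multiples of $n$, each new addition consuming one of the $d$ tolerated boundary crossings. Either approach will require delicate topological bookkeeping, and is the only place in the argument where finite-dimensionality of $X$ enters in an essential way (through Lemma~\ref{l.mild}).
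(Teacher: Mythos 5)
Your overall strategy is the right one, and it is essentially the paper's: slice each excursion away from a base set at multiples of $n$, and use the $d$-mildness supplied by Lemma~\ref{l.mild} (via Lemma~\ref{l.n,d}) to argue that at most $d$ of the candidate slicing blocks can be spoiled, which is exactly the bookkeeping behind the bound $(d+2)n-1$. But there is a genuine gap, and it sits precisely where you say you expect the difficulty: producing a \emph{compact} set. That step is not a technical afterthought -- it is the content of the lemma -- and neither of the two routes you sketch closes it as stated. The first route rests on a false claim: the first-return time $r$ to the closed set $K_0$ is \emph{lower} semi-continuous, not upper semi-continuous (if $y_k \to x$ and $f^m(y_k) \in K_0$, then $f^m(x) \in K_0$, so $r(x) \le \liminf_k r(y_k)$; the return time jumps \emph{up} exactly at points whose orbit meets $\partial K_0$, which is why mildness is relevant at all). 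Consequently the fibers of $r$, or of $\lfloor r/n \rfloor$, or of $r$ modulo $n$, are not closed in general (only sublevel sets are), so ``partitioning $K_0$ according to the level sets of $r$ modulo $n$'' does not produce closed pieces to work with.

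Your second route (iteratively adding partial slices at multiples of $n$) is indeed the mechanism of the paper's proof, but the missing step is the one the paper's argument is actually devoted to: how to choose, for each $x$ with a long return gap, \emph{which} multiple $in$ to add so that the set of added points is compact, even though the set $I(x)$ of admissible indices (those $i \in \{1,\dots,d+1\}$ for which the whole block of iterates $f^{in}(x),\dots,f^{(i+1)n-1}(x)$ avoids $K$) varies discontinuously with $x$. The paper handles this by lowering the spanning constant one unit at a time: it restricts attention to $Y=\overline{K'_*}$, the closure of the set of points realizing the maximal gap, notes that for $x \in Y$ any intermediate visit to $K$ lands in $\partial K$ (so $d$-mildness gives $I(x) \neq \emptyset$ as long as $N \ge (d+2)n$), and then stabilizes the choice by setting $\tilde I(x) = I_\epsilon(x) := \bigcap_{y \in B(x,\epsilon)} I(y)$, where $\epsilon$ is a Lebesgue number of a finite cover of $Y$ by balls on which this intersection is already all of $I$; this makes each set $\{x \in Y : \tilde I(x) \ni i\}$ closed and hence the enlarged set compact, and the induction terminates at spanning constant $(d+2)n-1$. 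Some selection device of this kind -- a choice of slicing index with closed fibers -- is indispensable; your phrase ``each new addition consuming one of the $d$ tolerated boundary crossings'' records the counting but not the selection, so as written the proposal is a correct plan with the correct arithmetic, but the proof of the lemma is not there.
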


\begin{proof}
Let $n$ be fixed.
Let us start with a compact set $K$ with non-empty interior whose
first $n$ iterates are disjoint (a small compact neighborhood of
any point will do).  By minimality, $K$ is $n$-good and $N$-spanning
for some $N$.  Let us show that if $N \geq (d+2) n$, then there exists an
enlargement of $K$ that is $n$-good and $N-1$-spanning.

Assume $K$ is not $N-1$-spanning.
Let $K'$ be a compact
neighborhood of $K$ that is $n$-good, $N$-spanning
and $d$-mild, and also
not $N-1$-spanning.
Let $K_*$, resp.\ $K'_*$,
be the set of $x$ in $K$, resp.\ $K'$,
such that for each $i$ with $1\le i < N$, the point $f^i(x)$
does not belong to $K$, resp.\ $K'$.
Let $Y$ be the non-empty set $\overline{K'_*}$.
Notice that $Y$ is contained in $K_*$ and thus is $N$-good.
To simplify notation, replace $K$ with $K'$.

If $x \in Y$ and $f^j(x) \in K$ with $1 \leq j \leq N-1$, then
$f^j(x) \in \partial K$. For $x \in Y$, let $J(x)$ be the set of
all $j$ with $1 \leq j \leq N-1$ and $f^j(x) \notin K$. Let $I(x)$
be the set of all $i$ with $1 \leq i \leq d+1$ such that $\{in,
in+1, \ldots, (i+1)n -1 \} \subset J(x)$. Since $K$ is $d$-mild,
$\{1,\ldots, N-1\} \setminus J(x)$ has at most $d$ elements and
$I(x) \neq \emptyset$ for every $x \in Y$.

If for each $x \in Y$ we choose a non-empty subset $\tilde I(x)$ of $I(x)$,
then the formula
$$
K' = K \cup \bigcup_{x \in Y} \bigcup_{i \in I_\epsilon(x)} f^{i n}(x)
$$
defines an $n$-good and $N-1$-spanning set.
For $K'$ to be compact, we need to guarantee that
\begin{equation}\label{e.compactness}
\text{for any $i$, the set $\{x\in Y : \tilde I(x) \ni i \}$ is closed.}
\end{equation}

For $x \in Y$ and $\epsilon>0$, let
$B(x,\epsilon) = \{y\in Y : d(y,x)< \epsilon\}$.
Let
$J_\epsilon(x)=\bigcap_{y \in B(x,\epsilon)} J(y)$ and
$I_\epsilon(x)=\bigcap_{y \in B(x,\epsilon)} I(y)$.
For any $x \in Y$, there is $\epsilon(x)>0$ such that
$J_{\epsilon(x)}(x)=J(x)$, and hence $I_{\epsilon(x)}(x)=I(x)$.
Since $Y$ is compact, it can be covered by finitely many balls
$B(x_k, \epsilon(x_k))$.
Let $\epsilon$ be a Lebesgue number for this cover.
Then, for each $x \in Y$, there exists some $k$ such that
$B(x,\epsilon) \subset B(x_k, \epsilon(x_k))$ and therefore
$$
I_\epsilon(x) = \bigcap_{y \in B(x,\epsilon)} I(y) \supset \bigcap_{y \in B(x_k,\epsilon(x_k))} I(y)
= I_{\epsilon(x_k)}(x_k) = I(x_k) \neq \emptyset.
$$
Defining $\tilde I(x) = I_\epsilon(x)$, it is easy to see that \eqref{e.compactness}
is satisfied.
Thus there exist an $n$-good and $N-1$-spanning compact set, as we wanted to show.
\end{proof}

\begin{rem}
Motivated by this result, we pose the following question: If $f$
is a homeomorphism of a compact space (possibly of infinite
dimension) without periodic points, is it true that for every
$n\in \N$, there exists a compact set whose first $n$ iterates are
disjoint and such that finitely many iterates cover the whole
space? A related result is Theorem~3.1 in \cite{BC}.
\end{rem}

Proposition~\ref{p.setskn}
follows readily from the previous lemmas.

\section{From Slow Growth to Invariant Sections for the Disk Action} \label{s.disk}

\subsection{The Disk Action and the Adjustment Lemma}\label{ss.disk adjustment}

The group $\SL(2,\R)$ acts on the upper half-plane $\H =\{w\in
\C : {\mathrm{Im}\, z>0}\}$ as follows:
$$
A = \begin{pmatrix} a & b \\ c & d\end{pmatrix} \in \SL(2,\R) \ \Rightarrow \
A \cdot w = \frac{aw+b}{cw+d} \, .
$$
(In fact, the action factors through $\PSL(2,\R)$.)
We fix the following conformal equivalence between $\H$ and
the disk $\D = \{z \in \C : |z|<1\}$:
$$
w = \frac{-iz-i}{z-1} \in \H \ \leftrightarrow \ z =
\frac{w-i}{w+i} \in \D \, .
$$
Conjugating through this equivalence, we get an action of $\SL(2,\R)$
on $\D$, which we also denote as $(A,z) \mapsto A\cdot z$.

The disk is endowed with the Riemannian metric
$$
v \in T_z\D \ \Rightarrow \  \|v\|_z = \frac{2|v|}{1-|z|^2} \, .
$$
Let
$\mathrm{d}(\mathord{\cdot}, \mathord{\cdot})$ denote the induced distance function.
The group $\SL(2,\R)$ acts on $\D$ by isometries.

It can be shown that
\begin{equation}\label{e.norm}
\|A\| = e^{\mathrm{d}(A \cdot 0,0)/2} \quad \text{for all $A \in \SL(2,\R)$.}
\end{equation}
In particular, $A \in \SO(2,\R)$ iff $A \cdot 0 = 0$.

Let us recall Lemmas~5 and 6 from \cite{ABD}:

\begin{lemma}
\label{l.firstadjust}
There exists a continuous map $\Phi:\D \times \D \to \SL(2,\R)$ such that
$\Phi(p_1,p_2) \cdot p_1 = p_2$ and
$\|\Phi(p_1,p_2)- \Id\| \leq e^{\mathrm{d}(p_1,p_2)}-1$.
\end{lemma}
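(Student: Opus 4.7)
\emph{Plan.} I would construct $\Phi$ explicitly, using the standard ``translations'' of the disk. Since $\SL(2,\R)$ is conjugate to $\mathrm{SU}(1,1)$ by a unitary (the Cayley transform) that intertwines the two disk actions and preserves operator norms, it is cleanest to pass to the $\mathrm{SU}(1,1)$ model. For each $p \in \D$, I set
$$T_p = \frac{1}{\sqrt{1-|p|^2}} \begin{pmatrix} 1 & p \\ \bar p & 1 \end{pmatrix},$$
which sends $0$ to $p$ and satisfies $T_p^{-1} = T_{-p}$. My candidate is then
$$\Phi(p_1,p_2) := T_{p_2}\, T_{p_1}^{-1},$$
which is jointly continuous in $(p_1,p_2)$, equals $\Id$ on the diagonal, and by inspection satisfies $\Phi(p_1,p_2) \cdot p_1 = p_2$. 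The real work is the norm bound.

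Multiplying out will give
$$\Phi(p_1,p_2) = \frac{1}{s}\begin{pmatrix} \alpha & \beta \\ \bar\beta & \bar\alpha \end{pmatrix}, \quad \alpha = 1 - p_2\bar p_1,\ \beta = p_2 - p_1,\ s = \sqrt{(1-|p_1|^2)(1-|p_2|^2)}.$$
Combining the standard distance formula $|p_2 - p_1|/|1 - \bar p_1 p_2| = \tanh(d/2)$ (with $d = \mathrm{d}(p_1,p_2)$) with the relation $|\alpha|^2 - |\beta|^2 = s^2$ (which is exactly $\det \Phi = 1$) yields $|\alpha| = s\cosh(d/2)$ and $|\beta| = s\sinh(d/2)$. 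The next key observation is that $\Phi - \Id$ still has the $\mathrm{SU}(1,1)$ shape $\begin{pmatrix} a & b \\ \bar b & \bar a \end{pmatrix}$; for such matrices a short computation shows that the eigenvalues of $M^*M$ are $(|a|\pm|b|)^2$, giving the clean formula $\|M\| = |a| + |b|$.

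Applying this with $u := \mathrm{Re}(\alpha)/s$, I will get
$$\|\Phi - \Id\| \;=\; \frac{|\alpha - s| + |\beta|}{s} \;=\; \sqrt{\cosh^2(d/2) + 1 - 2u}\, +\, \sinh(d/2),$$
which is strictly decreasing in $u$. So the main remaining claim is a lower bound $u \geq 1$: this follows from $\mathrm{Re}(p_2 \bar p_1) \leq |p_1||p_2|$ together with the elementary inequality $(1-|p_1||p_2|)^2 \geq (1-|p_1|^2)(1-|p_2|^2)$, which reduces to $(|p_1|-|p_2|)^2 \geq 0$. Plugging $u = 1$ then yields $\|\Phi - \Id\| \leq 2\sinh(d/2)$, and the proof concludes by the identity $e^d - 1 - 2\sinh(d/2) = 2\sinh(d/2)(e^{d/2}-1) \geq 0$. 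The only nontrivial ingredient is the norm formula $\|M\| = |a|+|b|$ for matrices of $\mathrm{SU}(1,1)$ shape, and everything else is elementary; I do not expect real obstacles.
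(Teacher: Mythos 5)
Your construction is correct and complete. The paper itself gives no proof of this lemma: it is quoted verbatim from Lemma~5 of \cite{ABD}, so there is no in-text argument to compare against; your writeup supplies a self-contained proof. The route you take is the natural one: passing to the $\mathrm{SU}(1,1)$ model by the (unitary, hence norm-preserving) Cayley conjugation, your matrices $T_p$ are exactly the images of the unique positive symmetric elements of $\SL(2,\R)$ sending $0$ to $p$, so $\Phi(p_1,p_2)=T_{p_2}T_{p_1}^{-1}$ is the ``polar translation'' solution of $\Phi\cdot p_1=p_2$; note that a hyperbolic translation along the geodesic through $p_1,p_2$ would \emph{not} work here, since its distance to $\Id$ blows up as the axis recedes from $0$, so the specific choice matters and yours is a good one. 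All the computational ingredients check out: $|\alpha|^2-|\beta|^2=s^2$ is the determinant condition, $|\beta|/|\alpha|=\tanh(\mathrm{d}/2)$ is the standard pseudo-hyperbolic distance formula for the metric $2|dz|/(1-|z|^2)$, the eigenvalues of $M^*M$ for $M=\bigl(\begin{smallmatrix} a & b\\ \bar b & \bar a\end{smallmatrix}\bigr)$ are indeed $(|a|\pm|b|)^2$ so $\|M\|=|a|+|b|$ (this is also consistent with the paper's identity \eqref{e.norm}, since $\|T_p\|=(1+|p|)/\sqrt{1-|p|^2}=e^{\mathrm{d}(0,p)/2}$), and the inequality $u\ge 1$ follows from $\mathrm{Re}(p_2\bar p_1)\le|p_1||p_2|$ and $(1-|p_1||p_2|)^2\ge(1-|p_1|^2)(1-|p_2|^2)$ as you say. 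In fact you prove the sharper estimate $\|\Phi(p_1,p_2)-\Id\|\le 2\sinh\bigl(\mathrm{d}(p_1,p_2)/2\bigr)$, which implies the stated bound via your identity $e^{d}-1-2\sinh(d/2)=2\sinh(d/2)(e^{d/2}-1)\ge 0$; the weaker bound $e^{d}-1$ is all that is used elsewhere (e.g.\ in \S\ref{ss.invariant}), so nothing downstream is affected.
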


\begin{lemma}[Disk Adjustment]\label{l.adjust}
For every $n \geq 1$, there exists a continuous map $\Psi_n :
\SL(2,\R)^n \times \D^2 \to \SL(2,\R)^n$ such that if
$\Psi_n(A_1,\ldots,A_n,p,q) = (\tilde A_1,\ldots,\tilde A_n)$,
then
\begin{enumerate}
\item $\tilde A_n \cdots \tilde A_1 \cdot p = q$ and \item
$\|\tilde A_i A_i^{-1} - \Id\| \leq \exp \left( \tfrac{1}{2n}
\mathrm{d}(A_n \cdots A_1 \cdot p,q) \right)-1$ for $1 \leq i \leq
n$.
\end{enumerate}
\end{lemma}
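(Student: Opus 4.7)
The strategy is to distribute the needed correction evenly along a single hyperbolic geodesic, exploiting the sharp bound $\|T - \Id\| = e^{\delta/2} - 1$ available for a pure hyperbolic translation $T$ of displacement $\delta$---which is a factor of two sharper in the exponent than the generic bound of Lemma~\ref{l.firstadjust}. This extra $\tfrac12$ is precisely what accounts for the $\tfrac{1}{2n}$ appearing in the conclusion.

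First set $d := \mathrm{d}(A_n \cdots A_1 \cdot p, q)$ and let $\gamma : [0,1] \to \D$ be the constant-speed hyperbolic geodesic with $\gamma(0) = A_n \cdots A_1 \cdot p$ and $\gamma(1) = q$. Define intermediate target points
\[
\tilde p_i := (A_n A_{n-1} \cdots A_{i+1})^{-1} \cdot \gamma(i/n), \qquad i = 0, 1, \ldots, n,
\]
with the convention that the empty product (when $i=n$) equals $\Id$. A direct calculation gives $\tilde p_0 = p$ and $\tilde p_n = q$; moreover, because $A_n \cdots A_{i+1}$ acts isometrically on $\D$,
\[
\mathrm{d}\bigl(A_i \cdot \tilde p_{i-1},\, \tilde p_i\bigr) = \mathrm{d}\bigl(\gamma((i-1)/n),\, \gamma(i/n)\bigr) = d/n.
\]

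Now, for each $i$, let $T_i \in \SL(2,\R)$ be the unique pure hyperbolic translation carrying $A_i \cdot \tilde p_{i-1}$ to $\tilde p_i$ along the geodesic joining them (with $T_i = \Id$ when these two points coincide), and set $\tilde A_i := T_i A_i$. A telescoping calculation then gives $\tilde A_n \cdots \tilde A_1 \cdot p = \tilde p_n = q$, establishing (1). For (2), work in the $\mathrm{SU}(1,1)$ model and conjugate so that $A_i \cdot \tilde p_{i-1}$ becomes $0$ and $\tilde p_i$ becomes $\tanh(d/(2n))$ on the real axis; then $T_i$ takes the explicit form
\[
\begin{pmatrix} \cosh(d/(2n)) & \sinh(d/(2n)) \\ \sinh(d/(2n)) & \cosh(d/(2n)) \end{pmatrix},
\]
so $T_i - \Id$ is a real symmetric matrix with eigenvalues $e^{\pm d/(2n)} - 1$. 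The Cayley transform identifying $\mathrm{SU}(1,1)$ with $\SL(2,\R)$ is unitary and preserves the operator norm, hence $\|\tilde A_i A_i^{-1} - \Id\| = \|T_i - \Id\| = e^{d/(2n)} - 1$.

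It remains to verify continuity of $\Psi_n$ in all its arguments. The geodesic $\gamma$ depends continuously on its endpoints, so the same is true of each sample $\gamma(i/n)$ and hence of the $\tilde p_i$. The pure hyperbolic translation is a continuous function of the pair of points it joins; the only delicate case is the diagonal, where $d \to 0$ forces $\sinh(d/(2n)) \to 0$ and the translation tends to $\Id$ uniformly in direction. This is really the only subtlety in the argument---the rest of the construction is algebraic and manifestly continuous.
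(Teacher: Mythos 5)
Your setup (the subdivided geodesic, the intermediate points $\tilde p_i$, the telescoping giving $\tilde A_i\cdots\tilde A_1\cdot p=\tilde p_i$) is fine and matches the natural strategy, but the norm estimate in part (2) — the heart of the lemma — is wrong. You take $T_i$ to be the hyperbolic translation along the geodesic through $x_i:=A_i\cdot\tilde p_{i-1}$ and $\tilde p_i$ and claim $\|T_i-\Id\|=e^{d/(2n)}-1$ after ``conjugating so that $x_i$ becomes $0$''. That conjugation is by a M\"obius transformation which does not fix $0$; in the $\mathrm{SU}(1,1)$ picture such an element is not unitary (the unitary elements are exactly the stabilizer of $0$), so it does not preserve the operator norm — the unitarity of the Cayley transform is beside the point. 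The identity $\|T-\Id\|=e^{\delta/2}-1$ for a translation of length $\delta$ holds only when its axis passes through the disk center. In general, by \eqref{e.norm}, $\|T_i-\Id\|\ge\|T_i\|-1=e^{\mathrm{d}(T_i\cdot 0,\,0)/2}-1$, and for a hyperbolic translation of length $\delta$ whose axis lies at hyperbolic distance $R$ from $0$ one has $\sinh\bigl(\tfrac12\mathrm{d}(0,T\cdot 0)\bigr)=\cosh(R)\,\sinh(\delta/2)$. So already for $n=1$: take $p$ and $q$ at hyperbolic distance $R$ from $0$, separated angularly with $\mathrm{d}(p,q)=\delta$; the translation along their common geodesic (whose closest point to $0$ is the midpoint of $[p,q]$, at distance about $R$) satisfies $\|T-\Id\|\ge e^{R}\sinh(\delta/2)-1$, which blows up as $R\to\infty$ while the required bound is $e^{\delta/2}-1$. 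The geodesic translation is the cheapest isometry moving a point by $\delta$ only when that point is the center; near $\partial\D$ it is among the most expensive, and the cheap corrections there look entirely different (e.g.\ small rotations about $0$).

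Note also that the paper does not prove this lemma; it quotes it from \cite{ABD} (Lemmas 5 and 6 there), so the comparison can only be with that source. The real content of such statements is uniformity over the position of the points in $\D$: this is already what makes Lemma~\ref{l.firstadjust} nontrivial (its bound $e^{\mathrm{d}(p_1,p_2)}-1$ holds no matter where $p_1,p_2$ sit), and any proof of Lemma~\ref{l.adjust} must likewise choose the per-step correction in a position-aware way, with the factor $\tfrac12$ in the exponent extracted by genuine work rather than by the translation-length computation at the center. Your heuristic for the constant is therefore also off: subdividing the geodesic accounts for the $\tfrac1n$, but the $\tfrac12$ cannot be bought by using the intrinsic translation along the geodesic joining the two points, since its distance to the identity is not controlled by $\mathrm{d}(x,y)$ alone.
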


\subsection{Construction of an Invariant Section} \label{ss.invariant}

Let $f:X \to X$ be a homeomorphism of a compact metric space.
If $A \in C(X,\SL(2,\R))$, the cocycle $(f,A)$, as defined in the introduction,
is a skew-product on $X \times \SL(2,\R)$.  The disk action of $\SL(2,\R)$ allows us
to consider also another skew-product denoted by
$F^\D_{f,A}:X \times \D \to X \times \D$ and given by $F^\D_{f,A}(x,z)=
(x,A(x) \cdot z)$.  The existence of an invariant section for $F^\D_{f,A}$ is
easily seen to be equivalent to $(f,A)$ being conjugate to a cocycle of rotations.
Indeed, if $\Phi$ is given
by Lemma~\ref{l.firstadjust} and $B(x)=\Phi(z(x),0)$, then $B(f(x)) A(x) B(x)^{-1}$
is a rotation for every $x$.

If $A \in C(X,\SL(2,\R))$
(or, more, generally, $A(x)$ is a $2\times 2$ real matrix depending continuously on $x \in X$),
we denote $\|A\|_\infty = \sup_{x \in X} \|A(x)\|$.

\begin{prop}\label{p.disk adjust ext}
Let $f:X \to X$ be a homeomorphism of a compact metric space that admits
a minimal non-periodic finite-dimensional factor, and let $\Lambda \subset X$ be a {\rm (}possibly empty{\rm )}
compact invariant set.
For every $C$, $\epsilon >0$, there
exists $\gamma > 0$ such that the following holds.
Suppose that
$A:X \to \SL(2,\R)$ is continuous,
$$
\|A\|_\infty < C
\quad \text{and} \quad
\lim_{n \to \infty} \sup_{x \in X} \frac1n \log \|A^n(x)\| < \gamma,
$$
and that $F^\D_{f|\Lambda,A|\Lambda}$ admits an invariant section $z$.
Then $A|\Lambda$ extends to a continuous map $\tilde A:X \to \SL(2,\R)$
such that $\|\tilde A - A\|_\infty < \varepsilon$ and $z$ extends to
an invariant section $\tilde z$ for $F^\D_{f,\tilde A}$.
\end{prop}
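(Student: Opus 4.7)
The plan is to construct, via Lemma~\ref{l.almost invariant}, an almost invariant section for the disk action $F^\D_{f,A}$ that extends $z$, and then convert it into a genuine invariant section by applying the Disk Adjustment Lemma~\ref{l.adjust} to distribute a small correction over long orbit segments, thereby producing the required perturbation $\tilde A$ of $A$.

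First I would extend $z$ to a continuous map $y_0 : X \to \D$ using Tietze's theorem, and set $M_0 = \sup_X M(y_0)$ with $M(\cdot) = \mathrm{d}(\cdot, 0)$. Since $\SL(2,\R)$ acts on $\D$ by isometries, one has $|M(A \cdot y) - M(y)| \le \mathrm{d}(A \cdot 0, 0) = 2\log\|A\|$ by \eqref{e.norm}, hence $\triplebar{(F^\D_{f,A})^j}_M \le 2\log\|A^j\|_\infty$. The hypothesis $\lim_n \frac{1}{n}\log\|A^n\|_\infty < \gamma$ furnishes some $n_0 = n_0(A)$ with $\|A^j\|_\infty \le e^{\gamma j}$ for $j \ge n_0$. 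I would then pick $n \ge \max(n_0, M_0/\gamma)$, apply Proposition~\ref{p.setskn} to obtain a compact $K \subset X$ that is $n$-good, $(d+2)n-1$-spanning, and $d$-mild, and use Lemma~\ref{l.almost invariant} to produce a continuous almost invariant section $y : X \to \D$ that is supported in $\interior K$, agrees with $z$ on $\Lambda$, and satisfies $\sup_X M(y) \le M_0 + 2d\gamma\bigl((d+2)n - 1\bigr)$, which is $O(M_0)$ thanks to the choice of $n$.

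Next, to promote $y$ to a true invariant section I would modify $A$ to $\tilde A$ one tower piece at a time. The set $K$ organizes $X$ into a Kakutani-type tower: every point lies on an orbit segment $x, f(x), \ldots, f^{r(x)-1}(x)$ with $x, f^{r(x)}(x) \in K$ and $n \le r(x) \le (d+2)n-1$. On each such segment, Lemma~\ref{l.adjust} applied to $\bigl(A(x), \ldots, A(f^{r-1}(x)); y(x), y(f^{r}(x))\bigr)$ yields replacement matrices $\tilde A(f^i(x))$ with $\tilde A^r(x) \cdot y(x) = y(f^{r}(x))$; by the above bound on $\sup M(y)$, the displacement $\mathrm{d}(A^r(x) \cdot y(x), y(f^{r}(x)))$ is $O(M_0 + \gamma r)$, so each individual correction obeys $\|\tilde A(f^i(x)) A(f^i(x))^{-1} - \Id\| \le \exp(O(M_0/r + \gamma)) - 1 \le e^{O(\gamma)} - 1$, and hence $\|\tilde A - A\|_\infty \le C(e^{O(\gamma)} - 1) < \varepsilon$ once $\gamma$ has been chosen small in terms of $C$ and $\varepsilon$. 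The extended invariant section is then $\tilde z = y$ on $K$, propagated forward by $\tilde A$ elsewhere.

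The main obstacle I anticipate is continuity: the return time $r(x)$ jumps as $x$ varies across $K$, and $\tilde A$ must be globally continuous on $X$. I plan to handle this by mimicking the layered inductive construction in the proof of Lemma~\ref{l.almost invariant}, defining $\tilde A$ successively on the same nested closed sets $X^0 \subset X^1 \subset \cdots \subset X^d = X$: on each layer one uses the continuity of the map $\Psi_r$ from Lemma~\ref{l.adjust} to extend $\tilde A$ consistently along the tower pieces whose $r$-value is locally constant, and between layers one invokes the Tietze property to interpolate $\tilde A$ continuously into the interior of $K$. The section $\tilde z$ inherits continuity from the same construction, and by design $\tilde A|\Lambda = A|\Lambda$ and $\tilde z|\Lambda = z$, as required.
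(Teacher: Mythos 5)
Your first half coincides with the paper's argument: extend $z$ by the Tietze property, bound $\triplebar{(F^\D_{f,A})^j}_M$ by $2\log\|A^j\|_\infty$, choose $n$ large compared with $n_0$ and $M_0/\gamma$, take the $d$-mild, $n$-good, $N$-spanning set from Proposition~\ref{p.setskn}, and produce via Lemma~\ref{l.almost invariant} an almost invariant section supported in $\interior K$ with the stated $M$-bound. The quantitative estimate for the size of the correction per matrix is also fine.

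The gap is in the second half, where you apply Lemma~\ref{l.adjust} along full first-return segments of variable length $r(x)\in[n,N]$. The resulting $\tilde A$ is genuinely discontinuous at orbits that graze $\partial K$: if $x_k\to x$ in $K$ with $r(x_k)=r'>r(x)$ (so $f^{r(x)}(x)\in\partial K$), the limit of the data fed to $\Psi_{r'}$ at $x_k$ is the data at $x$ prolonged by unmodified matrices, but $\Psi_{r'}$ spreads its correction evenly over all $r'$ matrices, so its output does not converge to ``$\Psi_{r(x)}$ on the first block, $A$ afterwards''; the two prescriptions disagree on the first $r(x)$ levels even though the targets are compatible. Your proposed repair --- redoing the layered $X^0\subset\cdots\subset X^d$ induction and using the Tietze property to interpolate $\tilde A$ --- does not work as stated: unlike the section-valued setting of Lemma~\ref{l.almost invariant}, here the object being interpolated is the cocycle itself, and any interpolated value must still satisfy the pointwise constraint $\tilde A(w)\cdot\tilde z(w)=\tilde z(f(w))$ together with the bound $\|\tilde A-A\|_\infty<\varepsilon$, while $\tilde z$ itself is only defined through $\tilde A$; nothing in the sketch explains how to keep this coupled pair continuous and admissible across the interpolation regions. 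The paper sidesteps the whole difficulty by never using the return towers: it adjusts only on the $n$ pairwise disjoint levels $K,f(K),\ldots,f^{n-1}(K)$ (disjoint by $n$-goodness), applying $\Psi_n$ with the \emph{fixed} time $n$ and target $\hat z(f^n(x))$, and sets $\tilde A=A$ elsewhere. Since the almost invariant section is supported in $\interior K$ and the first $n-1$ forward iterates of $K$ avoid $K$, one has $A^n(x)\cdot\hat z(x)=\hat z(f^n(x))$ for $x\in\partial K$, so the correction vanishes there and continuity of $\tilde A$ (and of $\tilde z$, defined as $\tilde A^m(x)\cdot\hat z(x)$ on level $m$ and $\hat z$ elsewhere) is automatic; invariance on the remainder of each return segment holds for free because the section is already invariant off $\interior K$. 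That observation is the missing idea in your write-up.
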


\begin{proof}
Let $d$ be given by Proposition~\ref{p.setskn}.
Let $\gamma > 0$ be such that
$e^{[1 + 2d(d+1)] \gamma} < 1 + \varepsilon/C$.
Let $n_0$ be such that $n \ge n_0$ implies
$\|A^n(x)\| \le e^{n\gamma}$ for every $x \in X$.

Let $M:\D \to [0,\infty)$ be the hyperbolic distance to $0$.
Fix some extension of $z$ to an element of $C(X , \D)$ and
set $C_1 = \sup_{x \in X} M(z(x))$.
Choose $n \in \N$ so that
$n > \max \{n_0, \gamma^{-1} \log C, C_1/\gamma\}$.
Let $K$  be the corresponding
$d$-mild, $n$-good, $N$-spanning
set given by Proposition~\ref{p.setskn},
where $N = (d+2)n - 1$.

We have
\begin{alignat*}{2}
\triplebar{(F^\D_{f,A})^j}_M
&=   \sup_{x \in X} \sup_{w \in \D}  \left| \mathrm{d}(A^j(x) \cdot w,0) - \mathrm{d}(w,0) \right|
&\quad &\text{(by definition)}\\
&\le \sup_{x \in X} \left| \mathrm{d}(A^j(x) \cdot 0, 0) \right|
&\quad &\text{(since $\mathrm{d}$ is $\SL(2,\R)$-invariant)}\\
&\le 2\sup_{x \in X} \log \|A^j(x)\|
&\quad &\text{(by~\eqref{e.norm})}\\
&\le 2j \gamma, &\quad &\text{provided $j \ge n > n_0$.}
\end{alignat*}
Applying Lemma~\ref{l.almost invariant}
we obtain an almost invariant section $\hat z$ for $F^\D_{f,A}$, which is supported
in $\interior K$ and satisfies
$$
\sup_{x \in X} M(\hat z(x)) \le \sup_{x \in X} M(z(x))
+ d \max_{n \le j \le N} \triplebar{(F^\D_{f,A})^j}_M.
$$
Thus we obtain
$M(\hat z(x)) \le C_1 + 2 d N \gamma$
for every $x \in X$.

Let $\Psi_n$ be given by Lemma~\ref{l.adjust} and put
\begin{multline*}
\big(\tilde A(x), \tilde A(f(x)), \ldots, \tilde A(f^{n-1}(x))\big) = \\
= \Psi_{n}\big(A(x), A(f(x)), \ldots, A(f^{n-1}(x)), \hat z(x), \hat z(f^n(x)) \big),
\end{multline*}
for each $x \in K$.
This defines $\tilde A$  on $\bigsqcup_{m=0}^{n-1} f^m(K)$.
We let $\tilde A=A$ on the rest of $X$.  Clearly $\tilde A$ is a
continuous extension of $A|\Lambda$.

For each $x \in K$, and $1 \le m \le n - 1$,
let $\tilde z(f^m(x)) = \tilde A^m (x) \cdot \hat z(x)$.
This defines $\tilde z$ on $\bigsqcup_{m=1}^{n-1} f^m(K)$.
We let $\tilde z=\hat z$ on the rest of $X$.  It is easy to see that
$\tilde z$ is a continuous extension of $z|\Lambda$
and that $\tilde z$ is an invariant section for $\tilde A$.

To complete the proof, we need to check that
$\tilde A$ and $A$ are $\| \cdot \|_\infty$-close.
We have
\begin{align*}
\mathrm{d} \big( A^{n}(x) \cdot \hat z(x), \hat z(f^{n}x) \big)
&\le \mathrm{d}\big( A^{n}(x) \cdot \hat z(x), 0 \big)  + \mathrm{d}\big( 0,\hat z(f^{n}x) \big)    \\
&\le \mathrm{d}(\hat z(x), 0) + 2 \log\|A^{n}(x)\| + \mathrm{d} (0, \hat z(f^{n}x))\\
&\le 2(C_1 + 2dN\gamma) + 2n \gamma \\
&\le [4 + 2d(d+2) ] n \gamma.
\end{align*}
Hence, using Lemma~\ref{l.adjust},
\begin{align*}
\|\tilde A - A\|_\infty &\le C \|\tilde A A^{-1} - \Id\|_\infty \\
& \le C \sup_{x \in X} \left[ \exp \left[\tfrac{1}{2n} \mathrm{d} \big( A^{n}(x) \cdot \hat z(x),
\hat z(f^{n}x) \big)\right] - 1 \right] \\
& \le C(e^{[2+ d(d+2)] \gamma} - 1)\\
& < \varepsilon. \qedhere
\end{align*}
\end{proof}

Let us obtain Theorem~\ref{t.addendum} from the proposition just proved:

\begin{proof}[Proof of Theorem~\ref{t.addendum}]
Assume that $f:X \to X$ is a minimal homeomorphism with a finite-dimensional
non-periodic factor.
We claim that if $X$ is finite dimensional, or there exist at most countably many
ergodic invariant measures for $f$, then a generic $A \in
C(X,\SL(2,\R)) \setminus \cUH$ satisfies
\begin{equation} \label {e.unif}
\lim_{n \to \infty} \sup_{x \in X} \frac {1} {n} \log \|A^n(x)\|=0.
\end{equation}
In the first case,
this is the main result of \cite{AB}.
In the second case, \cite{Bochi} shows that for each ergodic invariant measure $\mu$,
for a generic $A \in C(X,\SL(2,\R))$, the Lyapunov exponent relative to
$\mu$ vanishes, so taking the (at most countable) intersection, we conclude that
for a generic $A \in C(X,\SL(2,\R))$, the Lyapunov exponent vanishes
simultaneously for all ergodic invariant measures, and it
follows from Proposition~1 in~\cite{AB} that~\eqref{e.unif} holds.

Applying Proposition \ref {p.disk adjust ext} with $\Lambda=\emptyset$, we
conclude that any cocycle  $A \in C(X,\SL(2,\R)) \setminus \cUH$ can be approximated by some
$\tilde A$ that is conjugate to a cocycle of rotations.
\end{proof}

\section{Solving the Cohomological Equation} \label{s.cohom}

\def\new{\mathrm{new}}

\emph{From now on, we assume as usual that $f:X \to X$
is strictly ergodic with invariant probability measure $\mu$
and has a non-periodic finite-dimensional factor.}

\begin{lemma}\label{l.line adjust ext}
Let $T$ be a locally compact separable metric space and let $T^*
\subset T$ be a closed subset {\rm (}possibly empty{\rm )}. Let $\varphi \in
C^0 (T \times X, \R)$ and $\psi \in C( T^* \times X, \R)$ be such
that $\varphi(t,x) = \psi(t,f(x)) - \psi(t,x) + c(t)$ for $(t,x)
\in T^* \times X$, where $c(t) = \int \varphi(t,\mathord{\cdot})
\, d\mu$. Then, for every continuous function $\varepsilon:T \to
\R_+$, there exist $\tilde \varphi \in C(T \times X, \R)$,
coinciding with $\varphi$ on $T^* \times X$, and $\tilde \psi  \in
C(T \times X, \R)$, coinciding with $\psi$ on $T^* \times X$, such
that $|\tilde \varphi(t,x) - \varphi(t,x)| < \varepsilon(t)$ and
$\tilde \varphi(t,x) = \tilde \psi(t,f(x)) - \tilde \psi(t,x) +
c(t)$ for all $(t,x) \in T \times X$.
\end{lemma}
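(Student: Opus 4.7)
The plan is to recast the problem as the construction of a parametrized almost-invariant section for an $\R$-valued skew-product, and then to apply the machinery of Sections~\ref{s.almost inv}--\ref{s.controlled}.

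First, using Tietze's theorem, I would extend $\psi$ continuously to $\psi_0 \in C(T \times X, \R)$. Writing $\hat f(t,x) = (t, f(x))$ and $g := \varphi - (\psi_0 \circ \hat f - \psi_0 + c) \in C(T \times X, \R)$, the hypotheses on $\varphi$, $\psi$, $c$ force $g \equiv 0$ on $T^* \times X$ and $\int g(t,\cdot)\,d\mu = 0$ for every $t \in T$. The problem then reduces to producing $\eta \in C(T \times X, \R)$ with $\eta \equiv 0$ on $T^* \times X$ and $|\eta(t,f(x)) - \eta(t,x) - g(t,x)| < \varepsilon(t)$ everywhere; once this is achieved, $\tilde\psi := \psi_0 + \eta$ and $\tilde\varphi := \tilde\psi \circ \hat f - \tilde\psi + c$ satisfy the conclusions, since $\tilde\varphi - \varphi = \eta \circ \hat f - \eta - g$.

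The core construction of $\eta$ is an application of Lemma~\ref{l.almost invariant} to the skew-product $\hat F(t,x,y) = (t, f(x), y + g(t,x))$ over $\hat f$, with pair $(Y,M) = (\R, |\cdot|)$ (which has the Tietze property), invariant set $\hat\Lambda = T^* \times X$, and trivial initial section $y_0 \equiv 0$ (genuinely invariant on $\hat\Lambda$ because $g$ vanishes there). Since $\hat f$ acts trivially on the $T$-coordinate, any set of the form $T' \times K$ with $T' \subset T$ compact inherits the $n$-good, $N$-spanning, $d$-mild properties from a $K \subset X$ furnished by Proposition~\ref{p.setskn}. The quantitative input is strict ergodicity: by the uniform Birkhoff theorem, for every compact $T' \subset T$ and $\delta > 0$ one can choose $n$ so large that the Birkhoff sums of $g(t,\cdot)$ up to length $N = (d+2)n - 1$ are bounded in absolute value by $j\delta$, uniformly in $(t,x) \in T' \times X$ and $n \le j \le N$. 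The lemma then yields a continuous section $\eta_{T'} : T' \times X \to \R$, vanishing on $(T^* \cap T') \times X$, supported in $T' \times \interior K$, with both its sup-norm and its cohomological defect bounded by $O(dN\delta)$.

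The main obstacle is that $T$ is only locally compact and that the target bound $\varepsilon(t)$ is pointwise rather than uniform, so a single invocation of the lemma will not suffice. My plan is to iterate along a compact exhaustion $T_1 \subset T_2 \subset \cdots$ of $T$ with $T_j \subset \interior T_{j+1}$ and $\bigcup_j T_j = T$: at stage $j$, apply Lemma~\ref{l.almost invariant} on the compact base $T_{j+1} \times X$ with invariant set $((T^* \cap T_{j+1}) \cup T_{j-1}) \times X$ and, as initial section $y_0$, the previously-built $\eta$ (suitably extended through Tietze beyond $T_j$); the recurrence parameters $(n_j, K_j, \delta_j)$ are chosen so that the new contribution to $\eta$ is supported in the annulus $(T_{j+1} \setminus T_{j-1}) \times \interior K_j$ and its defect is below $\inf_{T_{j+1}} \varepsilon$. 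The delicate bookkeeping required to guarantee that the previously-built $\eta$ is genuinely almost-invariant with support disjoint from the new invariant set, and that the successive extensions assemble continuously to a global $\eta$ that meets the pointwise target, is where I expect the bulk of the technical effort to lie.
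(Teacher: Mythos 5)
Your reduction (subtract a Tietze extension $\psi_0$ of $\psi$, set $g=\varphi-(\psi_0\circ\hat f-\psi_0+c)$, and seek $\eta$ vanishing on $T^*\times X$ with $|\eta\circ\hat f-\eta-g|<\varepsilon$) is fine, and the decision to work with the $\R$-valued skew-product, Lemma~\ref{l.almost invariant} and Proposition~\ref{p.setskn} is the right framework — it is the paper's. But the core step has a genuine gap: Lemma~\ref{l.almost invariant} controls only the sup-norm of the section it produces, not its cohomological defect on the support. On $T'\times\interior K$ the defect of $\eta_{T'}$ is merely bounded by $2\|\eta_{T'}\|_\infty+\|g\|_\infty$, and $\|g\|_\infty$ is a fixed, non-small quantity; so the claim that the defect is $O(dN\delta)$ is unjustified. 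Worse, even the sup-norm bound $dN\delta$ is not small: $n$ (hence $N\approx(d+2)n$) must be taken large depending on $\delta$ so that Birkhoff sums of length $\ge n$ are within $j\delta$ of $jc(t)$, and nothing forces $N\delta\to 0$ as $\delta\to 0$. What is small is $dN\delta/n\le d(d+2)\delta$, and exploiting this requires an extra step your sketch omits: using that $K$ is $n$-good, spread the total correction needed over a block of $n$ disjoint iterates. Concretely, the paper keeps the almost invariant section $\hat\psi$ as is, but modifies $\varphi$ on $\bigsqcup_{m=0}^{n-1}f^m(K)$ by the averaged amount $\tfrac1n\bigl[\hat\psi(t,f^n x)-\hat\psi(t,x)-S_n(t,x)+nc(t)\bigr]$ and then propagates $\hat\psi$ through the tower to define $\tilde\psi$; the perturbation of $\varphi$ is then at most $\tfrac{2\|\psi\|_\infty}{n}+[2d(d+2)+1]\delta<\varepsilon$. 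Without this averaging over the $n$-good tower you cannot reach the pointwise bound $<\varepsilon(t)$.

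A secondary issue is the exhaustion step. At stage $j$ you propose to feed the previously built $\eta$ into Lemma~\ref{l.almost invariant} as an initial section that is invariant on $T_{j-1}\times X$; but your $\eta$ only satisfies the \emph{approximate} equation there, whereas the lemma needs exact invariance off the support. The paper resolves this by also interpolating the data: it chooses $\varphi_i$ on $T_{i+1}\times X$ coinciding with the already constructed $\tilde\varphi$ on $T_i\times X$ and with $\varphi$ on $(T^*\cap T_{i+1})\times X$, with fiber means $c(t)$ and within $\varepsilon(t)$ of $\varphi$, and then reapplies the compact case with $T^*_{\mathrm{new}}=T_i\cup(T_{i+1}\cap T^*)$ and the leftover error budget $\varepsilon(t)-\sup_x|\varphi_i(t,x)-\varphi(t,x)|$. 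This part of your plan is repairable along those lines, but as written it is not yet a proof; the missing averaging argument in the compact case is the essential point.
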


The proof of this proposition is somewhat similar to Proposition~\ref{p.disk adjust ext},
using the abstract Lemma~\ref{l.almost invariant}
and Proposition~\ref{p.setskn}.

\begin{proof}

Let us assume first that $T$ is compact.  In this case, we may replace the
function $\varepsilon$ by a positive number.
Let the functions $\varphi$ and $\psi$ be given.
Let
$S_j(t,x) = \sum_{m=0}^{j-1} \varphi (t, f^m(x))$ and $c(t) = \int \varphi(t,x) \, d\mu(x)$.
Using that $T$ is compact, it is easy to see that
$\frac1j S_j(t,x)$ converges to $c(t)$ uniformly over $(t,x) \in T \times X$.

Let $d$ be given by Proposition~\ref{p.setskn}. Choose $\delta >
0$ with $[2d(d+2) + 1] \delta < \varepsilon$, and choose $j_0$
such that $\left| \frac1j S_j(t,x) - c(t)\right|< \delta$,
uniformly for $(t,x)\in T \times X$, and $j \ge j_0$. Then choose
$n \ge j_0$ such that $\frac{2\|\psi\|_\infty}{n} + [2d(d+2) + 1]
\delta < \varepsilon$. (Here $\|\psi\|_\infty$ is meant to be $0$
in the case that $T^* = \emptyset$.) By
Proposition~\ref{p.setskn}, there is a compact set $K$ that is
$d$-mild, $n$-good, and $N$-spanning, where $N = (d+2)n - 1$.

Let $\hat{X} = T \times X$, and $\hat f: \hat{X}\ni (t,x) \mapsto (t, f(x))$.
Consider the skew-product on $\hat{X} \times \R$ over $\hat{f}$ given by
$$
F = F^\R_{\hat{f}, \varphi}: (t,x,w) \mapsto  (t, f(x), w + \varphi(t,x) - c(t)).
$$
Let $\Lambda = T^* \times X$. By assumption, the map $\psi \in
C(\Lambda, \R)$ is an invariant section of the skew product
$F|\Lambda \times \R$ over $\hat{f}|\Lambda$. Let $\psi_0 \in
C(\hat{X}, \R)$ be a continuous extension of $\psi$, so that
$\|\psi_0 \|_\infty = \|\psi\|_\infty$. (If $T^*= \emptyset$, take
$\psi_0 \equiv 0$.) Then $\psi_0$ is an almost invariant section
for the skew-product $F$ supported outside $\Lambda$. Applying
Lemma~\ref{l.almost invariant} with the pair $(Y,M) = (\R,
|\mathord{\cdot}|)$, we obtain an almost invariant section $\hat
\psi \in C(\hat{X}, \R)$ supported in $\interior K$ (i.e.,
$\varphi(t,x) = \hat \psi(t,f(x)) - \hat \psi(t,x) + c(t)$,
provided $x \not \in \interior K$) that coincides with $\psi_0$
(and hence with $\psi$) on $\Lambda$, and
\begin{align*}
\| \hat \psi \|_\infty
&\le \|\psi_0\|_\infty + d \max_{j \in [n, N]} \triplebar{F^j}_M \\
&=   \|\psi\|_\infty   + d \max_{j \in [n, N]} \sup_{(t,x)} \left| S_j(t,x) - j c(t) \right| \\
&<   \|\psi\|_\infty   + d N \delta .
\end{align*}

Let us define a function $\tilde \varphi:\hat{X} \to \R$.
For each $t\in T$, $x \in K$, and $0 \le m < n$, set
$$
\tilde\varphi (t,f^m(x)) =
\varphi(t,f^m(x)) + \frac{\hat \psi(t,f^{n}(x)) - \hat \psi(t,x) - S_{n}(t,x) + n c(t)}{n}.
$$
This defines $\tilde \varphi$ on $T \times \bigsqcup_{m=0}^{n-1} f^n(K)$;
let it equal $\varphi$ in the complement of this set.
Then $\tilde \varphi$ is continuous and
$$
\|\tilde \varphi - \varphi\|_\infty
\le \frac{2\|\psi\|_\infty}{n} + [2d(d+2) + 1] \delta
< \varepsilon.
$$
Notice that if $t \in T^*$, then $\hat\psi(t,x) = \psi_0(t,x) =
\psi(t,x)$ and thus $\tilde \varphi(t,x) = \varphi(t,x)$.

Let us define another function $\tilde \psi:\hat{X} \to \R$.
For each $t\in T$, $x \in K$, and $0 \le m < n$, set
$$
\tilde \psi (t,f^m (x)) = \hat \psi(t,x) + \sum_{j=0}^{m-1}[\tilde\varphi(t,f^j(x)) - c(t)].
$$
This defines $\tilde \psi$ on $T \times \bigsqcup_{m=0}^{n-1} f^n(K)$;
let it equal $\hat\psi$ in the complement of this set.
Then $\tilde \psi$ is continuous and
$\tilde \varphi(t,x) = \tilde \psi(t,f(x)) - \tilde \psi(t,x) +  c(t)$ for all $(t,x) \in T \times X$.
Also, $\tilde \psi(t,x) = \psi(t,x)$ when $t \in T^*$.
So the functions $\tilde \varphi$ and $\tilde \psi$ have all the desired properties.

This concludes the proof in the case where $T$ is compact. Let us
now consider the general case.  The hypotheses on $T$ imply that
there exists an exhaustion of $T$ in the sense that
$T=\bigcup_{i=1}^\infty T_i$ with $T_i$ compact and $T_i \subset
\interior T_{i+1}$.

We initially define $\tilde \varphi$ and $\tilde \psi$ on
$T_1 \times X$, by applying the compact case with the data $T_\new=T_1$,
$T^*_\new=T_1 \cap T^*$, $\varphi_\new=\varphi|T_1$, $\psi_\new=\psi|(T_1 \cap T^*)$,
$\varepsilon_\new=\varepsilon|T_1$: this yields functions $\tilde \varphi_\new,\tilde
\psi_\new:T_1 \times X \to \R$, which we take as the definition of
$\tilde \varphi$ and $\tilde \psi$ on $T_1 \times X$.  By construction,
all requirements are
satisfied when we restrict considerations to $T_1 \times X$.

Assume we have already defined
$\tilde \varphi$ and $\tilde \psi$
on $T_i \times X$
so that all requirements are satisfied when
considerations are restricted to $T_i \times X$.
Choose a continuous function $\varphi_i:T_{i+1} \times X \to \R$
that coincides with $\tilde \varphi$ on $T_i \times X$ and with
$\varphi$ on $(T^* \cap T_{i+1}) \times X$, such that $\int \varphi_i(t,x)
d\mu(x)=c(t)$ for every $t \in T_{i+1}$ and satisfying
$|\varphi_i(t,x)-\varphi(t,x)|<\varepsilon(t)$ for every $(t,x) \in T_{i+1}
\times X$.
Now apply the compact case again with data
$T_\new=T_{i+1}$, $T^*_\new=T_i \cup (T_{i+1} \cap T^*)$,
$\varphi_\new=\varphi_i$, $\psi_\new=\tilde \psi|(T_i \cup (T_{i+1} \cap T^*))$ and
$\varepsilon_\new(t)=\varepsilon(t)-\sup_{x \in X}
|\varphi_i(t,x)-\varphi(t,x)|$. This yields functions $\tilde
\varphi_\new,\tilde \psi_\new:T_{i+1} \times X \to \R$, which we take as the
definitions of $\tilde \varphi$ and $\tilde \psi$ on $T_{i+1} \times X$.
Notice that, by construction,
the new definitions extend the previous ones, and
satisfy all the
requirements when we restrict considerations to $T_{i+1} \times X$.

By induction, this procedure yields functions $\tilde
\varphi$, $\tilde \psi:T \times X \to \R$. These functions are continuous
since they locally coincide with a continuous function obtained in a finite
stage of the induction (here we use that $\bigcup \interior T_i=T$), and
satisfy all the other requirements over $T \times X$ (since those can be
also verified at a finite stage of the induction).
\end{proof}

Now we are able to give the:
\begin{proof}[Proof of Theorem~\ref{t.reduce hyp}]
Let $A\in \cUH$, and let $D\in \SL(2,\R)$ be such that $L(D)=L(A)$.
Let us indicate
$$
\Delta(r) = \begin{pmatrix} e^r & 0 \\ 0 & e^{-r} \end{pmatrix}.
$$
We can assume that $D = \Delta(c)$  for some $c \in \R$.
It is not difficult (see, e.g., the proof of Lemma~4 in \cite{ABD})
to show that there are continuous maps $B:X \to \PSL(2,\R)$ and $\varphi: X \to \R$
such that
$A(x) = B(fx)^{-1} \, \Delta(\varphi(x)) \, B(x)$ (in $\PSL(2,\R)$)
and $\int \varphi \, d\mu = c$.

Apply Lemma~\ref{l.line adjust ext} (with $T = (0,1]$ and $T^* = \emptyset$)
to find continuous functions $\tilde \varphi$, $\tilde \psi: (0,1] \times X \to \R$ such that
\begin{align*}
| &\tilde \varphi(t,x) - \varphi(x) | < \eps(t) \quad \text{where $\eps(t)\to 0$ as $t\to 0$,}\\
&\tilde \varphi(t,x) = \tilde \psi(t, fx) - \tilde \psi(t,x) + c.
\end{align*}
Define
$$
A_t(x) = B(fx)^{-1} \, \Delta (\varphi (t,x)) \, B(x)
\quad\text{and}\quad
B_t(x) =  \Delta(-\tilde \psi(t,x)) \, B(x).
$$
Then $A_t \to A$ as $t \to 0$ and $A_t(x) = B_t(fx) \, D \, B_t(x)^{-1}$,
as desired.
\end{proof}

\section{Extending Families of Cocycles: Small Lyapunov Exponents} \label{s.cube 1}

Let $T$ be a topological space.
We say that $A_t$, $t \in T$ is a \emph{continuous family of cocycles}
if $A_t \in C(X,\SL(2,\R))$ for every
$t \in T$ and the map $T \ni t \mapsto A_t \in C(X,\SL(2,\R))$ is continuous.
If $\cB$ is a subset of $C(X,\SL(2,\R))$ that contains $\{A_t : t \in T\}$
and such that the map $T \ni t \mapsto A_t \in \cB$ is homotopic to a constant,
then we say that that \emph{the family of cocycles $A_t$ is contractible in $\cB$.}
For example,
a continuous family of cocycles $A_t$, $t \in \partial [0,1]^p$ taking values in a set $\cB$
is contractible in $\cB$ if and only if there is an extended continuous family of cocycles $A_t$, $t \in [0,1]^p$
also taking values in $\cB$.

Given a continuous family of cocycles $A_t$, $t \in T$, we say that
$z_t$, $t \in T$ is a \emph{continuous family of sections} if
each $t \in T$ corresponds to an invariant section
$z_t \in C(X,\D)$ for the skew-product $F^\D_{f,A_t}$ (see \S\ref{ss.invariant}),
and the map $T \ni t \mapsto z_t \in C(X,\D)$ is continuous.

\medskip

The parameter spaces that we use next are the $p$-dimensional cube
$[0,1]^p$ and its boundary $\partial [0,1]^p$. The goal of this
section is to prove the following result:

\begin{lemma}\label{l.reduction}
Let $p \geq 1$ and let $\cB \subset C(X, \SL(2,\R))$ be a bounded open set.
Let $A_t$, $t \in \partial [0,1]^p$ be a continuous family of cocycles that is contractible in $\cB$,
and such that $L(A_t) = 0$ for all $t \in \partial [0,1]^p$.
Then, for any $\gamma > 0$, we can find a continuous
family of cocycles $\hat{A}_t$, $t\in [0,1]^p$
taking values in $\cB$ and extending the family $A_t$, $t \in \partial [0,1]^p$,
such that $L(\hat A_t) < \gamma$ for every $t \in [0,1]^p$.
\end{lemma}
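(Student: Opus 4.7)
The goal is a continuous family $\hat A_t$, $t\in[0,1]^p$, lying in $\cB$, extending the given boundary family and satisfying $L(\hat A_t)<\gamma$. My plan is in fact to produce an extension for which the disk skew-product $F^\D_{f,\hat A_t}$ admits a continuous invariant section, so that $\hat A_t$ is continuously conjugate to a cocycle of rotations (via Lemma~\ref{l.firstadjust}) and hence has $L(\hat A_t)=0$, which is stronger than what is required. Thus the task reduces to the parametric construction of an invariant disk section.

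\emph{Step 1 (measurable invariant sections on the boundary).} For each $t\in\partial[0,1]^p$, the vanishing $L(A_t)=0$ together with strict ergodicity of $(f,\mu)$ forces the $\SL(2,\R)$-cocycle $A_t$ to preserve (in a $\mu$-measurable sense) a conformal structure, which we encode as a $\mu$-measurable invariant disk section $z_t:X\to\D$ of $F^\D_{f,A_t}$. A standard measurable selection argument organizes these sections into a jointly measurable map $(t,x)\mapsto z_t(x)$ on $\partial[0,1]^p\times X$.

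\emph{Step 2 (back to the continuous category).} Luzin's theorem provides, for a prescribed $\eta>0$, a compact set $X_\eta\subset X$ with $\mu(X\setminus X_\eta)<\eta$ on which $(t,x)\mapsto z_t(x)$ is continuous. Using Proposition~\ref{p.setskn}, I would pick a compact set $K\subset X$ that is $d$-mild, $n$-good and $(d+2)n-1$-spanning, with $n$ large and arranged so that $X\setminus X_\eta\subset\interior K$. The parametric form of Lemma~\ref{l.almost invariant} applied to $(Y,M)=(\D,\mathrm{d}(\cdot,0))$ then upgrades $z_t|_{X_\eta}$ (extended continuously by Tietze) to a continuous almost invariant section $\tilde z_t$ over $\partial[0,1]^p\times X$, supported in $\interior K$, whose hyperbolic size is controlled by $\max_{n\le j\le (d+2)n-1}\triplebar{(F^\D_{f,A_t})^j}_M$.

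\emph{Step 3 (extension and adjustment into the interior).} Uniform subexponential growth on the boundary, $\lim_n n^{-1}\sup_{t\in\partial[0,1]^p,\,x\in X}\log\|A_t^n(x)\|=0$, follows from the pointwise vanishing $L(A_t)=0$, the compactness of $\partial[0,1]^p$, and unique ergodicity, via the uniform subadditive ergodic theorem. This bounds the quantities $\triplebar{(F^\D_{f,A_t})^j}_M$ in Step~2 as tightly as desired. By the contractibility hypothesis I extend $A_t$ and $\tilde z_t$ continuously to all of $[0,1]^p$ inside $\cB$. A parametric variant of Proposition~\ref{p.disk adjust ext}, applied with $\Lambda$ taken to be $\partial[0,1]^p\times X$ (so that $A_t$ is preserved on the boundary), then perturbs the cocycle by an arbitrarily small amount to turn $\tilde z_t$ into a genuine continuous invariant section. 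The output $\hat A_t$ remains inside $\cB$ when the perturbation is chosen small enough, and since it admits a continuous invariant disk section, Lemma~\ref{l.firstadjust} gives a conjugacy of $\hat A_t$ to an $\SO(2,\R)$-valued cocycle, whence $L(\hat A_t)=0<\gamma$.

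\emph{Main obstacle.} The crux is the passage between the measurable and continuous categories while leaving the boundary cocycles $A_t$ themselves untouched. The Luzin bad set $X\setminus X_\eta$ has to be absorbed into a recurrent set $K$ whose scale $n$ is simultaneously large enough to make the almost invariance defect small, yet not so large that the spanning multiplicity $(d+2)n-1$ ruins the quantitative estimates; Proposition~\ref{p.setskn}, and with it the finite-dimensional factor hypothesis on $f$, is tailored precisely to reconcile these demands. Keeping $\hat A_t$ inside the bounded open set $\cB$ requires the smallness of the adjustment in Proposition~\ref{p.disk adjust ext}, which in turn requires uniform subexponential growth --- this is exactly why the hypothesis $L(A_t)=0$ on $\partial[0,1]^p$ is the right one.
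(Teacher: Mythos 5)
Your strategy has two fatal gaps, and both concern steps that the paper's actual proof is specifically designed to avoid. First, Step 1 is false: vanishing of the Lyapunov exponent does \emph{not} imply the existence of a $\mu$-measurable invariant section for the disk action (i.e.\ measurable conjugacy to rotations). A constant unipotent cocycle $A(x)=\begin{pmatrix}1&1\\0&1\end{pmatrix}$, or $A(x)=\begin{pmatrix}1&\phi(x)\\0&1\end{pmatrix}$ with $\int\phi\,d\mu=0$ but $\phi$ not a coboundary, has $L(A)=0$ yet admits no invariant disk section, measurable or otherwise (any candidate section would be pushed to $\partial\D$ along orbits, contradicting recurrence). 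Worse, your overall goal --- a continuous invariant section for the extended family over the \emph{closed} cube while keeping $\hat A_t=A_t$ on $\partial[0,1]^p$ --- would force each boundary cocycle to be conjugate to rotations, hence bounded, which simply does not follow from $L(A_t)=0$; this is exactly why the paper's Proposition~\ref{p.extension1} must \emph{assume} a continuous family of sections on the boundary, and why Proposition~\ref{p.extension2} only produces sections on the open cube $(0,1)^p$. Second, in Step 3 you invoke a parametric Proposition~\ref{p.disk adjust ext} with $\Lambda=X\times\partial[0,1]^p$, but its hypothesis is uniform subexponential growth over the \emph{whole} parametrized space, and the extension of $A_t$ into the interior provided by contractibility carries no control whatsoever on its Lyapunov exponents: the hypothesis $L(A_t)=0$ lives only on the boundary. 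Producing small exponents in the interior is precisely the content of Lemma~\ref{l.reduction}; your argument presupposes it. (A smaller but real problem: you cannot ``arrange'' the Luzin bad set to lie in $\interior K$ --- Proposition~\ref{p.setskn} gives no control on the location of $K$, which moreover has measure at most $1/n$ since it is $n$-good.)

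For contrast, the paper's proof kills the interior exponents by a genuinely different mechanism: it extends the family by contractibility, then modifies it on a thin tower so that the induced cocycle on the base of the tower preserves a finite-valued measurable pair of directions which are swapped exactly on a set that is \emph{not} a coboundary (Lemma~\ref{l.noncbdry}); by Lemma~\ref{l.swap} and Lemma~\ref{l.induced cocycle} this forces $L=0$ for every parameter. This produces only an almost-continuous (measurable) family, and one returns to the continuous category by the parametrized Lusin-type Lemma~\ref{l.param lusin} together with the upper semicontinuity statement Lemma~\ref{l.semicont}, accepting $L<\gamma$ rather than $L=0$; finally one interpolates near $\partial[0,1]^p$ to restore the original boundary values, using the weak-metric robustness from Lemma~\ref{l.semicont}(b). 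No invariant sections are ever constructed in this lemma --- they come later, in Section~\ref{s.cube 2}, after the exponents have been made small.
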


The proof of Lemma~\ref{l.reduction} requires several preliminaries.
A measurable set $Y\subset X$ is called a \emph{coboundary}
if there is a measurable set $Z$ such that $Y = Z \vartriangle
f^{-1}(Z)$ $\mu$-mod~$0$.
Non-coboundaries are relatively easy to find:

\begin{lemma}[Corollary~3.5 from \cite{Knill}]\label{l.noncbdry}
If $Z\subset X$ is a positive measure set, then there is a
measurable set $Y \subset Z$ that is not a coboundary.
\end{lemma}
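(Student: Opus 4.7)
The plan is to argue by contradiction. Suppose every measurable $Y \subset Z$ is a coboundary. Since $f$ is strictly ergodic with invariant measure $\mu$ and admits a non-periodic finite-dimensional factor, $f$ is ergodic and aperiodic (has no periodic orbit).

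The first step is to translate the coboundary condition into a dynamical ergodicity statement. For $Y \subset X$ measurable, consider the skew-product $\tilde{f}_Y : X \times \Z/2\Z \to X \times \Z/2\Z$ given by $\tilde{f}_Y(x,s) = (f(x), s + \mathbf{1}_Y(x))$, preserving the product of $\mu$ with the uniform probability on $\Z/2\Z$. A direct verification shows that $Y = W \vartriangle f^{-1}(W)$ for some measurable $W$ if and only if $W \times \{0\} \cup W^c \times \{1\}$ is a non-trivial $\tilde{f}_Y$-invariant set; conversely, any non-trivial $\tilde{f}_Y$-invariant set either has this form or is a lift of an $f$-invariant set (which is ruled out by ergodicity of $f$). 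Thus the contradiction hypothesis becomes: for every $Y \subset Z$, the extension $\tilde{f}_Y$ is non-ergodic.

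The second step is to exhibit $Y \subset Z$ for which $\tilde{f}_Y$ is ergodic. Since $f$ is aperiodic, the Kakutani--Rohlin lemma, applied (for instance) to the first-return map of $f$ to $Z$, yields for every $n \in \N$ and $\varepsilon > 0$ a base $B \subset Z$ whose $n$ first $f$-iterates are disjoint and cover all but a set of measure $\varepsilon$. Over a sequence of such towers with rapidly increasing heights, one defines $Y$ recursively by selecting, at each stage, pieces of the successive bases so as to ensure that the $\Z/2\Z$-valued Birkhoff sums $c(n,x) = \sum_{k=0}^{n-1} \mathbf{1}_Y(f^k(x))$ admit no measurable transfer function $g : X \to \Z/2\Z$ with $c(n,x) = g(f^n(x)) + g(x) \bmod 2$; this absence is equivalent to ergodicity of $\tilde{f}_Y$.

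The principal obstacle is the verification in the second step: ensuring the recursively constructed $Y$ actually yields an ergodic extension. A convenient route is a Baire category argument in the Polish group $(\cM(Z), \vartriangle)$ of measurable subsets of $Z$ modulo null sets, showing that the set of $Y$ producing ergodic $\tilde{f}_Y$ contains a dense $G_\delta$, with the density supplied precisely by the Rohlin tower approximations. This Baire scheme, used to defeat all possible transfer functions $g$ simultaneously, is the technical core of Knill's argument in \cite{Knill}.
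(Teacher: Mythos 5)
The paper itself does not prove this lemma at all---it is quoted verbatim as Corollary~3.5 of Knill's paper \cite{Knill}---so there is no internal argument to compare with; what matters is whether your sketch stands on its own, and at present it does not. Your Step~1 is correct: for an ergodic base, $Y$ is a coboundary if and only if the two-point extension $\tilde f_Y(x,s)=(f(x),s+\mathbf{1}_Y(x))$ is non-ergodic, by the usual fibrewise analysis of invariant sets. The gap is in Step~2, which is where the entire content of the lemma lies. You assert that a recursive choice of pieces of Rokhlin bases ``ensures'' that no measurable transfer function exists, and, as a fallback, that the set of $Y\subset Z$ with $\tilde f_Y$ ergodic is a dense $G_\delta$ in $(\mathcal{M}(Z),\vartriangle)$, ``with the density supplied precisely by the Rohlin tower approximations.'' But density at a single point (say at $Y_0=\emptyset$) already implies the lemma, so it cannot simply be invoked: you must exhibit the countably many open conditions whose intersection is ergodicity and then show, for a given $Y_0\subset Z$, $\varepsilon>0$ and one such condition, how modifying $Y_0$ on a subset of a tower base contained in $Z$ realizes that condition. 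Neither the open conditions nor the perturbation defeating a candidate invariant function is given. Note also that the classical Jones--Parry genericity of ergodic compact group extensions cannot be quoted off the shelf: here the cocycle is constrained to be an indicator function supported in $Z$, i.e.\ to lie in a closed subgroup of $L^0(X,\Z/2)$, and residuality in the full group says nothing about such a subgroup (the coboundaries themselves form a dense subgroup consisting entirely of non-ergodic cocycles), so the density argument must genuinely be carried out inside $\mathcal{M}(Z)$.

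Two smaller points. The Rokhlin lemma applied to the first-return map $f_Z$ gives disjointness of the $f_Z$-iterates of the base and covers most of $Z$, not disjointness of the $f$-iterates covering most of $X$; what you need is the (true, but separate) strengthening that an $f$-Rokhlin tower of prescribed height can be chosen with base inside a prescribed positive-measure set. And the $G_\delta$ part should be justified, e.g.\ by continuity of $Y\mapsto\tilde f_Y$ into the weak topology on measure-preserving transformations together with Halmos's theorem, or by writing ergodicity as countably many open averaging conditions. In summary, the reduction to ergodicity of the $\Z/2$-extension is sound and the tower/Baire strategy is plausible, but the construction that actually produces (or proves density of) ergodic extensions with cocycle supported in $Z$ is missing, and that construction is precisely the content of Knill's Corollary~3.5.
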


Our interest in non-coboundaries is due to the following fact,
already used in~\cite{Knill}:

\begin{lemma}\label{l.swap}
Let $A: X \to \SL(2,\R)$ be such that $\log\|A\|$ is $\mu$-integrable.
Assume that there exist two measurable maps $e^1$, $e^2: X \to \P^1$
such that
$$
e^1(x) \neq e^2(x), \quad
A(x)\cdot \{e^1(x), e^2(x)\}= \{e^1(fx), e^2(fx)\} \quad
\text{for $\mu$-almost every $x$.}
$$
If the set $Y = \{x \in X : A(x)\cdot e^1(x) = e^2(fx) \}$ is
\emph{not} a coboundary, then the Lyapunov exponent $L(A)$
vanishes.
\end{lemma}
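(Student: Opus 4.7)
The plan is to pass to a measurable double cover of $(X,\mu,f)$ on which the two line fields $e^1,e^2$ become genuinely invariant (rather than merely permuted), and then exploit the resulting reducibility of the cocycle.

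Set $\tilde X = X \times \{1,2\}$ and define $\tilde f:\tilde X \to \tilde X$ by $\tilde f(x,i) = (f(x),i)$ if $x \notin Y$ and $\tilde f(x,i) = (f(x),3-i)$ if $x \in Y$. Equip $\tilde X$ with $\tilde \mu = \mu \otimes \tfrac{1}{2}(\delta_1 + \delta_2)$. The first step is to verify that $\tilde f$ is ergodic with respect to $\tilde \mu$: any $\tilde f$-invariant set is, mod~$0$, of the form $(Z\times \{1\}) \sqcup ((X\setminus Z) \times \{2\})$ for some measurable $Z \subset X$, and $\tilde f$-invariance of this set translates exactly to $Y = Z \vartriangle f^{-1}(Z)$ $\mu$-mod~$0$, i.e., to $Y$ being a coboundary. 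Since the hypothesis rules this out, $\tilde f$ is ergodic.

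Lift $A$ trivially via $\tilde A(x,i) = A(x)$. A direct case-check from the assumption $A(x)\cdot\{e^1(x),e^2(x)\} = \{e^1(fx),e^2(fx)\}$ and the definition of $Y$ shows that $\tilde e^+(x,i) := e^i(x)$ and $\tilde e^-(x,i) := e^{3-i}(x)$ are two transverse $(\tilde f,\tilde A)$-invariant line fields. Pick measurable unit vectors $\tilde v^\pm$ in these lines and let $\lambda^\pm(x,i) \in \R$ be the scalars with $\tilde A(x,i)\, \tilde v^\pm(x,i) = \lambda^\pm(x,i)\, \tilde v^\pm(\tilde f(x,i))$. Since $\det A \equiv 1$, one has $|\lambda^+\lambda^-| \equiv 1$, so $\log|\lambda^+| + \log|\lambda^-| \equiv 0$. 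Moreover, the $\tilde\mu$-preserving involution $\tau(x,i) = (x,3-i)$ exchanges $\tilde v^+$ with $\tilde v^-$, whence $\int \log|\lambda^+|\,d\tilde\mu = \int \log|\lambda^-|\,d\tilde\mu$, and therefore both integrals must vanish.

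Integrability of $\log|\lambda^\pm|$ is inherited from the $\mu$-integrability of $\log\|A\|$ (using $\|A^{-1}\| = \|A\|$ in $\SL(2,\R)$, so that $|\log|\lambda^\pm(x,i)|| \le \log\|A(x)\|$). Birkhoff's theorem applied to the ergodic system $(\tilde f,\tilde\mu)$ then yields $\tfrac{1}{n}\log\|A^n(x)\,\tilde v^\pm(x,i)\| \to 0$ for $\tilde\mu$-a.e.\ $(x,i)$; since any nonzero vector in $\R^2$ is a measurable linear combination of $\tilde v^+(x,1)$ and $\tilde v^-(x,1)$, this forces $\tfrac{1}{n}\log\|A^n(x)\| \to 0$ for $\mu$-a.e.\ $x$, i.e., $L(A) = 0$. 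The only substantial obstacle is establishing ergodicity of the double cover, which is exactly where the non-coboundary hypothesis is used; everything else is routine bookkeeping once the cocycle has been diagonalized upstairs.
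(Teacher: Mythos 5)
Your reduction of the non-coboundary hypothesis to ergodicity of the double cover $(\tilde X,\tilde f,\tilde\mu)$ is correct (it also uses ergodicity of $(f,\mu)$, which holds here by unique ergodicity), and this route is genuinely different from the paper's proof, which is much shorter: there one assumes $L(A)>0$, notes that Oseledets forces $\{e^1,e^2\}=\{e^+,e^-\}$ a.e., and checks that $W=\{x : e^1(x)=e^+(x)\}$ satisfies $W\vartriangle f^{-1}(W)=Y$, contradicting the hypothesis. However, your argument has a genuine gap at exactly the point you call routine: the identity $|\lambda^+\lambda^-|\equiv 1$ is false. With $\tilde v^\pm$ chosen as unit vectors, $\det A=1$ gives only $|\lambda^+(x,i)\,\lambda^-(x,i)|=|\sin\theta(x,i)|/|\sin\theta(\tilde f(x,i))|$, where $\theta$ is the angle between the two invariant lines; these lines are merely distinct, not orthogonal, and nothing in the hypotheses controls $\theta$. (Concretely, for $A=\mathrm{diag}(2,1/2)$, $v^+=(1,0)$, $v^-=2^{-1/2}(1,1)$ one gets $\|Av^+\|\,\|Av^-\|\approx 2.9$.) Hence $\log|\lambda^+|+\log|\lambda^-|$ is not $0$ but the coboundary $\xi-\xi\circ\tilde f$ with $\xi=\log|\sin\theta|$, and since $\xi$ is only a.e.\ finite, not necessarily integrable, you cannot conclude $\int(\log|\lambda^+|+\log|\lambda^-|)\,d\tilde\mu=0$ just by invariance of $\tilde\mu$; your entire computation of the exponent rests on this cancellation.

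The gap is repairable within your framework, but it needs an actual argument. Either invoke the standard lemma that an integrable function of the form $\xi-\xi\circ\tilde f$, with $\xi$ measurable and a.e.\ finite, has zero mean (Birkhoff, plus $\tfrac1n\,\xi\circ\tilde f^n\to 0$ in measure and passing to an a.e.\ convergent subsequence); combined with your correct $\tau$-symmetry $\lambda^-=\pm\,\lambda^+\circ\tau$, this restores $\int\log|\lambda^+|\,d\tilde\mu=\int\log|\lambda^-|\,d\tilde\mu=0$, and the rest of your proof goes through. Or, closer in spirit to the paper: if $L(A)>0$, apply Oseledets to $(\tilde f,\tilde A)$, whose Oseledets directions depend only on $x$; ergodicity of $\tilde f$ together with the Birkhoff argument along the invariant field forces $\tilde e^+$ to agree a.e.\ with one of them, and then $\tau$, which swaps $\tilde e^\pm$ but fixes the Oseledets directions, yields $\tilde e^+=\tilde e^-$ a.e., a contradiction. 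As written, though, the decisive cancellation is unjustified, and the delicate point of the lemma is this angle/normalization issue rather than the ergodicity of the cover.
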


\begin{proof}
Assume that $L(A)>0$, and consider the two Oseledets directions $e^+$,
$e^-: X \to \P^1$.
It follows from Oseledets's Theorem that
$\{e^1(x), e^2(x)\} = \{e^+(x), e^-(x)\}$ for almost every $x$.
Let $W$ be the set of $x$ where $e^1(x) = e^+(x)$.
Then $W \vartriangle f^{-1}(W)$ is precisely the set $Y$.
Hence it is a coboundary.
\end{proof}

If $Z \subset X$ is a measurable set of positive measure, the
\emph{first-return map} $f_Z: Z \to Z$ is defined as $f_Z(x) =
f^{r_Z(x)}(x)$, where $r_Z(x)$ is the least positive integer $n$
such that $f^n(x) \in Z$. The probability measure
$\frac{\mu}{\mu(Z)}$ is invariant and ergodic with respect
to~$f_Z$. If  $A: X \to \SL(2,\R)$ is such that $\log\|A\| \in
L^1(\mu)$, then we define the \emph{induced cocycle} $A_Z : Z \to
\SL(2,\R)$ by $A_Z(x) = A^{r_Z(x)}(x)$.

\begin{lemma}[cf.\ Lemma~2.2 from \cite{Knill}]\label{l.induced cocycle}
    The Lyapunov exponent of the cocycle $A_Z$ over the dynamics $f_Z$
    and with respect to the ergodic measure $\frac{\mu}{\mu(Z)}$
    is equal to $L(A)/\mu(Z)$.
\end{lemma}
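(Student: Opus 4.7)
The plan is to combine Kac's lemma with Birkhoff's ergodic theorem and the pointwise existence of the Lyapunov exponent. First, I would unfold the iterates of the induced cocycle. By the chain rule for cocycles, for $x \in Z$ and $n \ge 1$,
$$
A_Z^n(x) = A^{R_n(x)}(x), \qquad R_n(x) = \sum_{k=0}^{n-1} r_Z(f_Z^k(x)),
$$
so $R_n(x)$ is precisely the time of the $n$-th return of $x$ to $Z$.

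Next I would control $R_n(x)/n$. The measure $\mu/\mu(Z)$ is ergodic for $f_Z$, and Kac's lemma gives $\int_Z r_Z \, d(\mu/\mu(Z)) = 1/\mu(Z)$. Birkhoff's ergodic theorem applied to the integrable function $r_Z$ over $(Z, f_Z, \mu/\mu(Z))$ therefore yields
$$
\frac{R_n(x)}{n} \longrightarrow \frac{1}{\mu(Z)} \quad \text{for $\mu$-a.e. } x \in Z.
$$
In particular $R_n(x) \to \infty$, since $r_Z \ge 1$.

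Now I would bring in the Lyapunov exponent. Since $\log\|A\|$ is $\mu$-integrable and $\mu$ is ergodic, Kingman's subadditive ergodic theorem guarantees the pointwise limit
$$
\lim_{m \to \infty} \frac{1}{m}\log\|A^m(y)\| = L(A) \quad \text{for $\mu$-a.e.\ } y \in X.
$$
This full-measure set has full $\mu/\mu(Z)$-measure inside $Z$. For such $x \in Z$, combining the two limits,
$$
\frac{1}{n}\log\|A_Z^n(x)\| = \frac{R_n(x)}{n}\cdot \frac{1}{R_n(x)}\log\|A^{R_n(x)}(x)\| \longrightarrow \frac{1}{\mu(Z)}\cdot L(A),
$$
since $R_n(x) \to \infty$. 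This is exactly $L(A_Z) = L(A)/\mu(Z)$.

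There is no real obstacle; the only point demanding mild care is that $\log\|A_Z\|$ be integrable with respect to $\mu/\mu(Z)$ so the induced Lyapunov exponent is well defined, but this follows from integrability of $\log\|A\|$ together with Kac's lemma (bounding $\int_Z \log\|A_Z\| \, d\mu \le \int_X \log^+\|A\| \sum_{j \ge 0} \mathbf{1}_{\{r_Z > j\}}$ and using $\int_Z r_Z \, d\mu = \mu(X)$).
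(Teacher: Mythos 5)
Your argument is correct and complete: unfolding $A_Z^n(x)=A^{R_n(x)}(x)$, getting $R_n(x)/n\to 1/\mu(Z)$ from Kac plus Birkhoff, and passing to the subsequence $R_n(x)$ in the Kingman limit is exactly the standard proof of this fact. The paper itself gives no proof but simply cites Lemma~2.2 of \cite{Knill}, and your route is essentially the argument that reference supplies, including the integrability of $\log\|A_Z\|$ via the Kac tower (your final displayed bound is stated a bit loosely --- the sum $\sum_{j\ge 0}\mathbf{1}_{\{r_Z>j\}}(x)\log^+\|A(f^jx)\|$ should be integrated over $Z$, which by the Rokhlin tower identity equals $\int_X\log^+\|A\|\,d\mu$ --- but the idea is right and, since $\|A\|\ge 1$ for $\SL(2,\R)$-valued cocycles, the estimate is immediate).
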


A well-known theorem by Lusin states that a measurable function can be altered
in a set of arbitrarily small measure so that it becomes continuous.
We will need a parameterized version of this:

\begin{lemma}\label{l.param lusin}
Assume $T$ is a compact metric space. Let $\psi: T \times X \to
\R$ be a Borel measurable function such that for every $t \in T$,
there is a full measure subset of $X$ consisting of points $x$
with the property that the function $t' \mapsto \psi(t', x)$ is
continuous on a neighborhood of~$t$. Then for every $\beta>0$,
there exists a continuous function $\phi: T \times X \to \R$ such
that for every $t \in T$, the  points $x \in X$ where $\phi(t,x)
\neq \psi(t,x)$ form a set of measure less than~$\beta$.
\end{lemma}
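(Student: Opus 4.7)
The plan is to construct $\phi$ by patching together local approximations via a partition of unity on $T$. Fix $\beta > 0$ and a small $\eta > 0$ (to be tuned at the end).

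\smallskip

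\textbf{Local construction.} For each $t_0 \in T$ I seek an open ball $V(t_0) = B(t_0, r_0)$, a compact $K(t_0) \subset X$ with $\mu(X \setminus K(t_0)) < \eta$, and a continuous $\phi_{t_0} \in C(T \times X, \R)$ such that $\phi_{t_0} \equiv \psi$ on $\overline{V(t_0)} \times K(t_0)$. Let $X_0(t_0)$ be the full-measure set of $x$ for which $\psi(\cdot, x)$ is continuous on some neighborhood of $t_0$, and for $x \in X_0(t_0)$ let $r(x) > 0$ denote the corresponding continuity radius. Borel measurability of $r(\cdot)$ follows from that of $\psi$ by standard manipulation of modulus-of-continuity sets, and monotone continuity of $\mu$ gives an $r_0 > 0$ for which $X_\ast := \{x \in X_0(t_0) : r(x) > r_0\}$ satisfies $\mu(X \setminus X_\ast) < \eta/2$. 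On the product $B(t_0, r_0) \times X_\ast$ the function $\psi$ is now genuinely Carath\'eodory (continuous in $t$ for \emph{every} $x \in X_\ast$, measurable in $x$ for every $t$), so the Scorza--Dragoni theorem, i.e.\ the parameterized Lusin theorem for Carath\'eodory maps on Polish spaces, yields a compact $K(t_0) \subset X_\ast$ with $\mu(X_\ast \setminus K(t_0)) < \eta/2$ such that $\psi|_{B(t_0, r_0) \times K(t_0)}$ is jointly continuous. Setting $V(t_0) := B(t_0, r_0/2)$, Tietze's extension theorem extends $\psi|_{\overline{V(t_0)} \times K(t_0)}$ to the desired $\phi_{t_0}$.

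\smallskip

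\textbf{Gluing.} By compactness of $T$ pick a finite subcover $\{V(t_j)\}_{j=1}^N$ and a continuous partition of unity $\{\lambda_j\}_{j=1}^N$ subordinate to it. Set
\[ \phi(t, x) := \sum_{j=1}^N \lambda_j(t)\, \phi_{t_j}(t, x). \]
For $t \in T$ and any $x \in K(t) := \bigcap_{j : \lambda_j(t) > 0} K(t_j)$, each non-vanishing summand equals $\lambda_j(t)\,\psi(t, x)$, so $\phi(t, x) = \psi(t, x)$. A union bound gives $\mu(X \setminus K(t)) \leq N\eta$; fixing $\eta < \beta/N$ at the outset yields the required measure estimate.

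\smallskip

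\textbf{Main obstacle.} The hypothesis is strictly weaker than the Carath\'eodory condition: the null set of ``bad'' $x$ is allowed to depend on $t$, so one cannot apply Scorza--Dragoni to $\psi$ on $T \times X$ directly. The substantive work is concentrated in the local step, and the key trick is to first trim $X$ to a large-measure subset $X_\ast$ on which a uniform continuity radius $r_0$ around $t_0$ is available. Only after this trimming does $\psi$ become Carath\'eodory on the small product $B(t_0, r_0) \times X_\ast$, at which point the classical Scorza--Dragoni theorem becomes applicable, and the Tietze extension together with the partition-of-unity patching completes the construction.
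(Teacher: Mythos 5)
Your local step is fine and is essentially the paper's: trimming to a set $X_*$ of $x$ with a uniform continuity radius around $t_0$ and then invoking Scorza--Dragoni is the same as the paper's use of the sets $G(t,r)$ followed by Lusin's theorem for the map $x \mapsto \psi(\cdot,x)\in C(\bar B,\R)$. (Both you and the paper gloss over the measurability of these sets --- they are co-analytic, hence universally measurable, which suffices --- so I do not count that against you.) The genuine gap is in the gluing. Your $\phi=\sum_j \lambda_j\phi_{t_j}$ equals $\psi(t,\cdot)$ only on $\bigcap_{j:\lambda_j(t)>0}K(t_j)$, so you must union-bound over \emph{all} balls active at $t$, and you do this by $N\eta$ together with ``fix $\eta<\beta/N$ at the outset''. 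That choice is circular --- the radii $r_0(t_0)$, hence the subcover and hence $N$, depend on $\eta$ --- and it is not just a presentational issue: it cannot be met in general. Take $T=X=[0,1]$ with Lebesgue measure and $\psi(t,x)=1$ if $t\ge x$, $\psi(t,x)=0$ if $t<x$. The hypothesis holds (for each $t$, every $x\ne t$ is good), but continuity of $\psi$ on $\overline{V(t_j)}\times K(t_j)$ forces $K(t_j)$ to avoid the half-open interval $(\,\min \overline{V(t_j)},\,\max \overline{V(t_j)}\,]$, so $\mu(X\setminus K(t_j))\ge \diam V(t_j)$; since the $V(t_j)$ cover $[0,1]$, $\sum_j \mu(X\setminus K(t_j))\ge 1$, and one can never have $\mu(X\setminus K(t_j))<\beta/N$ for all $j$ once $\beta<1$. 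So the step ``$N\eta<\beta$'' is unachievable, not merely unjustified.

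The paper's gluing avoids this counting problem altogether: it forms the compact set $L=\bigcup_i B_i\times K_i'$, notes that $\psi|L$ is continuous (finitely many compact pieces of one and the same function), and applies Tietze \emph{once} to $L$. For each $t$ the extension then agrees with $\psi$ on the slice of $L$ over $t$, which already contains $K_i'$ for any \emph{single} ball $B_i\ni t$; the exceptional set at $t$ lies in $X\setminus K_i'$ for that one $i$ and has measure $<\beta$, with no factor of $N$ and no multiplicity issue (in the example above this is exactly what saves the day). Your scheme can be repaired when $T$ has finite covering dimension by using a cover of bounded multiplicity $m$ and bounding the active intersection by $m\eta$ rather than $N\eta$ --- this covers the paper's application $T=[0,1]^p$ --- but the lemma assumes only that $T$ is a compact metric space, where bounded-multiplicity fine covers need not exist. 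The cleanest fix is to replace the partition-of-unity patching by a single Tietze extension of $\psi$ restricted to $\bigcup_j \overline{V(t_j)}\times K(t_j)$, i.e.\ to revert to the paper's argument at the gluing stage.
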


\begin{proof}
For each $t \in T$ and $r>0$, let
$\bar{B}(t,r)$ be the closed ball of radius $r$ centered at~$t$,
and
$$
G(t,r) = \big\{ x\in X : \text{$t' \mapsto \psi (t', x)$ is continuous on $\bar B(t,r)$} \big\} \, .
$$
For every $t \in T$, there is $\rho(t)>0$ such that
$\mu(G(t,\rho(t)))>1-\beta/3$. Take a cover of $T$ by finitely
many balls $B_i = \bar{B}(t_i, \rho(t_i))$. Now, for each $i$, let
$K_i \subset G(t_i, \rho(t_i))$ be compact with $\mu(K_i)>1 -
2\beta/3$. Consider the mapping $\Psi_i : K_i \to C(B_i, \R)$
given by $t' \mapsto (x \in B_i \mapsto \psi(t',x))$. Take a
subset $K_i' \subset K_i$ with $\mu(K_i') > 1-\beta$ such that
$\Psi_i | K_i'$ is continuous. Let $L = \bigcup_i B_i \times
K_i'$. Then $L$ is compact, $\psi$ restricted to $L$ is
continuous, and for each $t$, the set of $x$ such that $(t,x)$
belongs to $L$ has measure at least $1 - \beta$. Let $\phi: T
\times X \to \R$ be a continuous extension of $\psi|L$, given by
Tietze's Extension Theorem. Then $\phi$ has the required
properties.
\end{proof}

We say that $A_t$, $t \in T$ is an \emph{$L^\infty$ family of cocycles}
if $A_t \in L^\infty(X,\SL(2,\R))$ for every $t \in T$,
the map $(t,x)\mapsto A_t(x)$ is measurable
and $\sup_t \|A_t\|_\infty$ is finite.

We say that $A_t$, $t \in T$ is an \emph{almost-continuous family
of cocycles} if it is an $L^\infty$ family, $T$ is a compact metric
space, and for every $t\in T$, there is a full measure set $G_t
\subset X$ such that for each $x \in G_t$, the mapping $t' \mapsto
A_{t'} (x)$ is continuous in a neighborhood of~$t$.

We define a \emph{weak metric} on the set $L^\infty(X, \SL(2,\R))$ as follows:
$$
d_\mathrm{w}(A,B) = \inf \big\{\beta >0 : \mu \{x\in X: \|A(x) - B(x)\| > \beta \} < \beta  \big\} \, .
$$

\begin{lemma}\label{l.semicont}
Assume $A_t$, $t \in T$ is an almost-continuous family of cocycles.
Let $\lambda = \sup_{t \in T} L(A_t)$, and fix $M > \sup_t \|A_t\|_\infty$.
Then:
\begin{enumerate}
    \item ${\displaystyle \lim_{n \to \infty} \sup_{t \in T} \frac{1}{n} \int_X \log\|A_t^n\| \, d\mu = \lambda}$.
    \item For every $\gamma>\lambda$,
    there exists $\beta>0$ with the following properties.
    Assume that $B_t$, $t\in T$ is an $L^\infty$ family of cocycles such that $\| B_t\|_{\infty } < M$
    and $d_\mathrm{w}(B_t, A_t) < \beta$ for every $t\in T$.
    Then $L(B_t) < \gamma$ for every $t$.
\end{enumerate}
\end{lemma}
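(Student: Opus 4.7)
The plan is to treat the two parts separately, each resting on a Kingman/Fekete argument together with one compactness-in-$t$ step.

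For part (1), I will set $a_n(t) = \frac{1}{n} \int \log\|A_t^n\|\,d\mu$. Two simple facts do most of the work: first, subadditivity $\log\|A_t^{n+m}(x)\| \le \log\|A_t^n(f^m x)\| + \log\|A_t^m(x)\|$, integrated via $f$-invariance of $\mu$ and combined with Fekete's lemma, gives $a_n(t)\searrow L(A_t)$ together with the bound $a_n(t) \le a_m(t) + (m\log M)/n$ whenever $n \ge m$; second, since $0 \le \log\|A_{t'}^n(x)\|\le n\log M$, the almost-continuity hypothesis combined with dominated convergence implies that each $a_n$ is continuous on $T$. Taking suprema in the Fekete bound yields $\limsup_n \sup_t a_n(t) \le \inf_m \sup_t a_m(t)$, so the task reduces to showing $\inf_m \sup_t a_m(t) \le \lambda$. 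The hard part here is that $t \mapsto L(A_t)$ need not be continuous, so Dini's theorem does not apply directly. I would substitute the following compactness argument: if that infimum were some $c > \lambda$, pick $t_m$ with $a_m(t_m) \ge c - 1/m$ and extract a subsequence $t_{m_k} \to t^*$ by compactness of $T$; for any fixed $m_0$, the Fekete bound gives $a_{m_0}(t_{m_k}) \ge a_{m_k}(t_{m_k}) - O(1/m_k)$, and continuity of $a_{m_0}$ then forces $a_{m_0}(t^*) \ge c$. Letting $m_0 \to \infty$ yields $L(A_{t^*}) \ge c > \lambda$, contradicting $L(A_{t^*}) \le \lambda$.

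For part (2), I will use part (1) to pick $n$ with $\sup_t a_n(t) < (\lambda+\gamma)/2$. For any $L^\infty$ family $B_t$ with $\|B_t\|_\infty < M$ and $d_\mathrm{w}(B_t,A_t) < \beta$, the sets $E_t := \{x : \|B_t(x) - A_t(x)\| > \beta\}$ satisfy $\mu(E_t) < \beta$, and so their $f$-pullbacks $F_t := \bigcup_{j=0}^{n-1} f^{-j}(E_t)$ satisfy $\mu(F_t) < n\beta$. A standard telescoping estimate outside $F_t$ gives $\|B_t^n(x) - A_t^n(x)\| \le n\beta M^{n-1}$; combined with $\|A_t^n(x)\|\ge 1$ and $\log(1+u)\le u$, this yields
\[
\log\|B_t^n(x)\| \le \log\|A_t^n(x)\| + n\beta M^{n-1} \quad (x \notin F_t),
\]
while on $F_t$ the crude bound $\log\|B_t^n(x)\| \le n\log M$ suffices. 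Integrating (and using $\log\|A_t^n\|\ge 0$ to restore the main term to all of $X$) produces
\[
a_n^B(t) := \frac{1}{n} \int \log\|B_t^n\|\,d\mu \le a_n^A(t) + \beta M^{n-1} + n\beta\log M.
\]
Choosing $\beta$ so small (depending on $n$, $M$, $\gamma-\lambda$) that the last two terms together stay below $(\gamma-\lambda)/2$ makes $a_n^B(t) < \gamma$ uniformly in $t$, and Fekete applied to $B_t$ then gives $L(B_t) \le a_n^B(t) < \gamma$.

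The main obstacle is really just the Dini substitute in part (1); once that is in place, part (2) is a routine perturbation estimate.
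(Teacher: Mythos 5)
Your proposal is correct and rests on the same ingredients as the paper's proof: continuity of the finite-time averages $\frac1n\int\log\|A_t^n\|\,d\mu$ deduced from almost-continuity plus the uniform bound $M$, subadditivity (Fekete), and compactness of $T$ for part (1), and the ``fix $n$ from part (1), then perturb in $d_\mathrm{w}$'' scheme for part (2), where your telescoping estimate just makes explicit what the paper leaves implicit in passing from $d_\mathrm{w}(B_t^{n_0},A_t^{n_0})<\eps$ to the integral bound. The only organizational difference is that in part (1) you run a subsequence/contradiction argument where the paper localizes at each $t$ and takes a finite cover, but this is the same compactness mechanism in different packaging.
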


\begin{proof}
The limit in part~(a) exists by subadditivity, and is obviously bigger or equal than $\lambda$.
Let us prove the reverse inequality.

Fix $\eta>0$ and take $t \in T$.
Let $m \in \N$ be such that $\frac{1}{m} \int \log \|A_t^m\| \, d\mu < \lambda+\eta$.
Since $A_t$ is an almost-continuous family of cocycles, so is $A^m_t$.
Hence for any $\eps>0$, there exists $\delta>0$ such that
$$
\mu \big\{x \in X : \|A^m_{t'}(x) - A^m_t(x)\| < \eps \text{ for all } t'\in B(t,\delta) \big\} > 1- \eps \, .
$$
Since the family is essentially bounded,
we can find $\delta = \delta(t)>0$ such that
$\frac{1}{m} \int \log \|A_{t'}^m\| \, d\mu < \lambda + 2\eta$ for every $t'\in B(t,\delta)$.
It follows from subadditivity that there exists $\bar m = \bar m(t) \in \N$ such that
$$
\frac{1}{n} \int \log \|A_{t'}^n\| \, d\mu < \lambda + 3\eta \quad \text{for all $t'\in B(t,\delta)$ and $n \ge \bar m$.}
$$

Now take a cover of $T$ by finitely many balls $B(t_i, \delta (t_i))$.
Let $n_0 = \max_i \bar{m}(t_i)$.
Then
\begin{equation}\label{e.part a}
\frac{1}{n} \int \log \|A_t^n\| \, d\mu < \lambda + 3\eta \quad \text{for every $t\in T$ and $n \ge n_0$.}
\end{equation}
Since $\eta>0$ is arbitrary, part~(a) of the Lemma follows.

Let $\eps>0$ be small.
There exists $\beta>0$ such that if $B_t$, $t\in T$ is an $L^\infty$ family of cocycles
with $d_\mathrm{w}(B_t, A_t) < \beta$ for each $t$,
then $d_\mathrm{w}(B_t^{n_0}, A_t^{n_0}) < \eps$.
Now, if in addition $\|B_t\|_\infty < M$
and $\eps$ was chosen small enough, it follows from~\eqref{e.part a} that
$$
\frac{1}{n_0} \int \log \|B_t^{n_0}\| \, d\mu < \lambda + 4\eta,
$$
which implies $L(B_t) < \lambda + 4\eta$ for every $t\in T$.
This proves part~(b).
\end{proof}

After these preliminaries, we can give the:

\begin{proof}[Proof of Lemma~\ref{l.reduction}]
Let us first explain informally the steps of the proof:
\begin{enumerate}
\item We select a bounded cocycle $D$ in $\cB$
and a small tower for $f$ along which some products of $D$ are rotations.

\item By assumption, the given family $A_t$ of cocycles can be extended to
a family defined in the whole cube, which we also denote by $A_t$.

\item In the crucial step,
we find an almost-continuous family of cocycles $B_t$ that has zero exponents for all values of the parameter and is uniformly very close to ${A}_t$ everywhere except in the tower.
This is done using Lemma~\ref{l.swap}.

\item We use Lemma~\ref{l.param lusin} to approximate $B_t$ by a continuous family of cocycles $\tilde A_t$;
by Lemma~\ref{l.semicont} we get $L(\tilde A_t)$ small for every $t$.

\item Since the original family of cocycles $A_t$, $t\in\partial [0,1]^p$ has zero exponents,
we can interpolate it with the $\tilde A_t$ family near the boundary of the cube
and obtain a continuous family $\hat{A}_t$, $t\in [0,1]^p$ that
coincides with the initial family in the boundary,
and (by semicontinuity) has small exponent in the whole cube.

\end{enumerate}
Now let us give the actual proof.

Since the set $\cB$ is bounded and $L(A_t)=0$ for every $t\in \partial [0,1]^p$,
by part~(b) of Lemma~\ref{l.semicont} there is $\beta>0$ such that
whenever $A_t'$, $t\in \partial [0,1]^p$ is an $L^\infty$ family of cocycles taking values in $\cB$
and with $d_\mathrm{w}(A_t', A_t) < \beta$, we have
$L(A_t')< \gamma$ for every $t\in \partial [0,1]^p$.

Since the family $A_t$, $t \in \partial [0,1]^p$ is contractible in $\cB$,
we can extend it to a continuous family $A_t$, $t \in [0,1]^p$ also taking values in $\cB$.

Let $\eps>0$ be very small. (Actual smallness requirements will emerge
several times along of the proof.)

Let $t_\mathrm{c} = (\tfrac{1}{2}, \ldots, \tfrac{1}{2})$ be the center of the cube.
We may assume that $A_{t_\mathrm{c}}=A_t$ for some $t \in \partial [0,1]^p$.
Using Theorem~\ref{t.addendum}, choose a cocycle
$D$ close to $ A_{t_\mathrm{c}}$ that has an invariant section $z : X \to \D$.
Making if necessary a small perturbation, we can assume that $z$
assumes the same value $z_0$ in an open set.
By conjugating everything with some fixed matrix (and hence changing $\cB$, $\gamma$, $\beta$ accordingly),
we can assume that $z_0 = 0$.
Let $N$ be a large integer.
Take a non-empty open set $Z \subset X$ such that its iterates in
times $0 = n_0 < n_1 <\cdots <n_N$
are wholly contained in $\{z=0\}$,
and moreover $Z \cap f^i(Z) = \emptyset$ for $0 < i \le n_N + 1$.
Notice that if $x \in Z$, then $D^{n_i}(x) \in \SO(2,\R)$ for
$0 \leq i \leq N$.
By a further perturbation (and reducing $Z$ if necessary), we can
assume that in addition to the previous properties,
$D|f^i(Z)$ is constant for $0 \leq i \leq n_N$,
and equal to the identity matrix for $i=n_N$.
Reducing $Z$ further, we assume that
the tower $\bigcup_{i=0}^{n_N - 1} f^i(Z)$
has measure less than $\beta/2$.
Let $W=f^{n_N}(Z)$ and $F=f^{n_N}: Z \to W$.
For $x \in W$, let $\ell(x)>0$ be
minimal with $f^{\ell(x)}(x) \in Z$, and let $G:W \to Z$ be given by
$G(x)=f^{\ell(x)}(x)$.  

Since the set $\cB$ is open, 
we can modify the family $ A_t$, $t\in [0,1]^p$ near $t=t_\mathrm{c}$
so that $ A_{t_{\mathrm{c}}}$ equals $D$.

Let $P_t :W \to \SL(2,\R)$ be given by $P_t(x) =  A^{\ell(x)}_t(x)$.

\begin{claim}
There exists an $L^\infty$ family $\tilde P_t$, $t\in [0,1]^p$ with the following properties:
\begin{enumerate}
\item $\|P_t^{-1} \tilde P_t - \Id \|_\infty < \epsilon$  for
every $t$. \item $(t,x) \mapsto \tilde P_t(x)$ is finite-valued
and takes values in $\SL(2,\R) \setminus \SO(2,\R)$. \item For
every $t$, there is a full measure subset of $W$ consisting of
points $x$ such that the map $t' \mapsto \tilde P_{t'}(x)$ is
constant in a neighborhood of $t$.
\end{enumerate}
\end{claim}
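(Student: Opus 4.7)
The plan is to build $\tilde P_t$ by quantizing $P_t$ via a grid in $T = [0,1]^p$ whose cells are shifted in an $x$-dependent way; the shift is chosen so that, for every $t_0 \in T$, the cell boundaries encountered by the function $t \mapsto \tilde P_t(x)$ fail to pass through $t_0$ for $\mu$-almost every~$x$. This is the mechanism that delivers~(c), which is the only delicate condition.

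First, since $f$ is strictly ergodic with a non-periodic minimal finite-dimensional factor, the invariant measure $\mu$ is non-atomic on $X$; composing the factor map with a generic coordinate and rescaling produces a continuous map $g : X \to (0,1)$ whose pushforward $g_\ast \mu$ is a non-atomic Borel measure on $(0,1)$. Fix such a $g$. Next, by minimality and compactness the return time $\ell : W \to \N$ is bounded by some $L$, so $P_t(x) = A_t^{\ell(x)}(x)$ is uniformly bounded and measurable in $x$, and the map $T \ni t \mapsto P_t \in L^\infty(W,\SL(2,\R))$ is uniformly continuous on the compact cube~$T$. Given a small $\eta > 0$ (to be chosen so that $2C\eta < \epsilon$, where $C$ is a uniform bound for $\|P_t^{-1}\|_\infty$), pick $\delta > 0$ with $|t - t'| < \delta\sqrt{p}$ implying $\|P_t - P_{t'}\|_\infty < \eta$.

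Now define a cell-index map $j : T \times W \to \Z^p$ by $j_i(t,x) := \lfloor(t_i + \delta g(x))/\delta\rfloor$ for $i=1,\ldots,p$; only finitely many indices arise. For each attained $j$, let $t_j \in T$ be the projection onto $T$ of the grid point $\delta j \in \R^p$; then $\|t - t_j\| < \delta\sqrt{p}$ whenever $j(t,x)=j$. Approximate the bounded measurable function $P_{t_j}$ uniformly within $\eta$ by a simple measurable function $M_j : W \to \SL(2,\R)\setminus\SO(2,\R)$; this is possible because the compact image $P_{t_j}(W)$ can be covered by finitely many small open balls centered in the open dense set $\SL(2,\R)\setminus\SO(2,\R)$. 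Finally, set $\tilde P_t(x) := M_{j(t,x)}(x)$.

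Properties (a) and (b) then fall out: (a) follows from $\|\tilde P_t(x) - P_t(x)\| \leq \|M_{j(t,x)} - P_{t_{j(t,x)}}\|_\infty + \|P_{t_{j(t,x)}} - P_t\|_\infty < 2\eta$ together with the uniform bound on $\|P_t^{-1}\|_\infty$, and (b) is built into the construction. The crux is (c): for fixed $x$, the integer-valued function $t \mapsto j(t,x)$ is locally constant at $t_0$ unless $(t_0)_i + \delta g(x) \in \delta\Z$ for some~$i$, i.e., unless $g(x)$ lies in the countable set $\bigcup_{i=1}^p \bigl(\Z - (t_0)_i/\delta\bigr)$; this countable set is $g_\ast\mu$-null by non-atomicity, so the exceptional set of $x$'s for each $t_0$ has $\mu$-measure zero. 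The main point at which the hypotheses of the paper enter is the construction of $g$ with non-atomic distribution, which is what the non-periodic finite-dimensional factor provides.
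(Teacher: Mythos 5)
Your construction is correct and is essentially the paper's own argument: you quantize the parameter cube by an $x$-dependently shifted grid so that, for each fixed $t_0$, the cell boundaries miss $t_0$ for $\mu$-a.e.\ $x$, and you assign values from a finite subset of $\SL(2,\R)\setminus\SO(2,\R)$ approximating the bounded family $P_t$ — exactly the mechanism the paper uses, except that the paper identifies $(X,\mu)$ measurably with $([0,1],\Leb)$ and shifts a dyadic partition by $x v_0$ with $v_0$ rationally independent, while you shift diagonally by $\delta g(x)$ with $g$ having non-atomic pushforward. The only small caution is your phrase ``a generic coordinate'': a fixed coordinate projection of the factor measure can be atomic, so you should take a generic \emph{linear functional} (only countably many directions are bad) or, as the paper does, dispense with continuity of $g$ altogether and use a measurable isomorphism onto $([0,1],\Leb)$.
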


\begin{proof}
Since $f$ is minimal and $Z$ is open, $\ell(x)$ is bounded and therefore so is
the function $P_t(x)$.
Choose then a large finite set of $\SL(2,\R) \setminus \SO(2,\R)$
approximating its image.

Since $(X,\mu)$ is a non-atomic Lebesgue space, we can assume $X$ is
the unit interval and $\mu$ is Lebesgue measure.
Let $\cQ$ be a partition of $\R^{p}$ into small dyadic cubes.
Fix a vector $v_0 \in \R^{p}$ whose coordinates are independent mod~$\Q$.
For each $x \in [0,1]$, let
$\cQ(x)$ be the restriction of the partition $xv_0 + \cQ$ to the unit cube.
Then every $t \in [0,1]^{p}$ belongs to the interior (relative to
$[0,1]^{p}$)
of an element of the partition $\cQ(x)$
for all but finitely many points $x \in X$.
Take $\tilde P_t(x)$ to be constant in each element of $\cQ(x)$, and taking
values in the large finite set.
This proves the claim.
\end{proof}

For each $x \in W$,  let $e^s_t(x)$ and  $e^u_t(x)$ be respectively
the most contracted and the most expanded directions by $\tilde P_t(x)$;
these two directions are uniquely defined because the matrix is not a rotation,
and are orthogonal.
Next define $e^s_t(x)$ and $e^u_t(x)$ for $x \in Z = G(W)$ by
$$
e^*_t(x)=\tilde P_t(G^{-1}(x)) \cdot e^*_t(G^{-1}(x)) \, , \quad
* \in \{s,u\} \, .
$$
These two directions are also orthogonal.

There are only finitely many possible values for $e^u_t(x)$ and
$e^s_t(x)$; let $\cF \subset \P^1$ be the set of all of them.
Define a function
$\measuredangle:\cF \times \cF \to [0,\pi)$ such that
$R_{\measuredangle(x,y)} \cdot x=y$.

Apply Lemma~\ref{l.noncbdry} and choose some set of positive measure
$Y \subset Z$ that is not a coboundary.
Let
$$
\theta(t,x) =
\begin{cases}
\measuredangle(e^u_t(x), e^s_t(F(x))) &\quad \text{if $x \in Y$,} \\
\measuredangle(e^u_t(x), e^u_t(F(x))) &\quad \text{if $x \in Z \setminus Y$.}
\end{cases}
$$
Notice that for every $t$ and for every $x$ in a full measure subset of $X$ (depending on~$t$),
the function $t' \mapsto \theta(t',x)$ is constant in a neighborhood of $t$.

Define
$$
B_t (x)=
\begin{cases}
D(x) \cdot R_{\theta(t,f^{-n_i}x)/N}         &\text{if $x\in f^{n_i}(Z)$, $0\le i < N$,} \\
D(x)      &\text{if $x\in f^{j}(Z)$, $j \in \{0,1,\ldots, n_N\} \setminus \{n_0, n_1, \ldots, n_{N-1}\}$,} \\
 A_t(x) \cdot [P_t(x)]^{-1} \cdot \tilde P_t(x) &\text{if $x\in W$,} \\
 A_t(x)                                         &\text{otherwise.}
\end{cases}
$$
Let us list some properties of the family of cocycles $B_t$, $t \in [0,1]^p$.
First, it is almost-continuous.
Second,
\begin{equation}\label{e.all zero}
L(B_t) = 0 \quad \text{for every $t \in [0,1]^p$.}
\end{equation}
Indeed, the induced cocycle $(B_t)_Z(x)$
is given by
$$
C_t(x) = \tilde P_t(F(x)) R_{\theta(t,x)}, \quad x \in Z.
$$
(Of course, this cocycle  is regarded over the dynamics $G\circ F$, which is the first return map to $Z$.)
The pair of measurable directions $e^u_t$, $e^s_t$ is preserved by the
action of the cocycle;
moreover the directions are swapped precisely for the points in $Y$.
Thus Lemma~\ref{l.swap} gives that for each $t$,
the measurable cocycle  $(G \circ F,C_t)$
has zero Lyapunov exponent
(with respect  to the ergodic invariant measure $\frac {1} {\mu(Z)} \mu$).
Thus \eqref{e.all zero} follows from Lemma~\ref{l.induced cocycle}.

A third property of the family $B_t$
is that for each $t$, $B_t(x)$ is close to $ A_t(x)$
for every $x$ outside a set of small measure.
Let us be more precise.
Let $\tau(t,x)$ equal $t_c$ if $x$ is in the tower $\bigcup_{i=0}^{n_N} f^i(Z)$
(which has measure less than $\beta/2$), and $t$ otherwise.
Then we can write
$$
B_t(x) =  A_{\tau(t,x)} (x) \cdot E_t(x) \, ,
$$
where $E_t$, $t \in [0,1]^p$ is an almost continuous family of cocycles
such that $\|E_t(x) - \Id\|< \epsilon$ for all $t$ and $x$.
(Here we used that $N$ is large.)

Since $\tau(t,x)$ depends continuously on $t$,
we can apply for instance Lemma~\ref{l.param lusin} and find
a continuous function $\tilde \tau: [0,1]^p \times X \to [0,1]^p$ such that
$\mu \{x \in X : \tilde \tau(t,x) \neq \tau(t,x) \}$ is very small for every $t$.

Let $\B_\eps$ be the set of $M \in \SL(2,\R)$ such that $\|M-\Id\|< \eps$.
Since $\eps$ is small, this set is homeomorphic to $\R^3$.
Thus we can apply Lemma~\ref{l.param lusin}
and find a continuous map $(t,x) \in [0,1]^p \times X \mapsto \tilde E_t(x) \in \B_\eps$
such that
$\mu \{x \in X : \tilde E_t(x) \neq E_t(x) \}$ is uniformly small for every $t$.

Now define
$$
\tilde A_t(x) =  A_{\tilde \tau(t,x)} (x) \cdot \tilde E_t(x) \, .
$$
Then $\tilde A_t$ is a continuous family of cocycles
and (because $\eps$ is small) it takes values in $\cB$.
We can choose $\tilde \tau(t,x)$ and $\tilde E_t(x)$ so that
$\sup_t d_\mathrm{w}(\tilde A_t, B_t)$ is as small as desired.
Therefore, by \eqref{e.all zero} and part~(b) of Lemma~\ref{l.semicont}, we can assume that
$$
L(\tilde A_t) < \gamma \quad \text{for every $t \in [0,1]^p$.}
$$
With appropriate choices, we also have that for each $t \in [0,1]^p$, the set
$$
G_t = \big\{x \in X : \tilde \tau(t,x) = t \big\}
\quad \text{has $\mu(G_t) > 1- \tfrac{3}{4} \beta$.}
$$

The last step of the proof is to modify the family $\tilde A_t$
to make it match with $A_t$ in the boundary of the cube,
while keeping the exponent small.
Let $\pi: [0,1]^p \setminus \{t_\mathrm{c}\}\to \partial[0,1]^p$ be a retraction.
Let $S$ be a neighborhood of $\partial [0,1]^p$ inside $[0,1]^p$,
thin enough so that for every $t \in S$, $\pi(t)$ is close to $t$ and
hence $A_{\pi(t)}$ is close to $ A_t$.
Let $\hat E_t$, $t \in [0,1]^p$ be a continuous family taking values in $\B_\eps$,
such that $\hat{E}_t(x) = \tilde{E}_t(x)$ for $t$ outside $S$,
and $\hat{E}_t(x) = \Id$ for $t \in \partial [0,1]^p$.
Let $\hat{\tau}: [0,1]^p \times X \to [0,1]^p$ be continuous
such that $\hat{\tau}(t,x) = \tilde{\tau}(t,x)$ if $t\not \in S$,
$\hat{\tau}(t,x) = t$ if $t\in \partial [0,1]^p$,
and moreover $\hat \tau(t,x) = t$
for every $t \in [0,1]^p$ and $x \in G_t$.
Define
$$
\hat A_t(x) =  A_{\hat \tau(t,x)} (x) \cdot \hat E_t(x) \, .
$$
Then the family $\hat A_t$, $t\in [0,1]^p$ takes values in $\cB$ and
continuously extends $A_t$, $t\in \partial [0,1]^p$.
For $t \in [0,1]^p \setminus S$, we have $\hat{A}_t = \tilde{A}_t$ and therefore $L(\hat{A}_t) < \gamma$.
For $t \in S$ and $x \in G_t$,
we have
$\hat A_t(x) =  A_t (x) \cdot \hat E_t(x)$, which is close to $A_{\pi(t)} (x)$.
Since $\eps$ is small, we have $d_\mathrm{w}( \hat{A}_t, A_{\pi(t)} ) < \beta$.
By definition of $\beta$, this implies $L(\hat{A}_t) < \gamma$.
This completes the proof of Lemma~\ref{l.reduction}.
\end{proof}

\section{Extending Families of Cocycles: Boundedness} \label{s.cube 2}

In this section, we will strengthen Lemma~\ref{l.reduction},
obtaining the following two results:

\begin{prop}\label{p.extension1}
Let $p \geq 1$, and let $\cB \subset C(X, \SL(2,\R))$ be a bounded open set.
Let $A_t$, $t \in \partial [0,1]^p$ be a continuous family of cocycles that is contractible in $\cB$
and admits a continuous family of sections $z_t$, $t \in \partial [0,1]^p$.
Then $z_t$ can be extended to a  continuous family of sections
for an extended family of cocycles $A_t$, $t \in [0,1]^p$ that also takes values in $\cB$.
\end{prop}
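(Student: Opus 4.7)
\emph{Plan of proof.} I will reduce the parametrized problem to a single-cocycle problem on $\hat X = [0,1]^p \times X$ and apply Proposition~\ref{p.disk adjust ext}.

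First, I observe that each boundary cocycle $A_t$, $t \in \partial[0,1]^p$, admits an invariant section $z_t$, so by Lemma~\ref{l.firstadjust} it is conjugate to an $\SO(2,\R)$-valued cocycle via $x \mapsto \Phi(z_t(x),0)$; in particular $L(A_t) = 0$. I then apply Lemma~\ref{l.reduction} with a small $\gamma > 0$ (to be specified) to extend the boundary family to a continuous family $\hat A_t$, $t \in [0,1]^p$, taking values in $\cB$ and satisfying $L(\hat A_t) < \gamma$ for every $t$.

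Now, I bundle these into a single cocycle. Define $\hat f : \hat X \to \hat X$ by $\hat f(t,x) = (t, f(x))$ and put $A(t,x) = \hat A_t(x)$. Then $\hat f$ inherits from $f$ a non-periodic minimal finite-dimensional factor (via $(t,x) \mapsto h(x)$, where $h$ is the factor map for $f$), so the hypotheses of Proposition~\ref{p.disk adjust ext} are available over $\hat f$. The set $\Lambda := \partial[0,1]^p \times X$ is compact and $\hat f$-invariant, and $z(t,x) := z_t(x)$ is an invariant section of $F^\D_{\hat f|\Lambda, A|\Lambda}$.

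The main technical step, which I expect to require the most care, is to verify the uniform growth hypothesis $\lim_{n \to \infty} \sup_{(t,x) \in \hat X} \tfrac{1}{n} \log \|A^n(t,x)\| < \gamma$ of Proposition~\ref{p.disk adjust ext} starting only from the pointwise bound $L(\hat A_t) < \gamma$. The cocycle $A$ is not uniformly hyperbolic on $\hat X$, since its restriction to each boundary slice is conjugate to rotations. The ergodic $\hat f$-invariant probability measures on $\hat X$ are exactly the measures $\delta_t \otimes \mu$ for $t \in [0,1]^p$ (any such measure projects to an $\mathrm{id}$-ergodic measure on $[0,1]^p$, hence a point mass, and $f$ is uniquely ergodic), so their Lyapunov exponents with respect to $A$ are exactly the numbers $L(\hat A_t)$. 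Invoking Proposition~1 of \cite{AB} through the finite-dimensional factor of $\hat f$ then yields that the uniform growth of $A$ equals $\sup_t L(\hat A_t)$, which is strictly less than $\gamma$ by upper semicontinuity of $t \mapsto L(\hat A_t)$ and compactness of the cube.

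Finally, I choose $C > \sup_t \|\hat A_t\|_\infty$ (finite because $\cB$ is bounded) and $\varepsilon > 0$ so small that the sup-norm $\varepsilon$-neighborhood of the compact set $\{\hat A_t : t \in [0,1]^p\} \subset \cB$ remains inside $\cB$ (which is possible since $\cB$ is open and $t \mapsto \hat A_t$ is continuous), taking $\gamma$ to be the output of Proposition~\ref{p.disk adjust ext} for these $C$ and $\varepsilon$. Proposition~\ref{p.disk adjust ext} applied to $A$ over $\hat f$, with invariant set $\Lambda$ and section $z$, then produces a continuous $\tilde A : \hat X \to \SL(2,\R)$ coinciding with $A$ on $\Lambda$, $\varepsilon$-close to $A$, and an invariant section $\tilde z$ of $F^\D_{\hat f, \tilde A}$ extending $z$. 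Setting $\tilde A_t(x) := \tilde A(t,x)$ and $\tilde z_t(x) := \tilde z(t,x)$ produces the desired continuous families in $\cB$, extending the given boundary data.
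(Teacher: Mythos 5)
Your overall strategy coincides with the paper's: use Lemma~\ref{l.reduction} to extend the boundary family to a cube family with small exponents, then bundle the parameter into the single cocycle $(t,x)\mapsto \hat A_t(x)$ over $\hat f(t,x)=(t,f(x))$ and apply Proposition~\ref{p.disk adjust ext} with $\Lambda=\partial[0,1]^p\times X$ to create an invariant section without altering the boundary data. Your verification of the uniform growth hypothesis (ergodic measures of $\hat f$ are exactly the $\delta_t\otimes\mu$, upper semicontinuity of $t\mapsto L(\hat A_t)$ plus compactness, and the uniformity statement from \cite{AB}) is a legitimate substitute for the paper's brief compactness-and-subadditivity remark, and your preliminary observation that a continuous family of sections forces $L(A_t)=0$ on the boundary is correct and indeed needed to invoke Lemma~\ref{l.reduction}.

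There is, however, a genuine circularity in your choice of constants. Proposition~\ref{p.disk adjust ext} produces $\gamma$ as a function of $C$ and $\varepsilon$; you choose $\varepsilon$ so small that the $\varepsilon$-neighborhood of the compact set $\{\hat A_t : t\in[0,1]^p\}$ stays inside $\cB$ (and $C>\sup_t\|\hat A_t\|_\infty$), but $\hat A_t$ is the output of Lemma~\ref{l.reduction}, whose input is $\gamma$. Since Lemma~\ref{l.reduction} only guarantees that the extended family lands somewhere in $\cB$ --- possibly arbitrarily close to the boundary of $\cB$ --- no admissible $\varepsilon$ can be fixed before the family is constructed, and you give no argument that the loop $\gamma\mapsto \hat A_{\cdot}\mapsto\varepsilon\mapsto\gamma$ admits a consistent solution. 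The paper closes this loop by using contractibility once more \emph{before} any constants are chosen: extend the boundary family to some $\cB$-valued $\bar A_t$, $t\in[0,1]^p$, pick $\varepsilon$ with $\cB_{2\varepsilon}(\bar A_t)\subset\cB$ for all $t$, take $C>\sup_{B\in\cB}\|B\|_\infty$ (which is independent of the family, since $\cB$ is bounded), obtain $\gamma$ from Proposition~\ref{p.disk adjust ext}, and only then apply Lemma~\ref{l.reduction} with the reduced open set $\cB'=\bigcup_{t}\cB_\varepsilon(\bar A_t)$ as target; the output is then $\varepsilon$-deep inside $\cB$, so the final perturbation produced by Proposition~\ref{p.disk adjust ext} remains in $\cB$. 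Your proof needs this (or an equivalent) anchoring step; with it, the rest of your argument goes through as written.
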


\begin{prop}\label{p.extension2}
Let $p \geq 1$, and let $\cB \subset C(X, \SL(2,\R))$ be a bounded open set.
Let $A_t$, $t \in \partial [0,1]^p$ be a continuous family of cocycles that is contractible in $\cB$
and such that $A_t \not\in \cUH$ for every $t \in \partial [0,1]^p$.
Then there exists an extended continuous family of cocycles $A_t$, $t \in [0,1]^p$
also taking values in $\cB$ and such that
the family $A_t$, $t \in (0,1)^p$ admits a continuous family of sections.
\end{prop}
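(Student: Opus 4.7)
\textbf{Proof plan for Proposition~\ref{p.extension2}.} The strategy is to reduce to Proposition~\ref{p.extension1} by means of a collar deformation of the boundary family. Identify a collar neighborhood of $\partial[0,1]^p$ in $[0,1]^p$ with $\partial[0,1]^p \times [0,\delta]$ (the outer face being $\partial[0,1]^p\times\{0\}$ and the inner face being identified with $\partial[\delta,1-\delta]^p$). The key intermediate goal is to construct a continuous family $B_t\in\cB$, $t\in\partial[0,1]^p$, that admits a continuous family of sections, together with a continuous homotopy $A^s_t\in\cB$, $(s,t)\in[0,\delta]\times\partial[0,1]^p$, with $A^0_t=A_t$ and $A^\delta_t=B_t$. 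Once this is done, I would place this homotopy on the collar and apply Proposition~\ref{p.extension1} to extend $B_t$ (regarded as the boundary family on $\partial[\delta,1-\delta]^p$, which is contractible in $\cB$ via the homotopy together with the given contraction of $A_t$) to the inner cube $[\delta,1-\delta]^p$ with a continuous family of sections.

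To build $B_t$ and the homotopy, I would proceed by induction on the dimension $k$ of cells in a cell decomposition of $\partial[0,1]^p$. At each vertex $t_0$ (a $0$-cell): since $A_{t_0}\notin\cUH$, Theorem~\ref{t.addendum} supplies a cocycle $B_{t_0}\in\cB$ arbitrarily close to $A_{t_0}$ admitting an invariant section, together with a short line-segment path in $\cB$ joining $A_{t_0}$ to $B_{t_0}$. For the inductive step, suppose $B_t$ and the homotopy have been constructed over the $(k-1)$-skeleton with $B_t$ admitting sections there. On each $k$-cell $\sigma$, the family $B_t$ is defined on $\partial\sigma$ and admits a continuous family of sections; Proposition~\ref{p.extension1}, applied to $\sigma$ viewed as a $k$-cube, extends $B_t$ to the whole of $\sigma$ with continuously varying sections. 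The homotopy over $\partial\sigma\times[0,\delta]$ already exists by induction; on $\sigma\times\{0\}$ the given family $A_t$ is present, and on $\sigma\times\{\delta\}$ we have just defined $B_t$. By openness of $\cB$ and the fact that a continuous map $\partial(\sigma\times[0,\delta])\to\cB$ that is null-homotopic (by contractibility of the original family in $\cB$) extends across the $(k+1)$-cell $\sigma\times[0,\delta]$, one fills in the homotopy while remaining in $\cB$.

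The extension on $[0,1]^p$ is now assembled in the obvious way: use $A^s_t$ on the collar $\partial[0,1]^p\times[0,\delta]$, and on the inner cube $[\delta,1-\delta]^p$ use the family with sections produced by Proposition~\ref{p.extension1}. Continuity across the inner face holds by construction. That the strict interior $(0,1)^p$ admits a continuous family of sections follows because the sections on the inner cube extend across its boundary by continuity of the Proposition~\ref{p.extension1} extension.

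\textbf{Main obstacle.} The delicate point is carrying through the inductive construction of $B_t$ while simultaneously keeping all of the following under control: (i) the perturbation stays inside the open set $\cB$; (ii) compatibility across overlapping cells; (iii) the contractibility hypothesis required by Proposition~\ref{p.extension1} is verified at each inductive step. Issue (i) is addressed by working with a shrunk bounded open set $\cB'\subset\cB$ containing the image of the original family, and choosing all perturbations small relative to the distance from $\cB'$ to the complement of $\cB$. Issue (ii) is automatic from the cell-by-cell construction since sections produced by Proposition~\ref{p.extension1} extend continuously by design. Issue (iii) is the subtlest: at each inductive step one must verify that the boundary family on $\partial\sigma$ is contractible in the chosen small neighborhood; this is ensured by the freedom (afforded by openness of $\cB$) to take perturbations at each stage small enough that the relevant maps are homotopic to their values at a basepoint via line segments inside $\cB$.
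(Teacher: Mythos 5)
Your cell-by-cell mechanism --- perturb at vertices via Theorem~\ref{t.addendum}, then extend over higher-dimensional faces with Lemma~\ref{l.balls} combined with Proposition~\ref{p.extension1} --- is exactly the right local tool (it is how the paper itself proceeds on skeleta). The gap is in the global architecture: a collar of \emph{fixed} width $\delta$ cannot deliver the stated conclusion. On the collar region $\{t\in(0,1)^p:\ \mathrm{dist}(t,\partial[0,1]^p)<\delta\}$ your extension consists of the interpolating homotopy $A^s_t$, and nothing forces those cocycles to admit invariant sections: they are only required to lie in $\cB$, and they may perfectly well be uniformly hyperbolic, hence unbounded and not conjugate to rotations. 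So your assembled family admits sections only on the inner cube, not on all of $(0,1)^p$ as the proposition demands (and as the later applications genuinely need --- e.g.\ Lemmas~\ref{l.step2} and \ref{l.step3} use sections at parameters arbitrarily close to the boundary). Your closing remark that the sections ``extend across the boundary of the inner cube by continuity'' only gives sections on $\partial[\delta,1-\delta]^p$; it says nothing about the open collar region, so the conclusion you reach is strictly weaker than the statement.

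The missing idea is to let the collar width and the perturbation size tend to zero together. The paper sets $V_n=[2^{-n},1-2^{-n}]^p$ and works on the infinite union of shells $W_n=V_{n+1}\setminus\interior V_n$, $n\ge n_0$, each subdivided into dyadic cubes of edge $2^{-n-1}$; your vertex-then-skeleton induction is run on all shells at once, with the perturbation at a vertex of the $n$-th shell of size $\epsilon_n\to 0$ and with inductive variation estimates ($r_n^{(0)}=3\epsilon_{n-3}$, $r_n^{(q)}=2^{q+3}r_n^{(q-1)}$) guaranteeing that the hypotheses of Lemma~\ref{l.balls} and Proposition~\ref{p.extension1} hold on every face and that the constructed family, which carries sections on all of $(0,1)^p\setminus\interior V_{n_0}$, extends continuously to $\partial[0,1]^p$ with the prescribed boundary values $A_t$. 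One final application of Proposition~\ref{p.extension1} on $V_{n_0}$ then produces sections on the rest of the interior. In short, your construction must be iterated along a sequence of collars shrinking to the boundary; with a single fixed $\delta$ there is no way to obtain sections near $\partial[0,1]^p$.
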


Notice that Theorem~\ref{t.bounded}
is an immediate consequence of this proposition with $p=1$.
It also follows that the complement of $\cUH$ is locally connected.
This shows the statement made in Remark~\ref{r.homotopy groups}.

\medskip

We will first prove Proposition~\ref{p.extension1}, and then use
it together with Theorem~\ref{t.addendum} to obtain
Proposition~\ref{p.extension2}. If $A \in C(X,\SL(2,\R))$ and
$\eps>0$, we write $\cB_\eps(A) = \{\tilde A : \|\tilde A -
A\|_\infty < \eps\}$.

\begin{proof}[Proof of Proposition~\ref{p.extension1}.]
Let $A_t$, $t\in \partial [0,1]^p$ be a continuous family of
cocycles admitting a continuous family of sections. Assume that
this family is contractible in a bounded open set $\cB \subset
C(X,\SL(2,\R))$, and thus can be extended to a $\cB$-valued family
$\bar{A}_t$, $t\in [0,1]^p$. Let $\eps>0$ be such that for each $t
\in [0,1]^p$, the ball with center $\bar{A}_t$ and radius $2\eps$,
indicated by $\cB_{2\eps}(\bar{A}_t)$, is contained in $\cB$. Let
$\gamma>0$ be given by Proposition~\ref{p.disk adjust ext}, where
in place of $f$ we take the homeomorphism of $\hat{X} = X \times
[0,1]^p$ given by $\hat{f}(x,t) = (f(x),t)$, and take $\Lambda =
X \times \partial [0,1]^p$, and $C > \sup_{B \in \cB}
\|B\|_\infty$. Next apply Lemma~\ref{l.reduction} to the family
$A_t$, $t\in \partial [0,1]^p$ and to the reduced set $\cB' =
\bigcup_{t\in [0,1]^p} \cB_\eps (\bar{A}_t)$. We obtain a family
$\hat{A}_t$, $t\in [0,1]^p$ with values in $\cB'$ such that
$L(\hat A_t) < \gamma$ for each $t$. Using compactness of
$[0,1]^p$ and subadditivity, it is not hard to see that
$$
\lim_{n \to \infty} \sup_{(x,t) \in X \times [0,1]^p} \frac1n \log \|\hat{A}^n_t(x)\| < \gamma.
$$
Thus we can apply Proposition~\ref{p.disk adjust ext} and find a
continuous family of cocycles $\tilde A_t$, $t\in [0,1]^p$ that
equals $\hat{A}_t$ (and hence $A_t$) when $t\in \partial [0,1]^p$,
such that $\| \tilde A_t - \hat A_t \|_\infty< \eps$ (and thus
$\tilde A_t \in \cB$) for each $t$, and that admits a continuous
family of sections $\tilde z_t$, $t \in [0,1]^p$ extending the
original $z_t$, $t\in \partial [0,1]^p$.
\end{proof}

\begin{lemma} \label{l.balls}
For every $C>0$, there is $\bar \eps>0$ such that for every $A \in
C(X,\SL(2,\R))$ with $\|A\|_\infty < C$ and every $\eps \in (0,
\bar \eps]$, the following holds: Every continuous family of
cocycles $A_t$, $t \in T$ taking values in $\cB_\eps(A)$ is
contractible in $\cB_{\eps}(A)$.
\end{lemma}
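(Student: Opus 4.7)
The plan is to prove that the ball $\cB_\eps(A)$ is itself contractible as a topological space, by constructing an explicit fiberwise deformation retraction onto the constant cocycle $A$ via Riemannian geodesics in $\SL(2,\R)$; any continuous family $t \mapsto A_t$ in $\cB_\eps(A)$ is then null-homotopic simply by composing with this retraction. View $\SL(2,\R)$ as a smooth $3$-dimensional submanifold of $M_2(\R) \cong \R^4$ equipped with the Riemannian metric induced from the ambient Euclidean one. The set $\cK = \{g \in \SL(2,\R) : \|g\| \le C+1\}$ is closed and bounded in $\R^4$ and hence compact, so its geometry is uniformly controlled: the Riemannian injectivity radius is bounded below by some $r_0 > 0$, and the second fundamental form $\mathrm{II}$ (with several derivatives) is uniformly bounded on $\cK$. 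Fix $\bar\eps \in (0, \min(r_0/4, 1))$, with further smallness to be imposed below.

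The geometric heart of the argument is a star-shapedness claim: for every $g \in \cK$ and every $v \in T_g \SL(2,\R)$ with $\|v\| \le 2\bar\eps$, the function $t \mapsto \|\exp^R_g(tv) - g\|$ is strictly increasing on $[0,1]$, where $\exp^R_g$ is the Riemannian exponential and $\|\cdot\|$ is the operator norm on $M_2(\R)$. Setting $\gamma(t) = \exp^R_g(tv)$, the Gauss formula gives $\ddot\gamma(0) = \mathrm{II}(v,v)$, a vector normal to $T_g \SL(2,\R)$. Taylor-expanding and exploiting $\langle v, \mathrm{II}(v,v)\rangle = 0$ yields
\[
\|\gamma(t) - g\|^2 = t^2 \|v\|^2 + O(t^4 \|v\|^4),
\]
with $O$-constants uniform over $\cK$; its $t$-derivative $2t\|v\|^2\bigl(1 + O(t^2 \|v\|^2)\bigr)$ is strictly positive on $(0, 1]$ once $\bar\eps$ is small enough to absorb the error. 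Consequently, the $\exp^R_g$-preimage of each Euclidean ball $\{h \in \SL(2,\R) : \|h - g\| < \eps\}$ (with $\eps \le \bar\eps$) is a star-shaped neighborhood of the origin in $T_g \SL(2,\R)$.

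Given this, the deformation retraction is defined fiberwise: for $\tilde A \in \cB_\eps(A)$ set $v_{\tilde A}(x) = (\exp^R_{A(x)})^{-1}(\tilde A(x))$, which is well-defined by the injectivity bound, and let
\[
H(\tilde A, s)(x) = \exp^R_{A(x)}\!\bigl((1-s)\, v_{\tilde A}(x)\bigr), \qquad s \in [0,1].
\]
Then $H(\tilde A, 0) = \tilde A$ and $H(\tilde A, 1) = A$; joint continuity in $(\tilde A, s)$ with respect to the sup norm follows from the smooth dependence of $\exp^R$ on basepoint and argument, uniform on the compact product $\cK \times \{v : \|v\| \le 2\bar\eps\} \times [0,1]$. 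The star-shapedness claim then gives $\|H(\tilde A, s)(x) - A(x)\| < \eps$ pointwise for all $s \in [0,1]$ and $x \in X$, so the homotopy stays inside $\cB_\eps(A)$, proving that $\cB_\eps(A)$ is contractible. The same argument works verbatim for every $\eps \in (0, \bar\eps]$. The main technical obstacle is ensuring that the $O$-constants in the Taylor expansion are uniform as the basepoint $A(x)$ varies; this is furnished by the compactness of $\cK$ together with the smoothness of the inclusion $\SL(2,\R) \hookrightarrow M_2(\R)$.
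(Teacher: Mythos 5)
The paper itself offers no argument here (the proof is ``left to the reader''), so the only issue is whether your construction is watertight; contracting $\cB_\eps(A)$ fiberwise along geodesics of $\SL(2,\R)\subset M_2(\R)$, with uniform geometric bounds on the compact set $\cK$, is a perfectly reasonable way to supply the missing proof. The gap is at the central star-shapedness claim. The ball $\cB_\eps(A)$ is defined via the paper's norm $\|\cdot\|$, the \emph{operator} norm, and you assert monotonicity of $t\mapsto\|\exp^R_g(tv)-g\|$ for that norm; but your justification --- the expansion $\|\gamma(t)-g\|^2=t^2\|v\|^2+O(t^4\|v\|^4)$ resting on $\langle v,\mathrm{II}(v,v)\rangle=0$ --- is an inner-product computation, valid only for the Frobenius (Euclidean) norm of $M_2(\R)\cong\R^4$. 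For the operator norm the Frobenius-orthogonality of $\mathrm{II}(v,v)$ has no force, and the stated error order is in fact false: at $g=\Id$ with $v=\mathrm{diag}(a,-a)$ one has $\mathrm{II}(v,v)=a^2\Id$, so $\gamma(t)-\Id=\mathrm{diag}(ta+\tfrac{t^2a^2}{2},\,-ta+\tfrac{t^2a^2}{2})+O(t^3)$, whose operator norm is $t|a|+\tfrac{t^2a^2}{2}+O(t^3)$; the square therefore carries a genuine $t^3\|v\|^3$ term. Moreover, since the operator norm is not differentiable, you cannot recover monotonicity by formally differentiating a pointwise cubic error bound: the difference of two error terms each of size $O(t^3\|v\|^3)$ need not be small compared with $(t_2-t_1)\,t\,\|v\|^2$ when $t_2-t_1$ is small, which is exactly the regime ($t$ near $1$, points near the boundary of the ball) where the containment could fail.

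What your computation does establish is that the geodesic contraction does not increase the \emph{Frobenius} distance to $A(x)$; by norm equivalence ($\|M\|\le\|M\|_F\le\sqrt2\,\|M\|$) this only confines the homotopy to $\cB_{\sqrt2\,\eps}(A)$, i.e.\ it proves contractibility in a dilated ball, whereas the lemma (as stated, and as you set out to prove it) requires the homotopy to stay in $\cB_\eps(A)$ itself. The operator-norm statement is very likely true for $\bar\eps$ small depending on $C$ --- for instance at $g=\Id$ a direct singular-value computation shows $\|tv+\tfrac{t^2}{2}\mathrm{II}(v,v)\|$ is increasing in $t$ --- but it needs an argument adapted to the operator norm (an explicit singular-value estimate uniform over basepoints $g\in\cK$, or some retraction that is provably distance-nonincreasing for that norm), and this is precisely the step your write-up does not supply. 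The remaining ingredients (positive injectivity radius and bounded second fundamental form on $\cK$, comparability of intrinsic and extrinsic distance at small scales so that $\|v_{\tilde A}(x)\|\le 2\bar\eps$, joint continuity of the fiberwise contraction) are standard and unobjectionable.
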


\begin{proof}
    Easy and left to the reader.
\end{proof}

Now we are able to prove the second main result of this section:

\begin{proof}[Proof of Proposition~\ref{p.extension2}]
Let $A_t$, $t \in \partial [0,1]^p$ be a family of cocycles that is contractible in $\cB$,
such that no $A_t$ is uniformly hyperbolic.

Let $V_n=[2^{-n}, 1-2^{-n}]^p$.
Take $n_0$ large (to be determined later).
Most of the proof will be devoted to the description of a procedure to extend the family of cocycles $A_t$ to the set $[0,1]^p \setminus \interior V_{n_0}$, which we call the \emph{shell}.

For each $n \ge n_0$, let $W_n = V_{n+1} \setminus \interior V_n$.
Cover the set $W_n$ by closed cubes $W_{n,i}$ of edge length $2^{-n-1}$ whose interiors are disjoint.
Let $\cP$ be the family of all those cubes $W_{n,i}$.
Consider the sets $K_0 \subset K_1 \subset \cdots \subset K_p$, where
$K_q$ is the union of the $q$-dimensional faces of cubes in $\cP$. 
We are going to define $A_t$ successively for $t$ in $K_0$, $K_1$, \dots, up to $K_p$,
and so obtain the extension to the shell.

During this proof, we take in $\R^p$ the box distance given by
$d((t_i), (t'_i)) = \max_i {|t_i - t'_i|}$. Fix a sequence
$\epsilon_1 > \epsilon_2 > \cdots$ converging to $0$ such that if
$\tau$, $\tau' \in \partial [0,1]^p$ are $2^{-n}$-close, then
$\|A_\tau - A_{\tau'}\|_\infty < \epsilon_n$.

Take $t\in K_0$ and let us define $A_t$.
First choose some $\tau = \tau(t) \in \partial [0,1]^p$ such that the distance from $t$ to $\tau$
is as small as possible; then that distance equals $2^{-n}$ for some $n \ge 2$.
Since $A_\tau$ is not uniformly hyperbolic,
by Theorem~\ref{t.addendum}
there is a cocycle $B$ with $\|B - A_\tau\| < \epsilon_n$ that admits a continuous section.
Set $A_t = B$.

The definition of $A_t$ proceeds by an inductive procedure.
Assume that $A_t$ is already defined for $t \in K_q$.
Fix $F$ to be a $(q+1)$-dimensional face in $K_{q+1}$,
and let $B =B(F)$ be its relative boundary, that is, the union of the $q$-dimensional faces in $K_q$ that intersect $F$.
Choose any point $t_0$ in $B$, and let $r = r(B)$ be the diameter of $\{A_t : t \in B\}$ in $C(X,\SL(2,\R))$.
If $r$ is sufficiently small, then, by Lemma~\ref{l.balls}
combined with Proposition~\ref{p.extension1}, we can extend the
continuous family of cocycles $A_t$, $t\in B$ (which takes values
in the ball $\cB_{2r}(A_0)$) and its continuous family of sections
to the $(q+1)$-dimensional cube $F$, so that the extended family
of cocycles takes values in $\cB_{4r}(A_0)$. The smallness of $r$
will be proven by induction, of course. In fact, we will prove
that there exist small numbers $r_n^{(q)}>0$ with $\lim_{n \to
\infty} r_n^{(q)} = 0$ such that if $F$ as above has diameter
$2^{-n}$, then $r(B) \le r_n^{(q)}$.

First consider $q=0$, so $F$ is an edge and $B=\{t_0, t_1\}$. If
$2^{-n}$ is the edge length, then $t_0$ and $t_1$ are both within
distance $2^{-n+1}$ of the boundary of the unit cube, hence
$d(t_i, \tau(t_i)) \le 2^{-n+1}$. It follows that $d(\tau(t_0),
\tau(t_1)) < 2^{-n+3}$ and so
\begin{align*}
r(B) =
\|A_{t_0} - A_{t_1}\|_\infty &\le \|A_{t_0} - A_{\tau(t_0)}\|_\infty + \|A_{\tau(t_0)} - A_{\tau(t_1)}\|_\infty +
\|A_{\tau(t_1)} - A_{t_1}\|_\infty \\
&\le \epsilon_{n-1} + \epsilon_{n-3} + \epsilon_{n-1} < 3 \epsilon_{n-3} \, .
\end{align*}
Thus we set $r_n^{(0)} = 3 \epsilon_{n-3}$.
These numbers will all be small for all $n \ge n_0$, provided $n_0$ is chosen large enough.
This proves the $q=0$ case.

Now let us explain how to find $r_n^{(q)}$ assuming $r_n^{(q-1)}$ is known.
Take a $(q+1)$-dimensional cell $F$;
its relative boundary $B = B(F)$ is composed of $2^{q+1}$ cells of dimension $q$.
In each of these cells, the variation of $A_t$ is at most $4 r_n^{(q-1)}$.
Hence a crude estimate gives $r(B) \le 2^{q+3} r_n^{(q-1)} =: r_n^{(q)}$.
Take $n_0$ large enough so that these numbers will be small for all $n\ge n_0$.

At this point we have defined $A_t$ for $t$ in the shell.
To check continuity in the boundary of the unit cube,
take a sequence $t_j$ in $(0,1)^p \setminus V_{n_0}$ converging to some $\tau \in \partial[0,1]^p$.
For each $j$, consider a cube of the family $\cP$ that contains $t_j$,
and let $t_j'$ be one of its vertices.
Then $\|A_{t_j} - A_{t_j'}\|_\infty \to 0$ as $j \to \infty$ due to the variation estimates.
On the other hand, $\tau(t_j') \to \tau$ and $\|A_{\tau(t_j')} - A_{t_j'}\|_\infty \to 0$,
therefore $A_{t_j} \to A_\tau$.

Now, taking a smaller shell if necessary, we can assume that all
the values of $A_t$ belong to the open set $\cB$. Finally, apply
Proposition~\ref{p.extension1} once more to extend continuously
the family of cocycles from the shell to the whole unit cube,
still taking values in $\cB$, and in such a way that in $(0,1)^p$,
there is a continuous family of sections. This concludes the proof
of Proposition~\ref{p.extension2}.
\end{proof}

\section{Proof of Theorems~\ref{t.reduce} and \ref{t.reduce psl}} \label{s.reduce}

Let $A \in C_0(X,\SL(2,\R)) \setminus \cUH$ be an unlocked
cocycle. Fix a continuous determination of the fibered rotation
number $\rho:\cB_{\epsilon_0}(A) \to \R$ on the ball of radius
$\epsilon_0$ and center $A$. As 
proved in Appendix~\ref{a.uhrn} (see
Proposition~\ref{p.lockprop}), we have
$$
\rho(R_{-\theta} A) < \rho(A) < \rho(R_{\theta} A) \quad \text{for every sufficiently small $\theta>0$.}
$$

The first step is to show the following:
\begin{lemma}\label{l.step1}
There is a continuous family of cocycles $A_t$, $t \in [-1,1]$
such that $A_0 = A$,
$$
\rho(A_{-t}) < \rho(A) < \rho(A_t)  \quad \text{for every $t \in (0, 1]$,}
$$
and no $A_t$ is uniformly hyperbolic.
\end{lemma}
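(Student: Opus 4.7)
The natural candidate is the one-parameter rotation family
\[
A_t = R_{t\theta_0}\,A, \qquad t \in [-1,1],
\]
for a suitably small $\theta_0>0$, where $R_\alpha \in \SO(2,\R)$ denotes the rotation by angle $\alpha$. The plan is to exhibit $\theta_0$ so small that (i) the whole path stays outside $\cUH$ and (ii) the fibered rotation number is strictly monotone along the path. Continuity and the endpoint condition $A_0=A$ are then automatic.

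For property (i), since $A$ is unlocked, it is in particular \emph{not} locked, and by Proposition~\ref{p.lockprop} (stated in Appendix~\ref{a.uhrn}) this is equivalent to $A \notin \cUH$. The set $\cUH$ is open in $C(X,\SL(2,\R))$, so there exists $\epsilon>0$ with $\cB_\epsilon(A)\cap \cUH = \emptyset$. Because $\theta \mapsto R_\theta\,A$ is a continuous path through $A$, we may shrink $\theta_0>0$ so that $\|R_{t\theta_0}A - A\|_\infty < \epsilon$ for every $t\in[-1,1]$; with this choice, $A_t\notin\cUH$ for all $t$.

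For property (ii), I will invoke the second conclusion of Proposition~\ref{p.lockprop}, which is precisely the strict one-sided monotonicity statement for unlocked cocycles: there exists $\theta^*>0$ such that
\[
\rho\bigl(R_{-\theta}A\bigr) < \rho(A) < \rho\bigl(R_{\theta}A\bigr) \qquad \text{for every } \theta\in(0,\theta^*].
\]
Shrinking $\theta_0$ further if necessary so that $\theta_0 \le \theta^*$, and recalling that the assignment $\theta\mapsto \rho(R_\theta A)$ is non-decreasing (the classical twist-monotonicity of the fibered rotation number, which underlies Proposition~\ref{p.lockprop}), we obtain $\rho(A_{-t}) < \rho(A) < \rho(A_t)$ for every $t\in(0,1]$. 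Combining the two choices of $\theta_0$ yields the desired path.

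The only non-trivial input is the strict monotonicity on an entire interval, not merely at the point $\theta=0$. This is the content of Proposition~\ref{p.lockprop}, and its proof (deferred to the appendix) is where the work lies: one must rule out the possibility that $\theta\mapsto\rho(R_\theta A)$ is locally constant on one side of $0$, which by standard arguments (analogous to the constancy of the IDS on spectral gaps in the Schrödinger setting) would force $A$ itself to be locked, contradicting our assumption. Given this input, Lemma~\ref{l.step1} reduces to the straightforward verification sketched above.
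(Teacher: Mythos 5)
There is a genuine gap, and it is exactly at the point where the whole difficulty of the lemma lies. Your argument for property (i) claims that since $A \notin \cUH$ and $\cUH$ is open, there is $\epsilon>0$ with $\cB_\epsilon(A)\cap\cUH=\emptyset$. This confuses the complement of an open set with an open set: $C(X,\SL(2,\R))\setminus\cUH$ is closed, so $A\notin\cUH$ gives no ball around $A$ disjoint from $\cUH$. In fact, in the situation of interest $A$ typically lies on the boundary of $\cUH$ --- Theorem~\ref{t.mainthm2} is precisely the statement that such cocycles (with $2\rho\in G$) are accessible by uniformly hyperbolic ones --- so no such $\epsilon$ exists. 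Concretely, by Proposition~\ref{p.lockprop}(a), $R_\theta A\in\cUH$ exactly when $\theta\mapsto\rho(R_\theta A)$ is locally constant at $\theta$; unlockedness only rules out a plateau at $\theta=0$, and the function $\theta\mapsto\rho(R_\theta A)$ generically has plateaux (corresponding to open gaps) accumulating at $0$. On each such plateau the cocycle $R_\theta A$ is uniformly hyperbolic, so the rotation path $A_t=R_{t\theta_0}A$ cannot in general be kept inside $\cUH^c$, no matter how small $\theta_0$ is. Your property (ii) (strict inequality $\rho(R_{-\theta}A)<\rho(A)<\rho(R_\theta A)$ for small $\theta>0$) is correct and is indeed recorded at the start of Section~\ref{s.reduce}, but it does not address the avoidance of $\cUH$, which is essential because the next step (Lemma~\ref{l.step2}) feeds the family into Proposition~\ref{p.extension2}, whose hypothesis is precisely that the boundary cocycles are not uniformly hyperbolic.

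The paper's proof has to work around exactly this obstruction: it selects angles $\theta_n\nearrow 0$ with $2\rho(R_{\theta_n}A)\notin G$, so that $R_{\theta_n}A\notin\cUH$ by \eqref{f.uhrot}; it then joins $R_{\theta_n}A$ to $R_{\theta_{n+1}}A$ by paths lying in $\cUH^c$ inside small tubes $\cT_n$, using Proposition~\ref{p.extension2}; and finally it applies a continuous correction $t\mapsto R_{\beta(t)}$ (constructed via an Intermediate Value Theorem argument and the uniqueness coming from $2\rho(R_{\theta_{n+1}}A)\notin G$) to pin the rotation number below $\rho(A)$ while keeping the corrected cocycles outside $\cUH$. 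None of this machinery is dispensable, because the naive rotation path simply does not satisfy the conclusion in general.
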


\begin{proof}
Choose $\theta_n<0$ an increasing sequence close to $0$ and
converging to $0$, such that $2\rho(R_{\theta_n} A) \notin G$ for
each $n$. The sequence $\rho(R_{\theta_n} A)$ is also strictly
increasing (cf.~Proposition~\ref{p.lockprop}). So, by continuity
of $\rho$, we can find a sequence $\delta_n$ decreasing to $0$
such that
$$
\rho(R_{\theta_{n-1}} A') < \rho(R_{\theta_n}A'') < \rho(R_{\theta_{n+1}}A''') < \rho(A'''')
\text{ for all $A'$, $A''$, $A'''$, $A'''' \in \cB_{\delta_n}(A)$.}
$$
Define an open subset of $C(X,\SL(2,\R))$ by:
$$
\cT_n = \bigcup_{\theta \in [\theta_n, \theta_{n+1}]} R_\theta \cB_{\delta_n}(A).
$$

\begin{claim}
For each $B \in \cT_n$, there exists a unique
$\beta \in [-\theta_{n+1}+\theta_{n-1}, -\theta_n]$ such that
$\rho(R_\beta B) = \rho(R_{\theta_{n+1}} A)$.
Moreover, the function $B \mapsto \beta$ is continuous.
\end{claim}

\begin{proof}
Let $B \in \cT_n$, so $B = R_\alpha A'$ for some
$\alpha \in [\theta_n, \theta_{n+1}]$ and $A' \in \cB_{\delta_n}(A)$.
Then
$$
\rho(R_{-\theta_{n+1}+\theta_{n-1}} B) \le \rho(R_{\theta_{n-1}} A') <
\rho(R_{\theta_{n+1}} A)  <
\rho(A') \le \rho(R_{-\theta_n} B) \, .
$$
By the Intermediate Value Theorem, there is $\beta \in
[-\theta_{n+1}+\theta_{n-1}, -\theta_n]$ such that $\rho(R_\beta
B) = \rho(R_{\theta_{n+1}} A)$. Uniqueness is a consequence of
Proposition~\ref{p.lockprop} and the fact that
$2\rho(R_{\theta_{n+1}} A) \not\in G$. Continuity follows.
\end{proof}

For each $n$, consider the family of cocycles $R_\theta A$, $\theta \in \{\theta_{n+1}, \theta_n\}$.
It is obviously contractible in $\cT_n$.
Thus we can apply Proposition~\ref{p.extension2} to find
a continuous one-parameter family of cocycles
with values in $\cT_n \cap \cUH^c$,
joining $R_{\theta_n} A$ to $R_{\theta_{n+1}} A$.
By concatenation, we find a continuous
family of cocycles $A_t$, $t\in [\theta_1, 0]$
such that
\begin{gather*}
A_{\theta_n} = R_{\theta_n} A \, , \\
t \in [\theta_n, \theta_{n+1}] \ \Rightarrow \ A_t \in \cT_n  \cap \cUH^c \, .
\end{gather*}
Let
$$
t_n = \inf \big\{ t\in [\theta_n, \theta_{n+1}]: \rho(A_{n,t}) = \rho(R_{\theta_{n+1}} A)  \big\} \,.
$$
Now let us define a ``correction'' function $\beta: [\theta_1,0] \to \R$.
Define $\beta(t) = 0$ for $t \in [\theta_n, t_n]$.
For each $t \in [t_n,\theta_{n+1}]$, let $\beta(t)$ be given by the claim above
such that $\rho(R_{\beta(t)} A_{n,t}) = \rho(R_{\theta_{n+1}} A)$.
Finally, put $\beta(0) = 0$.
Then $\beta$ is well-defined and continuous.

Now consider the continuous family of cocycles $\tilde A_t = R_{\beta(t)} A_t$.
We have $\tilde A_t \in \cUH^c$ for each~$t$
(indeed, for $t \in [t_n,\theta_{n+1}]$, this follows from $2\rho (\tilde A_t) \not\in G$).
Also, in each interval $[\theta_n, \theta_{n+1}]$
we have $\rho(\tilde A_t) \le \rho(R_{\theta_{n+1}} A)$,
so $\rho(\tilde A_t) < \rho(A)$ for every $t < 0$.

Applying an entirely analogous procedure, we find a family $\tilde
A_t$ for small non-negative $t$, such that $A_0=A$ and
$\rho(\tilde A_t) > \rho(A)$ for $t>0$. By reparametrizing, we
find the desired family.
\end{proof}

Next we prove the following strengthening of Lemma~\ref{l.step1}:
\begin{lemma}\label{l.step2}
There is a continuous family of cocycles $A_t$, $t \in [-1, 1]$
such that $A_0 = A$,
$$
\rho(A_{-t}) < \rho(A) < \rho(A_t)  \quad \text{for every $t \in (0, 1]$,}
$$
and the family $A_t$, $t\in [-1,1] \setminus \{0\}$ admits a continuous
family of sections.
\end{lemma}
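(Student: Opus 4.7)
The plan is to thicken the one-parameter family from Lemma~\ref{l.step1} into a two-dimensional family on a rectangle, apply Proposition~\ref{p.extension2} to extend it so that continuous sections exist on the open interior, and then thread a path through the rectangle that hits the boundary only at $t = 0$.

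First, I would fix the family $A_t^{(0)}$, $t \in [-1,1]$, supplied by Lemma~\ref{l.step1}; its image is a compact subset of $C_0(X,\SL(2,\R)) \setminus \cUH$. Let $\cB$ be a bounded open neighborhood of this image inside $C_0(X,\SL(2,\R)) \setminus \cUH$ on which the continuous determination $\rho$ from the proof of Lemma~\ref{l.step1} is defined. With $R = [-1,1] \times [0,1]$, I would define $B \colon \partial R \to \cB$ by $B(t,0) = B(t,1) = A_t^{(0)}$ on the horizontal edges and $B(\pm 1, s) = A_{\pm 1}^{(0)}$ on the vertical edges. This boundary family is contractible in $\cB$ (the map $(t,s) \mapsto A_t^{(0)}$ extends it continuously), and none of the boundary cocycles is uniformly hyperbolic. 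Hence Proposition~\ref{p.extension2} with $p=2$ supplies a continuous extension $B \colon R \to \cB$ whose restriction to the open interior $(-1,1) \times (0,1)$ admits a continuous family of sections.

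The last step is to choose a continuous path $\sigma \colon [-1,1] \to R$ with $\sigma(0) = (0,0)$, $\sigma(t) \in (-1,1) \times (0,1)$ for $t \neq 0$, and such that the sign of $\rho(B(\sigma(t))) - \rho(A)$ matches the sign of $t$. Setting $\tilde A_t := B(\sigma(t))$ then yields the required family: $\tilde A_0 = A$; for $t \neq 0$ the cocycle $\tilde A_t$ inherits a section from the interior family; and the rotation-number inequalities follow from the choice of $\sigma$.

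The main obstacle is producing such a $\sigma$ near $t=0$. The open sets $U_\pm = \{(s,r) \in R : \pm(\rho(B(s,r)) - \rho(A)) > 0\}$ contain $(0,1] \times \{0\}$ and $[-1,0) \times \{0\}$ respectively, but only accumulate at the degenerate corner $(0,0)$, so no linear path through $(0,0)$ will automatically work. I would handle this by a compactness argument: for each compact subinterval $[s_1, 1] \subset (0,1]$, continuity of $\rho$ yields $\delta(s_1) > 0$ with $[s_1, 1] \times [0, \delta(s_1)) \subset U_+$, and interpolating across a decreasing sequence $s_n \downarrow 0$ produces a continuous $r_+ \colon (0,1] \to (0,1)$ with $r_+(s) \to 0$ as $s \to 0^+$ and $(s, r_+(s)) \in U_+ \cap ((-1,1) \times (0,1))$. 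The mirror construction yields $r_-$, and concatenating the graphs of $r_\pm$ with the point $(0,0)$ at $t=0$ gives the desired path $\sigma$.
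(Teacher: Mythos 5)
Your argument is correct and is essentially the paper's own proof: the paper likewise thickens the family from Lemma~\ref{l.step1} into a two-parameter family that is constant in the second coordinate on the boundary of a square, applies Proposition~\ref{p.extension2} to get sections on the open interior, and then follows a path of the form $(t/2,h(t))$ just above the bottom edge, where the inequality on $\rho$ persists by continuity (the paper handles the two signs on separate squares while you use one rectangle, an immaterial difference). Two cosmetic repairs: the requirement that $\cB$ avoid $\cUH$ is neither needed in Proposition~\ref{p.extension2} nor always achievable (the cocycles from Lemma~\ref{l.step1} may lie on the boundary of $\cUH$), so simply take $\cB$ to be a bounded open neighborhood of the image inside $C_0(X,\SL(2,\R))$ on which the determination $\rho$ is defined; and your graph point $(1,r_+(1))$ lies on the vertical edge of the rectangle, where no section is provided, so compress the first coordinate (e.g.\ use $(s/2,r_+(s))$, exactly as in the paper) to keep $\sigma(t)$ in the open interior for every $t\neq 0$.
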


\begin{proof}
Let $A_t$, $t\in [-1,1]$ be the continuous family of cocycles given by the previous lemma.
We can assume it takes values in a small neighborhood of $A$.
Consider the continuous family $B_t$, $t \in \partial [0,1]^2$
defined by $B_{(t_1,t_2)} = A_{t_1}$.
Since these cocycles are not uniformly hyperbolic,
we can apply Proposition~\ref{p.extension2}
to find an extended family $B_t$, $t \in [0,1]^2$
such that the restricted family  $B_t$, $t \in (0,1)^2$
is continuously reducible to rotations.
Since $\rho(B_{(t_1,0)}) > \rho(A)$ for each $t_1 \in (0,1]$,
we can find a continuous function $h: (0,1] \to (0,1/2]$
such that $\rho(B_{(t_1/2,h(t_1))}) > \rho(A)$ for each $t_1$.
That is, $\tilde A_t = B_{(t/2,h(t))}$, $t\in (0,1]$ defines a continuous family of cocycles
that admits a continuous family of sections,
satisfies $\rho(\tilde A_t) > \rho(A)$ for each $t$,
and  $\lim_{t \to 0} \tilde A_t = A$.
In the same way we find the desired cocycles for negative parameter.
\end{proof}

In the next step we find a family depending on two parameters:
\begin{lemma}\label{l.step3}
There exists a family $A_t$, $t\in [0,1]^2$ such that
$A_{(0,0)} = A$,
$$
\rho\big(A_{(0,s)}\big) < \rho(A) < \rho\big(A_{(s,0)}\big) \quad
\text{for } s\in (0,1],
$$
and the restricted family $A_t$, $t\in [0,1]^2\setminus \{(0,0)\}$
admits a continuous family of sections.
\end{lemma}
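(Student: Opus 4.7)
The plan is to use Proposition~\ref{p.extension2} to embed the $1$-parameter family from Lemma~\ref{l.step2} into a $2$-parameter family on a rectangle, and then reparametrize $[0,1]^2$ by a continuous map whose image lies in the open interior of the rectangle except at the origin.

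First, I would apply Lemma~\ref{l.step2} to obtain a continuous family $C_t$, $t\in[-1,1]$, with $C_0=A$, admitting a continuous family of sections for $t\neq 0$, and satisfying $\rho(C_t) > \rho(A) > \rho(C_{-t})$ for $t > 0$. After rescaling the parameter, assume the family lies in a bounded open ball $\cB$ around $A$. Define a continuous family $B$ on $\partial([-1,1]\times[0,1])$ by $B_{(t_1,t_2)} = C_{t_1}$. Since the map $(t_1,t_2)\mapsto C_{t_1}$ factors through the contractible interval $[-1,1]$, the family is contractible in $\cB$, and since no $C_t$ is uniformly hyperbolic, neither is any $B_{(t_1,t_2)}$. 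Proposition~\ref{p.extension2} then yields a continuous extension $B$ to all of $[-1,1]\times[0,1]$ admitting a continuous family of sections on the open interior $(-1,1)\times(0,1)$.

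Next, fix a small $\epsilon > 0$ and choose a continuous function $h:[0,1]\to[0,1/2]$ with $h(0)=0$ and $h(s)>0$ for $s>0$, small enough that
\[
\rho\bigl(B_{((1-\epsilon)s,\,h(s))}\bigr) > \rho(A) > \rho\bigl(B_{(-(1-\epsilon)s,\,h(s))}\bigr) \quad \text{for every } s \in (0,1].
\]
Such an $h$ exists because $\rho(B_{(t_1,0)}) = \rho(C_{t_1})$ satisfies the analogous strict inequalities on the bottom edge, and by continuity of $\rho$, a sufficiently small $h$ preserves them (uniformly on each $[\delta,1]$, with $h(s)\to 0$ as $s\to 0$). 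Finally, define $\Phi:[0,1]^2\to[-1,1]\times[0,1]$ by
\[
\Phi(s_1,s_2) = \bigl((1-\epsilon)(s_1-s_2),\, h(\max(s_1,s_2))\bigr),
\]
and set $A_t = B_{\Phi(t)}$. Then $A_{(0,0)} = B_{(0,0)} = A$, the axis conditions follow directly from the choice of $h$, and for $(s_1,s_2)\neq(0,0)$ the point $\Phi(s_1,s_2)$ lies in the open interior $(-(1-\epsilon),1-\epsilon)\times(0,1/2)\subset(-1,1)\times(0,1)$, so $A_{(s_1,s_2)}$ inherits a continuous section from the extension $B$, continuous in $(s_1,s_2)$ by continuity of $\Phi$.

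The main obstacle is the construction of $h$ satisfying both strict rotation-number inequalities simultaneously with the correct tapering at $s=0$; this is handled by combining continuity of the fibered rotation number with a compactness-based localization, taking $h(s)$ to be the minimum of suitable local upper bounds from the two inequalities and interpolating so that $h(s)\to 0$ as $s\to 0$. The factor $(1-\epsilon)$ in the first coordinate of $\Phi$ is there to keep the image away from the left and right edges of the rectangle, where Proposition~\ref{p.extension2} does not guarantee continuity of the family of sections.
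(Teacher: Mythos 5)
Your proof is correct, but it follows a different route from the paper's. The paper keeps the family \emph{exactly} equal to the Lemma~\ref{l.step2} family $A'_{t_1-t_2}$ on the two axes (so the strict inequalities hold there by fiat), and achieves the sections off the origin via Proposition~\ref{p.extension1}, which extends cocycles \emph{together with prescribed boundary sections}: it first builds an auxiliary family $A''_t$ with sections for all $t$, places $A'$ on the axes and $A''$ on the far edges of $\partial[0,1]^2$, then applies Proposition~\ref{p.extension1} infinitely many times on the nested boundaries $\Gamma=\bigcup_n \partial[0,1/n]^2$ and on the annular regions between them, arranging $A_t\to A$ as $t\to(0,0)$. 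You instead make a single application of Proposition~\ref{p.extension2} to a rectangle whose boundary data is the suspended one-parameter family $C_{t_1}$ (contractible since it factors through an interval, and non-uniformly hyperbolic because $C_0=A\notin\cUH$ and the sectioned cocycles are bounded), and then pull back by the reparametrization $\Phi$, recovering the axis inequalities by continuity of $\rho$ and a tapering height $h$ with $h(0)=0$, $h>0$ on $(0,1]$ — which is essentially the same trick the paper uses inside its proof of Lemma~\ref{l.step2}, here upgraded to handle both sides simultaneously. The compactness construction of $h$ (uniform lower bounds for $\rho(C_{\pm(1-\epsilon)s})-\rho(A)$ on $[1/(n+1),1]$, uniform continuity of $\rho\circ B$, piecewise-linear interpolation with $h(s)\to0$) works, and the factor $(1-\epsilon)$ correctly keeps the image off the lateral edges where sections are not guaranteed. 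What each approach buys: the paper's construction yields a family that literally restricts to the original one on the axes (exact rotation numbers there), while yours is shorter, avoids the auxiliary family $A''$ and the infinite nested extension, and delivers exactly the strict inequalities the lemma asks for; both rely on the same underlying extension machinery. Two small points worth making explicit: the rescaled family, and hence the ball $\cB$, should be taken inside the domain $\cB_{\epsilon_0}(A)$ of the fixed determination of $\rho$ so that all rotation numbers are compared consistently; and the non-uniform hyperbolicity of every $C_t$ deserves one line (for $t\neq0$ an invariant section gives conjugacy to rotations, hence boundedness, hence $C_t\notin\cUH$; for $t=0$ it is the hypothesis on $A$).
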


\begin{proof}
Let $A'_t$, $t\in [-1,1]$ be given by Lemma~\ref{l.step2}, so
$\rho(A'_{-t}) < \rho(A) < \rho(A'_t)$ for $t>0$, and  $A'_t$,
$t\in [-1,1] \setminus \{0\}$ admits a continuous family of
sections $z'_t$.

By Proposition~\ref{p.extension1}, we can find a family
$A''_t$, $t\in [-1,1]$, admitting a continuous family of sections $z''_t$,
such that  $A''_{\pm 1} = A'_{\pm 1}$ and $z''_{\pm 1} = z'_{\pm 1}$.
Next define $A_t$ for $t \in \partial [0,1]^2$ by
$$
A_{(t_1,t_2)} =
\begin{cases}
A'_{t_1-t_2}  &\quad\text{if $t_1=0$ or $t_2=0$,} \\
A''_{t_1-t_2} &\quad\text{if $t_1=1$ or $t_2=1$.}
\end{cases}
$$
Define analogously the family of sections $z_t$, $t \in \partial [0,1]^2 \setminus \{(0,0)\}$.

Consider the set
$\Gamma = \bigcup_{n=1}^\infty \partial [0,1/n]^2$.
Using Proposition~\ref{p.extension1} (with $p=1$) infinitely many times,
find an extended continuous family of cocycles $A_t$, $t\in \Gamma$
such that the cocycles $A_t$, $t\in \Gamma \setminus \{(0,0)\}$ admit a
continuous family of sections that extends the previously defined $z_t$.

Now consider the regions
$[0,1/n]^2 \setminus (0,1/(n+1))^2$; each is homeomorphic to a square, and $A_t$, $B_t$ are defined in the boundary.
Applying Proposition~\ref{p.extension1}  (with $p=2$)  we extend $A_t$, $z_t$ to each of those holes.
This can be done so that $\lim_{t \to (0,0)}A_t = A$.
This gives the desired family of cocycles.
\end{proof}

Now we need a purely topological result:

\begin{lemma}\label{l.topology}
Let $\theta:[0,1]^2 \to \R$ be a continuous function.  Assume that
$\theta(0,0) = 0$ and $\theta(0,s) < 0 < \theta(s,0)$ for every
$0< s \leq 1$. Let $K=\theta^{-1}(0)$ and let $U \subset (0,1]^2$
be a neighborhood of $K \setminus \{(0,0)\}$.  Then there exists a
path $\gamma:(0,1] \to U$ such that $\lim_{s \to 0}
\gamma(s)=(0,0)$.
\end{lemma}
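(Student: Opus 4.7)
The plan is to first show that some connected component $D$ of the open set $U \subset \R^2$ has $(0,0) \in \overline{D}$, and then to build the path $\gamma$ inside $D$ by a standard nested-component construction. The key topological input will be Zoretti's theorem from plane topology, invoked to rule out the degenerate scenario in which the connected component of $K$ containing $(0,0)$ is trivial.

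First I would apply the Intermediate Value Theorem to the L-shaped arc from $(1/n,0)$ to $(0,1/n)$ through $(1/n,1/n)$ to produce points $q_n \in K \cap [0,1/n]^2 \setminus \{(0,0)\} \subset U$ with $q_n \to (0,0)$; in particular $(0,0) \in \overline{U}$. Let $C_0$ denote the connected component of $K$ containing $(0,0)$; the crux is to show that $C_0$ is non-degenerate. Were $C_0 = \{(0,0)\}$, then since $\{(0,0)\}$ would be an isolated component of the compact set $K \subset \R^2$, Zoretti's theorem would produce, for any small $\varepsilon > 0$, a simple closed Jordan curve $J \subset \R^2 \setminus K$ contained in the $\varepsilon$-neighborhood of $(0,0)$ and enclosing it; as $(0,0)$ is a corner of $[0,1]^2$, the curve $J$ would meet $[0,1]^2$ in an arc joining a point of the bottom side (where $\theta > 0$) to a point of the left side (where $\theta < 0$), and the IVT on this arc would force a zero of $\theta$ lying on $J$, contradicting $J \cap K = \emptyset$. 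Hence $C_0$ is a non-degenerate continuum; in particular $(0,0)$ is not isolated in $C_0$, and the Boundary Bumping Theorem of continuum theory, applied to $C_0$ and its proper nonempty open subset $C_0 \setminus \{(0,0)\}$ (whose boundary in $C_0$ is precisely $\{(0,0)\}$), guarantees that every connected component $E$ of $C_0 \setminus \{(0,0)\}$ satisfies $(0,0) \in \overline{E}$. Picking any such $E$, which is non-empty, connected, and contained in $K \setminus \{(0,0)\} \subset U$, we find that $E$ lies in a single connected component $D$ of the open set $U$, giving $(0,0) \in \overline{E} \subset \overline{D}$.

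Finally, with such a $D$ fixed, the path is produced as follows. The set $D \subset \R^2$ is open, hence locally path-connected, and being connected it is path-connected. Setting $D_1 = D$, I would define inductively $D_{n+1}$ to be a connected component of $D_n \cap B((0,0),1/(n+1))$ with $(0,0) \in \overline{D_{n+1}}$; such a component exists because any open subset of $\R^2$ has at most countably many components, so the pigeonhole principle applied to a sequence in $D_n$ converging to $(0,0)$ forces some component of the intersection to accumulate at $(0,0)$. Choosing $q_n \in D_n$ and a path $\sigma_n \subset D_n \subset B((0,0),1/n)$ from $q_n$ to $q_{n+1}$, and concatenating and reparametrizing these paths over $(0,1]$, one obtains the required $\gamma$. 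The main obstacle is the appeal to Zoretti's theorem (equivalently, a careful combination of Sura--Bura with a Jordan-curve argument); every other step is routine once this plane-topology input is available.
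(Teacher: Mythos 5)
Your route is genuinely different from the paper's: instead of covering $K\setminus\{(0,0)\}$ by dyadic squares whose size is tied to a continuous gauge and locating, via a sign/separation argument along the segment $t_1+t_2=1/2$, a connected union of squares accumulating at the origin, you argue through plane continuum theory (Zoretti plus boundary bumping). The first half of your argument is essentially sound: Zoretti's theorem applies to any component of a compact planar set (no isolation is needed -- and $\{(0,0)\}$ would in fact not be isolated, since by your IVT step other components of $K$ accumulate at the origin), and the contradiction works once one proves that $J$ contains a \emph{subarc} lying in $[0,1]^2$ with one endpoint on the open positive $x$-axis and one on the open positive $y$-axis; this is true (e.g.\ by lifting the argument function of a parametrization of $J$, which winds once around the origin, and extracting a monotone crossing of the angular interval $[0,\pi/2]$), but it is not the literal statement that ``$J$ meets $[0,1]^2$ in an arc'', so it needs a proof. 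The boundary bumping step then correctly yields a connected $E\subset K\setminus\{(0,0)\}$ with $(0,0)\in\overline{E}$. (A minor point: $U$ is only a neighborhood inside $(0,1]^2$ and need not be open in $\R^2$ when $K$ meets the top or right edge; one should pass to its relative interior, which is still locally path-connected, so components are relatively open and path-connected.)

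The genuine gap is in the nested-component construction at the end. From the inductive hypothesis ``$(0,0)\in\overline{D_n}$'' alone it does \emph{not} follow that some component of $D_n\cap B((0,0),1/(n+1))$ accumulates at $(0,0)$: take for $D_n$ a connected open set consisting of tiny disks centered at the points $(2^{-k},0)$, joined consecutively by thin corridors that leave the ball $B((0,0),1/(n+1))$ while staying inside $B((0,0),1/n)$; then $(0,0)\in\overline{D_n}$, yet every component of $D_n\cap B((0,0),1/(n+1))$ stays at a definite distance from the origin. Your pigeonhole justification does not repair this: there may be countably infinitely many components, and the terms of a sequence tending to $(0,0)$ may lie in pairwise distinct ones. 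This is exactly where the difficulty of the lemma lives -- the neighborhood $U$ may pinch toward $(0,0)$, so nearness to the origin has to be propagated through connected pieces of $K$ -- and your argument discards $E$ at precisely the moment it is needed. The construction can be repaired by strengthening the induction hypothesis to ``$D_n$ contains a connected set $F_n\subset K\setminus\{(0,0)\}$ with $(0,0)\in\overline{F_n}$'': from the continuum $\overline{F_n}\subset K$ one gets, by two further applications of boundary bumping (first intersecting with the small closed ball, then deleting the origin), a connected $F_{n+1}\subset\overline{F_n}\setminus\{(0,0)\}$ contained in the small ball and clustering at $(0,0)$; one then checks $F_{n+1}\subset D_n$ (components of relatively open sets are relatively closed, and $F_{n+1}$ lies strictly inside all previous balls and in $U$), and lets $D_{n+1}$ be the component containing it. With that bookkeeping your scheme goes through, but as written the key step -- the only step that actually forces the path to converge to $(0,0)$ -- is unjustified.
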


\begin{proof}
Let $\epsilon:(0,1] \to (0,1]$ be a continuous increasing function
such that for every $t \in K \setminus \{(0,0)\}$,
$B_{\epsilon(|t|)}(t) \cap [0,1]^2 \subset U$.

For every $t \in K \setminus \{(0,0)\}$, choose a closed dyadic
square $\tilde D_t$ containing $t$ of diameter at least
$\epsilon(|t|)/10$ and at most $\epsilon(|t|)/5$, and let $D_t$ be
the union of all closed dyadic squares of the same diameter that
intersect $\tilde D_t$.  Let $W=\bigcup_{t \in K \setminus \{0\}}
D_t$.

Let $I=\{t = (t_1 , t_2) \in [0,1]^2 ;\; t_1 + t_2 = 1/2\}$. Then
$I$ intersects $W$ on finitely many of its connected components,
which we denote by $S_i$.  We claim that one of them accumulates
at $(0,0)$. Indeed, if this is not the case, let $T$ be the union
of the boundaries of the $S_i$ intersected with $\Delta = \{t =
(t_1 , t_2) \in [0,1]^2 : t_1 + t_2 \le 1/2\}$. Then there
exists a path joining $(0,1/2)$ to $(1/2,0)$ that is contained
everywhere in $(I \setminus \bigcup S_i) \cup T$.  But the sign of
$\theta$ is constant along this path, a contradiction.

Let $S$ be such a connected component that accumulates at $(0,0)$.
Write $S$ as a countable union of dyadic squares,
so that any of them does not contain another.
Order such squares with the property that $D_i \cap
D_{i+1} \neq \emptyset$.  Refine this sequence so that no squares are
repeated.  Join the centers of the squares by straight segments.
This gives the desired path.
\end{proof}

Now we can finish the proof of Theorems~\ref{t.reduce} and \ref {t.reduce
psl}:

\begin{proof}
Consider the continuous family of cocycles $\tilde A_t$,
$t\in [0,1]^2$ given by Lemma~\ref{l.step3};
for $t \neq (0,0)$, one can write
$$
\tilde A_t(x) = \tilde B_t(f x) R_{2 \pi \theta(t,x)} \tilde B_t(x)^{-1} \, ,
$$
where $\tilde B_t \in C(X,\SL(2,\R))$, $t \in
[0,1]^2\setminus\{(0,0)\}$ is a continuous family of conjugacies
and $\theta : [0,1]^2 \times X \to \R$ is continuous, with $\int
\theta(0,s,x) \, d\mu(x) < \int \theta(0,0,x) \, d\mu(x) <
\theta(s,0,x)$ for every $0< s \leq 1$. Here, we write $\int
\theta(0,0,x) \, d\mu(x)$ for $\lim_{t \to (0,0)} \int \theta(t,x)
\, d\mu(x)$. By definition of the fibered rotation number, $\int
\theta(0,0,x) \, d\mu(x) = \rho(A) \mod G'$
(where $G'$ is a subgroup of $G$, see \S\ref{ss.cocycle dynamics}).

Let $g=\alpha-\int \theta(0,0,x) \, d\mu(x)$, in the case of Theorem \ref
{t.reduce}, and let $g=2 \alpha-2 \int \theta(0,0,x) \, d\mu(x)$, in the case
of Theorem \ref {t.reduce psl}.  In either case, $g \in G$, so
we may write $g=\int \phi \, d\mu$ for some continuous function
$\phi:X \to \R$ such that $\phi=\psi \circ f-\psi$ for
some continuous $\psi:X \to \R/\Z$.  In the case of Theorem \ref {t.reduce},
we may thus replace $\tilde B_t(x)$ by $\tilde
B_t(x) R_{2 \pi \psi(x)}$ and $\theta(t,x)$ by $\theta(t,x)+\phi(t,x)$, that is, we may
actually assume that $g=0$.  In the case of Theorem \ref {t.reduce psl},
we may proceed similarly, replacing $\tilde B_t(x)$ by the $\PSL(2,\R)$-valued
$\tilde B_t(x) R_{\psi(x)}$, and $\theta(t,x)$ by $\theta(t,x)+\phi(t,x)/2$:
we may thus also assume that $g=0$ in this case.  From now on, we
assume that $\alpha=\int \theta(0,0,x) \, d\mu(x)$.

Apply Lemma \ref {l.line adjust ext} with $T=[0,1]^2 \setminus
\{(0,0)\}$, $\varphi=\theta|(T \times X)$, $T^*=\emptyset$ and
$\varepsilon$ so small that $\varepsilon(0,s) < \int
\alpha-\theta(0,s,x) \, d\mu(x)$, $\varepsilon(s,0) < \int
\theta(s,0,x)-\alpha \, d\mu(x)$ and $\|\tilde B_t\|^2_\infty
\varepsilon(t) \to 0$ as $t \to (0,0)$.  Let $\tilde \varphi$ and
$\tilde \psi$ be as in the lemma. We now define $\hat
A_t(x)=\tilde B_t(f(x)) R_{2 \pi \tilde \varphi(t,x)} \tilde
B_t(x)^{-1}$ for $t \in [0,1]^2 \setminus \{(0,0)\}$ and $\hat
A_0=A$: this is a continuous family at $t=(0,0)$ since $\|\hat
A_t-\tilde A_t\|_\infty<2 \pi \varepsilon(t) \|\tilde
B_t\|_\infty^2$ for $t \in [0,1]^2 \setminus \{(0,0)\}$. Since
$\tilde \varphi(t,x)$ is cohomologous to $c(t) = \int \theta (t,x)
\, d\mu(x) = \rho(\tilde A_t)$, we can write $\hat A_t(x) = \hat
B_t(f(x)) R_{2 \pi c(t)} \hat B_t(x)^{-1}$ (where the $\psi$'s are
absorbed by $\hat B$).

Now choose some function $\eps_1(t) > 0$ that goes to zero as $t
\to (0,0)$. By Lemma~\ref{l.topology}, there exists a continuous
path $\gamma:(0,1] \to (0,1]^2$ whose range is contained in the
set of $t$'s for which $\| \hat A_t(x) - \hat B_t(f(x)) R_{2 \pi
\alpha} \hat B_t(x)^{-1} \| < \eps_1(t)$. (The latter set is open
and contains $K = c^{-1}(\alpha)$.) Thus, we may set $A_s(x) =
\hat B_{\gamma(s)}(f(x)) R_{2 \pi \alpha} \hat
B_{\gamma(s)}(x)^{-1}$. By construction, this family has the
desired properties.
\end{proof}

\begin{rem} \label{r.homotopic const}
It follows from the proof that if $\alpha = \rho_{f,A} \mod G'$,
then the maps $B_t$ can be taken in $C_0(X,\SL(2,\R))$ (i.e., homotopic to constant).
Conversely, if the conclusion of Theorem~\ref{t.reduce} holds with
$B_t \in C_0(X,\SL(2,\R))$, then it is straightforward to show that $\rho_{f,A} = \alpha \mod G'$.
\end{rem}

\begin{rem} \label{r.unlocked}
A cocycle can be accessed by a continuous path of unlocked cocycles with the
same rotation number if and only if it is unlocked itself.
Indeed, the ``only if'' part is an immediate consequence of the definitions.
On the other hand, notice that
if two cocycles in $C_0(X,\SL(2,\R))$ are conjugate via a map also in
$C_0(X,\SL(2,\R))$ then\footnote{In fact, it is not necessary that the conjugacy is homotopic to constant;
see Proposition~\ref{p.conjugacy inv}.}
they have the same type (locked / unlocked / semi-locked).
Thus, given an unlocked cocycle $A$,
applying Theorem~\ref{t.reduce} (and Remark~\ref{r.homotopic const}) with $\alpha=\rho_{f,A}$
we obtain a continuous path of unlocked cocycles with the same rotation number.\end{rem}

\begin{appendix}

\section{Connectedness Considerations}

Recall that $C_0(X, \SL(2,\R))$ indicates the subset of $C(X, \SL(2,\R))$
formed by maps that are homotopic to a constant.
The rotation number can be viewed as a map $C_0(X,\SL(2,\R)) \to \R / G'$.

\begin{prop}\label{p.const}
If two cocycles have the same rotation number {\rm (}mod~$G'${\rm )}, then
is it possible to connect them by a continuous path with constant  rotation number {\rm (}mod~$G'${\rm )}.
\end{prop}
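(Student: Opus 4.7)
Let $\alpha \in \R$ be a fixed lift of the common value, so that $\rho_{f,A_0} \equiv \rho_{f,A_1} \equiv \alpha \pmod{G'}$. The plan is to show that any $A \in C_0(X,\SL(2,\R))$ with $\rho_{f,A} \equiv \alpha \pmod{G'}$ can be continuously connected to the constant cocycle $x \mapsto R_{2\pi\alpha}$ through a path lying entirely in the fiber $\rho^{-1}(\alpha \bmod G')$; the proposition then follows by concatenating such a path from $A_0$ with the reverse of the one from $A_1$.

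For the unlocked case the argument is essentially a direct application of Theorem~\ref{t.reduce}. Given an unlocked $A$ with $\rho_{f,A} \equiv \alpha \pmod{G'}$, I apply Theorem~\ref{t.reduce} with this particular $\alpha$, and invoke Remark~\ref{r.homotopic const} to ensure that the resulting conjugacies $B_t$ can be chosen in $C_0(X,\SL(2,\R))$. This yields $A_t \in C_0(X,\SL(2,\R))$ with $A_0=A$ and $A_1(x) = B_1(f(x)) R_{2\pi\alpha} B_1(x)^{-1}$, along which the rotation number is exactly $\alpha \pmod{G'}$ (since conjugation by a map in $C_0$ preserves the rotation number modulo $G'$, and the target rotation $R_{2\pi\alpha}$ has rotation number $\alpha$). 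Since $C_0(X,\SL(2,\R))$ is path-connected and contains the constant identity map, I then choose a continuous path $B_s$ in $C_0$ from $B_1$ to $\Id$ and take $\tilde{A}_s(x) = B_s(f(x))R_{2\pi\alpha}B_s(x)^{-1}$; concatenating produces the desired path from $A$ to the constant cocycle $R_{2\pi\alpha}$ with constant rotation number.

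For the remaining (locked or semi-locked) cases I would exploit the structural classification from Appendix~\ref{a.uhrn}. If $A$ is locked, then $A \in \cUH$ and by Proposition~\ref{p.lockprop} the rotation number is locally constant there, so the whole connected component of $\cUH$ through $A$ lies in the fiber; deforming along this component to its topological boundary in $C_0(X,\SL(2,\R))$ produces a non-UH cocycle $A'$ with $\rho_{f,A'} \equiv \alpha \pmod{G'}$, which is therefore either unlocked or semi-locked. If it is unlocked, we are done by the previous paragraph. If $A'$ is semi-locked, I use that semi-locked cocycles are local extrema of $\rho$ while being accessed by UH cocycles (Proposition~\ref{p.lockprop}), and combine this with the density of reducibility inside $\Ruth$ (Remark~\ref{r.ruth}, derived from Theorem~\ref{t.reduce psl}) to wiggle within the fiber and reach an unlocked cocycle with rotation number $\alpha$.

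The main obstacle is the semi-locked case: Remark~\ref{r.unlocked} explicitly forbids accessing a semi-locked cocycle by unlocked cocycles of the same rotation number, so the wiggling argument above cannot be replaced by a naive perturbation and must pass through locked cocycles of rotation number $\alpha$. The unlocked case, by contrast, is essentially immediate from Theorem~\ref{t.reduce} together with path-connectedness of $C_0(X,\SL(2,\R))$.
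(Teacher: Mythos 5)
Your unlocked case is fine: applying Theorem~\ref{t.reduce} with Remark~\ref{r.homotopic const} and then contracting the conjugacy $B_1$ to the identity inside $C_0(X,\SL(2,\R))$ does produce a path from an unlocked $A$ to $R_{2\pi\alpha}$ with rotation number constantly $\alpha \bmod G'$. But the proposition is about \emph{arbitrary} cocycles in $C_0(X,\SL(2,\R))$, and your treatment of the locked and semi-locked cases has genuine gaps. For the locked case, "deforming along the component of $\cUH$ to its topological boundary" is not justified: an open connected component need not have its boundary points accessible by paths from inside, so you would at least need an argument (a workable substitute is the explicit path $\theta \mapsto R_{-\theta}A$ up to the endpoint of the plateau of $r_A$, which by Proposition~\ref{p.lockprop} lands on a semi-locked cocycle while keeping $\rho$ constant). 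More seriously, the semi-locked case is not an argument at all: "wiggle within the fiber" is exactly the step you yourself observe is obstructed by Remark~\ref{r.unlocked}, since a semi-locked cocycle cannot be reached by unlocked cocycles of the same rotation number, so any path out of it inside the fiber must begin through locked (i.e.\ uniformly hyperbolic) cocycles -- and Proposition~\ref{p.disconnect} in the same appendix shows that $\cUH$ intersected with a fixed rotation-number fiber can be disconnected, so traversing $\cUH$ to a more favorable exit point is not automatic either. As it stands, the reduction of the general case to the unlocked case is missing.

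For comparison, the paper proves Proposition~\ref{p.const} by a completely different and case-free argument: take \emph{any} path $A_t$ in $C_0(X,\SL(2,\R))$ from $A'$ to $A''$ (such a path exists since $C_0$ is a path component), append the loop $t \mapsto R_{-2\pi(t-1)\phi}A''$ with $\phi \in C(X,\Z)$ chosen so that the continuous determination of $\rho$ has equal values at the two endpoints, and then correct the whole path by rotations: setting $\rho(t,\theta) = \rho(R_\theta A_t)$, which is monotone in $\theta$ with $\rho(t,\theta+2\pi)=\rho(t,\theta)+1$, one defines $f(t)=\inf\{\theta : \rho(t,\theta)=a\}$ and proves $f$ is continuous using Proposition~\ref{p.lockprop}: a plateau of $\theta \mapsto \rho(t,\theta)$ forces uniform hyperbolicity, where $\rho$ is locally constant in \emph{both} variables. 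The corrected path $t \mapsto R_{f(t)}A_t$ then has constant rotation number. This correction mechanism is exactly what lets the paper avoid the locked/semi-locked/unlocked trichotomy that blocks your approach; if you want to salvage your scheme, you will need either such a correction device or a genuine argument for connecting semi-locked cocycles to the rest of the fiber.
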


\begin{proof}
Let $A'$, $A''\in C_0(X,\SL(2,\R))$ be two cocycles such that $\rho_{f,A'} = \rho_{f,A''} \mod G'$.
Fix once and for all a lift $F': X \times \R \to X \times \R$ such that
$e^{2 \pi i F_2'(x,t)}$ is a positive real multiple of $A'(x) \cdot e^{2 \pi i t}$.
This determines the fibered rotation number of $(f,A')$ as some $a \in \R$.
Moreover, for any contractible space $Y$ and any continuous map $y \in Y \mapsto A_y \in C_0(x,\SL(2,\R))$
such that $A_{y_0} = A'$ for some $y_0 \in Y$,
there exists a unique continuous function $\rho:Y \to \R$,
such that $\rho(y) = \rho_{f,A_y} \mod G'$ and
$\rho(y_0) = a$.
We also call $\rho$ a continuous determination of the rotation number.
(In fact, the previous use of this terminology is a particular case.)

Take any continuous path $t\in [0,1] \mapsto A_t \in C_0(X,\SL(2,\R))$ with
$A_0 = A'$ and $A_1 = A''$.
Consider the continuous determination of the rotation number $\rho:[0,1] \to \R$ with $\rho(0)=a$.
Then $\rho(1) - \rho(0) \in G'$, thus there exists $\phi \in C(X,\Z)$ such that
$\rho(1) - \rho(0) = \int \phi \, d\mu$.
Extend the path of cocycles to the interval $[0,2]$ by defining
$A_t(x) = R_{-2 \pi (t-1) \phi(x)} A''(x)$ for $t \in [1,2]$.
Considering the corresponding extension $\rho:[0,2]\to \R$,
we have 
$\rho(2) = \rho(0) = a$.

Now consider the two-parameter family
$(t,\theta) \in [0,2] \times \R \mapsto R_{\theta} A_t \in C_0(X , \SL(2,\R))$
and the associated continuous determination of the rotation number
$\rho : [0,2] \times \R \to \R$ such that $\rho(0,0) = a$, and thus $\rho(0,2) = a$.
Notice that $\rho(t,\theta)$ is non-decreasing as a function of $\theta$,
and $\rho(t,\theta + 2 \pi) = \rho(t,\theta) + 1$.

We claim that the function $f(t) = \inf \{\theta \in \R : \rho(t,\theta) = a\}$ is continuous.
Indeed, fix any sequence $t_n$ in $[0,2]$ converging to some $t$
and such that $\beta = \lim f(t_n)$ exists.
Let $\alpha = f(t)$. 
Then $\rho(t,\beta) = \lim \rho(t_n, \beta) \le a = \rho(t,\alpha)$.
It follows that $\beta \le \alpha$.
Moreover, if this inequality is strict, then $\rho(t, \theta)$ is constant equal to $a$ for $\theta$
in the interval $(\beta, \alpha)$ and thus by the first part of Proposition~\ref{p.lockprop},
$R_{(\alpha+\beta)/2} A_t$ is uniformly hyperbolic.
In particular $\rho(s, (\alpha+\beta)/2) = a$ for every $s$ sufficiently close to $t$.
This implies that $\rho(t,\beta) \le \lim \rho(t_n, \beta) \le \lim \rho (t_n , (\alpha+\beta)/2, t_n) = a$
and thus, $\beta \ge f(t)$ -- contradiction.

It now follows that the rotation number is constant mod~$G'$ along the
continuous path of cocycles $t\in [0,2] \mapsto R_{\min(0, f(t))} A_t$,
concluding the proof.
\end{proof}

\begin{prop}
If $A_0$, $A_1 \in C_0(X, \SL(2,\R)) \setminus \cUH$, then
we can join these cocycles by a continuous path $t \in [0,1] \mapsto A_t \in C(X, \SL(2,\R)) \setminus \cUH$.
Moreover, if $A_0$ and $A_1$ have the same rotation number {\rm (}mod~$G'${\rm )},
then we can take the path with constant rotation number {\rm (}mod~$G'${\rm )}.
\end{prop}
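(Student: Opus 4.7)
The plan is to deduce both assertions directly from the extension results for families of cocycles established earlier.

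For the first assertion, I would apply Proposition~\ref{p.extension2} with $p=1$ to the two-point family $\{A_0, A_1\}$ on $\partial [0,1]$. Since $C_0(X,\SL(2,\R))$ is path-connected (being the homotopy class of a constant map), there is a continuous path from $A_0$ to $A_1$ in $C_0$; its image is $\|\cdot\|_\infty$-bounded by compactness, and taking a bounded open $\|\cdot\|_\infty$-neighborhood $\cB$ of this image ensures that $A_0$ and $A_1$ lie in the same path component of $\cB$, i.e.\ the two-point family is contractible in $\cB$. Since $A_0, A_1 \notin \cUH$ by hypothesis, Proposition~\ref{p.extension2} yields an extended continuous family $A_t$, $t \in [0,1]$, taking values in $\cB$ and such that $A_t$ for $t \in (0,1)$ admits a continuous family of sections. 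Each such $A_t$ is then bounded, hence outside $\cUH$, and together with the endpoints the whole path lies in $C(X,\SL(2,\R)) \setminus \cUH$.

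For the moreover assertion, let $\alpha$ denote a common value of $\rho_{f, A_0}$ and $\rho_{f,A_1}$ modulo $G'$. When both $A_i$ are unlocked, I would invoke Theorem~\ref{t.reduce} together with Remark~\ref{r.homotopic const} to obtain, for each $i$, a continuous path in $C_0(X,\SL(2,\R))$ from $A_i$ to a cocycle of the form $\bar A_i(x) = B_i(f(x)) R_{2\pi\alpha} B_i(x)^{-1}$ with $B_i \in C_0(X,\SL(2,\R))$, along which the fibered rotation number remains constantly equal to $\alpha$ modulo $G'$ and the intermediate cocycles are bounded (hence outside $\cUH$) for positive parameter. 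I would then interpolate $B_0$ to $B_1$ through a continuous path $B_s$ in the path-connected space $C_0(X,\SL(2,\R))$; the resulting family $B_s(f(x)) R_{2\pi\alpha} B_s(x)^{-1}$ consists of bounded cocycles with constant rotation number $\alpha$ modulo $G'$, completing the path from $\bar A_0$ to $\bar A_1$.

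The principal obstacle is the case where $A_0$ or $A_1$ is semi-locked, since by Remark~\ref{r.unlocked} a semi-locked cocycle cannot be accessed by a continuous path of unlocked cocycles sharing its rotation number, so Theorem~\ref{t.reduce} is not directly applicable. My plan here is to first apply Proposition~\ref{p.const} to obtain a path $\tilde A_t$ in $C_0(X,\SL(2,\R))$ joining $A_0$ to $A_1$ with rotation number constantly $\alpha$ modulo $G'$, which may however cross $\cUH$ in open subintervals whose endpoints lie on the boundary of a single $\cUH$-component (and share the rotation number $\alpha$ by continuity). Over each such subinterval I would reroute via the first assertion and then apply a small rotation correction $R_{\theta(t)}$, in the spirit of the proof of Proposition~\ref{p.const}, to restore the constant value of the rotation number, choosing $\theta(t)$ small enough that the corrected detour remains outside $\cUH$. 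Verifying that such a correction can be arranged near the semi-locked endpoints, where arbitrarily small rotations can push the cocycle into $\cUH$, is the technical heart of the argument.
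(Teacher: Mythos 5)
Your handling of the first assertion is exactly the paper's: it is Proposition~\ref{p.extension2} with $p=1$, plus the observation that a cocycle admitting a continuous invariant section for the disk action is bounded and hence outside $\cUH$; nothing to add there. For the second assertion in the unlocked case, your argument is correct and only mildly different from the paper's. The paper reduces to joining each $A_i$ to the single constant cocycle $R_{2\pi\alpha}$: after Theorem~\ref{t.reduce} it deforms the conjugacy $B_1$ into an $\SO(2,\R)$-valued map via the deformation retract of $\SL(2,\R)$ onto $\SO(2,\R)$, writes the resulting cocycle as $R_{\theta_0(x)}$, and homotopes $\theta_0$ to the constant $2\pi\alpha$ through functions of fixed mean. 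You instead keep the two conjugacies and interpolate $B_0$ to $B_1$ inside the path-connected space $C_0(X,\SL(2,\R))$; this works, using that conjugation by a map in $C_0(X,\SL(2,\R))$ preserves the rotation number mod~$G'$ (Remark~\ref{r.homotopic const}, Proposition~\ref{p.conjugacy inv}) and that all intermediate cocycles, being conjugate to $R_{2\pi\alpha}$, are bounded and outside $\cUH$. The paper's route avoids the interpolation of conjugacies at the cost of the extra retraction/mean-preserving homotopy; the two are of comparable difficulty.

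The gap is the semi-locked branch, which you explicitly leave open, and your proposed repair does not close it as described. After rerouting a $\cUH$-crossing subinterval $[a,b]$ of the Proposition~\ref{p.const} path by the first assertion, you need a continuous correction $\theta(t)$ with $\theta(a)=\theta(b)=0$, $\rho(R_{\theta(t)}A_t)=\alpha$, and $R_{\theta(t)}A_t \notin \cUH$. The natural choice (the extremal $\theta$ with $\rho(R_\theta A_t)=\alpha$, as in the proof of Proposition~\ref{p.const}) does keep you at the edge of the plateau, hence outside $\cUH$, but it need not vanish at an endpoint whose plateau lies on that side, and returning to $\theta=0$ there forces you through the locked interior of the plateau, i.e.\ through $\cUH$; the two endpoints (and plateaus met along the rerouted arc) may moreover demand opposite extremal choices, which cannot be switched continuously without the same problem. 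Worse, near a semi-locked cocycle $A$ every cocycle with the same rotation number is locked or semi-locked (a semi-locked $A$ is a local extremum of $\rho$), so no appeal to Theorem~\ref{t.reduce} or to small unlocked perturbations can start the path, and a genuinely new argument through semi-locked cocycles would be needed. To be fair, the paper's own proof does not confront this either: it applies Theorem~\ref{t.reduce} to an arbitrary $A \in C_0(X,\SL(2,\R)) \setminus \cUH$ without addressing the unlocked hypothesis, which by Remark~\ref{r.unlocked} cannot simply be dropped. So your caution points at a real issue, but as a proof your proposal is complete only for unlocked endpoints and incomplete exactly at the step you call the technical heart.
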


\begin{proof}
The first assertion is a trivial consequence of Proposition~\ref{p.extension2} with $p=1$.

To prove the second assertion, it suffices to show that for any cocycle
$A \in C_0(X, \SL(2,\R)) \setminus \cUH$ and any $\alpha = \rho_{f,A} \bmod{G}$,
there is a path joining $A$ and $R_{2\pi\alpha}$ along which the rotation number is constant.

Given such $A$ and $\alpha$, apply
Theorem~\ref{t.reduce} and find
continuous paths $A_{t}$, $t \in [0,1]$
and $B_{t}$, $t \in (0,1]$ in $C(X, \SL(2,\R))$ such that
$A_{0} = A$ and $A_{t}(x) = B_{t}(f(x)) R_{2\pi\alpha} B_{t}(x)^{-1}$.

Since $\SO(2,\R)$ is a deformation retract of $\SL(2,\R)$,
there exists a homotopy $h_t: \SL(2, \R) \to \SL(2,\R)$, $t \in [0,1]$
between the identity $h_0$ and a $\SO(2,\R)$-valued map $h_1$.

The rotation number is constant along the path
$A_{t+1}(x) = h_t(B_{1}(fx)) R_{2\pi\alpha} [h_t(B_{1}(x))]^{-1}$, $t\in [0,1]$
joining $A_{1}$ and the $\SO(2,\R)$-valued cocycle $A_2$.
The latter is homotopic to constant as a map $X \to \SL(2,\R)$
and thus (using the deformation retract) also as a map $X \to \SO(2,\R)$.
It follows that we can write $A_2(x) = R_{\theta_0(x)}$ where $\theta_0$ is real-valued.
Let $\theta_t$, $t\in [0,1]$ be a homotopy in $C(X,\R)$ between $\theta_0$ and constant $2\pi\alpha$
such that $\int \theta_t \, d\mu$ does not depend on $t$.
Define $A_{2+t}(x) = R_{\theta_t(x)}$ for $t\in [0,1]$.
Then $A_t$, $t\in [0,3]$ is a path joining $A$ and $R_{2\pi \alpha}$
along which the rotation number is constant.
\end{proof}

On the other hand, the set of uniformly hyperbolic cocycles with a given rotation number is
not necessarily connected:

\begin{prop}\label{p.disconnect}
There exists a strictly ergodic homeomorphism $f$ of a finite-dimensional compact space $X$
and there exist two cocycles $A_0$, $A_1 \in C_0(X, \SL(2,\R))$
with the same rotation number 
that lie in distinct path-connected components of $\cUH$.
\end{prop}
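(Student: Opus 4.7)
The plan is to exhibit an explicit example in which two cocycles in $C_0(X,\SL(2,\R)) \cap \cUH$ with the same rotation number are distinguished by the homotopy class in $[X, \P^1] = H^1(X,\Z)$ of the stable direction $e^s$. This class is a path-component invariant in $\cUH$, because the hyperbolic splitting of a uniformly hyperbolic cocycle depends continuously on the cocycle, producing a homotopy of stable directions along any continuous path inside $\cUH$.

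For the base dynamics, I would take $X = \T^2$ and $f(x,y) = (x + \phi(y), y + \beta)$, an Anzai-type skew-product where $\beta \in \R$ is irrational and $\phi : \T^1 \to \R$ is continuous of mean zero, degree zero, and \emph{not} a coboundary over the rotation $r_\beta$. Such $(\phi, \beta)$ exist by the classical Anzai--Furstenberg construction (take $\beta$ Liouville and tailor the Fourier coefficients of $\phi$ to be comparatively large at denominators where $\|n\beta\|$ is exceptionally small). The resulting $f$ is minimal, and being a minimal isometric extension of a uniquely ergodic rotation, it is automatically strictly ergodic; $\T^2$ is finite-dimensional with the non-periodic factor $y\mapsto y+\beta$. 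Crucially, $\deg\phi=0$ forces the pullback $f^\ast$ on $H^1(\T^2, \Z) = \Z^2$ to be the identity.

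I would then set $A_0 \equiv D := \mathrm{diag}(2, 1/2)$ and $A_1(x,y) := B(f(x,y))\,D\,B(x,y)^{-1}$ with $B(x,y) := R_{2\pi x}$, whose class in $H^1(\T^2,\Z)$ is the nontrivial element $(1,0)$; nevertheless $[A_1] = (f^\ast - \Id)[B] = 0$, so $A_1 \in C_0$. Both cocycles lie in $\cUH$ because $B^{-1}(f)\,A_1\,B = D$ is constant hyperbolic (and uniform hyperbolicity is invariant under $\SL(2,\R)$-conjugacy). To compute the rotation number of $A_1$, iterate the natural lift and substitute $u_n := \theta_n - (x + S_n(y))$ with $S_n(y) := \sum_{k=0}^{n-1}\phi(y+k\beta)$; one obtains $u_{n+1} = \tilde D(u_n)$ (bounded), so $(\theta_n - \theta_0)/n \to \lim S_n(y)/n = \int\phi\,dy = 0$ by unique ergodicity. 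Hence $\rho_{f,A_1} = 0 = \rho_{f,A_0}$.

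Finally, the stable direction of $A_0$ is the constant section $[0:1]$, of class $(0,0)$, whereas for $A_1$ one checks directly that $e^s_{A_1}(x,y) := R_{2\pi x}\cdot[0:1]$ is invariant, since $A_1(x,y)\cdot e^s_{A_1}(x,y) = R_{2\pi(x+\phi(y))}\cdot[0:1] = e^s_{A_1}(f(x,y))$. As $x$ circles $\T^1$ once, this direction winds \emph{twice} around $\P^1 \cong S^1/\{\pm 1\}$, giving class $(2,0) \neq (0,0)$ in $H^1(\T^2, \Z)$. By the initial observation, $A_0$ and $A_1$ must lie in distinct path components of $\cUH$. The main obstacle is the delicate Anzai--Furstenberg choice of $(\phi,\beta)$ ensuring that $\phi$ is not a coboundary over $r_\beta$ (so that $f$ is minimal rather than conjugate to a translation); once this is achieved, everything else is a direct verification.
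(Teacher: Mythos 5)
Your construction is essentially the paper's own: both proofs take a strictly ergodic Anzai-type skew-product on $\T^2$, set $A_0$ equal to a constant hyperbolic matrix $D$ and $A_1=(B\circ f)\,D\,B^{-1}$ with $B=R_{2\pi u}$ for a map $u:\T^2\to\R/\Z$ that is \emph{not} null-homotopic while $u\circ f-u$ lifts to a continuous real function of zero $\mu$-mean (giving $A_1\in C_0$, uniform hyperbolicity, and equality of rotation numbers), and then separate the path components of $\cUH$ by the homotopy class in $H^1(\T^2,\Z)$ of the invariant direction, which winds twice for $A_1$ and not at all for $A_0$. Your variant, putting the skewing in the other coordinate and taking $\int\phi=0$ so that $u(x,y)=x$ works directly, is a mild simplification: it avoids the auxiliary function $\psi$ that the paper uses to approximately solve the cohomological equation (the paper's $u=x_1-x_2+\psi(x_1)$ is needed there because its $\phi$ has mean $\alpha\neq 0$). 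The topological obstruction argument and the rotation-number computation are correct.

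The one genuine gap is your justification of strict ergodicity of the base. The assertion that a \emph{minimal} isometric (circle) extension of a uniquely ergodic rotation is automatically strictly ergodic is false: Furstenberg's examples -- exactly the maps $f(x,y)=(x+\alpha,y+\phi(x))$ used in Appendix~\ref{a.appb} of this paper -- are minimal Anzai skew-products that are \emph{not} uniquely ergodic (the skewing function is a measurable, but not continuous, coboundary). Moreover, ``$\phi$ is not a coboundary over $r_\beta$'' is not even the right hypothesis for minimality: one needs, for every nonzero integer $m$, that $m\phi$ is not continuously cohomologous mod~$1$ to a constant, and for unique ergodicity one needs the corresponding \emph{measurable} non-cohomology condition (equivalently, ergodicity of Lebesgue measure, which by Furstenberg's theorem \cite{Furstenberg} does imply strict ergodicity for Anzai skew-products). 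The gap is repairable with no change to the rest of your argument: either impose the measurable criterion when choosing $(\beta,\phi)$, or simply invoke Lemma~\ref{l.ergodic} of the paper with $\beta=0$, which produces, for generic mean-zero $\phi$, a strictly ergodic skew-product of exactly the required form (with the roles of the two coordinates interchanged relative to your notation).
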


\begin{rem}\label{r.a4}
One could raise the question of whether it is possible to add
an extra parameter in Theorem~\ref{t.mainthm2} and obtain the
following statement:
``If $A_s$, $s\in [0,1]$ is a path in $C_0(X,\SL(2,\R))$
with $2\rho_{f, A_s}$ constant and in $G$, then
there is a continuous family of cocycles $A_{s,t}$, $(s,t)\in [0,1]^2$
such that $A_{s,0} = A_s$ and $A_{s,t} \in \cUH$ for $t>0$.''
However, it follows from Propositions~\ref{p.const} and \ref{p.disconnect}
that such a statement is \emph{false} in general.
(On the other hand, it is possible to add
an extra parameter in Theorem~\ref{t.bounded}:
we have done that already in Proposition~\ref{p.extension2}.)
\end{rem}

To prove Proposition~\ref{p.disconnect} we will need the following well-known lemma.
Lacking a suitable reference, we give a proof of it.

\begin{lemma} \label{l.ergodic}
Let $\alpha \in \R \setminus \Q$ and $\beta \in \R$.
For a generic continuous
$\phi:\R/\Z \to \R$ with $\int \phi(x) \, dx = \beta$, the map\footnote{Maps of such form
are called \emph{Anzai skew-products} after \cite{Anzai}.}
$f_{\alpha,\phi}:\R^2/\Z^2 \to \R^2/\Z^2$ given by $f_{\alpha, \phi}(x,y)=(x+\alpha,y+\phi(x))$
is strictly ergodic.\footnote {
It is well-know that ergodic Anzai skew-products are in fact strictly ergodic; see
\cite{Furstenberg}, but this will not be used below.}
\end{lemma}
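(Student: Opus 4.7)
The plan is to work in the Polish space $X := \{\phi \in C(\T,\R) : \int_\T \phi\, dx = \beta\}$ (a closed affine subspace of $C(\T,\R)$ with the sup-norm) and show that the ergodic $\phi$'s---those for which $f_{\alpha,\phi}$ is Lebesgue-ergodic on $\T^2$---form a dense $G_\delta$ in $X$; the footnote's reference to Furstenberg then upgrades ergodicity to strict ergodicity. The key characterization I will use is the Fourier one: writing $F \in L^2(\T^2)$ as $\sum_n F_n(x) e^{2\pi i n y}$, $f_{\alpha,\phi}$-invariance reduces to the twisted cocycle equations $F_n(x+\alpha) = e^{-2\pi i n\phi(x)} F_n(x)$, whose nontrivial solutions (since $T_\alpha$ is ergodic) correspond exactly to measurable $g : \T \to S^1$ satisfying $e^{2\pi i n\phi(x)} = g(x)/g(x+\alpha)$. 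Thus $f_{\alpha,\phi}$ is ergodic iff $e^{2\pi i n\phi}$ is not a measurable $T_\alpha$-coboundary in $S^1$ for any $n\neq 0$; let $\cE_n$ denote the set where it is.

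For the $G_\delta$ property, I will use Furstenberg once more (ergodicity is equivalent to unique ergodicity for these skew-products) and express unique ergodicity as uniform convergence of Birkhoff sums against a countable dense family $\{F_\ell\} \subset C(\T^2)$, obtaining
\[
\bigcap_{\ell,m}\bigcup_{N_0}\bigcap_{N\ge N_0}\Bigl\{\phi\in X:\sup_{(x,y)\in\T^2}\Bigl|\tfrac{1}{N}\sum_{k=0}^{N-1}F_\ell(f_{\alpha,\phi}^k(x,y))-\int F_\ell\,d\Leb\Bigr|<\tfrac{1}{m}\Bigr\},
\]
each member open in $\phi$ by joint continuity of the iterates in $(x,y,\phi)$.

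The main obstacle will be density. A clean structural fact lets me reduce the problem: since measurable $S^1$-coboundaries form a multiplicative group, $\cE_n$, if nonempty, is the translate $\phi_0 + \cK_n$ of the additive subgroup $\cK_n := \{\eta\in C_0 : e^{2\pi i n\eta}\text{ is a measurable $T_\alpha$-coboundary}\}$ inside the hyperplane $C_0:=\{\eta\in C(\T,\R):\int\eta=0\}$. Thus $\cE_n$ has empty interior in $X$ iff $\cK_n$ does in $C_0$, and I need only produce, for each $n \neq 0$ and each $\varepsilon>0$, some $\eta\in C_0$ with $\|\eta\|_\infty<\varepsilon$ for which $e^{2\pi i n\eta}$ is not a measurable coboundary. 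I plan to build such $\eta$ as a lacunary Fourier series $\eta(x)=\sum_j a_j\cos(2\pi k_j x)$, with integers $k_j\to\infty$ chosen (using the continued-fraction expansion of $\alpha$) so that the small divisors $|e^{2\pi i k_j\alpha}-1|$ follow a prescribed sparse pattern, and amplitudes $a_j>0$ with $\sum a_j<\varepsilon$ tuned so that any formal solution $\psi$ of $\psi\circ T_\alpha-\psi\equiv n\eta\pmod 1$ is forced out of $L^2$ even after subtracting a $\Z$-valued compensating function. The technical heart lies in the Liouville case for $\alpha$, where the interplay between small divisors and the mod-$1$ ambiguity requires a careful $L^2$ lower-bound argument to genuinely rule out measurable $\psi$; with the empty-interior of each $\cK_n$ established and the $F_\sigma$ structure of $\cK_n$ (easily visible from the characterization as a countable union of closed sets arising from finite-truncation approximations), Baire category then makes the ergodic set residual.
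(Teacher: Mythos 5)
Your overall strategy (Anzai's Fourier criterion: ergodicity iff $e^{2\pi i n\phi}$ is not a measurable multiplicative coboundary for any $n\neq 0$, plus Baire category, plus Furstenberg's ergodic-implies-uniquely-ergodic theorem) is a legitimate route, but as written it has a genuine gap exactly at its central step: the density. Everything reduces to producing, for every irrational $\alpha$ (in particular every Liouville $\alpha$), a continuous zero-mean $\eta$ such that $e^{2\pi i n\eta}$ is \emph{not} a measurable coboundary over $x\mapsto x+\alpha$, and this you only sketch (``lacunary series with small divisors following a prescribed sparse pattern''), explicitly deferring the hard point -- ruling out \emph{measurable} transfer functions, not just $L^2$ solutions of the linearized equation, in the presence of the mod-$1$ ambiguity. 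That is precisely where the difficulty of the lemma lives, so the proposal does not yet constitute a proof. Two secondary points: the displayed set is not manifestly $G_\delta$ (you should quantify ``there exists $N$'' rather than ``for all $N\ge N_0$''; a single time $N$ with uniformly small Birkhoff error already forces every invariant measure to nearly agree with $\Leb$ on $F_\ell$), and the claim that $\cK_n$ is $F_\sigma$ needs an actual argument (one works: with $V_\phi h = e^{2\pi i n\phi}\cdot h\circ T_\alpha$ and $P_\phi$ the projection onto invariant vectors, $\|P_\phi h\|=\inf_N\|\frac1N\sum_{k<N}V_\phi^k h\|$ exhibits $\cK_n$ as a countable union of closed sets). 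Note also that since $\cK_n$ is a subgroup of the connected group $C_0$, it has empty interior as soon as it is proper, so you never need \emph{small} $\eta$ -- but you still need \emph{some} $\eta$, for every $\alpha$, which is the unproven step.

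For comparison, the paper's proof avoids all of this machinery (and deliberately does not use Furstenberg's theorem). It fixes a dense sequence $h_n\in C(\R^2/\Z^2,\R)$ and works with the open sets $U_n$ of those $\phi$ for which \emph{every} invariant measure of $f_{\alpha,\phi}$ integrates $h_n$ within $1/n$ of Lebesgue; the intersection is exactly unique (hence strict) ergodicity. Density of $U_n$ is proved with cocycles that are honest continuous coboundaries plus the constant $\beta$: starting from a trigonometric polynomial $\phi_0=\psi_0\circ T_\alpha-\psi_0+\beta$, one replaces $\psi_0$ by $\psi_0+\delta_q$, where $\delta_q(x)$ is the sawtooth $2\,\mathrm{dist}(qx,\Z)$ and $q$ is a continued-fraction denominator. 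Then every $f_{\alpha,\phi}$-invariant measure is a vertical translate of the graph measure of $\psi_0+\delta_q$, and these equidistribute, so $\phi\in U_n$, while $\|\phi-\phi_0\|_\infty\le 2|q\alpha-p|$. In other words, the paper gets genericity from perturbations that are typically \emph{not} ergodic, sidestepping any coboundary/small-divisor analysis; your route requires exactly the delicate non-coboundary construction that the paper's argument is designed to avoid.
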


\begin{proof}

Let $h_i:\R^2/\Z^2 \to \R$, $i \geq 1$ be a dense sequence in
$C(\R^2/\Z^2,\R)$.  Let $U_n$ be the set of all continuous $\phi$ with
average $\beta$ such that for every ergodic invariant measure $\mu$ for
$f_{\alpha,\phi}$, we have $|\int h_n \, d\mu-\int h_n \, d\Leb|<1/n$.

Clearly, if $\phi \in \bigcap U_n$, then $\phi$ is uniquely ergodic.  So we
just have to show that all $U_n$ are dense.  Let $\phi_0$ be any trigonometric
polynomial.  We wish to find $\phi$ close to $\phi_0$ that belongs to $U_n$.
Now, clearly $\phi_0(x)=\psi_0(x+\alpha)-\psi_0(x)+\beta$ for some trigonometric
polynomial.

For an arbitrary continuous
function $g:\R/\Z \to \R$, let $\mu_g$ be the probability
measure on $\R^2/\Z^2$ that projects to Lebesgue measure in the first factor and is
supported on the graph of $x \mapsto x+g(x)$.
Consider the sequence of functions $\delta_k:\R/\Z \to \R$, such that $\delta_k(x)$ is twice the distance between $k x$ and $\Z$.
It is readily seen that $\mu_{\delta_k}$ converges in
weak-$*$ sense to $\Leb$.  It follows that for every continuous function
$g$, $\mu_{g+\delta_k}$
converges to $\Leb$ (which equals the push-forward of $\Leb$ under $(x,y)
\mapsto (x,y+g(x))$.

In particular, $\mu_{\psi_0+\delta_k} \to \Leb$.  Let $p/q$ be a continued
fraction approximant of $\alpha$ and take $\psi=\psi_0+\delta_q$.  Let
$\phi(x)=\psi(x+\alpha)-\psi(x)$.  Then any invariant measure for
$f_{\alpha,\phi}$ is a vertical translate of $\mu_\psi$, that is, it is
of the form $(T_t)_* \mu_\psi$ where $T_t(x,t)=(x,y+t)$.  It follows that
all of them are close to $\Leb$, so if we take $q$ sufficiently large, we
will have $\phi \in U_n$.  Finally, one computes $\|\phi-\phi_0\|_\infty
\leq 2 |q \alpha-p|<1/q$, so taking $q$ large also ensures that $\phi$ is
close to $\phi_0$.
\end{proof}

\begin{proof}[Proof of Proposition~\ref{p.disconnect}]
By the previous lemma, there exists a
strictly ergodic homeomorphism $f=f_{\alpha, \phi}$ of the torus $\R^2/\Z^2$
with $\int \phi(x_1) \, dx_1 = \alpha \in \R \setminus \Q$.
Let  $\psi: \R/\Z \to \R$ be continuous and such that
$\psi(x_1+\alpha)-\psi(x_1)$ is uniformly close to $\phi(x_1) - \alpha$.
(This can be done easily using the remark after Lemma~3 in \cite{ABD}.)
Let $u : \R^2/\Z^2 \to \R/\Z$ be given by:
$$
u(x_1, x_2) = x_1 - x_2 + \psi(x_1).
$$
Thus
$u(f(x_1,x_2)) - u(x_1, x_2) = \psi(x_1+\alpha) - \psi(x_1) + \alpha - \phi(x_1)$,
which is close to $0$.
Notice that $u$ is not homotopic to constant.
Define two elements of $C(\R^2/\Z^2, \SL(2,\R))$ by
$$
A_0(x_1, x_2) = \begin{pmatrix} 2 & 0 \\ 0 & 1/2 \end{pmatrix} , \quad
A_1(x_1, x_2) = R_{u(f(x_1,x_2))} \begin{pmatrix} 2 & 0 \\ 0 & 1/2 \end{pmatrix} R_{-u(x_1, x_2)} .
$$
These cocycles are uniformly hyperbolic and have the same rotation number
(because $u \circ f - u$ lifts to a function of zero mean).
But no path $A_t$, $t\in [0,1]$  joining $A_0$ and $A_1$ can be wholly contained in $\cUH$,
because the respective expanding directions
$e^u(A_0)$, $e^u(A_1):\R^2 / \Z^2 \to \P^1$ are not homotopic.
\end{proof}

\section{Minimal Examples With Many Persistently Closed Gaps}\label{a.appb}

In \cite {Be1}, Question 4, the question of genericity of Cantor spectrum is
asked in a more general context than strictly ergodic dynamical systems.
The Gap Labelling Theorem holds, in fact, for any homeomorphism of a compact
metric space, and Bellissard asked whether the presence of
periodic orbits could be the only obstruction for the genericity of
Cantor spectrum (when it is allowed at all by the
Gap Labeling Theorem).
Though we have answered this question affirmatively
in our more restricted setting, the answer to the original question is in
fact negative, and as we will see even a slight departure from
unique ergodicity (keeping minimality) may already prevent Cantor spectrum.

We will consider the well-known Furstenberg examples
(see \cite[page~585]{Furstenberg} or \cite[\S~5.5]{Parry})
of minimal, but non-uniquely ergodic skew-products of the form $f(x,y)=(x+\alpha,y+\phi(x))$,
$\phi \in C(X=\R^2/\Z^2,\R)$, $\alpha \in \R \setminus \Q$.
For these examples,
$G=\Z \oplus \alpha \Z$ is independent of the choice of the
invariant measure.

Since $\calL$ is dense in $(0,1)$, the Gap Labelling Theorem does not
prevent Cantor spectrum for a Furstenberg example, and in fact there are
choices of the sampling function exhibiting Cantor spectrum.\footnote {
Indeed, the converse of the Gap Labelling Theorem holds for generic
potentials which depend only on the first coordinate: this case reduces to
the strictly ergodic dynamics $x \mapsto x+\alpha$ which is covered by Corollary
\ref {c.gencgl}.}
However, the next theorem shows that Cantor spectrum (and hence the converse
of the Gap Labelling Theorem) is not generic for such dynamics.

\begin{thm}\label{t.furstenberg}

Let $f$ be a Furstenberg example.
\begin{enumerate}
\item For an open and dense set of $v \in
C(X,\R)$, $\Sigma_{f,v}$ is not a Cantor set,
\item For a generic set of $v \in C(X,\R)$, $\interior \Sigma_{f,v}$ is dense
in $\Sigma_{f,v}$.\footnote {It is reasonable to expect that for generic $v$,
$\partial \Sigma_{f,v}$ is a Cantor set (thus $\Sigma_{f,v}$ is not
generically a finite union of intervals).}
\item The interior of $C_0(X,\SL(2,\R)) \setminus \cUH$ is dense in
$C_0(X,\SL(2,\R)) \setminus \cUH$.
\end{enumerate}

\end{thm}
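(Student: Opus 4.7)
My plan is to prove part~(c) first and then to derive (a) and (b) from it via the Projection Lemma (Lemma~\ref{l.projection}) combined with a Baire category argument.

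For (c), the strategy is to produce a dense family of cocycles in $C_0(X,\SL(2,\R)) \setminus \cUH$ that lie outside $\overline{\cUH}$ --- equivalently, in $\interior(C_0(X,\SL(2,\R)) \setminus \cUH)$. The feature of the Furstenberg example that makes this possible is the one already exploited in Proposition~\ref{p.disconnect}: the existence of continuous maps $u : X \to \R/\Z$ with $\|u \circ f - u\|_\infty$ arbitrarily small but $[u] \in H^1(X,\Z)$ non-trivial. Such $u$'s split $\cUH$ into infinitely many open path-components $\cUH_k$ indexed by the homotopy class of the unstable direction, even among cocycles of the same rotation number. Since $X = \T^2$ is finite-dimensional, Theorem~\ref{t.addendum} applies (minimality of $f$ is already enough for its hypothesis~(1)) and yields density of bounded cocycles in $C_0 \setminus \cUH$, so it suffices to show that every bounded cocycle $A$ can be perturbed slightly into $\interior(C_0 \setminus \cUH)$. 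My proposed construction is to twist $A$ by the conjugation $B \mapsto R_{u \circ f}\, B\, R_{-u}$ for a small $u$ of prescribed non-trivial homotopy class, combined with a small hyperbolic amplitude perturbation, yielding $A'$ arbitrarily close to $A$ that is simultaneously separated in $C^0$-distance from every $\cUH_k$; this places $A'$ outside $\overline{\cUH}$.

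Given (c), parts (a) and (b) follow quickly. For (a), pick any $v_0 \in C(X,\R)$ and any $E_0 \in \Sigma_{f, v_0}$; by \eqref{jlresult}, $A_{E_0, v_0} \in C_0 \setminus \cUH$, so (c) produces $A' \in \interior(C_0 \setminus \cUH)$ nearby. Lemma~\ref{l.projection} conjugates $A'$ to a Schr\"odinger cocycle $A_{E_0, v_1}$ for some $v_1$ close to $v_0$, which therefore also has a $C^0$-neighborhood in $\cUH^c$; in particular $A_{E_0 + h, v_1} \in \cUH^c$ for $|h| < \delta$, so $(E_0 - \delta, E_0 + \delta) \subset \Sigma_{f, v_1}$ and $\Sigma_{f, v_1}$ is not Cantor. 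Openness of ``not Cantor'' is immediate from the Hausdorff continuity of $v \mapsto \Sigma_{f, v}$. For (b), take a countable basis $\{I_n\}$ of open intervals of $\R$ and set $\mathcal{O}_n = \{v : I_n \cap \Sigma_{f,v} = \emptyset\} \cup \{v : I_n \cap \interior \Sigma_{f,v} \neq \emptyset\}$; each $\mathcal{O}_n$ is open by spectrum continuity and dense by the argument used in (a), so $\bigcap_n \mathcal{O}_n$ is the desired dense $G_\delta$. The main obstacle is the uniform separation claim in (c): making rigorous that the twist-plus-hyperbolic construction produces a cocycle simultaneously separated from \emph{every} component $\cUH_k$ in the $C^0$-distance, not merely crossing a single codimension-one wall between two of them. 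This is precisely the dichotomy between the Furstenberg setting and the strictly ergodic one, where Theorem~\ref{t.mainthm2} guarantees the opposite, and I expect the argument to require a compactness reduction on the space of continuous $\P^1$-valued would-be invariant directions, together with the homotopy classification of maps $X \to \P^1$, to bound uniformly the distance from $A'$ to each $\cUH_k$.
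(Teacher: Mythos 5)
Your reduction of (a) and (b) to (c) via Lemma~\ref{l.projection} is essentially the paper's argument, but your proof of (c) has a genuine gap, and it is the central one. The mechanism you propose --- twisting by $B \mapsto R_{u\circ f}\,B\,R_{-u}$ for a map $u$ of non-trivial homotopy class, plus a small hyperbolic amplitude --- cannot by itself take you out of $\overline{\cUH}$. Twisting by $R_{u\circ f}\cdot R_{-u}$ is conjugation by the continuous map $R_u \in C(X,\SO(2,\R))$, and conjugation is a homeomorphism of $C(X,\SL(2,\R))$ preserving $\cUH$, hence preserving $\overline{\cUH}$ and its complement; adding a small hyperbolic amplitude pushes you, if anywhere, \emph{towards} $\cUH$ (this is exactly the construction of Proposition~\ref{p.disconnect}, whose output cocycles are uniformly hyperbolic). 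Moreover, ``being $C^0$-separated from every path-component $\cUH_k$'' is not something homotopy data can deliver: a non-uniformly hyperbolic cocycle carries no invariant direction whose class could be compared with the unstable directions of nearby uniformly hyperbolic cocycles, and nothing in the construction constrains those directions to lie in a prescribed class. You flag this uniform separation claim as the obstacle yourself; it is not a technical compactness issue but the wrong obstruction. The feature of the Furstenberg examples that the paper actually uses is \emph{non-unique ergodicity}: for $A \in C_0(X,\SL(2,\R)) \cap \overline{\cUH}$ the fibered rotation number is independent of the invariant measure (for $A \in \cUH$ it varies continuously over the connected set of invariant measures while taking values in the countable set $\tfrac12\Z + \tfrac{\alpha}{2}\Z$, hence is constant, and this passes to the closure), whereas for an $\SO(2,\R)$-valued cocycle $R_{\phi}$ the rotation number with respect to $\mu$ is $\int \phi\, d\mu$, which can be made measure-dependent by an arbitrarily small perturbation of $\phi$, and openly so. Thus measure-dependence of the rotation number is an open condition placing a cocycle outside $\overline{\cUH}$, it is dense among $\SO(2,\R)$-valued cocycles, and combined with Theorem~\ref{t.addendum} (any non-uniformly hyperbolic cocycle is approximated by a bounded one, i.e.\ one conjugate to rotations, and the property of lying outside $\overline{\cUH}$ is conjugation-invariant) this yields (c).

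On the deduction of (a) and (b): the density argument is correct and matches the paper, but do not invoke ``Hausdorff continuity of $v \mapsto \Sigma_{f,v}$'' for openness --- continuity of the spectrum is delicate precisely because unique ergodicity fails here, and in any case having non-empty interior is not a Hausdorff-open property. The openness you need comes for free from the same source as the density: once $A_{E_0,v_1} \notin \overline{\cUH}$, there is a neighborhood of $(E_0,v_1)$ on which $A_{E,\tilde v} \notin \cUH$, so $E_0 \in \interior \Sigma_{f,\tilde v}$ for all $\tilde v$ near $v_1$; the same remark repairs the openness of your sets $\mathcal{O}_n$ in the Baire argument for (b) (the piece $\{v : I_n \cap \Sigma_{f,v} = \emptyset\}$ is also not obviously open as stated, so it is better to build $\mathcal{O}_n$ from the open conditions ``$A_{E,v} \in \cUH$ for all $E$ in a fixed compact subinterval'' and ``$A_{E,v} \notin \overline{\cUH}$ for some $E \in I_n$'').
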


\begin{proof}

We start with the third statement.  Notice that
if $A \in C_0(X,\SL(2,\R)) \cap \overline
\cUH$, the fibered
rotation number (viewed as an element of $\R/G'=\R/\Z$) is independent of
the invariant measure,\footnote {It is enough to check this for
$A \in C_0(X,\SL(2,\R)) \cap \cUH$: consider
$\rho(A)$ as a continuous function of the convex set of invariant measures,
which (since $A \in \cUH$) takes values in $\frac {1} {2} \Z+\frac
{\alpha} {2} \Z$.}.

If $A \in C_0(X,\SL(2,\R)) \cap C(X,\SO(2,\R))$, it has the form
$A(x)=R_{\phi(x)}$ with $\phi:X \to \R$ continuous, and the fibered rotation
number is independent of the invariant measure if and only if $\int_X \phi \,
d\mu$ is.  Thus for an open and dense set of $A \in C_0(X,\SL(2,\R)) \cap
C(X,\SO(2,\R))$, $A \notin \overline \cUH$.  By Theorem~\ref{t.addendum}, if $A$
is not uniformly hyperbolic, it can be approximated by a cocycle which is bounded,
and hence by one in the complement of $\cUH$.  This proves (c).

Using Lemma~\ref{l.projection}, (c) implies that for $v_0 \in C(X,\R)$ and $E \in
\Sigma_{f,v_0}$, since $A_{E,v_0} \not \in \cUH$, we can perturb $v_0$ to $v$ such that
$A_{E,v} \notin \overline{\cUH}$.  Then $E \in \interior \Sigma_{f,\tilde v}$ for every
$\tilde v$ in a neighborhood of $v$.  This gives (a).  A simple Baire
category argument allows us to conclude (b).
\end{proof}

\section{Uniform Hyperbolicity and the Rotation
Number}\label{a.uhrn}

In this section we discuss in more detail the relationship between
the behavior of the rotation number in a neighborhood of a cocycle
that is homotopic to a constant and properties of the cocycle.
Recall the definitions of a locked/semi-locked/unlocked cocycle, which were given in Section~\ref{s.2}.

\begin{prop}\label{p.lockprop}
Let $A\in C(X, \SL(2,\R))$ be homotopic to a constant, and fix a
continuous determination of the fibered rotation number
$\rho:\cV \to \R$ on a neighborhood of $A$.
Then:
\begin{enumerate}
\item The function $r_A : \theta \mapsto \rho(R_\theta A)$,
defined on an open interval around $0$, is locally constant at
$\theta = 0$ if and only if $A \in \cUH$. Consequently, the
cocycle $A$ is:
\begin{itemize}
\item locked if $R_A$ is locally constant at $\theta = 0$;
\item semi-locked if $\theta = 0$ is in the boundary of a plateau of
$r_A$;
\item unlocked if the value $\rho(A)$ does not correspond
to a plateau of $r_A$.
\end{itemize}

\item The cocycle $A$ is uniformly hyperbolic if and only if
it is locked.

\item If the cocycle $A$ is semi-locked, it can be accessed by
uniformly hyperbolic cocycles of the form $A_\theta = R_\theta A$,
where $\theta$ runs through a {\rm (}non-degenerate{\rm )} interval of the form
$[0,\eps]$ or $[-\eps,0]$.

\item If $A$ takes values in $\SO(2,\R)$, or if $2 \rho(A) \not\in G$, then $A$ is unlocked.
\end{enumerate}
\end{prop}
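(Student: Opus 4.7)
The plan is to prove Proposition~\ref{p.lockprop} by first establishing the monotonicity of the auxiliary function $r_A(\theta):=\rho(R_\theta A)$, then deriving (a) and (b) as a single combined equivalence, and finally deducing (c) and (d) by direct bookkeeping. The starting observation is that $r_A$ is continuous, non-decreasing, and satisfies $r_A(\theta+2\pi)=r_A(\theta)+1$: multiplying $A(x)$ on the left by $R_\theta$ translates the fiber coordinate of any lift by $\theta/(2\pi)$, and monotone fiber translations preserve the rotation number's monotonicity. With this in hand, the ``Consequently'' classification inside (a) is merely a rephrasing in terms of the plateau structure of the monotone function $r_A$, and the remaining content of (a) and (b) bundles into the equivalence ``$A\in\cUH$ $\iff$ $r_A$ constant on a two-sided neighborhood of $0$ $\iff$ $A$ locked.'' The two non-trivial claims are: (I) if $A \in \cUH$ then $A$ is locked, and (II) if $r_A$ is constant on an open interval $I$ then $R_\theta A \in \cUH$ for every $\theta \in I$. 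The implication ``$A$ locked $\Rightarrow$ $A \in \cUH$'' then follows from (II) applied to the constancy interval guaranteed by lockedness.

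For (I), I would use topological rigidity of the hyperbolic splitting. On a neighborhood $\cV$ of $A$ contained in $\cUH$, the invariant directions $e^{s,u}_B:X\to\P^1$ depend continuously on $B \in \cV$; choosing continuous projective conjugacies $U_B \in C(X,\PSL(2,\R))$ that diagonalize each $B$, the rotation number of $B$ decomposes into the rotation number of the diagonal part (lying in the discrete set $\tfrac{1}{2} G$) plus a contribution determined by the $\PSL$-homotopy class of $U_B$, which is itself locally constant in $B$. Thus $\rho(B)$ is locally constant on $\cV$, and $A$ is locked.

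The main obstacle is (II). I would argue via a Herman-style phase-locking argument on the projective skew-product $F_\theta(x,\bar v) = (f(x), R_\theta A(x)\cdot \bar v)$. Lifting to the universal cover of $\P^1$ gives continuous maps $\tilde F_\theta$ whose second-coordinate parts satisfy $\tilde F_{\theta_2}(x,t) - \tilde F_{\theta_1}(x,t) = c(\theta_2-\theta_1)$ for some fixed $c>0$; since $\tilde F_\theta$ is increasing in the fiber variable, a simple induction yields $\tilde F_{\theta_2}^n(x,t) - \tilde F_{\theta_1}^n(x,t) \geq c(\theta_2-\theta_1)$ for all $n \geq 1$. If $r_A$ is constant on an open interval containing $[\theta_1,\theta_2]$, the two rotation numbers coincide, forcing these iterate differences to be sublinear in $n$ yet bounded below by a positive constant. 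Extracting upper- and lower-semicontinuous invariant graphs for $\tilde F_{\theta_0}$ at any interior $\theta_0$ as appropriate $\limsup$ and $\liminf$ of iterates, and letting $\theta_1,\theta_2$ approach $\theta_0$ by continuity of the family, one obtains two distinct continuous invariant sections for the projective action of $R_{\theta_0} A$, i.e., a continuous invariant splitting of the cocycle. By strict ergodicity this splitting implies that $R_{\theta_0} A$ is either uniformly hyperbolic or conjugate to a rotation cocycle; in the latter case $r_A$ would be strictly monotone near $\theta_0$ (the argument of (d) below), contradicting its constancy, so $R_{\theta_0} A \in \cUH$. The main technical hurdle is executing this Herman--Johnson style semicontinuous construction cleanly in the skew-product setting over a minimal base.

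Parts (c) and (d) are then straightforward consequences. For (c), if $A$ is semi-locked and, say, a local maximum of $\rho$, then $r_A(\theta) \leq r_A(0)$ for small $\theta$, while monotonicity gives $r_A(\theta) \geq r_A(0)$ for $\theta \geq 0$; thus $r_A$ is constant on some $[0,\varepsilon]$, and (II) supplies $R_\theta A \in \cUH$ for $\theta \in (0,\varepsilon]$, giving the desired accessibility. The local-minimum case is symmetric. For (d), an $\SO(2,\R)$-valued cocycle $A(x)=R_{\phi(x)}$ satisfies $r_A(\theta) = \theta + \int \phi\, d\mu$, which is strictly monotone; $A$ is bounded and hence not in $\cUH$, and strict monotonicity prevents $A$ from being a local extremum, so $A$ is unlocked. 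If instead $2\rho(A) \notin G$, then $A \notin \cUH$ by \eqref{f.uhrot}, so $A$ is not locked; and were $A$ a local extremum, the argument for (c) would produce UH cocycles with rotation number $\rho(A)$, again forcing $2\rho(A) \in G$ by \eqref{f.uhrot}, a contradiction.
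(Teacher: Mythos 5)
Your reductions are organized the same way the paper's proof is: everything hinges on the single equivalence ``$r_A$ locally constant at $\theta=0$ $\iff$ $A\in\cUH$,'' after which the plateau bookkeeping gives (a)--(d) exactly as you describe (your derivations of (c) and (d) from that equivalence are fine). The problem is that your step (II), which is the entire mathematical content of the proposition, is not proved. You assert that constancy of $\rho(R_\theta A)$ on an interval lets one ``extract upper- and lower-semicontinuous invariant graphs as limsup/liminf of iterates'' and then, by letting $\theta_1,\theta_2\to\theta_0$, obtain \emph{two distinct continuous} invariant sections for $R_{\theta_0}A$. No mechanism is given for either the continuity or the distinctness, and this is precisely the hard point: a cocycle that is not uniformly hyperbolic may admit no continuous invariant direction at all, and the statement that plateau-interior parameters do admit an attractor--repeller pair is equivalent to what you are trying to prove. (The analogous mode-locking results for forced circle maps, cf.\ the Bjerkl\"ov--J\"ager reference in the paper, require substantial work.) Moreover, even granting two distinct continuous invariant sections, your dichotomy ``uniformly hyperbolic or conjugate to a rotation cocycle'' is false: a continuous splitting with zero Lyapunov exponent makes the cocycle conjugate to a diagonal cocycle $\Delta(\varphi(x))$ with $\int\varphi\,d\mu=0$, and over a uniquely ergodic base this need not be conjugate to rotations (the coboundary equation need not have a continuous solution). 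Finally, the exclusion of the non-hyperbolic alternative by ``the argument of (d)'' does not apply: (d) treats genuinely $\SO(2,\R)$-valued $A$, where $R_\theta A$ is again a rotation cocycle; if $A$ is merely conjugate to rotations, conjugating $R_\theta A$ produces $C(f\cdot)^{-1}R_\theta C(f\cdot)R_{\phi(\cdot)}$, not $R_\theta R_{\phi(\cdot)}$, so strict monotonicity of $r$ for such cocycles is not available at this stage -- indeed deducing it would presuppose ``not $\cUH$ $\Rightarrow$ strict growth,'' which is the claim under proof, so the argument is circular.

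For contrast, the paper proves (II) quantitatively: a Hilbert-metric contraction lemma (Lemma~\ref{l.wind}) shows that if some product $A^{n_0}(x)$ is not exponentially large, then inserting rotations by angles in $[-\eps,\eps]$ at every step steers any initial direction to any prescribed direction; hence at every visit to the positive-measure set of such $x$ the lifted orbits for $R_\eps A$ and $R_{-\eps}A$ gain at least a half-turn relative to each other, and unique ergodicity converts this into $\rho(R_\eps A)-\rho(R_{-\eps}A)\ge \mu(G)/(2n_0)>0$ whenever $A\notin\cUH$. You would need either this estimate or a complete substitute for your invariant-graph construction. A smaller point: in your step (I) the parenthetical ``lying in the discrete set $\tfrac12 G$'' is not right, since $G$ is typically dense; the correct bookkeeping is that the diagonal part contributes an integer-valued (hence locally constant) winding while the conjugacy's contribution, tracked along a continuous deformation of the unstable section, changes only by a coboundary plus a continuously varying integer-valued term. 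The paper treats this direction as obvious, but as written your justification does not establish local constancy.
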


Before giving the proof of this proposition, let us first
establish the following lemma.

\begin{lemma} \label{l.wind}
For any $\epsilon>0$, there exist $c>0$ and $\lambda>1$ with the
following properties. Let $v$ be a non-zero vector in $\R^2$.
Assume $A_0$, $A_1$, \ldots, $A_{n-1}$ are matrices in $\SL(2,\R)$
such that
$$
\|A_{n-1} \cdots A_1 A_0 (v) \| < c \lambda^n \|v\| \, .
$$
Then for any $w \in \R^2 \setminus \{0\}$, there exists $ \theta
\in [-\epsilon, \epsilon]$ such that the vectors
$$
R_\theta A_{n-1} R_\theta A_{n-2} \cdots  R_\theta A_0 R_\theta
(v) \quad \text{and} \quad w
$$
are collinear.
\end{lemma}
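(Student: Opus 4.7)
Write $v_n(\theta) := R_\theta A_{n-1} R_\theta \cdots R_\theta A_0 R_\theta(v)$. Since the collinearity condition in the conclusion is a condition in $\P^1 \cong \R/\pi\Z$, I will approach this as a surjectivity problem. Let $\phi_n(\theta) \in \R$ be a continuous lift of the projective angle $[v_n(\theta)]$. Since the assignment $\theta \mapsto [v_n(\theta)]$ is continuous and $\P^1$ is a circle, the conclusion holds once I show that
\[
\phi_n(\epsilon) - \phi_n(-\epsilon) \geq \pi,
\]
for then the intermediate value theorem produces a $\theta \in [-\epsilon,\epsilon]$ with $[v_n(\theta)] = [w]$.

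\textbf{Step 1: Monotonicity and derivative formula.} Set $v_0(\theta) = R_\theta v$ and $v_{i+1}(\theta) = R_\theta A_i v_i(\theta)$, and let $\phi_i$ be a continuous lift of the argument of $v_i$. The standard chain rule for the projective action of $\SL(2,\R)$ on $\R/\pi\Z$---where $A_i$ acts with derivative $|A_i u|^{-2}$ at the unit vector $u$---yields the recursion
\[
\phi_{i+1}'(\theta) = \frac{|v_i(\theta)|^2}{|v_{i+1}(\theta)|^2}\,\phi_i'(\theta) + 1,
\]
with $\phi_0'(\theta) \equiv 1$. Solving explicitly gives the closed form
\[
t_n(\theta) := \phi_n'(\theta) = \sum_{j=0}^{n} \frac{|v_j(\theta)|^2}{|v_n(\theta)|^2} \, ,
\]
which is strictly positive, so $\phi_n$ is strictly increasing.

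\textbf{Step 2: Reduction to an integral estimate.} The total angular sweep equals $\int_{-\epsilon}^{\epsilon} t_n(\theta)\, d\theta$, so it suffices to show this integral is at least $\pi$. Retaining only the $j=0$ term in the sum yields the pointwise bound $t_n(\theta) \geq |v|^2/|v_n(\theta)|^2$, which reduces the problem to producing a lower bound on $\int_{-\epsilon}^\epsilon d\theta/|v_n(\theta)|^2$. The hypothesis $|v_n(0)| < c\lambda^n |v|$ makes the integrand extremely large at $\theta = 0$; the task is to control how quickly it can drop off as $\theta$ moves away.

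\textbf{Step 3: Local control of $|v_n(\theta)|$.} The main technical step is to show that $|v_n(\theta)|^2$ does not grow too fast from $|v_n(0)|^2$. Geometrically, smallness of $|v_n(0)|$ forces $v$ to be nearly aligned with the most contracted direction of the product $A_{n-1}\cdots A_0$, while the derivative $\frac{d}{d\theta} v_n(\theta)|_{\theta = 0}$ is essentially orthogonal to $v_n(0)$, lying in the expanding direction. I would exploit this via a second-order Taylor-type estimate of the form $|v_n(\theta)|^2 \lesssim |v_n(0)|^2 + C_n \theta^2 |v|^2$. The standard Lorentzian integral then gives
\[
\int_{-\epsilon}^\epsilon \frac{d\theta}{|v_n(\theta)|^2} \gtrsim \frac{1}{|v_n(0)|\sqrt{C_n}\,|v|} \, ,
\]
and one can use the hypothesis together with the multiplicative structure of $\SL(2,\R)$ (so that singular values of $F_0$ and its $\theta$-derivative are constrained by $\det F_0 = 1$) to bound $|v_n(0)|\sqrt{C_n}$ uniformly in the matrices $A_i$.

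\textbf{Main obstacle.} The main difficulty is Step 3: the individual norms $\|A_i\|$ are unbounded, so controlling $|v_n(\theta)|$ away from $\theta = 0$ is delicate. My plan is to exploit the fact that the hypothesis forces the product $A_{n-1}\cdots A_0$ to be strongly hyperbolic with $v$ close to its stable direction, which geometrically constrains how quickly the $R_\theta$-perturbations can move $v$ off that stable direction. Appropriate choices of $c$ small and $\lambda > 1$ close to $1$, depending only on $\epsilon$, will make the bound uniform in $n$, with a case split between small $n$ (where Step 3 dominates) and large $n$ (where one instead uses that the sum in Step 1 contains many terms and picks up contributions beyond the $j=0$ one).
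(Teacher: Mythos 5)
Your reduction of the statement to the angular sweep $\phi_n(\epsilon)-\phi_n(-\epsilon)\ge\pi$ and the derivative identity $\phi_n'(\theta)=\sum_{j=0}^n |v_j(\theta)|^2/|v_n(\theta)|^2$ are correct, and this monotonicity route is genuinely different from the paper's argument. But the core of the lemma is exactly what is left unproved. The constants $c,\lambda$ must be fixed first and work for \emph{all} $n$, and since $c\lambda^n\to\infty$, for large $n$ the hypothesis imposes no smallness on $|v_n(0)|$ at all. Already the family $A_0=\cdots=A_{n-1}=\Id$ (the hypothesis holds as soon as $c\lambda^n>1$) has $|v_n(\theta)|\equiv\|v\|$, so the $j=0$ term contributes exactly $2\epsilon<\pi$ to the sweep; hence Steps 2--3 cannot suffice, and the whole burden falls on the ``large $n$'' remark, for which you give no mechanism. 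Moreover, the specific estimate proposed in Step 3 --- $|v_n(\theta)|^2\lesssim |v_n(0)|^2+C_n\theta^2\|v\|^2$ with $|v_n(0)|\sqrt{C_n}$ bounded ``uniformly in the matrices $A_i$'' --- is false in this regime: take $A_0=\mathrm{diag}(e^L,e^{-L})$, $A_1=A_0^{-1}$, and pad with identities so that $n$ is large. Then $|v_n(0)|=\|v\|$, while $|v_n(\theta)|^2$ reaches size of order $e^{4L}\theta^4\|v\|^2$, so any admissible $C_n$ is of order $e^{4L}$ and $|v_n(0)|\sqrt{C_n}$ is unbounded as $L\to\infty$. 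Even in the genuinely small-$n$ regime (where $c\lambda^n\ll 1$ forces $|v_n(0)|\ll\|v\|$), the control of $|v_n(0)|\sqrt{C_n}$ is asserted rather than proved, and it is delicate because the intermediate partial products are unbounded.

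The missing idea is how to extract the sweep from the intermediate terms $1\le j\le n-1$ for an arbitrary matrix sequence: pointwise largeness of the sum (it always contains the term $1$, and at $\theta=0$ a term at least $c^{-2}\lambda^{-2n}$) says nothing about its integral, and lower-bounding the width of the $\theta$-window on which the ratios $|v_j(\theta)|^2/|v_n(\theta)|^2$ stay large is essentially the lemma itself; note also that $v_j(\theta)$ depends on $\theta$, so the tail is not a clean subproblem and a naive induction on $n$ does not close. The paper avoids all pointwise control by proving the contrapositive with the Hilbert metric: if the image of $[-\epsilon,\epsilon]$ misses some direction, one fattens the successive image intervals by $\epsilon$ at each of the $n$ steps, each inclusion $A_{k-1}(I_{k-1})\Subset I_k$ contracts the Hilbert metric by a uniform factor $\tau^{-1}<1$, and therefore the projective derivative of $A_{n-1}\cdots A_0$ at $\bar v$ is $O(\tau^{-n})$, i.e.\ $\|A_{n-1}\cdots A_0 v\|\gtrsim\tau^{n/2}\|v\|$, contradicting the hypothesis for suitable $c,\lambda$. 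The crucial feature, which your $j=0$ reduction discards, is that the $\epsilon$ of rotation freedom is exploited cumulatively at every one of the $n$ steps; any completion of your approach would need an analogous cumulative device, and as written the proposal does not contain one.
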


A set $I \subset \P^1$ is called an interval if it is non-empty,
connected, and its complement contains more than one point. If $I
\subset \P^1$ is an open interval, there is a Riemannian metric
$\|\mathord{\cdot}\|_I$ on $I$, called the \emph{Hilbert metric},
which is uniquely characterized by the following properties:
\begin{itemize}
\item any invertible linear map $A:\R^2 \to \R^2$ induces an
isometry between $I$ and $A(I)$ with the corresponding Hilbert
metrics; \item if $I \subset \P^1$ is the projectivization of
$\{(1,y)\in \R^2 :  y \in (0,\infty)\}$, then the Hilbert metric
is given by $\frac{dy}{y}$.
\end{itemize}
If $J\Subset I$ are open intervals, then  the metric
$\|\mathord{\cdot}\|_I$ restricted to $J$ is smaller than
$\|\mathord{\cdot}\|_J$  times a factor $\tanh \big(\diam_{I}
(J)/4) < 1$ (Garrett Birkhoff's formula).

\begin{proof}[Proof of Lemma~\ref{l.wind}.]
We can assume that $\epsilon < \pi/2$, otherwise the conclusion is
trivial. Let $|\mathord{\cdot}|$ indicate the Euclidean (angle)
metric on $\P^1$ (so that the length of $\P^1$ is $\pi$). Fix
constants $0<C_1<C_2$ (depending only on $\epsilon$) such that for
any open interval $I$ whose Euclidean length $|I|$ is between
$\epsilon$ and $\pi-\epsilon$, we have:
\begin{alignat*}{2}
\|\mathbf{u}\|_I &\ge C_1 |\mathbf{u}| &\quad &\text{for every $\mathbf{u}$ tangent to $I$;}\\
\|\mathbf{u}\|_I &\le C_2 |\mathbf{u}| &\quad &\text{if
$\mathbf{u}$ is a tangent vector at the midpoint of $I$.}
\end{alignat*}
Also fix a constant $\tau>1$ such that if $J \Subset I$ are open
intervals with the same midpoint and $|J|+ \eps = |I| \le \pi -
\eps$, then
$$
\|\mathbf{u}\|_{J} \ge \tau \|\mathbf{u}\|_I \quad \text{for every $\mathbf{u}$ tangent to $J$.}\\
$$

Now take $A_0$, \ldots, $A_{n-1} \in \SL(2,\R)$ and a unit vector
$v \in \R^2$. Let $\bar v \in \P^1$ correspond to $v$. Assume that
\begin{equation}\label{e.no cover}
\big\{R_\theta A_{n-1} \cdots  A_1 R_\theta A_0 R_\theta (\bar{v})
: \theta \in [-\epsilon, \epsilon] \big\} \neq \P^1 \, ;
\end{equation}
we want to show that $\|A_{n-1} \cdots A_0 \|$ is exponentially
large.

Let us define some open intervals $I_0$, $I_1$, \ldots, $I_n
\subset \P^1$. Let $I_0$ be the open interval with midpoint
$\bar{v}$ and length $|I_0| = \epsilon$. Define inductively $I_k$
as the open interval with the same midpoint as $A_{k-1}(I_{k-1})$
and length $|I_k| = |A_{k-1}(I_{k-1})| + \epsilon$. It follows
from \eqref{e.no cover} and the fact that the matrices $A_j$ preserve
orientation that $I_k$ is indeed an interval, with endpoints
$R_{-\eps/2} A_{k-1} \cdots R_{-\eps/2} A_0 R_{-\eps/2} (\bar{v})$
and $R_{\eps/2} A_{k-1} \cdots R_{\eps/2} A_0 R_{\eps/2}
(\bar{v})$, and length $|I_k| \le \pi - \epsilon$. Therefore for
any vector $\mathbf{u}$ tangent to $I_{k-1}$, we have
$\|DA_{k-1} (\mathbf{u})\|_{I_k} \le    
\tau^{-1} \|\mathbf{u}\|_{I_{k-1}}$.

Let $\mathbf{w}$ be a non-zero tangent vector at $\bar{v}$, and
write $B= A_{n-1} \cdots A_0$.
Then 
$$
|DB(\mathbf{w})| \le C_1^{-1} \| DB(\mathbf{w}) \|_{I_n} \le
C_1^{-1} \tau^{-n} \| \mathbf{w} \|_{I_0} \le C_1^{-1} C_2
\tau^{-n} |\mathbf{w}| \, .
$$
Since $\| B(v) \|^{-2} = |DB(\mathbf{w})|/|\mathbf{w}|$,
\footnote{Indeed, the slice of the unit disk corresponding to a
small angle $|\mathbf{w}|$ around $v$ is a region of $\R^2$ with
area $\|v\|^2 |\mathbf{w}|$ that is mapped by $B$ to a region of
area $\|B(v)\|^2 |DB(\mathbf{w})| + \cO(|\mathbf{w}|^2)$; but $B$
is area-preserving.} we get that $\| B(v) \|$ is exponentially
large, as desired.
\end{proof}

\begin{proof}[Proof of Proposition~\ref{p.lockprop}.]
We begin by proving the first statement in part (a). Obviously,
$\rho$ is locally constant around $A$ if it is uniformly
hyperbolic. Let us prove the other direction. As usual, fix a lift
$F_2 : X \times \R \to \R$ of $A$. The nearby lift of $R_\theta A$
is $F_2 + \theta$, hence the rotation number is given by
$$
\rho(R_\theta A) = \lim_{n \to \infty} \frac{1}{n}\int\big[(F_2
+\theta)^n(x,t) - t\big] \, d\mu(x) \quad \text{($t$ arbitrary),}
$$
where $(F_2+\theta)^n(x,t)$ is the second coordinate of the $n$-th
iterate of $(x,t) \mapsto {(f(x), F_2(x,t) + \theta)}$.

Assume that $(f,A)$ is not uniformly hyperbolic. Fix $\eps>0$. We
will show that $\rho(R_\eps A) > \rho(R_{-\eps} A)$. Let $c>0$ and
$\lambda>1$ be given by Lemma~\ref{l.wind}, depending on~$\eps$.
There exist $n_0 \in \N$ and $x\in X$ such that $\|A^{n_0}(x)\| <
c \lambda^{n_0}$. Let $G$ indicate the (positive measure) set of
such points $x$. Then  Lemma~\ref{l.wind} implies that
$$
(F_2 + \eps)^{n_0}(x,t) - (F_2 - \eps)^{n_0}(x,t) > \tfrac{1}{2}
\quad \text{for all $x \in G$, $t\in \R$.}
$$
When $x\not\in G$, the left-hand side is positive. Since orbits
under $f^{n_0}$ visit $G$ with an average frequency $\mu(G)$, we
conclude that
$$
\rho(R_\eps A) - \rho(R_{-\eps} A) = \lim_{k \to \infty}
\frac{1}{n_0 k}\int\big[(F_2 +\eps)^{n_0 k}(x,t) - (F_2
-\eps)^{n_0 k}(x,t)\big] \, d\mu(x) \ge \frac{\mu(G)}{2n_0} \, ,
$$
as we wanted to show. This proves the first part of (a).

Let us now prove the remaining parts of (a). Suppose $A$ is such
that $r_A: \theta \mapsto \rho(R_\theta A)$ is constant on $[0,\eps]$.
By what we just proved, it follows that $R_{\eps/2} A \in \cUH$.
Take a neighborhood $V$ of $A$ such that $R_{\eps/2} B \in \cUH$
and $\rho(R_{\eps/2} B) = \rho(R_{\eps/2} A) = \rho(A)$
for every $B \in V$. Since $r_A$ is non-decreasing,
we get $\rho(B) \leq \rho(R_{\eps/2} B) =
\rho(A)$ and hence $\rho$ attains a local maximum at $A$.
Analogously, if $A$ is such that $\theta \mapsto \rho(R_\theta A)$ is constant on
$[-\eps,0]$, then $\rho$ attains a local minimum at $A$.
This establishes the remaining three implications in part~(a).

The parts (b), (c), and (d) follow readily from (a). (Too see the second part of
(d), note that if $A$ is not unlocked, then $\theta \mapsto
\rho(R_\theta A)$ is constant equal to $\rho_0$ on a interval $I$
of the form $[0, \eps]$ or $[-\eps, 0]$. Thus, $R_\theta A$ is
uniformly hyperbolic for all $\theta \in \interior I$ and hence
$2\rho(A) \in G$.)
\end{proof}

\begin{rem}
As is apparent from the proof, Proposition~\ref{p.lockprop}
holds in the more general setting where $f$ is a homeomorphism of
a compact space with a fully supported invariant probability
measure $\mu$ and the rotation numbers are taken with respect to
this measure (see \cite{H}, \S5.8.2).
\end{rem}

\begin{prop}\label{p.conjugacy inv}
Conjugate cocycles have the same type {\rm (}locked / semi-locked / unlocked{\rm )}.
\end{prop}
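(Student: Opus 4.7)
The strategy rests on two facts: conjugation by $B$, viewed as the map $\Phi_B : A' \mapsto B(f(\cdot))A'(\cdot)B(\cdot)^{-1}$, is a homeomorphism of $C(X,\SL(2,\R))$ that preserves $\cUH$; and conjugate cocycles have equal fibered rotation number modulo $G'$. Granting both, I would fix a continuous determination $\rho$ of the rotation number on a neighborhood $\cV$ of $A$, set $\tilde\rho := \rho \circ \Phi_B^{-1}$ on $\Phi_B(\cV)$, and check that $\tilde\rho$ is itself a continuous determination of the fibered rotation number near $\tilde A$. Since $\Phi_B$ is a homeomorphism sending $A$ to $\tilde A$ and satisfying $\rho = \tilde\rho \circ \Phi_B$ on $\cV$, the function $\rho$ is locally constant at $A$ if and only if $\tilde\rho$ is locally constant at $\tilde A$, and $A$ is a local extremum of $\rho$ if and only if $\tilde A$ is a local extremum of $\tilde\rho$. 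By Proposition~\ref{p.lockprop}(a) these two conditions characterize respectively the locked and semi-locked types, and since the trichotomy is independent of the chosen determination, $A$ and $\tilde A$ are of the same type.

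The content therefore reduces to the rotation-number invariance $\rho_{f,\Phi_B(A')} \equiv \rho_{f,A'} \pmod{G'}$ for every $A' \in \cV$, which I would verify at the level of lifts. Choose a continuous lift $\tilde\beta_x : \R \to \R$ of the action of $B(x)$ on $\P^1 \cong \R/\Z$. Then for any cocycle $A'$ with lift $F_2^{A'}(x,\cdot)$, the map $(x,t) \mapsto \bigl(f(x),\, \tilde\beta_{f(x)}\bigl(F_2^{A'}(x, \tilde\beta_x^{-1}(t))\bigr)\bigr)$ gives a valid lift of the skew-product action of $\Phi_B(A')$. The function $u \mapsto \tilde\beta_x(u) - u$ is $1$-periodic in $u$ (since $\tilde\beta_x$ covers an action on $\R/\Z$) and therefore uniformly bounded over $x \in X$ and $u \in \R$, so iterating the two lifts produces trajectories in $\R$ that differ by a uniformly $O(1)$ quantity. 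Dividing by $n$ and passing to the limit yields equal fibered rotation numbers for these specific choices of lifts, which is the desired congruence modulo $G'$ for arbitrary choices. The hard part of the plan is precisely this lift bookkeeping --- in particular, one must carry it out without any homotopy assumption on $B$, which is what makes the proposition apply to arbitrary conjugacies, as needed in the footnote to Remark~\ref{r.unlocked}.
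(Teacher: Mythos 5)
Your first paragraph sets up the right skeleton (transport a continuous determination through $\Phi_B$ and use Proposition~\ref{p.lockprop}(a)), but the bridge you rely on is both unproved by your construction and, as stated, false. A lift $\tilde\beta_x$ of the circle action of $B(x)$ that is continuous \emph{jointly in $(x,t)$} exists if and only if $B$ itself is homotopic to a constant (the obstruction is exactly the homotopy class of the $\SO(2,\R)$-part of $B$, i.e.\ of the induced map $X\to\R/\Z$). The proposition, as the footnote to Remark~\ref{r.unlocked} emphasizes, must cover conjugacies where only $x\mapsto B(f(x))B(x)^{-1}$ is homotopic to a constant while $B$ is not, and there your lift bookkeeping cannot even start. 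Moreover the conclusion you reduce to --- equality of rotation numbers modulo $G'$ --- fails in that generality: take $X=\R/\Z$, $f(x)=x+\alpha$ with $\alpha$ irrational, $A=\Id$, $B(x)=R_{2\pi x}$. Then $B(f(x))B(x)^{-1}=R_{2\pi\alpha}$ is constant, the conjugated cocycle is $R_{2\pi\alpha}$, and the rotation numbers are $0$ and $\alpha$, which do not agree modulo $G'=\Z$ (they differ by an element of $G$ that is not in $G'$). So ``equal mod $G'$'' cannot be the statement that carries the proof.

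What is actually needed, and what the paper proves, is weaker: on a connected open set $\cU$ with continuously chosen lifts, the two continuous determinations satisfy $\rho'(\Phi(A'))=\rho(A')+c$ for a single real constant $c$, not necessarily in $G'$; adding a constant does not affect ``locally constant at $A$'' or ``local extremum at $A$'', so type invariance follows. The paper obtains this by lifting $B$ only \emph{measurably}: choose $\Theta:X\times\R\to\R$, continuous in the second variable, with $\Theta(X\times\{0\})$ bounded; the defect between the two lifted dynamics is a bounded, integer-valued, measurable function $\psi(A',x)$, Birkhoff averaging gives $\rho'(\Phi(A'))-\rho(A')=\int\psi(A',x)\,d\mu(x)$, and since $\psi(\cdot,x)$ is a continuous $\Z$-valued function of $A'$ on the connected set $\cU$, it does not depend on $A'$, yielding the constant $c$. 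Your argument is correct, and essentially a continuous-lift version of this, in the special case where $B$ is homotopic to a constant (and then it gives the extra information $c\in G'$); to close the gap you must replace the continuous lift by this measurable-lift-plus-connectedness step, or otherwise handle non-null-homotopic $B$ directly.
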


\begin{proof}
Let $B \in C(X,\SL(2,\R))$ be such that $x \mapsto B(f(x)) B(x)^{-1}$
is homotopic to a constant and let $\Phi:C_0(X,\SL(2,\R)) \to C_0(X,\SL(2,\R))$
be given by $\Phi(A)(x)=B(f(x)) A(x) B(x)^{-1}$.

Let $\cU$ be an open connected subset of $C_0(X,\SL(2,\R))$ such that
there exists a continuous choice of lifts $A \in \cU \mapsto F_{A,2} \in C(X \times \R,\R)$.
These lifts induce determinations $\rho:\cU \to \R$ and $\rho':\Phi(\cU) \to \R$
of the fibered rotation number.
We are going to show that
there exists $c \in \R$ such that
$\rho'(\Phi(A))=\rho(\Phi(A))+c$ for every $A\in \cU$.
The proposition follows immediately.

Let also $\Theta:X \times \R \to \R$ be a measurable
lift of $B$ that is continuous in
the second variable, but not necessarily in the first variable, such that
$\Theta(X \times \{0\})$ is bounded.
It follows that there exists a bounded measurable
function $\psi:\cU \times X \to \Z$ such that
$$
F_{\Phi(\tilde A), 2}(f(x),\Theta(x,t)) =
\Theta(f(x),F_{\tilde A, 2}(x,t)) + \psi(\tilde A,x).
$$
Thus $\rho'(\Phi(\tilde A))-\rho(A)=\int \psi(\tilde A,x) \, d\mu(x)$.
On the other hand, $\psi(\tilde A,x)$ is easily seen to be continuous
in the first variable, and since $\cU$ is connected, it depends only on the
second variable.
\end{proof}

\section{Proof of the Projection Lemma}\label{a.2}

Let us now prove Lemma~\ref{l.projection}. In the case where $\tr
A$ does not vanish identically, this result is contained in Lemma
9 of \cite {ABD}.  We will explain the necessary changes to deal
with the remaining case.

We wish to show that an arbitrary continuous $\SL(2,\R)$ perturbation
$\tilde A$ of the constant cocycle $A$ is conjugate to a Schr\"odinger
perturbation (with a conjugacy depending continuously on the perturbation,
and equal to $\id$ at $A$).  On the other hand, given any fixed compact
set of non-empty interior
$K \subset X$, Lemma 10 of \cite {ABD} states that under
the assumption of minimality, $\tilde A$ is conjugate to a perturbation
$\hat A$ such that $\hat A=A$ outside $K$.

So it is enough to describe how
to conjugate such a localized perturbation.  Since $X$ has at least four
points, we can choose $K$ admitting a compact neighborhood $K'$ such that
$f^i(K') \cap K'=\emptyset$ for $1 \leq i \leq 3$.  Notice that $A^4=\id$,
so $\hat A^4$ is close to $\id$.  Define $E=\sup_{x \in X}
\|\hat A^4(x)-\id\|^{1/2}$.

For each $x \in K'$, there are unique $E_i(x)
\in \R$, $1 \leq i \leq 4$, such that $E_3(x)=E \phi(x)$ and
$S_{E_4(x)} \cdots S_{E_1(x)}=\hat A^4(x)$, where for $t \in \R$, $S_t$ denotes the Schr\"odinger matrix $\begin{pmatrix} t & -1 \\ 1 & 0 \end{pmatrix}$.
Indeed, write $\hat A^4(x)=R_{-\pi/2} \hat A(x)=\id+\begin{pmatrix}
a(x)&b(x)\\c(x)&d(x) \end{pmatrix}$, and notice that $E_1(x)$, $E_2(x)$ and
$E_4(x)$ must be given by
$$
E_1(x)=\frac {b(x)-E \phi(x)} {1+d(x)}, \quad E_2(x)=-\frac {d(x)} {E \phi(x)},
\quad E_4(x)=\frac {c(x)+E_2(x)} {1-d(x)},
$$
with the expression for $E_2(x)$ being evaluated as $0$ whenever
$E \phi(x)=0$.  These expressions immediately imply that the maps $E_i:K' \to \R$ are
continuous and vanish in $\partial K'$.

We now define $\Phi(\hat A)$ as follows.  Outside
$\bigcup_{i=0}^3 f^i(K')$, we set $\Phi(\hat A)=\hat A=A$.
For $x \in K'$, we define $\Phi(\hat A)(f^i(x))=S_{E_{i+1}(x)}$,
$0 \leq i \leq 3$.
It is easy to see that $\Phi$ depends continuously on $\hat A$ in a
neighborhood of $A$.

By construction, $\Phi(\hat A)^4(x)=\hat A^4(x)$ for $x \in K$.  Define
$\Psi(\hat A)=\id$ outside $\bigcup_{i=0}^3 f^i(K')$,
$\Psi(\hat A)(f^i(x))=\Phi(\hat A)^i(x) \hat A^i(x)^{-1}$ for $x \in K'$ and
$1 \leq i \leq 3$.  Then $\Psi$ is a continuous function of $\hat A$
with $\Psi(\hat A)(f(x)) \cdot \hat A(x)
\cdot (\Psi(\hat A)(x))^{-1}=\Phi(\hat A)(x)$ for every $x \in X$.
This completes the proof of Lemma~\ref{l.projection}.

\begin{rem}

Like \cite[Lemma 9]{ABD}, Lemma~\ref{l.projection} is also valid
in smooth settings.  We only need to be careful in two steps.
First, we should choose $\phi$ to be a smooth function.
Second, we should replace the definition of $E=E(\hat A)$, which should be
``much larger than $d$'' for our argument to work, by something that takes
into account the derivatives of $d$.  In the case of $C^r$ cocycles
with finite $r$, we could take $E=\|d\|^{1/2}_{C^r}$, and it is
straightforward to check that both $\Phi$ and $\Psi$ are
$C^r$-valued continuous maps defined in a $C^r$-neighborhood of $A$.
In the case $r=\infty$, we can take
$$
E=-\frac {1} {\log \sup_{x \in K'} \|\hat A^4(x)-\id\|}
$$
whenever $\hat A \neq A$, and let $E=0$ when $\hat A=A$. With this
definition, $\Phi$ and $\Psi$ are $C^0$-valued continuous maps in
a $C^0$-neighborhood of $A$, but their restrictions to $C^{s+1}$
are $C^s$-valued continuous maps for every finite $s$ (using the
convexity inequality $\| \mathord{\cdot} \|_{C^s} \leq K_s (\| \mathord{\cdot}
\|_{C^0} \| \mathord{\cdot}\|_{C^{s+1}}^s)^{1/(s+1)}$; see \cite {N}), so
their restrictions to $C^\infty$ are $C^\infty$-valued continuous
maps.
\end{rem}

\section{Mixing Dynamics and Cantor Spectrum}\label{a.fayad}

In \cite {F}, Fayad proved that there exists a real-analytic
{\it time change} $f^t$ of an irrational flow on $\R^3/\Z^3$ which is mixing.
Thus $f^t$ is generated by a vector field of the form
$u \alpha$, where the coordinates
of $\alpha \in \R^3$ are rationally independent and
$u:\R^3/\Z^3 \to \R$ is a positive
real analytic function.
By general properties of time changes (see \cite[\S 5.1]{Parry}), to any
measure $\mu_0$ which is invariant by the flow generated by
$\alpha$, we can associate an $f^t$-invariant measure
which is absolutely continuous with respect to $\mu_0$ and has
density $1/u$, and all $f^t$-invariant measures are obtained in this way.
Since irrational flows are uniquely ergodic, the flow $f^t$ is uniquely
ergodic as well.  We will denote by $\mu$ the unique $f^t$-invariant
probability measure.

\begin{prop}

Let $f=f^1$ be the time one map of the flow $f^t$.  Then $f$ is mixing,
strictly ergodic, and the range $G$ of the Schwartzman asymptotic cycle of
$f$ is dense in $\R$.

\end{prop}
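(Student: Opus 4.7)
The plan is to verify the three conclusions---mixing, strict ergodicity, density of $G$---separately. Mixing and strict ergodicity are inherited from properties of the continuous flow $f^t$, while density of $G$ follows from an explicit computation exploiting the fact that the orbits of $f$ coincide with those of the linear flow $g^t(x)=x+t\alpha$.

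Mixing of $f=f^1$ is immediate from mixing of $f^t$, specializing the defining relation $\int g_1\,(g_2\circ f^t)\,d\mu\to\int g_1\,d\mu\int g_2\,d\mu$ to the integer subsequence $t=n\to\infty$. Unique ergodicity of the flow $f^t$ is standard for time changes: its orbits coincide with those of the uniquely ergodic $g^t$, and the unique invariant probability measure has density $c\,u^{-1}$ relative to the Haar measure on $\R^3/\Z^3$. To pass to $f^1$, I would invoke the standard dichotomy that the time-one map of a uniquely ergodic flow is uniquely ergodic unless the flow carries a continuous eigenfunction $\varphi:X\to\R/\Z$ with $\varphi(f^t x)-\varphi(x)\equiv\lambda t\pmod 1$ for some $\lambda\in\Z\setminus\{0\}$. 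Such an eigenfunction would in particular be a non-constant measurable eigenfunction of $f^t$, contradicting the weak mixing implied by strong mixing. Hence $f$ is uniquely ergodic; minimality then follows from unique ergodicity together with the fact that $\mu$ has full support, so $f$ is strictly ergodic.

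For density of $G$, write $f^t(x)=g^{\sigma(x,t)}(x)=x+\sigma(x,t)\alpha$ with $\sigma(x,t)=\int_0^t u(f^s x)\,ds$, so that the time-one map is $f(x)=x+h(x)\alpha$ modulo $\Z^3$, where $h(x)=\int_0^1 u(f^s x)\,ds$ is continuous and positive; in particular $\bar h:=\int h\,d\mu>0$. For $i=1,2,3$, let $\psi_i\in C(X,\R/\Z)$ be the $i$-th coordinate projection and set $\phi_i(x)=h(x)\alpha_i$. Then
\[
\psi_i(f(x))-\psi_i(x)\equiv h(x)\alpha_i\equiv\phi_i(x)\pmod 1,
\]
so $\alpha_i\bar h=\int\phi_i\,d\mu\in G$ for each $i$. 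The rational independence of $(\alpha_1,\alpha_2,\alpha_3)$ combined with $\bar h\neq 0$ yields rational independence of $(\alpha_1\bar h,\alpha_2\bar h,\alpha_3\bar h)$, so the rank-three subgroup $\Z\alpha_1\bar h+\Z\alpha_2\bar h+\Z\alpha_3\bar h\subset G$ is dense in $\R$.

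The main obstacle is the passage from unique ergodicity of $f^t$ to unique ergodicity of $f=f^1$: the time-one map of a uniquely ergodic flow need not itself be uniquely ergodic (e.g.\ the product of an irrational linear flow with a rotation of period $1$), and the upgrade requires ruling out continuous eigenfunctions of the flow at nonzero integer frequencies. This is exactly what the strong mixing hypothesis eliminates, but the dichotomy that converts this spectral information into unique ergodicity of $f^1$ must be invoked with some care.
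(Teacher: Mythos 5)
Your treatment of mixing and of the density of $G$ is correct and essentially the paper's own: mixing of $f^1$ is immediate from mixing of the flow, and writing $f(x)=x+h(x)\alpha$ with $h(x)=\int_0^1 u(f^s x)\,ds$ and testing against the coordinate projections $\psi_i:X\to\R/\Z$ shows $\alpha_i\int h\,d\mu\in G$; rational independence of the $\alpha_i$ then gives a dense rank-three subgroup, exactly as in the paper (which in addition evaluates $\int h\,d\mu=C^{-1}$, though, as you note, only its non-vanishing is needed).

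The gap is in the step you yourself flag as the main obstacle. The ``standard dichotomy'' you invoke --- that the time-one map of a uniquely ergodic flow is uniquely ergodic unless the flow carries a \emph{continuous} eigenfunction with nonzero integer frequency --- is not a theorem, and in the direction you need it is false. Since a uniquely ergodic map's unique invariant measure is automatically ergodic, unique ergodicity of $f^1$ already forces $(f^1,\mu)$ to be ergodic, and ergodicity of $(f^1,\mu)$ fails as soon as the flow has a nonzero integer \emph{measurable} eigenvalue; there are uniquely ergodic flows with a measurable but no continuous eigenvalue at integer frequencies (take a special flow over an irrational rotation $R_\alpha$ with continuous positive roof $r=1+\psi\circ R_\alpha-\psi$, where $\psi$ is measurable and no $e^{2\pi i n\psi}$, $n\neq 0$, agrees a.e.\ with a continuous function; then $e^{2\pi i\psi}$ solves the eigenvalue equation for frequency $1$, so $f^1$ is not even $\mu$-ergodic, hence not uniquely ergodic, while no continuous eigenfunction with nonzero integer frequency exists). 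So the correct dichotomy must be stated with measurable eigenfunctions, equivalently with ergodicity of $(f^1,\mu)$, and that statement is precisely what needs proof here: given any $f^1$-invariant probability $\mu'$, the averages $\mu'_\epsilon=\frac1\epsilon\int_0^\epsilon f^t_*\mu'\,dt$ are $f^1$-invariant and absolutely continuous with respect to $\mu$ (because $\int_0^1 f^t_*\mu'\,dt=\mu$ by unique ergodicity of the flow), so mixing --- indeed mere ergodicity --- of $(f^1,\mu)$ forces $\mu'_\epsilon=\mu$, and letting $\epsilon\to 0$ gives $\mu'=\mu$. This averaging argument is exactly the paper's proof. Since your flow is mixing, the repaired statement applies and your plan goes through, but as written the crucial lemma is both unproved and, in the continuous-eigenfunction form you cite, false; you should either prove the measurable version as above or argue directly as the paper does.
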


\begin{proof}

Obviously $f$ is mixing.  If $\mu'$ is any invariant probability
measure for $f$, then
$\int_0^1 f^t_* \mu' dt$ is an $f^t$-invariant probability measure, so it
coincides with $\mu$.  On the other hand, for every $\epsilon>0$,
$\mu'_\epsilon=\frac {1} {\epsilon} \int_0^\epsilon f^t_* \mu' dt$ is an
$f$-invariant probability measure absolutely continuous with respect to
$\mu$.  Obviously $\mu'_\epsilon \to \mu'$ as $\epsilon \to 0$,
and since $f$ is mixing, $f_*^n \mu'_\epsilon \to \mu$ as $n \to
\infty$ (in both cases the limits are taken in the weak-$*$ topology).
Since $\mu'_\epsilon$ is $f$-invariant, we conclude that
$\mu'_\epsilon=\mu$ for every $\epsilon>0$, and hence $\mu'=\mu$.  Thus
$f$ is uniquely ergodic.  Since $\supp \mu=\R^3/\Z^3$, unique ergodicity
implies strict ergodicity.

Given $k \in \Z^3$, let $\psi:\R^3/\Z^3 \to \R/\Z$ be given by
$\psi(x)=\langle x,k \rangle$, and let $\phi(x)=\int_0^1
u(f^t(x)) dt \langle \alpha,k \rangle$.  Then
$\phi(x)=\psi(f(x))-\psi(x) \bmod{1}$, $g=\int_{\R^3/\Z^3} \phi d\mu \in G$.
Recalling that $\mu$ is an $f^t$-invariant probability measure with
density proportional to $\frac {1} {u(x)}$, we get
\begin{multline*}
\int_{\R^3/\Z^3} \phi(x) d\mu(x)=
\int_0^1 \int_{\R^3/\Z^3} u(f^t(x)) d\mu(x) dt \langle \alpha,k\rangle=
\int_{\R^3/\Z^3} u(x) d\mu(x) \langle \alpha,k \rangle\\
=C^{-1} \langle \alpha,k \rangle,
\end{multline*}
where $C=\int_{\R^3/\Z^3} u(x)^{-1} dx>0$
is the normalization constant making $\mu$ a probability measure.
Since the coordinates of $\alpha$ are rationally independent, this shows
that $G$ contains a free Abelian subgroup of rank $3$, so $G$ is
dense in $\R$.
\end{proof}

It follows from the proposition and Corollary~\ref{c.gencgl}
that Schr\"odinger operators arising from mixing strictly ergodic dynamics
can have Cantor spectrum, thus answering negatively \cite[Problem~5]{Be0}.

\end{appendix}

\end{document}